\def\jamesmode{0}
\def\arxivmode{0}
\def\fastmode{1}
\def\showauthornotes{0}
\def\showkeys{0}
\def\showdraftbox{1}
\def\showcolorlinks{1}
\def\usemicrotype{1}
\def\showfixme{1}
\newcommand{\vvmathbb}{\mathbb}
\newcommand{\capacity}{\mathrm{cap}^G}
\newcommand{\invcheeger}{\Phi^{\mathrm{inv}}}
\newcommand{\F}{\Phi}
\newcommand{\dloc}{\vvmathbb{d}_{\mathrm{loc}}}
\newcommand{\graphs}{\cG}
\newcommand{\rgraphs}{\cG_{\bullet}}
\newcommand{\rrgraphs}{\cG_{\bullet \bullet}}
\newcommand{\cgraphs}{\graphs^*}
\newcommand{\crgraphs}{\rgraphs^*}
\newcommand{\crrgraphs}{\rrgraphs^*}
\newcommand{\todl}{\Rightarrow}
\newcommand{\smashed}[1]{\sqrt{\smash[b]{#1}}}
\newcommand{\avgd}{\bar{d}}
\newcommand{\dimspec}{\mathrm{dim}_{\mathsf{sp}}}
\newcommand{\dimspecover}{\obar{\mathrm{dim}}_{\mathsf{sp}}}
\newcommand{\dimspecunder}{\ubar{\mathrm{dim}}_{\mathsf{sp}}}
\newcommand{\dimconf}{\mathrm{dim}_{\mathsf{cg}}}
\newcommand{\dimconfunder}{\ubar{\mathrm{dim}}_{\mathsf{cg}}}
\newcommand{\dimconfover}{\obar{\mathrm{dim}}_{\mathsf{cg}}}
\newcommand{\Reff}{R_{\mathrm{eff}}}
\newtheorem{theorem}{Theorem}[section]
\newtheorem*{theorem*}{Theorem}
\newtheorem*{proposition*}{Proposition}
\newtheorem{lemma}[theorem]{Lemma}
\newtheorem*{lemma*}{Lemma}
\newtheorem{corollary}[theorem]{Corollary}
\newtheorem*{conjecture*}{Conjecture}
\newtheorem*{fact*}{Fact}
\newtheorem*{exercise*}{Exercise}
\newtheorem*{hypothesis*}{Hypothesis}
\theoremstyle{definition}
\newtheorem{definition}[theorem]{Definition}
\newtheorem{assumption}[theorem]{Assumption}
\newtheorem{exercise-easy}[theorem]{Exercise}
\newtheorem{exercise-med}[theorem]{Exercise}
\newtheorem{exercise-hard}[theorem]{Exercise$^\star$}
\newtheorem{claim}[theorem]{Claim}
\newtheorem*{claim*}{Claim}
\newtheorem{remark}[theorem]{Remark}
\newtheorem*{remark*}{Remark}
\newtheorem*{observation*}{Observation}
\let\mathbb\varmathbb
\definecolor{bleudefrance}{rgb}{0.01, 0.1, 1.0}
\definecolor{azure}{rgb}{0.0, 0.5, 1.0}
\newcommand{\savehyperref}[2]{\texorpdfstring{\hyperref[#1]{#2}}{#2}}
\newcommand{\Sref}[1]{\hyperref[#1]{\S\ref*{#1}}}
\newcommand{\mynotes}[1]{{\sffamily\small\color{teal}{#1}}\medskip}
\newcommand{\Authornote}[2]{{\sffamily\small\color{brickred}{[#1: #2]}}\medskip}
\newcommand{\Authornotecolored}[3]{{\sffamily\small\color{#1}{[#2: #3]}}}
\newcommand{\Authorcomment}[2]{{\sffamily\small\color{BrickRed}{[#1: #2]}}}
\newcommand{\Authorstartcomment}[1]{\sffamily\small\color{gray}[#1: }
\newcommand{\Authorfnote}[2]{\footnote{\color{red}{#1: #2}}}
\newcommand{\Authorfixme}[1]{\Authornote{#1}{\textbf{??}}}
\newcommand{\Authormarginmark}[1]{\marginpar{\textcolor{red}{\fbox{\Large #1:!}}}}
\newcommand{\myexplain}[1]{{\sffamily\small\color{red}{\noindent [Explanation:\medskip\newline \begin{quote}#1\hfill]\end{quote}}}\medskip}
\newcommand{\mynotes}[1]{}
\newcommand{\Authornote}[2]{}
\newcommand{\Authornotecolored}[3]{}
\newcommand{\Authorcomment}[2]{}
\newcommand{\Authorstartcomment}[1]{}
\newcommand{\Authorfnote}[2]{}
\newcommand{\Authorfixme}[1]{}
\newcommand{\Authormarginmark}[1]{}
\newcommand{\myexplain}[1]{}
\renewcommand{\Authornote}[2]{{\sffamily\small\color{brickred}{[#1: #2]}}\medskip}
\renewcommand{\Authornotecolored}[3]{{\sffamily\small\color{#1}{[#2: #3]}}}
\newcommand{\Authorfnote}[2]{\footnote{\color{brickred}{#1: #2}}}
\newcommand{\jnote}{\Authornote{J}}
\newcommand{\Esymb}{\mathbb{E}}
\newcommand{\Psymb}{\mathbb{P}}
\DeclareMathOperator*{\E}{\Esymb}
\DeclareMathOperator*{\ProbOp}{\Psymb}
\renewcommand{\Pr}{\ProbOp}
\newcommand{\textparen}[1]{\text{(#1)}}
\newcommand{\because}[1]{\textparen{because #1}}
\renewcommand{\because}[1]{\textparen{because #1}}
\newcommand{\seteq}{\mathrel{\mathop:}=}
\newcommand{\bigmid}{~\big|~}
\newcommand\bdot\bullet
\newcommand{\Ind}{\mathbb I}
\newcommand{\Ind}{\mathds 1}
\DeclareMathOperator{\vol}{vol}
\DeclareMathOperator{\supp}{supp}
\DeclareMathOperator{\dist}{dist}
\newcommand{\Z}{\mathbb Z}
\newcommand{\N}{\mathbb N}
\newcommand{\R}{\mathbb R}
\newcommand{\cB}{\mathcal B}
\newcommand{\cC}{\mathcal C}
\newcommand{\cE}{\mathcal E}
\newcommand{\cF}{\mathcal F}
\newcommand{\cG}{\mathcal G}
\newcommand{\cL}{\mathcal L}
\newcommand{\cP}{\mathcal P}
\newcommand{\cQ}{\mathcal Q}
\newcommand{\cR}{\mathcal R}
\newcommand{\cT}{\mathcal T}
\renewcommand{\leq}{\leqslant}
\renewcommand{\geq}{\geqslant}
\let\epsilon=\varepsilon
\numberwithin{equation}{section}
\newcommand\MYcurrentlabel{xxx}
\newcommand{\MYstore}[2]{%
  \global\expandafter \def \csname MYMEMORY #1 \endcsname{#2}%
}
\newcommand{\MYload}[1]{%
  \csname MYMEMORY #1 \endcsname%
}
\newcommand{\MYnewlabel}[1]{%
  \renewcommand\MYcurrentlabel{#1}%
  \MYoldlabel{#1}%
}
\newcommand{\MYdummylabel}[1]{}
\newcommand{\torestate}[1]{%
  \let\MYoldlabel\label%
  \let\label\MYnewlabel%
  #1%
  \MYstore{\MYcurrentlabel}{#1}%
  \let\label\MYoldlabel%
}
\newcommand{\restatetheorem}[1]{%
  \let\MYoldlabel\label
  \let\label\MYdummylabel
  \begin{theorem*}[Restatement of \prettyref{#1}]
    \MYload{#1}
  \end{theorem*}
  \let\label\MYoldlabel
}
\newcommand{\restatelemma}[1]{%
  \let\MYoldlabel\label
  \let\label\MYdummylabel
  \begin{lemma*}[Restatement of \prettyref{#1}]
    \MYload{#1}
  \end{lemma*}
  \let\label\MYoldlabel
}
\newcommand{\restateprop}[1]{%
  \let\MYoldlabel\label
  \let\label\MYdummylabel
  \begin{proposition*}[Restatement of \prettyref{#1}]
    \MYload{#1}
  \end{proposition*}
  \let\label\MYoldlabel
}
\newcommand{\restatefact}[1]{%
  \let\MYoldlabel\label
  \let\label\MYdummylabel
  \begin{fact*}[Restatement of \prettyref{#1}]
    \MYload{#1}
  \end{fact*}
  \let\label\MYoldlabel
}
\newcommand{\restate}[1]{%
  \let\MYoldlabel\label
  \let\label\MYdummylabel
  \MYload{#1}
  \let\label\MYoldlabel
}
\newcommand{\addreferencesection}{
  \phantomsection
\ifnum\stocmode=0
  \addcontentsline{toc}{section}{References}
\else
  \addcontentsline{toc}{section}{References \hspace*{1in} --------- End of extended abstract ---------}
\fi

}
\newcommand{\e}{\epsilon}
\let\origparagraph\paragraph
\renewcommand{\paragraph}[1]{\vspace*{-20pt}\hspace*{-5pt}\origparagraph{#1.}}
\let\pref=\prettyref
\newcommand{\diam}{\mathsf{diam}}
\DeclareMathOperator{\tr}{tr}
\renewcommand{\Ind}{\vvmathbb{1}}
\newcommand\f{\varphi}
\newcommand\myuline{\bgroup\markoverwith{\rule[-0.4ex]{2pt}{0.2mm}}\ULon}
\newcommand{\obar}[1]{\smash{\mkern3mu\overline{\mkern-3mu \vphantom{\scalebox{0.85}{\ensuremath{#1}}} \smash{#1}\mkern-3mu}\mkern3mu}}
\newcommand{\ubar}[1]{\smash{\mkern2mu\myuline{\mkern-2mu \smash{#1}\mkern-2mu}\mkern2mu}}
\newlist{assumptions}{enumerate}{10}
\setlist[assumptions]{label*=\arabic*}
\renewcommand{\deg}{\mathrm{deg}}
\newcommand{\reff}{R_{\mathrm{eff}}}
\newcommand{\dmax}{d_{\max}}
\newcommand{\len}{\mathrm{len}}
\newcommand{\area}{\mathcal{A}}
\newcommand{\ao}{\area_{\omega}}
\newcommand{\rig}{\mathsf{rig}}
\renewcommand{\Z}{\vvmathbb{Z}}
\renewcommand{\R}{\vvmathbb{R}}
\renewcommand{\mathbb}{\vvmathbb}
\begin{document}

\title{Conformal growth rates and spectral geometry \\ on distributional limits of graphs}
   \author{James R. Lee \\ {\small University of Washington}}
\date{}

\maketitle

\begin{abstract}
   For a unimodular random graph $(G,\rho)$, we consider deformations
   of its intrinsic path metric by a (random) weighting of its vertices.
   This leads to the notion of the {\em conformal growth exponent of $(G,\rho)$},
   which is the best asymptotic degree of volume growth of balls that can be
   achieved by such a reweighting.
   Under moment conditions on the degree of the root,
   we show that the conformal growth exponent of a unimodular random graph
   bounds its almost sure spectral dimension.
   This has interesting consequences for many low-dimensional models.

   The consequences in dimension two are particularly strong.
   It establishes that models like the
   uniform infinite planar triangulation (UIPT) and
   quadrangulation (UIPQ) almost surely have spectral dimension at most two.
   It also establishes a conjecture of Benjamini and Schramm (2001)
   by extending their Recurrence Theorem from planar graphs
   to arbitrary families of $H$-minor free graphs.
   More generally, it strengthens the work of Gurel-Gurevich and Nachmias (2013)
   who established recurrence for distributional limits of planar graphs
   when the degree of the root has exponential tails.

   We further present a general method for proving subdiffusivity of the
   random walk on a large class of models, including UIPT and UIPQ,
   using only the volume growth profile of balls in the intrinsic metric.
\end{abstract}

\newpage

\begingroup
\hypersetup{linktocpage=false}
\renewcommand{\baselinestretch}{0.9}\normalsize
\tableofcontents
\renewcommand{\baselinestretch}{1.0}\normalsize
\endgroup

\newpage

\section{Introduction}

\newcommand\myov{\bgroup\markoverwith{\rule[8.5pt]{0.1pt}{0.5pt}}\ULon}
\newcommand{\myubar}[1]{\mkern2mu\myov{\mkern-2mu #1\mkern-2mu}\mkern2mu}

   Motivated by the study of random surfaces in Quantum Geometry \cite{ADJ97},
   Benjamini and Schramm \cite{BS01} sought to understand the behavior
   of random planar triangulations.
   Toward this end, they
   introduced the notion of the {\em distributional limit} of a sequence
   of finite graphs $\{G_n\}$.  This limit is a random rooted infinite graph $(G,\rho)$
   with the property that the laws of neighborhoods of a randomly chosen vertex in $G_n$
   converge, as $n \to \infty$, to the laws of neighborhoods of $\rho$ in $G$.
   When the limit exists, it is a {\em unimodular random graph} in the sense of Aldous and Lyons \cite{aldous-lyons}.
   (See \pref{sec:unimodular} for a discussion of the weak local topology and unimodular random graphs.)

   An example of central importance is the {\em uniform infinite planar triangulation (UIPT)} of
   Angel and Schramm \cite{AS03} which is obtained by taking the distributional limit
   of a uniform random triangulation of the $2$-sphere with $n$ vertices.
   A well-studied variant is the {\em uniform infinite planar quadrangulation (UIPQ)}
   constructed by Krikun \cite{Krikun05}.
   More recently, Benjamini and Curien \cite{BC11} sought to extend
   these studies to graphs that can be sphere-packed in $\R^d$ for $d \geq 3$,
   but noted that the defining natural models in higher dimensions
   is a subtle issue.

   In general, the goal of this line of work is to understand the almost sure
   geometric properties of the limit object, where often interesting phenomena emerge.
   For instance, Angel \cite{Angel03} has shown that almost surely balls
   of radius $R$ in UIPT have volume $R^{4+o(1)}$, but such a ball can be separated
   from infinity by removing only $R^{1+o(1)}$ vertices.
   This reflects the fractal geometry of UIPT and leads one
   to suspect, for instance, that the random walk should be recurrent,
   and the speed of the walk should be subdiffusive.

   Indeed, Benjamini and Curien \cite{BC13} 
   proved that the random walk in UIPQ is
   almost surely subdiffusive:
   The average distance from the starting point is at most $T^{1/3+o(1)}$
   after $T$ steps; the correct exponent is conjectured to be $1/4$,
   as predicted by the KPZ relations (see the discussion in \cite{BC13}).
   More recently, Gurel-Gurevich and Nachmias \cite{GN13}
   established that the random walk on UIPT and UIPQ is almost surely recurrent.

   While the theory developed here applies to a wide range of distributional limits,
   our methods yield new proofs of these preceding results in substantially more general settings,
   and more detailed
   information even for the specific models of UIPT and UIPQ.
   For instance, we establish that the spectral dimension of UIPT/UIPQ
   is almost surely at most two, confirming a long-held belief.
   In fact, this holds for any distributional limit of finite planar graphs
   when the degree of the root has sufficiently nice tails.
   For UIPT (and a family of other planar models), a matching lower bound 
   was recently established by Gwynne and Miller \cite{GM17}.
   We are additionally able to strengthen the almost sure recurrence for UIPT/UIPQ to the
   conclusion that almost surely the number of returns to the root
   by time $T$ grows asymptotically faster than $\log \log T$.

   We show that the $T^{1/3+o(1)}$ speed bound for UIPT/UIPQ actually
   holds for any unimodular random planar graph with quartic volume growth,
   or alternately in any unimodular random graph where there
   is a large enough discrepancy between the volume growth
   and isoperimetric profile.
   After initial circulation of this manuscript, this method has been
   used to establish that the speed in UIPT is $T^{1/4+o(1)}$ \cite{GH18};
   this estimate is sharp, as it matches the lower bound proved in \cite{GM17}.
   Moreover, the method of conformal weights was used recently to show that random walk on
   the 2D incipient infinite cluster is almost surely subdiffusive
   in the chemical distance \cite{GL20}.

      Previous work on distributional limits of planar graphs relies
   heavily on the analysis of circle packings, which can
   be thought of as ambient representations that conformally uniformize the geometry of the underlying graph.
   Here we take an intrinsic approach, deforming the graph geometry directly
   using a family of discrete graph metrics.
   This makes our methods significantly more flexible
   and applicable to a much broader family of graphs.
   The connection between discrete uniformization and spectral geometry of graphs
   is present in
   earlier joint works with Biswal and Rao \cite{BLR08} and Kelner, Price, and Teng \cite{KLPT09}, where
   we showed how such metrics can be used to control the spectrum of the Laplacian
   in bounded-degree graphs.

\subsection{Discrete conformal metrics and the growth exponent}

   Consider a locally finite, connected graph $G$.
A {\em conformal metric on $G$} is a map $\omega : V(G) \to \R_+$.
The metric endows $G$ with a graph distance as follows:  Give to every edge $\{u,v\} \in E(G)$ a length
$\len_{\omega}(\{u,v\}) \seteq \frac12 (\omega(u)+\omega(v))$.
This prescribes to every path $\gamma = \{v_0, v_1, v_2, \ldots\}$ in $G$ the induced length
\[
   \len_{\omega}(\gamma) \seteq \sum_{k \geq 0} \len_{\omega}(\{v_k, v_{k+1}\})\,.
\]
Now for $u,v \in V(G)$,
one defines the path metric $\dist_{\omega}(u,v)$ as the infimum of the lengths of all $u$-$v$ paths in $G$.
Denote the closed ball
\[
   B_{\omega}(x,R) = \left\{ y \in V(G) : \dist_{\omega}(x,y) \leq R \right\}\,.
\]

If $(G,\rho)$ is a unimodular random graph, then a {\em conformal metric on $(G,\rho)$}
is a (marked) unimodular random graph $(G',\omega,\rho')$ with $\omega : V(G) \to \R_+$
such that $(G,\rho)$ and $(G',\rho')$
have the same law.  We say that the conformal weight is {\em normalized} if $\E\left[\omega(\rho)^2\right] = 1$.
See \pref{sec:unimodular} for precise definitions.

One thinks of such a metric $\omega : V(G) \to \R_+$ as deforming the geometry
of the underlying graph.  It will turn out that normalized conformal metrics
with nice geometric properties form a powerful tool in understanding
the structure of $(G,\rho)$.
A basic property one might hope for is controlled volume growth of balls:
$|B_{\omega}(\rho,R)| \leq O(R^d)$ for some fixed $d > 0$.
As we will see,
the best exponent $d$ one can achieve
controls the spectral dimension of $G$ from above.

\medskip
\noindent
{\bf Spectral dimension vs. conformal growth exponent.}
Consider a unimodular random graph $(G,\rho)$.
We define the {\em upper and lower conformal growth exponents of $(G,\rho)$}, respectively, by
\begin{align*}
   \dimconfover(G,\rho) & \seteq \inf_{\omega} \limsup_{R \to \infty} \frac{\log \|\#B_{\omega}(\rho,R)\|_{L^\infty}}{\log R}\,, \\
   \dimconfunder(G,\rho) & \seteq \inf_{\omega} \liminf_{R \to \infty} \frac{\log \|\#B_{\omega}(\rho,R)\|_{L^\infty}}{\log R}\,,
\end{align*}
and the infimum is over all normalized conformal metrics on $(G,\rho)$, and 
we use
\[
   \|X\|_{L^{\infty}} \seteq \sup \left\{ \lambda > 0 : \Pr(X < \lambda) = 1 \right\}
\] 
to denote the essential supremum of a random variable $X$,
and $\# S$ to denote the cardinality of a finite set $S$.

When $\dimconfover(G,\rho) = \dimconfunder(G,\rho)$, we define the {\em conformal growth exponent}
by
\[
   \dimconf(G,\rho) \seteq \dimconfover(G,\rho) = \dimconfunder(G,\rho)\,.
\]
Note that the quantities $\dimconfover,\dimconfunder,\dimconf$ are functions of the law of $(G,\rho)$;
they are not defined on (fixed) rooted graphs.

As an indication that the conformal growth exponent can be bounded
in interesting settings, let us 
state the next theorem which is proved in the companion paper \cite{Lee17b}.
We use $\todl$ to denote convergence
in the distributional sense; see \pref{sec:unimodular}.
Say that a graph $G$ is {\em sphere-packed in $\R^d$} if $G$
is the tangency graph of a collection of interior-disjoint Euclidean balls
in $\R^d$.

\begin{theorem}\label{thm:Rd-packings}
   If $\{G_n\}$ are finite graphs such that each $G_n$ is sphere-packed in $\R^d$, and
   $\{G_n\} \todl (G,\rho)$, then $\dimconfover(G,\rho) \leq d$.
\end{theorem}
\pref{thm:Rd-packings} is proved in somewhat greater generality:
One can replace $\R^d$ by any Ahlfors $d$-regular metric measure space
and relax the notion of ``packing'' to allow bounded-multiplicity
overlap of balls.
We refer to \cite{Lee17b}
for details.

      For a locally finite, connected graph $G$, denote the discrete-time
      heat kernel
      \[p^G_T(x,y) \seteq \Pr[X_T=y \mid X_0=x]\,, \qquad x,y \in V(G)\,,\]
      where $\{X_n\}$ is the standard random walk on $G$.
      We recall the {\em spectral dimension of $G$:}
      \[
         \dimspec(G) \seteq \lim_{n \to \infty} \frac{-2 \log p^G_{2n}(x,x)}{\log n}\,,
      \]
      whenever the limit exists.  If the limit does exist, then it is the same for all $x \in V(G)$.

      The spectral dimension is considered an important quantity
      in the study of quantum gravity, since it can be defined in a
      reparameterization-invariant way \cite{ANRBW98,AAJW98}.
      It has long been conjectured that the spectral dimension of 2D quantum gravity is equal to two.
      Our results confirm that the almost sure spectral dimension is at most two for these models.

      Define also the {\em upper and lower spectral dimension of $G$}, respectively:
      \begin{align*}
         \dimspecover(G) &\seteq \limsup_{n \to \infty}\frac{-2 \log p^G_{2n}(x,x)}{\log n}\,,\\
         \dimspecunder(G) &\seteq \liminf_{n \to \infty}\frac{-2 \log p^G_{2n}(x,x)}{\log n}\,.
      \end{align*}
      It turns out that conformal growth exponent bounds the spectral dimension 
      in somewhat general settings.

      Say that a real-valued random variable $X$ has {\em negligible tails}
      if its tails decay faster than any inverse polynomial:
      \begin{equation}\label{eq:neg-tails-def}
         \lim_{n \to \infty} \frac{\log n}{|\!\log \Pr[|X| > n]|} = 0\,,
      \end{equation}
      where we take $\log(0)=-\infty$ in the preceding definition
      (in the case that $X$ is essentially bounded).

      For the sake of clarity in the next statement, we use $(G,\rho)$ to denote the law $\mu$ of $(G,\rho)$,
      and $(\bm{G},\bm{\rho})$ to denote the random variable with law $\mu$.

      \begin{theorem}\label{thm:spec-conf}
         If $(G,\rho)$ is a unimodular random graph and $\deg_G(\rho)$ has negligible tails, then 
         almost surely:
         \begin{align*}
            \dimspecover(\bm{G}) &\leq \dimconfover(G,\rho)\,, \\
            \dimspecunder(\bm{G}) &\leq \dimconfunder(G,\rho)\,.
         \end{align*}
      \end{theorem}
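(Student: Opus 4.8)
The plan is to bound the on-diagonal heat kernel return probability $p^{\bm G}_{2n}(\bm\rho,\bm\rho)$ from below by transporting the problem into the conformally deformed metric, where balls grow at most like $R^d$ (up to subpolynomial error), and then invoke a Nash-type / Faber--Krahn inequality relating volume growth to heat kernel decay. The essential point is that the spectral dimension is controlled by the \emph{volume growth profile in the intrinsic metric}, and a conformal reweighting cannot change which vertices the random walk visits — only how we \emph{measure} distances — so a good conformal metric yields a good isoperimetric/volume-growth input for the walk on the original graph.

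\medskip

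\textbf{Step 1: Fix a near-optimal conformal metric.} Given $\epsilon>0$, choose a normalized conformal metric $\omega$ on $(G,\rho)$ achieving $\limsup_{R\to\infty}\frac{\log\|\#B_\omega(\rho,R)\|_{L^\infty}}{\log R}\le \dimconfover(G,\rho)+\epsilon =: d+\epsilon$; similarly a metric achieving the $\liminf$ bound. So deterministically (a.s.) $\#B_\omega(\bm\rho,R)\le R^{d+2\epsilon}$ for all large $R$. Work with this $\omega$ throughout.

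\medskip

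\textbf{Step 2: Relate the conformal ball volume to a return-probability bound via a Nash inequality.} The standard route (as in Coulhon--Grigor'yan, and as used in the circle-packing proofs of recurrence) is: control the return probability $p_{2n}^G(x,x)$ by the $L^1\!\to\!L^\infty$ norm of the heat semigroup, which in turn follows from a Nash inequality of the form $\|f\|_2^{2+4/\nu}\le C\,\mathcal E(f,f)\,\|f\|_1^{4/\nu}$, and such a Nash inequality is implied by a volume lower bound on balls in \emph{some} metric compatible with the Dirichlet form. The subtlety is that the relevant metric must be adapted to the conductances: here all conductances are $1$ (standard random walk on $G$), so the natural metric is the graph metric, not $\dist_\omega$. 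The bridge is to use the weighted counting measure $m_\omega$ on $V(G)$ defined (heuristically) so that $\omega$-balls of $\dist_\omega$-radius $R$ have $m_\omega$-mass $\asymp R^d$; concretely one assigns vertex $v$ a mass reflecting its $\omega$-weight, and one runs a \emph{time-changed} random walk whose invariant measure is $m_\omega$. The time-change does not affect recurrence/transience nor the asymptotic spectral exponent (a now-standard fact: $\dimspec$ is invariant under bounded-degree-type time changes, and more generally the return-probability exponent only changes by the Laplace-transform relationship that preserves the power), so it suffices to bound the return probability of the $\omega$-time-changed walk.

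\medskip

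\textbf{Step 3: Handle the degree tails.} This is where the negligible-tails hypothesis on $\deg_G(\rho)$ enters, and I expect it to be \textbf{the main obstacle}. The conformal metric $\omega$ is only controlled in $L^\infty$ for the \emph{ball cardinality}, and $\E[\omega(\rho)^2]=1$ gives control on $\omega$ only in $L^2$. To pass from these averaged/essential-sup statements to an \emph{almost sure} statement about the heat kernel along the random walk's trajectory, one must control how $\omega$ and the degrees behave not just at $\rho$ but along a typical path — i.e., one needs that, a.s., the empirical statistics of $\omega(X_k)$ and $\deg(X_k)$ seen by the walk are not too wild. Unimodularity (mass transport) is the key tool: it lets one move from statements about $\rho$ to statements about a uniformly random neighbor, and iterating (with the walk as the mass-transport mechanism, via the stationarity of the environment seen from the walk on a unimodular graph) one shows that $\sum_{k\le n}\omega(X_k)^2$ and $\sum_{k\le n}\deg(X_k)$ grow at most polynomially a.s. The negligible-tails hypothesis is exactly what upgrades ``polynomial in expectation'' to ``subpolynomial correction a.s.'', via Borel--Cantelli applied to the tail bound \eqref{eq:neg-tails-def}: the probability that $\deg(X_k)$ exceeds $k^{\delta}$ at some $k\le n$ is summable, so a.s.\ the walk only ever sees degrees $\le n^{o(1)}$ up to time $n$, and likewise for $\omega$.

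\medskip

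\textbf{Step 4: Assemble.} On the a.s.\ event from Step 3, the time-changed walk up to time $n$ effectively lives in a region of $m_\omega$-volume profile $V(R)\le R^{d+o(1)}$, the Nash inequality from Step 2 gives $p^{m_\omega}_{2n}\le n^{-2/(d+o(1))}$, hence $-2\log p_{2n}/\log n \le d + o(1)$, i.e.\ $\dimspecover(\bm G)\le d+2\epsilon$ a.s. Letting $\epsilon\downarrow 0$ along a countable sequence gives $\dimspecover(\bm G)\le \dimconfover(G,\rho)$ a.s.; the identical argument with the $\liminf$-optimal metric and $\liminf_n$ gives the lower-dimension statement. One must be slightly careful that the a.s.\ event can be taken uniform over the countable sequence of $\epsilon$'s and over the two metrics, but this is routine (countable intersection of full-measure events).

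\medskip

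\emph{Remark.} The genuinely delicate part, and the reason the hypothesis cannot be dropped cheaply, is the interplay in Step 3 between three different ``sizes'' — the graph-distance radius $r$, the $\omega$-distance radius $R$, and the combinatorial cardinality — together with the fact that $\omega$ is only $L^2$-normalized. One needs the walk to travel at most $n^{o(1)}$ in \emph{graph} distance within an $\omega$-ball whose cardinality is polynomial, which requires lower bounds on $\omega$ along the path that are \emph{not} assumed; the resolution is that small $\omega$-weight means the time-changed walk spends correspondingly little time there, so the quantities that actually enter the Nash inequality (sums of $\omega^2$, which relate to $m_\omega$-volume and to the Dirichlet form normalization) are the self-correcting ones. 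Making this bookkeeping precise, with unimodularity supplying the stationarity of the environment-seen-from-the-walk, is the heart of the proof.
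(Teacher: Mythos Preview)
Your proposal has the direction of the heat-kernel inequality backwards, and this is fatal to the approach as written. To prove $\dimspecover(\bm G)\le d$ one must show $p^G_{2n}(\rho,\rho)\ge n^{-d/2-o(1)}$, i.e.\ a \emph{lower} bound on the return probability. A Nash/Faber--Krahn inequality deduced from volume growth gives an \emph{upper} bound on the heat kernel (equivalently a lower bound on the spectral dimension), so Step~2 cannot produce the conclusion claimed in Step~4. Relatedly, your claim that a time-change by a measure $m_\omega$ preserves the spectral exponent is not a general fact: without strong two-sided regularity of $\omega$ (which you only control in $L^2$), the Laplace-transform relationship between the two heat kernels can and does shift powers.

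The paper's proof is organized around the correct direction. It first uses $\dimconfunder<\infty$ to deduce invariant amenability, then passes to a sequence of finite approximants $\{(G_k,\rho_k)\}$. On each finite graph, the conformal metric together with a padded random partition (available at modulus $O(\log R)$ just from the volume bound) is used to build many disjoint bump functions $\psi_i$ with small Rayleigh quotient; a Cheeger-type rounding of each $\psi_i$ produces disjoint sets $S_i$ that nearly exhaust $\pi$ and are non-expanding for $P^T$. Reversibility then gives, for most vertices $x$, the lower bound $p_{2T}(x,x)\gtrsim 1/|S_i|$, and $|S_i|$ is controlled by the conformal ball volume. The negligible-tails hypothesis enters only to ensure $\bar d_\mu(\beta)\le\beta^{-o(1)}$, so that degree corrections are subpolynomial. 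Finally the annealed lower bound on $p_{2T}^{G_k}$ is transferred to $(G,\rho)$ via distributional convergence and a Fatou argument. The essential idea you are missing is this localization/trapping step: one needs to \emph{construct} sets in which the walk is confined for time $\asymp R^2$, not to invoke an isoperimetric upper bound.
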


      In conjunction with \pref{thm:Rd-packings}, this shows that if $(G,\rho)$ is the distributional
      limit of finite $\R^d$-packable graphs (and $\deg_G(\rho)$ has negligible tails), then
      almost surely:
      \[p_{2T}^G(\rho,\rho) \geq T^{-d/2-o(1)} \quad \textrm{as} \quad T \to \infty\,.\]
      In particular, this establishes that the vast majority of random planar maps
      in the literature have almost sure spectral dimension at most two.
      This was unknown even for UIPT and UIPQ.  For UIPQ, it was recently
      established by Gwynne and Miller \cite{GM17} that this is tight:
      the spectral dimension is almost surely two.  For UIPT, a matching lower
      bound remains open.

      An overview of the proof of \pref{thm:spec-conf} in the
      special case of $2$-dimensional growth is given in \pref{sec:spectral-measure}.
      The inequalities in \pref{thm:spec-conf} imply that the conformal growth
      rate provides a lower bound on the
      return probabilities up to a $T^{o(1)}$ correction factor.
      We remark that if one makes the stronger assumption that
      $\deg_G(\rho)$ has exponential tails, then the implied correction
      factors are only polylogarithmic; see the discussion in \pref{sec:conformals}.

      Finally, we note that the inequalities in \pref{thm:spec-conf} cannot
      be reversed in general: There is a unimodular random graph $(G,\rho)$ with
      uniformly bounded degrees such that $\dimspec(G,\rho)$ is finite but
      $\dimconf(G,\rho)$ is infinite; see \pref{sec:homo} where
      we review an example due to \cite{AHNR18}.

      \medskip
      \noindent
      {\bf Uniformization and intrinsic dimension.}
         Certainly circle packings of planar graphs are a powerful, elegant, and
         ``conformally natural'' \cite{Rohde11} tool.
         Still, it is enlightening to think of situations where ambient representations
         do a poor job of emphasizing the intrinsic geometry of the underlying graph.
         In general, this is the case when the dimension of the graph differs
         from that of the ambient space.

         A basic example is the planar graph $G=(V,E)$ which is the product of a triangle
         and a bi-infinite path:  $V=\{0,1,2\} \times \Z$ and $\{x,y\} \in E$ 
         if and only if $\|x-y\|_{1} = 1$.
         This graph is quasi-isometric to $\Z$, and thus manifestly one-dimensional.
         The appropriate uniformizing discrete conformal metric (by transitivity) is $\omega \equiv \1$.
         The circle packing in $\R^2$ (which is unique up to M\"obius transformations)
         has an accumulation point in $\R^2$, and the radii of the
         circles grow with geometrically increasing radii
         from the accumulation point to infinity.

         Consider another example:
         the incipient infinite cluster (IIC) of critical percolation $(G_d^{\mathrm{IIC}}, 0)$
         on $\Z^d$, for $d$ sufficiently large.  In their solution to the Alexander-Orbach conjecture in high
         dimensions, Kozma and Nachmias \cite{KN09} show that for $d \geq 11$\footnote{One needs
         to use \cite{FH17} to obtain $d \geq 11$; the original reference proves it for $d \geq 19$.}, almost surely
         $\dimspec\left(G_d^{\mathrm{IIC}}\right) = 4/3$.
         \pref{thm:spec-conf} implies
         that $\dimconf(G_d^{\mathrm{IIC}}) \geq 4/3$.
         Moreover, one can show that this is tight.
         For instance,
         $G_d^{\mathrm{IIC}}$ is spectrally homogeneous in the sense of \eqref{eq:homo1} in
         \pref{sec:homo} with $d=4/3$
         \cite{AsafComm2017}.
         In this case, the inequality in \pref{thm:spec-conf} can be reversed,
         and $\dimconf(G_d^{\mathrm{IIC}})=4/3$.

\subsection{Dimension two:  Gauged quadratic growth and recurrence}

   The conformal growth exponent is not precise enough to study recurrence (which depends on
   lower-order factors in the heat kernel $p_T^G(\rho,\rho)$).
Say that a unimodular random graph $(G,\rho)$ is {\em $(C,R)$-quadratic} for $C > 0$ and $R \geq 1$ if
\begin{equation}
   \inf_{\omega} \|\#B_{\omega}(\rho,R)\|_{L^\infty} \leq C R^2\,,
\end{equation}
where the infimum is over all normalized conformal metrics on $(G,\rho)$.
We say that $(G,\rho)$ has {\em gauged quadratic conformal growth (gQCG)} if there is a constant $C > 0$
such that $(G,\rho)$ is $(C,R)$-quadratic for all $R \geq 1$.
Note that we allow a different conformal weight $\omega$ for every choice of $R$,
and this is necessary for distributional limits of finite planar graphs to have gQCG (see \pref{lem:cbt}).

\begin{theorem}\label{thm:qcg1}
      If $(G,\rho)$ is a unimodular random graph with uniformly bounded degrees and gauged quadratic
      conformal growth, then $G$ is almost surely recurrent.
   \end{theorem}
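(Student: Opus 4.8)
The plan is to show that the simple random walk on $G$ is almost surely recurrent by proving that the effective resistance $\Reff(\rho\leftrightarrow\infty)$ in $G$ (with unit edge conductances) is almost surely infinite. There are three ingredients: a length--area (extremal length) lower bound for the resistance across a conformal annulus; the mass‑transport principle, which converts the normalization $\E[\omega(\rho)^2]=1$ into a bound on the \emph{expected} $\omega$-area of a ball; and a multiscale stitching of the per‑scale conformal metrics furnished by gauged quadratic growth. Write $D$ for the essential bound on $\deg_G(\rho)$, and for a conformal metric $\omega$ and finite $S\subseteq V(G)$ set $\operatorname{area}_\omega(S)=\sum_{v\in N[S]}\omega(v)^2$, where $N[S]$ is the closed graph neighborhood of $S$.

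\emph{Length--area and mass transport.} Fix a normalized conformal metric $\omega$ on $(G,\rho)$ and $0\le r<r'$. The function $f(v)=\min\{1,(\dist_\omega(\rho,v)-r)^+/(r'-r)\}$ vanishes on $B_\omega(\rho,r)$, equals $1$ off $B_\omega(\rho,r')$, and satisfies $|f(u)-f(v)|\le \len_\omega(\{u,v\})/(r'-r)$ by the triangle inequality; since $\len_\omega(\{u,v\})^2\le\tfrac12(\omega(u)^2+\omega(v)^2)$ and $G$ has degree at most $D$, the Dirichlet energy of $f$ is at most $D(r'-r)^{-2}\operatorname{area}_\omega(B_\omega(\rho,r'))$, whence
\[
   \Reff\big(B_\omega(\rho,r)\leftrightarrow V\setminus B_\omega(\rho,r')\big)\ \ge\ \frac{(r'-r)^2}{D\cdot \operatorname{area}_\omega(B_\omega(\rho,r'))}\,.
\]
On the other hand, the transport rule $F(u,w)=\omega(w)^2\,\1[w\in B_\omega(u,R)]$ sends mass comparable to $\operatorname{area}_\omega(B_\omega(\rho,R))$ out of $\rho$ and mass $\omega(\rho)^2\cdot\#B_\omega(\rho,R)$ into $\rho$ (using the symmetry of $\dist_\omega$), so the mass‑transport principle, the gQCG bound $\#B_\omega(\rho,R)\le CR^2$, and $\E[\omega(\rho)^2]=1$ give $\E\big[\operatorname{area}_\omega(B_\omega(\rho,R))\big]\le O(CD)\,R^2$.

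\emph{Multiscale stitching.} Fix a very rapidly increasing sequence of scales $R_1\ll R_2\ll\cdots$ (so that $\sum_{j<k}R_j^2=o(R_k^2)$ and $\sum_kR_k^{-2}<\infty$), and for each $k$ let $\omega_k$ be a normalized conformal metric with $\#B_{\omega_k}(\rho,R_k)\le CR_k^2$ almost surely (available by gQCG up to an arbitrarily small additive error in $C$). One builds from $\{\omega_k\}$ a single conformal metric $\omega^\star$ by stitching the $\omega_k$ together scale by scale --- $\omega^\star$ agrees with $\omega_k$ on a $k$-th ``ring'' $Z_k$ --- arranged so that, almost surely, every $\omega^\star$-ball is finite, the balls $U_k:=B_{\omega^\star}(\rho,R_k)$ are nested with $U_0:=\{\rho\}$ and exhaust $V(G)$, and $U_k\setminus U_{k-1}\subseteq B_{\omega_k}(\rho,R_k)$; the last property forces $\operatorname{area}_{\omega^\star}(U_k)\le O(1)\sum_{j\le k}\operatorname{area}_{\omega_j}(B_{\omega_j}(\rho,R_j))$, so by the mass‑transport bound and the separation of scales, $\E[Y_k]\le O(CD)$ uniformly in $k$, where $Y_k:=\operatorname{area}_{\omega^\star}(U_k)/R_k^2$. (Exhaustion is a Borel--Cantelli matter: for fixed $v$ one has $\Pr[v\notin B_{\omega_k}(\rho,R_k)]\le O(1)R_k^{-2}$, since $\dist_{\omega_k}(\rho,v)$ is at most $\dist_G(\rho,v)$ times the maximum of $\omega_k$ along a graph geodesic and $\E[\omega_k(u)^2]=1$.) \textbf{This construction is the main obstacle.} The subtlety is that a naive convex combination $\omega^\star=(\sum_k\lambda_k\omega_k^2)^{1/2}$ forces $\sum_k\lambda_k<\infty$ and hence a polylogarithmic inflation of the ball‑size bound at scale $R_k$, which would make $\E[Y_k]$ grow like a power of $\log R_k$ --- fatal to the final step. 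The stitching must instead keep the $\omega_k$ genuinely uncoupled across the widely separated scales while still guaranteeing that $\omega^\star$-balls track the intended rings, and verifying this (including the boundary contributions from adjacent rings to $\operatorname{area}_{\omega^\star}(U_k)$) is where the real work lies.

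\emph{Conclusion.} Summing the length--area inequality over the nested rings (resistances in series) and using $R_k-R_{k-1}\ge\tfrac12 R_k$,
\[
   \Reff(\rho\leftrightarrow\infty)\ \ge\ \sum_{k\ge1}\Reff\big(\partial U_{k-1}\leftrightarrow\partial U_k\big)\ \ge\ \frac{1}{4D}\sum_{k\ge1}\frac{1}{Y_k}\,,
\]
so it suffices to show $\sum_k 1/Y_k=\infty$ almost surely. Let $M:=2\sup_k\E[Y_k]<\infty$, so that $\Pr[Y_k\le M]\ge\tfrac12$ for every $k$ by Markov's inequality. If $A:=\{\sum_k 1/Y_k<\infty\}$ had $\Pr[A]=p'>0$, then on $A$ we have $Y_k\to\infty$, so $\1_A\,\1[Y_k\le M]\to0$ pointwise and hence $\Pr[A\cap\{Y_k\le M\}]\to0$, giving $\tfrac12\le\limsup_k\Pr[Y_k\le M]\le 1-p'$, i.e.\ $p'\le\tfrac12$. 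Since $\{G\ \text{transient}\}\subseteq A$, we conclude $\Pr[G\ \text{transient}]\le\tfrac12$ for \emph{every} unimodular random graph with bounded degrees and gauged quadratic conformal growth. Finally, if $\Pr[G\ \text{transient}]=p>0$ for our $(G,\rho)$, condition on transience: transience is a re‑rooting‑invariant event, so the conditioned law is again a unimodular random graph, it still has bounded degrees, and it still has gauged quadratic conformal growth (with constant $C/p$, after renormalizing the witnessing metrics). But under this law $G$ is transient almost surely, contradicting the bound $\Pr[G\ \text{transient}]\le\tfrac12$ just established. Hence $p=0$: $G$ is almost surely recurrent.
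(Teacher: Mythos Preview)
Your length--area inequality and the mass-transport bound on $\E[\operatorname{area}_\omega(B_\omega(\rho,R))]$ are both correct and match what the paper does. The final conditioning/zero--one argument is also sound in spirit. But the proof has a genuine gap at precisely the step you flag as ``the main obstacle'': the multiscale stitching of the metrics $\{\omega_k\}$ into a single $\omega^\star$ is never carried out, and there is no evident way to do it with the properties you claim. The metrics $\omega_k$ are unrelated to one another; the balls $B_{\omega_j}(\rho,R_j)$ and $B_{\omega_k}(\rho,R_k)$ need not be nested, so there is no canonical choice of ``rings'' $Z_k$, and even if you define rings in some auxiliary metric, setting $\omega^\star=\omega_k$ on $Z_k$ gives no control on $B_{\omega^\star}(\rho,R_k)$ --- a short $\omega^\star$-path can weave through many rings, picking up only the small weights from each. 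Your own remark that the naive $\ell^2$-combination fails is a symptom of this: the different scales genuinely interfere, and you have not proposed a mechanism that decouples them.

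The paper sidesteps stitching entirely, and the device it uses is worth knowing. Rather than combine the $\omega_k$, it \emph{transfers each $\omega_k$-statement to the graph metric} via a regulation lemma: given bounded degrees, one can replace any normalized $\omega$ by a normalized $\hat\omega\ge\tfrac12\omega$ satisfying $\hat\omega\ge\tfrac12$ pointwise and $\hat\omega(u)\le C\hat\omega(v)$ for every edge $\{u,v\}$, with $C$ depending only on the degree bound. This two-sided comparison (Lemma~\ref{lem:compare}) gives $B_G(\rho,c\log R)\subseteq B_{\hat\omega}(\rho,R)\subseteq B_G(\rho,2R)$, so your length--area bound becomes a lower bound on $\Reff\big(B_G(\rho,c\log R)\leftrightarrow V\setminus B_G(\rho,2R)\big)$ --- a statement about \emph{graph-metric} balls, which are automatically nested across scales. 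One then only needs, for each fixed $\epsilon>0$, that with probability at least $1-\epsilon$ the resistance across this graph-metric annulus exceeds some $c(\epsilon)$; Fatou's lemma gives infinitely many good scales almost surely, and the standard recurrence criterion finishes. No single $\omega^\star$ is ever built.
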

   In \pref{sec:uniform-gQCG}, we argue that distributional limits of planar graphs, $H$-minor free
   graphs, string graphs, and other families have gauged quadratic conformal growth.
   Thus \pref{thm:qcg1} generalizes the Benjamini-Schramm Recurrence Theorem \cite{BS01} to $H$-minor-free graphs,
   confirming a conjecture stated there.  After initial dissemination of a draft
   of this manuscript,
   we learned that Angel and Szegedy (personal communication) had previously discovered a
   proof of the $H$-minor-free case using a detailed analysis of the Robertson-Seymour
   classification \cite{RS04}.\footnote{We remark that establishing quadratic conformal
   growth for $H$-minor-free graphs does not require the Robertson-Seymour theory.}
   \begin{remark}[String graphs]
      By the Koebe-Andreev-Thurston circle packing theorem, planar graphs are precisely the tangency graphs 
      of interior-disjoint disks in the plane.  {\em String graphs} are a significant generalization: They
      are the interesction graphs of a collection of arbitrary path-connected regions in the plane
      (with no assumption on disjointness).
      Such graphs can be dense, but string graphs with uniformly bounded degrees have quadratic
      conformal growth (see \pref{sec:uniform-gQCG}).
   \end{remark}

   \noindent
   {\bf Unbounded degrees.}
   Recently Gurel-Gurevich and Nachmias \cite{GN13} resolved a central open
   problem by showing that the uniform infinite planar triangulation (UIPT)
   and quadrangulation (UIPQ) are almost surely recurrent.
   They achieved this by extending the Recurrence Theorem of Benjamini and Schramm
   in a different direction:  In every distributional limit of finite
   planar graphs where the degree of the root has exponential tails,
   the limit is almost surely recurrent.
   It was previously known that both UIPT and UIPQ satisfy this hypothesis.

   Let $\mu$ denote the law of $(G,\rho)$, and define
   $\bar{d}_{\mu} : [0,1] \to \R_+$ by
   \[
   \bar{d}_{\mu}(\e) \seteq \sup \left\{ \E\left[\deg_G(\rho) \mid \cE\right] : \Pr(\cE) \geq \e \right\}\,,
   \]
   where the supremum is over all measurable sets $\cE$ with $\Pr(\cE) \geq \e$.
   That $\deg_G(\rho)$ has exponential tails is equivalent to
   $\bar{d}_{\mu}(1/t) \leq O(\log t)$ as $t \to \infty$.
   Negligible tails as defined in \eqref{eq:neg-tails-def}
   is equivalent to the assumption that
   \begin{equation}\label{eq:negligible-tails}
      \bar{d}_{\mu}(1/t) \leq t^{o(1)} \quad \textrm{as} \quad t \to \infty\,.
   \end{equation}

   \begin{assumption}\label{assume:nice}
      Suppose $(G,\rho)$ is a unimodular random graph with law $\mu$ satisfying the following:
      \begin{enumerate}
         \item $(G,\rho)$ has gauged quadratic conformal growth.
         \item $(G,\rho)$ is uniformly decomposable (cf. \pref{sec:metric-spaces}).
         \item $\E[\deg_G(\rho)^2] < \infty$.
      \end{enumerate}
   \end{assumption}

   \begin{theorem}\label{thm:qcg2}
      Under \pref{assume:nice}, if additionally
            \[ 
               \sum_{t\geq 1} \frac{1}{t \bar{d}_{\mu}(1/t)} = \infty\,,
            \]
      then $G$ is almost surely recurrent.
   \end{theorem}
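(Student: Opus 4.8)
The plan is to derive recurrence from an almost-sure lower bound on the return probabilities of the form $p^G_{2t}(\rho,\rho)\gtrsim 1/\bigl(t\,\bar{d}_\mu(c/t)\bigr)$, valid for a set of times of full logarithmic density. Since $G$ is recurrent iff $\sum_t p^G_{2t}(\rho,\rho)=\infty$ (the Green function at the root diverges), summing such a bound and invoking $\sum_{t\ge 1}1/\bigl(t\,\bar{d}_\mu(1/t)\bigr)=\infty$ will give the conclusion almost surely — the constant $c$ is harmless since $\bar{d}_\mu$ is monotone and $\sum_t 1/\bigl(t\,\bar{d}_\mu(c/t)\bigr)$ and $\sum_t 1/\bigl(t\,\bar{d}_\mu(1/t)\bigr)$ converge together. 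The workhorse is the reversibility estimate $p^G_{2t}(\rho,\rho)\ge\tfrac14\,\deg_G(\rho)/\pi(S)$, valid whenever $\Pr_\rho[X_t\in S]\ge\tfrac12$, which follows from $p^G_{2t}(\rho,\rho)=\pi(\rho)\sum_y p^G_t(\rho,y)^2/\pi(y)$ and Cauchy--Schwarz (here $\pi(v)=\deg_G(v)$ and $\pi(S)=\sum_{v\in S}\deg_G(v)$). So everything reduces to producing, at each scale, a set $S_t$ that catches the walk at time $t$ with probability $\ge\tfrac12$ and has small $\pi$-mass; these $S_t$ will be balls $B_\omega(\rho,O(\sqrt t))$ in a suitably chosen, scale-dependent conformal metric $\omega$.

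\emph{Step A (geometry).} Using gauged quadratic conformal growth together with uniform decomposability and $\E[\deg_G(\rho)^2]<\infty$, one manufactures a normalized conformal metric $\omega$ on $(G,\rho)$ that is \emph{simultaneously} (a) of quadratic growth at the scale of interest, $\#B_\omega(\rho,r)\le Cr^2$ (no polylogarithmic factor — this is exactly why one exploits the per-scale freedom in the definition of gQCG, cf.\ \pref{lem:cbt}), and (b) \emph{degree-adapted}, meaning that the one-step conformal speed $\Gamma_\omega(v)=\deg_G(v)^{-1}\sum_{u\sim v}\tfrac14(\omega(u)+\omega(v))^2$ is controlled in the degree-weighted average. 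Property (a) is what $(C,r)$-quadraticity provides; the new point is to also enforce (b), and here the decomposition does the work: one pushes $\omega$ down near the high-degree vertices, which — being confined to bounded-diameter pieces — creates no long shortcuts and hence does not spoil quadratic growth, after renormalizing so that $\E[\omega(\rho)^2]=1$. The mass-transport principle then upgrades (a) to the almost-sure volume estimate $\pi(B_\omega(\rho,r))\lesssim r^2\,\bar{d}_\mu(c/r^2)$, with lower-order fluctuations absorbed via the second-moment hypothesis; the quantity $\bar{d}_\mu$ enters precisely because it measures the degree-mass one cannot avoid enclosing inside a ball of $\asymp r^2$ vertices (a ball of $r^2$ vertices "sees" degrees out to roughly the $1/r^2$ tail).

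\emph{Step B (the walk is diffusive in the conformal metric).} One shows that, on the event $\{G\ \text{infinite}\}$, the walk $\{X_t\}$ from $\rho$ satisfies $\Pr_\rho[\dist_\omega(\rho,X_t)\le c\sqrt t\,]\ge\tfrac12$ for all large $t$. This is the intrinsic substitute for the circle-packing fact that the walk is, up to bounded distortion, a martingale in the Euclidean coordinate. The argument is a Lyapunov/optional-stopping estimate for the process $\dist_\omega(\rho,X_t)$: property (b) bounds its per-step quadratic variation, but its \emph{drift} $\E[\dist_\omega(\rho,X_{t+1})-\dist_\omega(\rho,X_t)\mid X_t]$ must be shown to be small — a bare distance function is not harmonic, so this is not automatic, and it is here that the $H$-minor-free / uniformly decomposable structure of $(G,\rho)$ is essential. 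I expect this drift control to be the main obstacle, mirroring the role played by quasiconformal rigidity of circle packings in the classical proofs; carrying it out intrinsically, using only the decomposition and the volume profile, is the crux of the whole argument.

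\emph{Step C (assembly).} Taking $S_t=B_\omega(\rho,c\sqrt t)$, Step B gives $\Pr_\rho[X_t\in S_t]\ge\tfrac12$ while Step A gives $\#S_t\le Ct$ and hence $\pi(S_t)\lesssim t\,\bar{d}_\mu(c'/t)$; the reversibility bound then yields $p^G_{2t}(\rho,\rho)\gtrsim\deg_G(\rho)/\bigl(t\,\bar{d}_\mu(c'/t)\bigr)$ for a logarithmically dense set of times $t$, and summing against the divergence of $\sum_t 1/\bigl(t\,\bar{d}_\mu(1/t)\bigr)$ shows $\sum_t p^G_{2t}(\rho,\rho)=\infty$ almost surely, so $G$ is almost surely recurrent. (An alternative to Steps B--C is a Nash--Williams argument: inside $B_\omega(\rho,r)$ one averages over the $\asymp r$ sphere cutsets to extract one of size $\lesssim\pi(B_\omega(\rho,r))/r\lesssim r\,\bar{d}_\mu(c/r^2)$, strings these essentially-disjoint cutsets across geometric annuli, and sums their reciprocals; but arranging that the per-scale conformal metrics produce genuinely nested cutsets seems more delicate than the heat-kernel route, so I would pursue the latter.)
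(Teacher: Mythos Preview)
Your Step B is not a proof; it is an admission that you do not know how to carry out the crux of the argument. You write that the drift of $\dist_\omega(\rho,X_t)$ ``must be shown to be small'' and that you ``expect this drift control to be the main obstacle,'' but you give no mechanism by which uniform decomposability would yield it. Padded random partitions control the \emph{geometry} of $(V(G),\dist_\omega)$, not the harmonic-analytic behavior of the distance function under the walk; there is no reason to think the drift is small in general, and the paper does not attempt to prove any such thing. Step A also has a gap: the almost-sure bound $\pi(B_\omega(\rho,r))\lesssim r^2\,\bar d_\mu(c/r^2)$ does not follow from $|B_\omega(\rho,r)|\le Cr^2$ alone, since the ball could preferentially contain high-degree vertices; your ``degree-adapted'' modification of $\omega$ is asserted but not constructed.

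The paper's route is different and avoids the drift question entirely. It passes to finite approximants $G_n$ via hyperfiniteness (this is where $\dimconfunder<\infty\Rightarrow$ invariant amenability is used), and on each $G_n$ uses the conformal metric together with a padded random partition to build many disjointly supported bump functions $\psi_1,\dots,\psi_k:V\to[0,1]$ whose level-$1$ sets nearly exhaust $\pi$ and whose Rayleigh quotients are small (the $\avgd_G$ factors enter here, from handling the set $V_L$ of large-weight vertices and from bounding degrees inside the partition pieces). A discrete Cheeger step plus the elementary bound $\langle\psi,(I-P^T)\psi\rangle_\pi\le 2(T{+}1)\,\cR_G(\psi)$ converts each $\psi_i$ into a set $S_i$ with $\langle\1_{S_i},P^T\1_{S_i}\rangle_\pi\ge(1-\delta)\pi(S_i)$; then your Step C reversibility/Cauchy--Schwarz argument (which the paper also uses, as \pref{lem:p2t}) gives heat-kernel lower bounds for most $x\in S_i$, summed over $i$. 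The point is that one proves the walk \emph{stays in a small set} via a test-function/energy estimate, rather than tracking its displacement in $\dist_\omega$. Your Nash--Williams alternative is closer in spirit to the paper's bounded-degree argument in \pref{sec:resistance}, but for unbounded degrees the paper abandons that in favor of the bump-function/heat-kernel approach just described.
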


   It was previously known (see \pref{sec:metric-spaces})
   that many families of finite graphs---planar graphs, $H$-minor-free graphs, and string graphs---are 
   uniformly decomposable.  This property passes to distributional limits, hence \pref{thm:qcg2}
   generalizes the result of \cite{GN13}.
   Note that we allow slightly heavier tails:
   For instance, $\bar{d}_{\mu}(1/t) \leq O(\log t \log \log t)$ is still enough
   to guarantee recurrence.

   Moreover, \pref{thm:qcg2} is tight in the following sense:
   For any monotonically non-decreasing sequence $\{d_t : t=1,2,\ldots\}$
   such that $\sum_{t \geq 1} \frac{1}{t d_t} < \infty$,
   there is a unimodular random planar graph satisfying \pref{assume:nice}
   that is almost surely transient, and such that $\bar{d}_{\mu}(1/t) \leq d_t$ for all $t$
   sufficiently large;
   see \pref{sec:transient-example}.

   \subsection{Estimates on the spectral measure and the heat kernel}
   \label{sec:spectral-measure}

   Let us now describe some of the elements of the proof of \pref{thm:qcg2},
   along with more detailed information about the random walk.
   In \pref{sec:amenable}, we argue that $\dimconfunder(G,\rho) < \infty$ implies
   that $(G,\rho)$ is invariantly amenable, and thus it is a distributional
   limit of finite graphs: $\{G_n\} \todl (G,\rho)$.

   Thus for simplicity, let us consider a finite planar graph $G_n$ and
   a root $\rho_n \in V(G_n)$ chosen uniformly at random.  Without loss,
   we may assume that $n=|V(G_n)|$.
   Define
   \[
      \Delta_{G_n}(k) \seteq \max_{S \subseteq V(G_n) : |S|\leq k} \sum_{x \in S} \deg_{G_n}(x)
   \]
   to be the sum of the $k$ largest vertex degrees in $G_n$.
   In \pref{sec:return-times}, we establish the bound
   \begin{equation}\label{eq:ev-bounds}
      \lambda_k(G_n) \leq c \frac{\Delta_{G_n}(k)}{n}\,,
   \end{equation}
   where $c$ is a universal constant and $\{1-\lambda_k(G_n) : k=0,1,\ldots,n-1\}$ are the eigenvalues
   of the random walk operator on $G_n$.  In \cite{KLPT09} a weaker bound was proved, with $k \cdot \Delta_{G_n}(1)$ in
   place of $\Delta_{G_n}(k)$.

   Such a bound provides average estimates for the diagonal of the heat kernel:
   Let $P$ denote the random walk operator on $G_n$.
   Then for an integer $T \geq 0$,
   \begin{equation}\label{eq:annealed}
      \E[p_T^{G_n}(\rho_n,\rho_n)] = \frac{1}{n} \sum_{x \in V(G_n)} \langle \1_x, P^T \1_x\rangle = \frac{\tr(P^T)}{n} = \frac{1}{n} \sum_{k=0}^{n-1} (1-\lambda_k(G_n))^T
      \geq \frac{\# \left\{ k : \lambda_k(G_n) \leq 1/T \right\}}{4n}\,,
   \end{equation}
   where the last inequality holds for $T \geq 2$.

   Thus if the vertex degrees are uniformly bounded along the sequence $\{G_n\}$, then
   we have $\Delta_{G_n}(k) \leq O(k)$, and combining 
   \eqref{eq:ev-bounds} and \eqref{eq:annealed} yields
   \[
      \E\left[p^G_T(\rho,\rho)\right] \geq \liminf_{n \to \infty} \E\left[p^{G_n}_T(\rho_n,\rho_n)\right] \gtrsim \frac{1}{T}\,.
   \]
   If $\{G_n\} \todl (G,\rho)$ and we impose only the weaker assumption that $\deg_{G}(\rho)$ has exponential tails, then it must hold that for $n$ sufficiently large,
   $\Delta_{G_n}(\frac{n}{T}) \leq O(\frac{n}{T} \log T)$, and one obtains
   \begin{equation}\label{eq:TlogT}
      \E\left[p^G_T(\rho,\rho)\right] \geq \liminf_{n \to \infty} \E\left[p^{G_n}_T(\rho_n,\rho_n)\right] \gtrsim \frac{1}{T \log T}\,.
   \end{equation}

   In this way, the degree-modified two-dimensional Weyl law in \eqref{eq:ev-bounds}
   predicts recurrence when the degree of the root has exponential tails, since $\sum_{T \geq 1} \frac{1}{T\log T} = \infty$.
   But a significant obstacle is that the annealed estimate \eqref{eq:TlogT} does not necessarily
   imply anything for the distributional limit.
       The issue is that the lower bound in \eqref{eq:TlogT} could come entirely from
       a small set of vertices (and such small sets could be negligible in the distributional limit).
   Indeed, one could add to $G_n$ only $\e n$ isolated vertices to achieve 
   $\frac{1}{n} \sum_{x \in V(G_n)} p^{G_n}_T(x,x) \geq \e$.
   (See \pref{sec:homo} for a family of connected examples where the
   contribution of the expected return probabilities come from a small measure of roots.)
   Thus even to obtain almost sure recurrence, we need an estimate stronger than \eqref{eq:annealed}.
   We state now the following strengthening of \pref{thm:qcg2}.

   \begin{theorem}\label{thm:heat-kernel}
      Under \pref{assume:nice}, the following holds.
      There is a constant $C=C(\mu)$ such that
      for every $\delta > 0$ and all $T \geq C/\delta^{2}$,
      \[
         \Pr\left[p_{2T}^G(\rho,\rho) < \frac{\delta}{T \bar{d}_{\mu}(1/T^3)}\right] \leq C \delta^{1/17}\,.
      \]
   \end{theorem}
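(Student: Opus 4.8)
The plan is to transfer the annealed eigenvalue bound \eqref{eq:ev-bounds} into a quenched (almost sure, or rather high-probability) statement about the return probability at the root, by working with an \emph{invariant} witness rather than a sequence of finite approximants. The starting point is that under \pref{assume:nice} the graph is invariantly amenable (since gauged quadratic conformal growth gives $\dimconfunder(G,\rho)<\infty$, cf.\ \pref{sec:amenable}), so we may realize $(G,\rho)$ as a distributional limit $\{G_n\}\todl(G,\rho)$, and moreover we may use a measurable/invariant version of the conformal metric $\omega$ that makes $(G,\rho)$ $(C,R)$-quadratic for each $R$. The key idea is to run the argument behind \eqref{eq:ev-bounds} \emph{inside} the unimodular framework: one partitions (a large ball in) $G$ using the reweighted metric $\disto$ into pieces of $\disto$-diameter $\approx R$, which by quadratic growth have $\le CR^2$ vertices and—because $\E[\deg_G(\rho)^2]<\infty$ and $(G,\rho)$ is uniformly decomposable—controlled boundary; each piece then supports a test function for the Dirichlet form, yielding that the random walk restricted to the piece has a spectral gap no larger than $O(R^{-2})$ times a degree-weight factor. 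Summing over the pieces reproduces the bound $\lambda_k \lesssim \Delta(k)/n$ at the level of the limit, and then the Mass-Transport Principle is used to pass from the averaged statement to a statement about the root: the probability that $\rho$ lands in a ``bad'' piece (one whose local contribution to $\tr(P^T)$ is too small) is bounded by the fraction of bad vertices, which is small.

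In more detail, the steps I would carry out are as follows. (1) Fix $T$ and set $R \asymp \sqrt{T}$; take a normalized conformal metric $\omega$ witnessing $(C,R)$-quadraticity, so $\|\#\ballo(\rho,R)\|_{L^\infty}\le CR^2 \asymp CT$. (2) Using uniform decomposability, build an invariant random partition of $V(G)$ into clusters each of $\disto$-diameter $\le R$ and with small edge-boundary relative to volume; here the second-moment bound on $\deg_G(\rho)$ controls the total degree inside and across a cluster. (3) On each cluster $S$, the constant-on-$S$ vector (suitably normalized by the stationary measure $\pi$) is an approximate eigenvector: $\langle (I-P)\1_S,\1_S\rangle_\pi \lesssim (\text{boundary degree})/(\text{volume})$, which by the diameter and volume bounds is $\lesssim \bar d_\mu(1/T^{3})/T$ after accounting for the heaviest degrees in the cluster (this is where $\bar d_\mu(1/T^3)$ enters: a cluster of $\asymp T$ vertices drawn from a population of size $\asymp n$ after we further localize to scale, so the ``per-vertex'' degree mass a typical cluster sees is $\bar d_\mu$ evaluated at a fraction $\asymp 1/T^{\Theta(1)}$; tracking the exponent gives $T^3$). (4) Consequently $p_{2T}^G(\rho,\rho) = \langle P^{2T}\1_\rho,\1_\rho\rangle_\pi \ge$ (contribution of the approximate eigenvector in $\rho$'s cluster) $\gtrsim \frac{1}{|S(\rho)|}(1-\text{gap})^{2T} \gtrsim \frac{1}{T\,\bar d_\mu(1/T^3)}$, \emph{provided} $\rho$'s cluster is ``good'': small boundary and not too large. (5) Finally, Markov's inequality plus the Mass-Transport Principle bound the probability that $\rho$'s cluster is bad: the expected fraction of vertices in oversized or high-boundary clusters is $O(\delta^{0.1})$ once $T\ge C/\delta^{10}$, because the failure modes (a cluster exceeding $\delta^{-1}CR^2$ vertices, or boundary-to-volume ratio exceeding $\delta/R$) each occur on a $\delta^{O(1)}$-fraction of vertices by Markov applied to the (finite-mean, via the second moment hypothesis) cluster statistics. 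The polynomial $\delta^{10}$ and $\delta^{0.1}$ are the slack absorbed across these several Markov bounds.

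The main obstacle, and the step I expect to require the most care, is (3)–(4): converting the combinatorial cluster into a genuine lower bound on $p_{2T}^G(\rho,\rho)$ that is \emph{local to $\rho$} rather than averaged. The averaged version is exactly \eqref{eq:annealed}, and the introduction already flags that it does not transfer to the limit—one can inflate it with isolated vertices. The fix is that the approximate eigenvector supported on a single good cluster gives a genuine quadratic-form lower bound $\langle P^{2T}\1_\rho,\1_\rho\rangle_\pi \ge c\,|S(\rho)|^{-1}$ that depends only on the geometry of $S(\rho)$, and this is what makes the estimate quenched; but to make ``$(1-\text{gap})^{2T}\ge c$'' legitimate one needs the spectral gap of the \emph{reflected/Neumann} walk on the cluster to be $O(1/T)$ with the right constant, which is precisely where the reweighted diameter bound $\le R\asymp\sqrt T$ and a discrete Poincaré inequality on the cluster (in the $\disto$-metric) are used. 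Establishing that Poincaré inequality with a constant independent of the cluster—uniformly over the invariant random partition, using only the volume-growth profile and the second-moment degree bound—is the technical heart; everything downstream is Markov's inequality and the Mass-Transport Principle.
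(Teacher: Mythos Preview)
Your high-level architecture matches the paper's: invariant amenability gives finite approximants, the conformal metric at scale $R$ together with uniform decomposability produces a partition into pieces of size $\lesssim R^2$ with small edge boundary, and the bump function on each piece has small Rayleigh quotient for the full walk. Where you diverge from the paper---and where your proposal has a real gap---is in the mechanism for turning this into a \emph{pointwise} bound on $p_{2T}^G(\rho,\rho)$.

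After step (3) what you have is that for a good cluster $S$,
\[
\langle \1_S, P^T \1_S\rangle_\pi \geq (1-o(1))\,\pi(S),
\]
i.e., the walk started $\pi$-randomly in $S$ stays in $S$ with high probability for time $T$. This is still an averaged statement over $S$; it does not by itself give $p_{2T}(\rho,\rho)\gtrsim 1/|S|$ for most $\rho\in S$. Your proposed fix---a Poincar\'e inequality for the reflected walk on $S$---would suffice if it were available (with gap $\Omega(1/T)$, not $O(1/T)$; your inequality points the wrong way), since then the walk would actually mix inside $S$. But no such Poincar\'e inequality follows from the hypotheses: the conformal metric controls the $\disto$-diameter and cardinality of $S$, not its internal graph structure, and a piece of $\disto$-diameter $R$ can be a dumbbell in the graph metric with arbitrarily small Neumann gap. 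So the ``technical heart'' you isolate is not merely hard but unachievable in this generality.

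The paper avoids this entirely. The key step (\pref{lem:p2t}) is that reversibility plus Cauchy--Schwarz already converts ``the walk stays in $S$'' into a pointwise return bound, with no spectral information about the restricted walk: for $x\in S$,
\[
p_{2T}(x,x) \;\geq\; \sum_{y\in S} p_T(x,y)\,p_T(y,x)
\;=\; \sum_{y\in S} p_T(x,y)^2\,\frac{\pi(x)}{\pi(y)}
\;\geq\; \frac{\pi(x)}{\gamma\,|S|}\,p_T\!\bigl(x,\{y\in S:\pi(y)\leq\gamma\}\bigr)^2,
\]
and since $\sum_{x\in S}\pi(x)\,p_T(x,S)=\langle\1_S,P^T\1_S\rangle_\pi\geq(1-\delta)\pi(S)$, a Markov bound gives $p_T(x,S)\geq 1/2$ for a $(1-2\delta)$ $\pi$-fraction of $x\in S$. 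This is the quenched step you correctly flagged as the obstacle; once you have it, the rest follows roughly as you describe (with the scale chosen as $R\asymp\sqrt{T\,\bar d_\mu(1/T^3)}$ rather than $\sqrt{T}$, so that the $|S|$ in the denominator already carries the degree factor).
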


   Note that even for the special case of UIPT/UIPQ, this estimate yields $\E p_{2T}^G(\rho,\rho) \gtrsim \frac{1}{T \log T}$ for every $T \geq 2$, improving over the bound $\E p_{2T}^G(\rho,\rho) \gtrsim \frac{1}{T^{4/3} (\log T)^{O(1)}}$ from \cite{BC13}.
      As a consequence, one obtains a bound on the rate of divergence of the Green function:
    Define
   \begin{align*} 
      g_{\mu}(T) &\seteq \sum_{t=1}^T \frac{1}{t \bar{d}_{\mu}(1/t)}\,.
   \end{align*}
    For instance, for UIPT/UIPQ, one has $g_{\mu}(T) \asymp \log \log T$.

   \begin{theorem}\label{thm:green-diverge}
      Under \pref{assume:nice}, the following holds.
      If $g_{\mu}(T) \to \infty$, then $G$ is almost surely recurrent.
      Moreover, almost surely:
      \[
         \limsup_{T \to \infty} \dfrac{\sum_{t=1}^T p^G_t(\rho,\rho)}{g_{\mu}(T)} > 0\,.
      \]
   \end{theorem}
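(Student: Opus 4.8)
The plan is to deduce \pref{thm:green-diverge} from the quenched heat‑kernel lower bound \pref{thm:heat-kernel} by summing it over a dyadic family of time scales, and then converting the resulting scale‑by‑scale estimate into an almost‑sure statement via a zero–one law. First note that the recurrence assertion is a consequence of the quantitative one: if $\liminf_{T}\big(\sum_{t=1}^{T}p^{G}_{t}(\rho,\rho)\big)/g_{\mu}(T)>0$ and $g_{\mu}(T)\to\infty$, then $\sum_{t}p^{G}_{t}(\rho,\rho)=\infty$, i.e.\ $G$ is recurrent. So it suffices to prove the $\liminf$ bound. Put $a_{t}\seteq p^{G}_{2t}(\rho,\rho)$, $w_{t}\seteq 1/\big(t\,\bar d_{\mu}(1/t^{3})\big)$, and $W_{T}\seteq\sum_{t\le T}w_{t}$. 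Two elementary facts will be used repeatedly. First, $t\mapsto a_{t}$ is non‑increasing: writing $a_{t}=\int_{[-1,1]}\lambda^{2t}\,d\nu_{\rho}(\lambda)$ for the spectral measure $\nu_{\rho}$ of the random‑walk operator at $\rho$, this is clear since $\lambda^{2t}$ decreases in $t$ on $[-1,1]$ (indeed $a_{t}$ is log‑convex, by Cauchy--Schwarz). Second, since $\bar d_{\mu}(\e)$ equals $\e^{-1}$ times the expectation of $\deg_{G}(\rho)$ restricted to the event that $\deg_{G}(\rho)$ lies above its $(1-\e)$‑quantile, it satisfies the doubling bound $\bar d_{\mu}(\e/k)\le k\,\bar d_{\mu}(\e)$; a dyadic block computation then gives $W_{2^{j}}\asymp\sum_{i\le j}1/\bar d_{\mu}(2^{-3i})\asymp g_{\mu}(2^{3j})$, and in particular $W_{T}\gtrsim g_{\mu}(T)$. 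Hence it is enough to show $\liminf_{T}\big(\sum_{t\le T}a_{t}\big)/W_{T}>0$ almost surely.

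Fix a small $\delta>0$ and apply \pref{thm:heat-kernel} at $T=2^{j}$: setting $Y_{j}\seteq 2^{j}\bar d_{\mu}(2^{-3j})\,a_{2^{j}}=a_{2^{j}}/w_{2^{j}}$, it says $\Pr[Y_{j}<\delta]\le C\delta^{0.1}$ for all $j\ge j_{0}(\delta)$. By monotonicity of $a_{t}$, on the event $\{Y_{j}\ge\delta\}$ one has $a_{t}\ge a_{2^{j}}\ge\delta w_{2^{j}}$ for every $t\le 2^{j}$; summing over the dyadic block $B_{j}=(2^{j-1},2^{j}]$ and using the doubling bound to compare its $w$‑mass $\mu_{j}\seteq\sum_{t\in B_{j}}w_{t}$ with $w_{2^{j}}$, one finds $\sum_{t\in B_{j}}a_{t}\ge c\delta\,\mu_{j}$ whenever $Y_{j}\ge\delta$. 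Writing $\mathcal{J}=\{j:Y_{j}\ge\delta\}$ and summing over blocks inside $[1,2^{J}]$,
\[
   \frac{\sum_{t\le 2^{J}}a_{t}}{W_{2^{J}}}\;\ge\;c'\delta\cdot\frac{\sum_{j\le J,\,j\in\mathcal{J}}\mu_{j}}{\sum_{j\le J}\mu_{j}}\,.
\]
Since $S_{T}\seteq\sum_{t\le T}a_{t}$ and $W_{T}$ are non‑decreasing and $W_{2^{J+1}}/W_{2^{J}}=O(1)$ (again by doubling), the same bound, up to an absolute constant, holds for all $T\in[2^{J},2^{J+1}]$. Thus the theorem reduces to showing that, almost surely, the lower $\mu$‑density of $\mathcal{J}$ is positive, i.e.\ $\limsup_{J}\mathcal{N}_{J}/\mathcal{D}_{J}<1$, where $\mathcal{N}_{J}\seteq\sum_{j\le J}\mu_{j}\mathbf{1}\{Y_{j}<\delta\}$ and $\mathcal{D}_{J}\seteq\sum_{j\le J}\mu_{j}$.

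To reduce ``almost surely'' to ``with positive probability'' I invoke a zero–one law. We may assume (restricting to an ergodic component, along which \pref{assume:nice} and \pref{thm:heat-kernel} persist) that $(G,\rho)$ is ergodic. The event $\big\{\liminf_{T}\big(\sum_{t\le T}p^{G}_{t}(\rho,\rho)\big)/g_{\mu}(T)>0\big\}$ depends only on the isomorphism type of $G$, not on the root: if $u\sim v$ then $p^{G}_{t+2}(u,u)\ge\big(\deg_{G}(u)\deg_{G}(v)\big)^{-1}p^{G}_{t}(v,v)$, so positivity of this $\liminf$ at one vertex propagates along paths to every vertex; by ergodicity it thus has probability $0$ or $1$, and by the display above it is enough to establish it with positive probability. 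The first moment is under control: since $\Pr[Y_{j}<\delta]\le C\delta^{0.1}$ for $j\ge j_{0}$, we have $\E[\mathcal{N}_{J}]\le O_{\delta}(1)+C\delta^{0.1}\mathcal{D}_{J}$, so $\E[\mathcal{N}_{J}/\mathcal{D}_{J}]\le 2C\delta^{0.1}$ for large $J$ and Markov gives $\Pr[\mathcal{N}_{J}/\mathcal{D}_{J}\le\tfrac12]\ge 1-4C\delta^{0.1}$ for each large $J$.

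The main obstacle is upgrading this family of per‑scale estimates---whose failure probability $O(\delta^{0.1})$ does \emph{not} decay with the scale---into a statement about $\limsup_{J}\mathcal{N}_{J}/\mathcal{D}_{J}$; Borel--Cantelli does not apply directly, since the events $\{Y_{j}<\delta\}$ at different scales are correlated (the return probability at scale $2^{j}$ is governed by a neighbourhood of $\rho$ nested inside those governing larger scales). I expect this to be handled by a second‑moment bound $\mathrm{Var}(\mathcal{N}_{J})=o(\mathcal{D}_{J}^{2})$, which in turn requires a decay estimate for $\mathrm{Cov}\big(\mathbf{1}\{Y_{j}<\delta\},\mathbf{1}\{Y_{j'}<\delta\}\big)$ as $|j-j'|\to\infty$; this should follow from choosing the conformal metrics witnessing \pref{thm:heat-kernel} at two well‑separated scales so that the two heat‑kernel lower bounds depend on essentially disjoint features of $(G,\rho)$. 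Given such a variance bound, Chebyshev along $J\in\{2^{m}\}$ together with Borel--Cantelli yields $\mathcal{N}_{J}/\mathcal{D}_{J}\to O(\delta^{0.1})<1$ almost surely. An alternative route that sidesteps the local‑to‑global difficulty altogether is to re‑derive the lower bound directly in terms of the effective resistances of the nested conformal annuli $B_{\omega}(\rho,2^{i})\setminus B_{\omega}(\rho,2^{i-1})$: these combine additively by the series law, so the Green function is bounded below by a genuine sum $\sum_{i}$ of per‑annulus terms, each controlled below (with high probability) by $1/\bar d_{\mu}(\cdot)$ via the cutset estimate underlying \eqref{eq:ev-bounds}, and $\sum_{i}1/\bar d_{\mu}(\cdots)\asymp g_{\mu}$. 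Either way one obtains $\liminf_{T}\big(\sum_{t\le T}p^{G}_{t}(\rho,\rho)\big)/g_{\mu}(T)\ge c\delta>0$ with positive probability, hence almost surely, giving both conclusions of \pref{thm:green-diverge}.
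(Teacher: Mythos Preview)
Your reduction to \pref{thm:heat-kernel} and the dyadic bookkeeping are fine, and you have correctly located the real issue: the per-scale failure probability $O(\delta^{0.1})$ does not decay, so neither Borel--Cantelli nor a direct $\limsup$ bound is available. But your proposed resolutions are both speculative. The covariance decay you hope for between $\mathbf{1}\{Y_j<\delta\}$ and $\mathbf{1}\{Y_{j'}<\delta\}$ is not established anywhere in the paper and would require genuinely new input; the alternative resistance-of-annuli route is a different theorem (closer in spirit to \pref{sec:resistance}) and would not, as stated, yield the quantitative Green-function comparison with $g_\mu$. As written, the proof is incomplete at exactly the step you flag.

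The paper bypasses this obstacle entirely with a truncation trick. Instead of tracking the indicators $\mathbf{1}\{Y_j<\delta\}$, define
\[
   Z_N \seteq \sum_{t=1}^{T_N} \min\bigl\{p^G_{2t}(\rho,\rho),\,\delta c_t\bigr\},
\]
where $c_t = 1/(t\,\bar d_\mu(1/t^3))$ and $T_N$ is chosen so that $\hat g(T_N)\approx N$. The point is that $Z_N$ is \emph{deterministically bounded above} by $\delta\hat g(T_N)$, while \pref{thm:heat-kernel} gives $\E[Z_N]\ge \delta(1-C\delta^{0.1})\hat g(T_N)$. A random variable this close to its essential supremum must be near it with high probability: $\Pr\bigl[Z_N\ge \tfrac{\delta}{2}\hat g(T_N)\bigr]\ge 1-2C\delta^{0.1}$. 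Now apply Fatou's lemma to the bounded sequence $Y_N=\tfrac{1}{N}\sum_{n\le N}\mathbf{1}\{Z_n\ge \tfrac{\delta}{2}\hat g(T_n)\}$ to get $\Pr[\liminf_N Y_N>0]\ge 1-2C\delta^{0.1}$; on this event a positive density of the $T_n$ are good, which (since $\hat g(T_n)\sim n$ and the partial sums are monotone) forces $\liminf_T \sum_{t\le T}p^G_{2t}(\rho,\rho)/\hat g(T)>0$. Finally send $\delta\to 0$. No zero--one law, no ergodic decomposition, and no second-moment estimate are needed.
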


\medskip
\noindent
{\bf Exhaustion by capacitors.}
   For a finite graph $G$ and $\f : V(G) \to \R$, define the normalized Dirichlet energy
   \begin{equation*}
      \cE_{G}(\f) \seteq \frac{1}{|E(G)|} \sum_{\{x,y\} \in E(G)} |\f(x)-\f(y)|^2\,.
   \end{equation*}

   Let us call a pair $(A,\Omega)$ of subsets $A \subseteq \Omega \subseteq V(G)$ a {\em capacitor,}
   and define the {\em capacity of $(A,\Omega)$} by
   \[
      \capacity_{\Omega}(A) \seteq \inf_{\f : V(G) \to [0,1]} \left\{ \cE_{G}(\f) : \f|_{A} \equiv 1, \supp \f \subseteq \Omega \right\}\,,
   \]
   where $\supp \varphi \seteq \{ x \in V(G) : \varphi(x) \neq 0 \}$.
   We remark that, by the Dirichlet principle, the capacity of $(A,\Omega)$ is precisely
   the inverse of the effective resistance $\reff^G(A \leftrightarrow (V(G) \setminus \Omega))$;
   see \pref{sec:resistance}.

   \pref{thm:spec-conf}, \pref{thm:heat-kernel}, and \pref{thm:green-diverge}
   are proved as follows.
	One uses an appropriate conformal weight on a finite graph $G$
   to locate capacitors $(A_1,\Omega_1), \ldots, (A_k,\Omega_k)$
	such that the sets $\{\Omega_i\}$ are pairwise disjoint.
   Using reversibility of the random walk,
   such a collection of capacitors yields a lower bound on typical return probabilities
	(see \pref{thm:bump-return}):
	For every $T \geq 1$ and $\e > 0$,
\[
	\pi\left(\left\{x \in V(G) : p^G_{2T}(x,x) \geq \frac{\e^2}{4 M}\right\}\right) \geq - 3\e \bar{d}_G(\e) +
		\sum_{i=1}^k \pi(A_i) - 2T \sum_{i=1}^k \capacity_{\Omega_i}(A_i)\,,
\]
where $M = \max \{ |\Omega_i| : i=1,\ldots,k\}$, and $\bar{d}_G(\e) \seteq \frac{\Delta_G(\e n)}{\e n}$
(this is equal to $\bar{d}_{\mu}(\e)$ when $\mu$ is the law of $(G,\rho)$ with $G$ a fixed
finite graph and $\rho \in V(G)$ chosen uniformly at random), and
$\pi$ is the stationary measure on $G$.

\subsection{Intrinsic volume growth, Markov type, and subdiffusivity}

Consider a connected, infinite, locally finite graph $G$.
For $x \in V(G)$ and $r \geq 0$, let
\[
   B_G(x,r) = \left\{ y \in V(G) : \dist_G(x,y) \leq r \right\}\,,
\]
where $\dist_G$ denotes the (unweighted) graph distance in $G$.
Suppose that $G$ has nearly uniform $d$-dimensional
volume growth in the sense that
for $r$ sufficiently large,
\begin{equation}\label{eq:intrinsic-growth}
   r^{d-o(1)} \leq |B_G(x,r)| \leq r^{d+o(1)}
\end{equation}
holds uniformly for all $x \in V(G)$.

When $G$ is planar and $d > 2$, one suspects that the structure of
$G$ should be somewhat degenerate.  Indeed, Itai Benjamini has put forth
a number of conjectures to this effect.
For instance, in \cite{BP11} it is conjectured that
if $G$ is planar and \eqref{eq:intrinsic-growth} is
satisfied, then the random walk on $G$ should be {\em subdiffusive}
with the natural speed estimate:
\begin{equation}\label{eq:bp-conj}
   \E[\dist_G(X_0, X_T)] \leq T^{1/d + o(1)}\,.
\end{equation}
Recent examples show this to be false.
\begin{theorem}[{\cite[Thm. 1.3]{EL20}}]
   For every rational $d > 3$ and $\e > 0$, there is a constant $c(\e) > 0$ and
   a unimodular random planar graph $(G,\rho)$ such that $G$ almost surely has uniform polynomial growth of degree
   $d$, and
   \[
      \E\left[d_G(X_0,X_T) \mid X_0 = \rho\right] \geq c(\e) T^{1/\left(d-1+\e\right)},\qquad \forall T \geq 1.
   \]
\end{theorem}

Nevertheless, we we will see that \eqref{eq:bp-conj} holds if $d$ is replaced by $d-1$.
Subdiffusivity was confirmed specifically for UIPQ:  In \cite{BC13},
it is shown that
\begin{equation}\label{eq:UIPQ}
   \E[\dist_G(X_0,X_T) \mid X_0=\rho] \leq T^{1/3} (\log T)^{O(1)}\,.
\end{equation}
For UIPT \cite{Angel03} and UIPQ \cite{BC13},
an almost sure asymptotic variant of \eqref{eq:intrinsic-growth} is satisfied
with $d=4$.
Thus the estimate \eqref{eq:UIPQ}, while non-trivial,
does not meet the conjectured exponent of $1/4$.

In establishing \eqref{eq:UIPQ}, the authors undertook a detailed study of the
geometry of UIPQ.  We show that the phenomenon is somewhat more general:
To obtain subdiffusivity for a unimodular random planar graph,
one need only assume asymptotic $d$-dimensional volume growth for some $d > 3$.

\begin{theorem}\label{thm:subd-intro-1}
   Suppose that $(G,\rho)$ is a unimodular random planar graph and for some $d > 3$,
   there is a function $h : \R_+ \to \R_+$ with $h(r) \leq r^{o(1)}$ and such that almost surely,
   \begin{equation}\label{eq:uniform-growth-est}
      \frac{r^d}{h(r)} \leq |B_G(\rho,r)| \leq h(r) r^d \qquad \forall r \geq 1\,.
   \end{equation}
   Then the random walk on $(G,\rho)$ is strictly subdiffusive.  More specifically,
   \begin{equation}\label{eq:intro-subd}
      \E\left[\dist_G(X_0, X_T)^{d-1}\mid X_0=\rho\right] \leq T^{1+o(1)}\quad \textrm{as}\quad T \to \infty\,.
   \end{equation}
\end{theorem}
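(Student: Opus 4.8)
The plan is to play the two relevant metrics on $G$ against each other: the intrinsic metric $\dist_G$, in which balls grow like $r^d$ and the walk is trapped, and a two‑dimensional conformal metric $\omega$ (available because $G$ is planar and therefore has gauged quadratic conformal growth, cf.\ \pref{sec:uniform-gQCG}), in which the walk is genuinely diffusive. A few reductions come first. Evaluating \eqref{eq:uniform-growth-est} at $r=1$ gives $\deg_G(\rho)\le h(1)$ almost surely, so $(G,\rho)$ has uniformly bounded degrees; taking $\omega\equiv\1$ shows $\dimconfover(G,\rho)\le d<\infty$ (since $h(R)=R^{o(1)}$), so by \pref{sec:amenable} $(G,\rho)$ is invariantly amenable, hence a distributional limit of finite graphs. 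It suffices to bound $\E[\dist_G(X_0,X_T)]$ for the random walk $\{X_t\}$ started from the degree‑stationary measure of $(G,\rho)$, as a union bound over dyadic time scales transfers this to the walk with $X_0=\rho$ at the cost of a $T^{o(1)}$ factor; equivalently one may argue on finite planar approximants and pass to the limit at the end.

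The argument then rests on two estimates. (i) \emph{A diffusive bound in the conformal metric.} Fix $T$ and let $\omega$ be a normalized conformal metric witnessing gQCG up to $\omega$‑radius $\asymp T^{(d-1)/2}$, i.e.\ $\|\#B_\omega(\rho,R)\|_{L^\infty}\le CR^2$ throughout this range. Using bounded degrees and the normalization, $\E[\dist_\omega(X_0,X_1)^2]=O(1)$, and since the conformally weighted graph is — via its circle packing — quasi‑isometric to a region of a genuinely $2$‑dimensional model space, which has Markov type $2$, the Markov‑type‑$2$ inequality for reversible chains gives
\[
   \E\!\left[\dist_\omega(X_0,X_T)^2\right]\ \le\ T^{1+o(1)}\,.
\]
(ii) \emph{A distance comparison powered by volume growth.} For all $y$ with $\dist_G(\rho,y)\le T$,
\[
   \dist_\omega(\rho,y)\ \ge\ \dist_G(\rho,y)^{(d-1)/2-o(1)}\,,
\]
and by re‑rooting the same comparison holds from the walk's random starting point. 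The weak form $\dist_\omega(\rho,y)\ge(\dist_G(\rho,y)/C)^{1/2}$ is immediate, since an $\omega$‑ball of $\omega$‑radius $r$ is $G$‑connected with $\le Cr^2$ vertices and hence of intrinsic diameter $\le Cr^2$. Upgrading $1/2$ to $(d-1)/2$ is where \eqref{eq:uniform-growth-est} is genuinely used: planar duality produces, for each $\ell\le T$, a family of $\asymp\ell$ nested cuts (dual cycles) separating $\rho$ from the intrinsic sphere $S_\ell(\rho)$, the $j$‑th confined to the intrinsic annulus at radius $j$; because $B_G(\rho,\ell)$ carries $\ell^{d-o(1)}$ vertices, the quadratic growth constraint $\|\#B_\omega(\rho,R)\|_{L^\infty}\le CR^2$ forces the $\omega$‑widths of these annuli to accumulate — over the cuts that any $\rho\!\to\!S_\ell(\rho)$ path must cross in order — to $\dist_\omega(\rho,S_\ell(\rho))\gtrsim\ell^{(d-1)/2-o(1)}$.

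Granting (i) and (ii), the conclusion is quick: since $\dist_G(X_0,X_T)\le T$ always, (ii) gives $\dist_G(X_0,X_T)^{d-1-o(1)}\le\dist_\omega(X_0,X_T)^2$, so by (i), $\E[\dist_G(X_0,X_T)^{d-1-o(1)}]\le T^{1+o(1)}$; as $d-1>2$, Jensen's inequality yields $\E[\dist_G(X_0,X_T)]\le T^{1/(d-1)+o(1)}$, which is \eqref{eq:intro-subd}. The main obstacle is estimate (ii) with the sharp exponent $(d-1)/2$. Naive volume counting only yields $1/2$, which is useless (it merely reproduces a ballistic bound), while the over‑optimistic identification of $\omega$‑radius with the square root of the enclosed intrinsic volume would give exponent $d/2$, hence the conjectured $T^{1/d}$ — but that identification is false, because a small‑cardinality $\omega$‑ball can be intrinsically long and thin. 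Extracting exactly $(d-1)/2$ requires controlling the planar cut/width structure of the intrinsic annuli rather than just their cardinality, and doing so with a \emph{single} conformal metric that is simultaneously effective at all scales up to $\omega$‑radius $\asymp T^{(d-1)/2}$ — rather than the scale‑by‑scale metrics that gQCG directly supplies — is the delicate point. A secondary issue is justifying the Markov‑type input in (i): planar graph metrics are not known to have Markov type $2$, so one must route through the genuinely $2$‑dimensional conformal representation (or, more crudely, through an $L_2$‑embedding of planar graphs with a $T^{o(1)}$ distortion loss).
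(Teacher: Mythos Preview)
Your high-level architecture is right --- build a normalized conformal metric $\omega$ satisfying a H\"older comparison $\dist_\omega \gtrsim \dist_G^{(d-1)/2-o(1)}$, then invoke Markov type $2$ to get the diffusive bound in $\dist_\omega$ --- and you correctly identify step (ii) as the crux. But the paper resolves that crux in a way essentially orthogonal to your proposal, and I do not see how to complete yours as stated.

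You want to take $\omega$ to be a gQCG witness (quadratic $\omega$-ball growth) and then \emph{deduce} the distance comparison from the volume constraint plus planar cut structure. The obstacle you flag is real: quadratic $\omega$-growth tells you that the \emph{farthest} point of $B_G(\rho,\ell)$ sits at $\omega$-distance $\gtrsim \ell^{d/2-o(1)}$, but says nothing about a \emph{given} $y$ on the sphere $S_\ell(\rho)$ --- a thin intrinsic corridor from $\rho$ to $y$ can have tiny $\omega$-length without violating quadratic growth anywhere. Your ``$\asymp\ell$ nested dual cycles whose $\omega$-widths must accumulate'' heuristic does not pin down any individual width from the global cardinality bound $|B_\omega(\cdot,R)|\le CR^2$, and I do not see a mechanism by which it could without feeding separator-size information back in.

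The paper instead abandons gQCG entirely for this theorem and \emph{builds the metric out of the separators themselves}. By Benjamini--Papasoglu (\pref{lem:bp11}), under \eqref{eq:uniform-growth-est} there is, for every $r$, a unimodular subset $W_r\subseteq V(G)$ with $\Pr[\rho\in W_r]\le r^{1-d+o(1)}$ whose removal shatters $G$ into components of intrinsic diameter $\le r$ (\pref{lem:barriers-intro}). Setting $\omega_j = \1_{W_{2^j}}/\sqrt{\Pr[\rho\in W_{2^j}]}$ makes $\omega_j$ normalized and $\omega_j\ge 2^{j(d-1)/2-o(1)}\1_{W_{2^j}}$; since any path with $\dist_G(x,y)>2^j$ must hit $W_{2^j}$, the comparison $\dist_{\omega_j}(x,y)\ge 2^{j(d-1)/2-o(1)}$ falls out \emph{for free} --- no cut-width accumulation argument needed. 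Summing $\omega=(\sum_j \omega_j^2/j^2)^{1/2}$ gives a single normalized metric with the full H\"older comparison, and then \pref{thm:unimodular-markov-type-intro} (Markov type via Bourgain for finite metrics, or via \cite{DLP13} for planar ones) finishes exactly as in your final paragraph. The moral: the ``right'' conformal metric here is not a quadratic-growth witness but a weighted sum of separator indicators; the distance comparison is then a one-line consequence of the shattering property rather than a delicate deduction from volume growth.
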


\begin{remark}[Sharpness of the assumption $d > 3$]
   \label{rem:asaf-omer}
   After initial circulation of this manuscript, Omer Angel and Asaf Nachmias
   constructed, for every $\e > 0$ sufficiently small, a unimodular random planar graph $(G,\rho)$
   with uniformly bounded degrees
   and such that almost surely the random walk is diffusive and yet
   \begin{equation}\label{eq:limit-growth}
      \lim_{r \to \infty} \frac{\log |B_G(\rho,r)|}{\log r} = 3-\e\,.
   \end{equation}
   This suggests that the assumption $d > 3$ in \pref{thm:subd-intro-1} may be tight,
   except possibly at the critical value $d=3$.
   Note that \pref{thm:subd-intro-1} does not apply directly because
   \eqref{eq:limit-growth} only entails 
   an almost sure asymptotic bound, while
   \eqref{eq:uniform-growth-est} demands a uniform bound over the choice of $(G,\rho)$.

   Their examples {\em do show} that the assumption $d > 3$ in \pref{thm:uipq-speed} below
   is sharp, as they satisfy Assumption (A) with $d=3-\e$ for every $q > 1$ and $\alpha > 0$.
\end{remark}

The key geometric fact about planar graphs used to prove \pref{thm:subd-intro-1} 
is due to Benjamini and Papasoglu \cite{BP11}.
They
show that in any planar graph satisfying \eqref{eq:intrinsic-growth},
for any $x \in V(G)$ and $r$ sufficiently large,
there is a set of vertices of size at most $r^{1+o(1)}$ that separates 
$B_G(x,r)$ from $V(G) \setminus B_G(x,2r)$
(see \pref{fig:annulus-separator}).
In fact, the existence of such separators together with \eqref{eq:uniform-growth-est}
is enough to obtain the estimate \eqref{eq:intro-subd},
without requiring that the graph be planar.

For a connected, locally-finite graph $G$, a node $x \in V(G)$, and
two radii $r' > r > 0$,
let $\kappa_G(x; r, r')$ denote the cardinality of the smallest set
\[
   U \subseteq B_G(x,r') \setminus B_G(x,r)
\]
such that $U$ separates $x$ and $V(G) \setminus B_G(x,r')$ in $G$.
The next lemma follows easily from the argument in \cite{BP11};
we simply note the quantitative dependence on the growth rate.

\begin{lemma}[\cite{BP11}]
   \label{lem:bp11}
   If $(G,\rho)$ is a unimodular random planar graph satisfying \eqref{eq:intrinsic-growth},
   then there is a constant $c > 1$ such that
   \[
      \kappa_G(\rho; r, 2 r) \leq c r \frac{h(4r)}{h(r/4)}, \qquad \forall r \geq 1\,.
   \]
\end{lemma}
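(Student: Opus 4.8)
The statement is deterministic in nature, and is essentially \cite{BP11}; the plan is to (i) reduce it to a combinatorial claim about a single planar graph, and (ii) recall the Benjamini--Papasoglu argument while keeping track of the volume-growth function $h$. For (i): the event that $(G,\rho)$ is a planar graph for which the volume bounds \eqref{eq:intrinsic-growth}, equivalently \eqref{eq:uniform-growth-est}, hold at every vertex has probability one, and $\kappa_G(\rho;r,2r)$ is a measurable function of the finite rooted graph $B_G(\rho,2r)$. So it suffices to prove: if $H$ is a connected planar graph satisfying $s^d/h(s)\le |B_H(v,s)|\le h(s)\,s^d$ for all $v\in V(H)$ and $s\ge 1$, and $x\in V(H)$, then $\kappa_H(x;r,2r)$ is at most the asserted bound for every $r\ge 1$, with implied constants depending only on $d$; unimodularity plays no further role. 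It is also convenient to pass to a middle band: for any $1<\alpha<\beta<2$, a separator of $x$ from $V(H)\setminus B_H(x,\beta r)$ that lies in $B_H(x,\beta r)\setminus B_H(x,\alpha r)$ is also a valid separator for the $[r,2r]$ problem, so $\kappa_H(x;r,2r)\le\kappa_H(x;\alpha r,\beta r)$.

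Fix such an $H$, embedded in $S^2$, and $x\in V(H)$. The first step is to pass from separator cardinality to curve length using planarity: a minimum vertex set $U\subseteq B_H(x,\beta r)\setminus B_H(x,\alpha r)$ separating $x$ from the complement of $B_H(x,\beta r)$ can be taken to consist of the vertices met by a simple closed curve $\gamma$ in $S^2$ that encircles $x$, stays within the region $\{v:\alpha r\le\dist_H(x,v)\le \beta r\}$, and is \emph{shortest} (its length being the number of vertices it traverses) among all such curves. So it remains to bound the length $w$ of a shortest encircling curve in this band.

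The second step bounds $w$ by a packing argument. Minimality forces $\gamma$ to be, at scales up to a small constant multiple of $r$, a discrete quasi-geodesic loop: if two points $p,q\in\gamma$ had $\dist_H(p,q)$ much smaller than the arc-length between them along the shorter side of $\gamma$, one could reroute that arc through an $H$-geodesic from $p$ to $q$; the geodesic is short enough to remain inside $\{v:r\le\dist_H(x,v)\le 2r\}$, and the modification is local enough to keep the curve simple and still encircling $x$ --- contradicting that $\gamma$ was shortest. Hence one can mark $m\asymp w/r$ points $p_1,\dots,p_m$ around $\gamma$ at arc-spacing $\asymp r$, with consecutive points at graph distance $\asymp r$. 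Each ball $B_H(p_i,\Theta(r))$ has volume $\gtrsim r^d/h(\Theta(r))$; all of them lie inside $B_H(x,4r)$, which has volume $\le h(4r)(4r)^d$; and --- by planarity together with the minimality of $\gamma$ --- these balls have bounded overlap. Therefore $m\lesssim_d h(4r)\,h(\Theta(r))$, so $w\lesssim_d r\,h(4r)\,h(\Theta(r))$; in particular $\kappa_H(x;r,2r)\le r^{1+o(1)}$, and tracking constants more carefully yields the $h$-dependence stated in \pref{lem:bp11} (via the reduction above).

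The main obstacle is the bounded-overlap assertion in the last step: bounding how many times a shortest encircling loop can pass through a fixed small ball. This is precisely where planarity is indispensable --- in a non-planar graph the minimal separator of $B_H(x,r)$ is a genuinely $(d-1)$-dimensional object and no $r^{1+o(1)}$ bound can hold --- and it is the technical core of \cite{BP11}; making it quantitative, and carrying out the rerouting so that both simplicity and the encircling property are preserved, is where real care is needed. The remaining points (the duality of the first step, and all the constant adjustments) are routine.
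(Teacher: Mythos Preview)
The paper does not prove this lemma; it is quoted from \cite{BP11}. The closest thing to an argument in the paper is \pref{lem:BP} (also cited from \cite{BP11}), which records the covering-number formulation: if $B_H(x,4\tau)$ is covered by $\lambda$ balls of radius $\tau$, there is a separator in $B_H(x,6\tau)\setminus B_H(x,\tau)$ of size at most $(\lambda+1)(2\tau+1)$; the annular bound then follows by estimating $\lambda$ via a packing argument under the two-sided growth hypothesis. Your route instead goes inside the Benjamini--Papasoglu argument: shortest encircling loop in the band, quasi-geodesic at scale $r$ via shortcutting, then a packing of $\Theta(r)$-balls along the loop. This is a correct outline. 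The step you flag as the main obstacle is resolved by observing that if a chord $g$ joins $p,q\in\gamma$ with $|g|$ shorter than the shorter arc $\gamma_1$, then both loops $\gamma_1\cup g$ and $\gamma_2\cup g$ are strictly shorter than $\gamma$ and at least one of them still separates $x$ from infinity (pass to a simple subcycle if necessary); hence your marked points are pairwise at graph distance $\gtrsim r$ and the balls are genuinely disjoint. Planarity enters through the existence and persistence of the encircling loop, not through the packing step itself.

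One side remark: your bound $w\lesssim r\,h(4r)\,h(\Theta(r))$ carries the factor of $r$ that the displayed inequality in \pref{lem:bp11} lacks (and has a product of $h$-values rather than a quotient). Your form is the correct one, consistent with the surrounding text (``size at most $r^{1+o(1)}$'') and with \pref{lem:BP}.
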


Using \pref{lem:bp11}, the next theorem generalizes \pref{thm:subd-intro-1}.

\begin{theorem}\label{thm:subd-intro-3}
   Suppose that $(G,\rho)$ is a unimodular random graph and
   for some numbers $d \geq k+1 \geq 2$,
   there is a function
   $h : \R_+ \to \R_+$ satisfying $h(r) \leq r^{o(1)}$
   and such that almost surely \eqref{eq:uniform-growth-est} holds,
   and moreover,
   \begin{equation}\label{eq:sepsize}
      \kappa_G\left(\rho; r, h(r) r\right) \leq h(r) r^{k-1} \qquad \forall r \geq 1\,.
   \end{equation}
   Then one has the estimate:
   \[
      \E\left[\dist_G(X_0,X_T)^{d-k+1} \mid X_0=\rho\right] \leq T^{1+o(1)}\quad\textrm{as}\quad T \to \infty\,.
   \]
   In particular, when $d > k + 1$, the random walk is strictly subdiffusive.
\end{theorem}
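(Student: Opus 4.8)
The plan is to bound the annealed quantity $\E[\dist_G(X_0,X_T)\mid X_0=\rho]$ by exhibiting, at a single well-chosen scale $r_0\approx T^{1/(d-k+1)}$, a system of ``cells'' of diameter $r_0^{1+o(1)}$ such that the walk must pay a boundary crossing each time it moves to a new cell, and then to show that a stationary walk can afford only about $T\,r_0^{k-1-d}$ such crossings in $T$ steps. Tracking the walk through the cells then gives a displacement bound $\dist_G(X_0,X_T)\lesssim r_0^{1+o(1)}\cdot(\#\text{crossings})$, and optimizing $r_0$ against the resulting estimate $T\,r_0^{k-d+o(1)}+r_0^{1+o(1)}$ produces the exponent $1/(d-k+1)$.

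First I would record two reductions. The volume bound \eqref{eq:uniform-growth-est} at $r=1$ forces $\deg_G(\rho)\le h(1)$ almost surely, so $G$ has uniformly bounded degree $\Delta:=\lceil h(1)\rceil$; and, by unimodularity, the almost-sure hypotheses \eqref{eq:uniform-growth-est} and \eqref{eq:sepsize} hold, almost surely, simultaneously at every vertex of $G$. Next, pass to the degree-biased law $\hat\mu$ proportional to $\deg_G(\rho)\,d\mu$, under which the re-rooted simple random walk $(G,X_t)$ is a stationary, reversible sequence; since $\deg_G(\rho)\in[0,\Delta]$, the laws $\hat\mu$ and $\mu$ are mutually absolutely continuous with density bounded by $\Delta$, so it suffices to bound $\E_{\hat\mu}[\dist_G(X_0,X_T)]$.

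Now fix $r_0\ge 1$. Choose equivariantly (using i.i.d.\ vertex marks) a maximal set $\mathcal N\subseteq V(G)$ of vertices with pairwise $\dist_G$-distance $\ge r_0$; for each $v\in\mathcal N$ let $U_v$ be a vertex separator witnessing \eqref{eq:sepsize} at scale $r_0$, and let $C_v$ be the connected component of $v$ in $G\setminus U_v$. Then $B_G(v,r_0)\subseteq C_v\subseteq B_G(v,h(r_0)r_0)$, so $C_v$ has $\dist_G$-diameter at most $r_0^{1+o(1)}$ and at most $\Delta|U_v|\le r_0^{k-1+o(1)}$ boundary edges (edges of $G$ with exactly one endpoint in $C_v$), and $\mathcal N$ is an $r_0$-net. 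Set $\Phi_t:=\#\{v\in\mathcal N:\{X_{t-1},X_t\}$ is a boundary edge of $C_v\}$ and $S:=\sum_{t=1}^T\Phi_t$. Two estimates drive the proof. \emph{(Displacement.)} Track the walk greedily through cells: start in a cell containing $X_0$, and whenever the walk steps out of the current cell $C_v$ --- which forces $\{X_{t-1},X_t\}$ to be a boundary edge of $C_v$, hence contributes a unit to $\Phi_t$ --- move to a cell containing the new position. This uses at most $S+1$ cells, each of $\dist_G$-diameter $r_0^{1+o(1)}$, so $\dist_G(X_0,X_T)\le (S+1)\,r_0^{1+o(1)}$. \emph{(Crossing budget.)} By stationarity $\E_{\hat\mu}[S]=T\,\E_{\hat\mu}[\Phi_1]$, and $\E_{\hat\mu}[\Phi_1]$ is controlled via the mass-transport principle applied to $\mu$: transporting, from each vertex $x$, one unit along each edge $e\ni x$ to every $v\in\mathcal N$ for which $e$ is a boundary edge of $C_v$, the mass received by a vertex $\rho$ is $2|\partial C_\rho|\,\Indc[\rho\in\mathcal N]\le r_0^{k-1+o(1)}\,\Indc[\rho\in\mathcal N]$; and a second mass transport, using that the balls $\{B_G(v,r_0/3):v\in\mathcal N\}$ are pairwise disjoint and each of size $\ge r_0^{d-o(1)}$ (volume lower bound), gives $\mu(\rho\in\mathcal N)\le r_0^{-d+o(1)}$. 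Combining, $\E_{\hat\mu}[\Phi_1]\le r_0^{k-1-d+o(1)}$, using also $\deg_G(\rho)\le\Delta$ to convert the $\mu$-computation into the $\hat\mu$-expectation.

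Putting the two estimates together, $\E_{\hat\mu}[\dist_G(X_0,X_T)]\le \bigl(T\,r_0^{k-1-d+o(1)}+1\bigr)\,r_0^{1+o(1)}=T\,r_0^{k-d+o(1)}+r_0^{1+o(1)}$; choosing $r_0=T^{1/(d-k+1)}$ (note $r_0\ge 1$, and $k\le d-1$ so the first term really decreases in $r_0$) balances the two terms and yields $\E_{\hat\mu}[\dist_G(X_0,X_T)]\le T^{1/(d-k+1)+o(1)}$, hence the same bound under $\mu$ after undoing the bounded density. I expect the crossing-budget step to be the main obstacle: one must extract, from the \emph{single-scale} hypothesis \eqref{eq:sepsize} alone (without the ``uniform decomposability'' used in the recurrence theorems), a cell system against which the walk's net progress can be charged, and then arrange the mass-transport bookkeeping so that the separator exponent $k-1$ and the volume exponent $d$ combine into exactly $r_0^{k-1-d}$; everything else --- the two reductions, the greedy displacement count, the optimization, and the tracking of the $r_0^{o(1)}$ factors coming from $h$ --- is routine. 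If the conclusion is intended as an almost-sure bound on the quenched expectation $\E[\,\cdot\mid G,X_0=\rho]$, one upgrades by applying the annealed bound along a geometric sequence of values of $T$ with a slightly smaller exponent, together with monotonicity and Borel--Cantelli.
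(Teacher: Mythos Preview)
Your argument is correct and takes a genuinely different route from the paper's proof. The paper proceeds via the conformal-metric machinery that is the main theme of the work: it uses \pref{lem:barriers-intro} (a close cousin of your cell construction) to produce, for each dyadic scale $2^j$, a unimodular ``barrier'' set $W_{2^j}$ with $\Pr[\rho\in W_{2^j}]\le 2^{j(k-1-d)+o(j)}$; it then builds normalized conformal weights $\omega_j=\1_{W_{2^j}}/\sqrt{\Pr[\rho\in W_{2^j}]}$, combines all scales into a single weight $\omega=\sqrt{\sum_j \omega_j^2/j^2}$, and invokes Markov type (\pref{thm:unimodular-markov-type-intro}) to show the walk is diffusive in $\dist_\omega$. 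The H\"older comparison $\dist_G\lesssim \dist_\omega^{2/(d-k+1)+o(1)}$ then yields the exponent.

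Your approach bypasses both the conformal metric and Markov type entirely: you work at a single scale $r_0$, charge the displacement directly to boundary-edge crossings of overlapping cells, and bound the expected number of crossings by a first-moment mass-transport computation. This is more elementary and self-contained --- no Bourgain embedding, no Markov type theory, no multi-scale gluing --- and the bookkeeping of the two mass transports (one for $|\partial C_\rho|\,\1_{\{\rho\in\mathcal N\}}$ giving $r_0^{k-1}$, one for the net density giving $r_0^{-d}$) isolates exactly where the two exponents enter. The paper's approach, on the other hand, is what ties this result into the paper's broader program: the same conformal-weight construction feeds into the spectral-dimension and heat-kernel bounds, and the Markov type framework handles the more delicate moment hypotheses of \pref{thm:uipq-speed} where degrees are unbounded. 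Your single-scale crossing count would need more care there, since the reduction to bounded degree via $|B_G(\rho,1)|\le h(1)$ is no longer available.
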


One can view this result as saying that when a graph has $d$-dimensional volume
growth, but $k$-dimensional isoperimetry and $d > k + 1$, then
this discrepancy necessitates subdiffusivity.
We now elaborate on the proof of \pref{thm:subd-intro-3} in
the special case when $G$ is almost surely planar.
In particular, one will find in the argument below
the relatively simple construction of a useful discrete
conformal metric.

\medskip
\noindent
{\bf Markov type of normalized conformal metrics.}
A central tool is K. Ball's notion of Markov type \cite{Ball92}.

\begin{definition}[Markov type]
   \label{def:markov-type}
   A metric space $(X,d)$ is said to have {\em Markov type $p \in [1,\infty)$} if there is a constant $M > 0$ such that for every $n \in \mathbb N$, the following holds. For every reversible Markov chain $\{Z_t\}_{t=0}^{\infty}$ on $\{1,\ldots,n\}$, every mapping $f : \{1,\ldots,n\} \to X$, and every time $t \in \mathbb N$, \begin{equation}\label{eq:mtype} \E \left[d(f(Z_t), f(Z_0))^p\right] \leq M^p t \, \E \left[d(f(Z_0), f(Z_1))^p\right]\,, \end{equation} where $Z_0$ is distributed according to the stationary measure of the chain. One denotes by $M_p(X,d)$ the infimal constant $M$ such that the inequality holds. \end{definition}

   In \pref{sec:diffusive}, basic Markov type theory 
   is used to prove the following.

\begin{theorem}\label{thm:unimodular-markov-type-intro}
   Suppose that $(G,\rho)$ is a unimodular random graph that almost
   surely satisfies
   \[
      |B_G(\rho,r)| \leq C r^q \qquad \forall r \geq 1
   \]
   for some numbers $C,q\geq 1$.
   Then for any normalized conformal metric $\omega$ on $(G,\rho)$, the following holds:
   For any $q' \geq 1$ and $T \geq 2$,
   \[
      \E\left[T^{q'} \wedge \dist_{\omega}(X_0,X_T)^2 \mid X_0=\rho\right] \leq C'T (\log T)^2\,,
   \]
   where $C'=C'(C,q,q')$.
\end{theorem}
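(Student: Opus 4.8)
The plan is to apply K.~Ball's theory of Markov type. The two external inputs are: (i) Hilbert space has Markov type~$2$ with constant~$1$ (Ball), and (ii) Bourgain's embedding theorem, by which every $N$-point metric space embeds bi-Lipschitzly into Hilbert space with distortion $O(\log N)$ and therefore has Markov type~$2$ with constant $O(\log N)$. The whole game is to ensure that the relevant ``$N$'' is polynomial in $T$, so that this Markov type constant is $O_{C,q}(\log T)$, even though $(G,\rho)$ is typically infinite; the truncation by $T^{q'}$ and unimodularity are exactly what make this possible.

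First I would pass to a finite model. The trivial metric $\omega\equiv\1$ is normalized, so the volume hypothesis yields $\dimconfover(G,\rho)\le q<\infty$; by the amenability result recalled in \pref{sec:amenable}, $(G,\rho)$ is then invariantly amenable, hence a distributional limit $\{(G_n,\omega_n)\}\todl(G,\omega,\rho)$ of finite graphs carrying the conformal weight as a mark (truncating $\omega$ at a large constant and letting it grow at the end to deal with unbounded marks). Since $\min\{T^{q'},\dist_{\omega}(X_0,X_T)\}^2$ is bounded by $T^{2q'}$ and, after replacing $\dist_\omega$ by the conformal metric of the induced ball $B_G(\rho,T)$, is a local function of the $T$-neighborhood of the root together with the length-$T$ walk, it suffices to bound $\E[\min\{T^{q'},\dist_{\omega_n}(X_0,X_T)\}^2\mid X_0=\rho_n]$ for $\rho_n\in V(G_n)$ uniform, up to an $o(1)$ error. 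This is one place the truncation earns its keep: without it the limiting quantity need not be captured by $\todl$.

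Now localize and apply Markov type. In $T$ steps the walk from $\rho_n$ stays inside $B_{G_n}(\rho_n,T)$, a set of at most $C(T+1)^q$ vertices; equip it with the restricted conformal metric. Bourgain's theorem gives Markov type~$2$ there with a constant $M\le O_{C,q}(\log T)$, uniform over $n$ and over the realization. The Markov type inequality applied to the identity map then bounds the squared displacement over $T$ steps by $M^2T$ times the one-step squared displacement. For the latter, $X_0$ and $X_1$ are adjacent, so $\dist_{\omega_n}(X_0,X_1)=\tfrac12(\omega_n(X_0)+\omega_n(X_1))$ and hence $\dist_{\omega_n}(X_0,X_1)^2\le\tfrac12(\omega_n(X_0)^2+\omega_n(X_1)^2)$; taking expectations and using the mass-transport principle with the normalization $\E[\omega(\rho)^2]=1$ and the control on $\deg_G(\rho)$ forced by the hypothesis (the $r=1$ case gives $\deg_G(\rho)\le C-1$), the one-step term is $O_C(1)$ in the limit. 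Combining yields the claimed $O_{C,q,q'}(T(\log T)^2)$.

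The step I expect to be the main obstacle is precisely keeping the power of $T$ equal to~$1$. The Markov type inequality concerns a chain started from its stationary measure, whereas here the walk starts at the root; a careless conversion loses a factor $\asymp 1/\pi(\rho)\asymp |B_{G_n}(\rho_n,T)|\asymp T^q$, which would degrade the estimate to $T^{q+1}(\log T)^2$. The fix is to exploit that for a unimodular random graph the environment seen from the random walk is stationary (after the degree-biasing, which is benign here as degrees are bounded), so that no such $1/\pi(\rho)$ penalty is incurred; the truncation at $T^{q'}$ is then used to absorb the low-probability excursions on which this stationarity-based bookkeeping would otherwise break down. Once this is arranged, the remaining ingredients---Ball's inequality, Bourgain's embedding, and the mass-transport estimate for the one-step term---are routine.
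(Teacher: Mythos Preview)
Your overall strategy---Markov type $2$ via Ball plus Bourgain, with the $(\log T)^2$ coming from the Markov type constant of a metric space of size polynomial in $T$---matches the paper's, and you correctly isolate the one genuine obstacle: the Markov type inequality requires the chain to start from its stationary measure.

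The gap is in your proposed fix. You localize to $B_{G_n}(\rho_n,T)$, but this ball is \emph{centered at the root}; the root sits at a distinguished point, not distributed according to the stationary measure of the restricted walk on that ball. Equivalently, the Bourgain embedding of $B_{G_n}(\rho_n,T)$ into Hilbert space depends on $\rho_n$, so it is not a single fixed map $f$ to which the Markov type inequality can be applied. ``Stationarity of the environment seen from the walker'' gives stationarity on all of $G_n$, not on the ball: it would let you apply Markov type to the full walk on $G_n$ with $Z_0=\rho_n$ stationary (after degree-biasing), but then the relevant constant is $M_2(V(G_n),\dist_{\omega_n})=O(\log|V(G_n)|)$, not $O(\log T)$. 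You cannot have it both ways with a root-centered ball.

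The paper's fix is to replace the root-centered ball by the cluster of the root in a \emph{unimodular} random partition into pieces of bounded $\dist_G$-diameter (\pref{lem:bdd-folner}): a bond percolation $\xi_j$ on $(G,\rho)$ with $\diam_G(K_{\xi_j}(\rho))\le 2j$ and $\Pr[B_G(\rho,T)\nsubseteq K_{\xi_j}(\rho)]$ small once $j$ is a suitable polynomial in $T$. Because $\xi_j$ does not depend on which vertex is the root, mass-transport shows that (in the reversible version, which is equivalent here since degrees are bounded) the root is distributed according to the stationary measure $\pi_{K_{\xi_j}(\rho)}$ of the restricted walk on its own cluster. Now the Markov type inequality applies directly to this finite reversible chain; the cluster has at most $|B_G(\rho,2j)|\le C(2j)^q$ vertices, so Bourgain gives $M_2=O(\log T)$; on the high-probability event $B_G(\rho,T)\subseteq K_{\xi_j}(\rho)$ the restricted and true walks agree for $T$ steps; and the truncation at $T^{q'}$ absorbs the complementary event exactly as you anticipated. (Your detour through finite approximants $G_n$ is then unnecessary: $\xi_j$ is built directly on the infinite graph.)
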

We remark that the truncation by $T^{q'}$ is a technical matter that will not play
a significant role in the application of \pref{thm:unimodular-markov-type-intro}
to speed in the graph metric.
We will compare $\dist_{\omega}(X_0,X_T)$ to $\dist_G(X_0,X_T)$ and,
since $\dist_G(X_0,X_T) \leq T$ always holds, the truncation will be irrelevant
after passing to the graph distance; see the argument below.

Thus in order to prove a subdiffusive estimate for the speed in $\dist_G$, 
it suffices to construct a normalized conformal metric on $(G,\rho)$
with a suitable relationship between $\dist_{\omega}$ and $\dist_G$.

\begin{figure}
   \begin{center}
   \subfigure[Separating two spheres in $G$]{ \includegraphics[width=5cm]{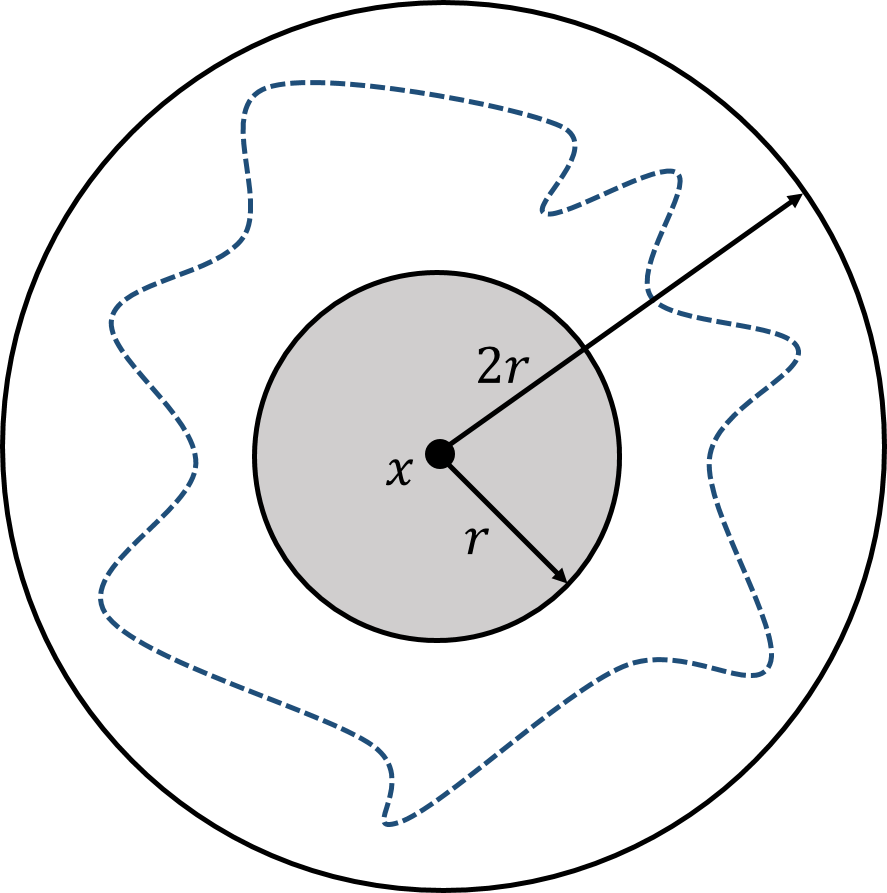}\label{fig:annulus-separator} } \hspace{0.6in}
\subfigure[Laying down a conformal barrier]{  \includegraphics[width=5.3cm]{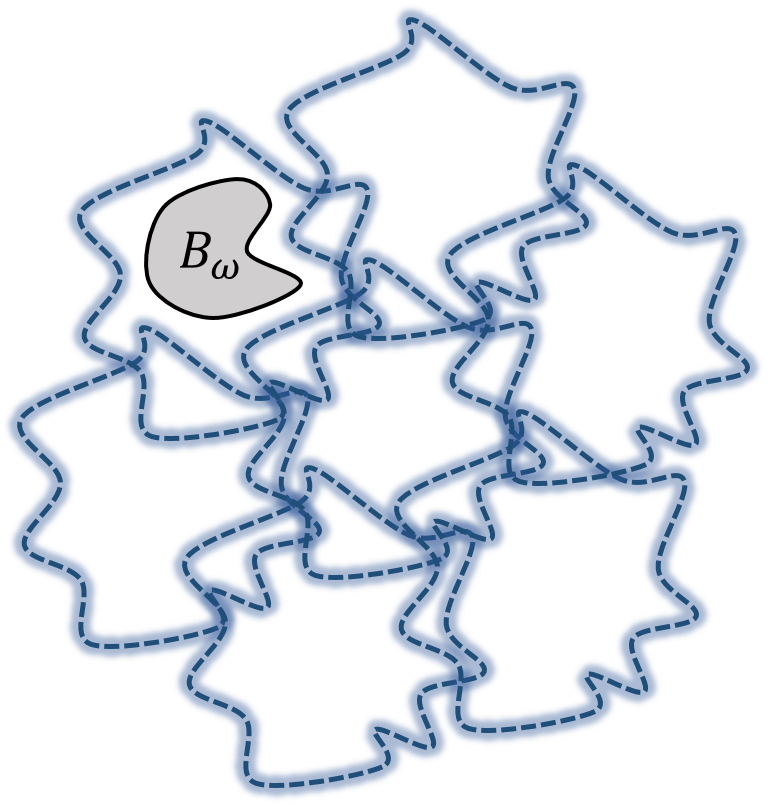}\label{fig:barrier} } \caption{Establishing subdiffusivity} \end{center}
\end{figure}

\medskip
\noindent
{\bf Weights from separators.}
First let us fix a radius $s \geq 1$.
By iteratively ``cutting out'' separators guaranteed by \eqref{eq:sepsize}
in a unimodular way
(see \pref{fig:barrier}), a generalization
of the following fact is established in \pref{sec:separators} (see
\pref{lem:separators-gen}).

\begin{lemma}\label{lem:barriers-intro}
   Suppose the assumptions of \pref{thm:subd-intro-3} hold.
   For every $s > 0$, there is a triple $(G,\rho,W_s)$ that
   is unimodular as a marked network, and such that the following holds:
   \begin{enumerate}
      \item $\Pr\left[\rho \in W_{s}\right] \leq s^{k-1-d+o(1)}$,
      \item Almost surely, every connected component of $G[V(G) \setminus W_{s}]$
         has diameter at most $s$ (in the metric $\dist_G$).
   \end{enumerate}
\end{lemma}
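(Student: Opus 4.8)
The plan is to produce $W_r$ by a single iterative ``cutting'' at one well-chosen scale, driven by an auxiliary i.i.d.\ labelling of the vertices, so that $(G,\rho,W_r)$ is an equivariant factor of a unimodular marked graph and is therefore itself unimodular. I use throughout that finite volume growth \eqref{eq:uniform-growth-est} forces every ball $B_G(x,t)$ to be a.s.\ finite, that we may assume $h\ge 2$ without loss, and that by the mass-transport principle the at-the-root estimates \eqref{eq:uniform-growth-est} and \eqref{eq:sepsize} upgrade to hold a.s.\ simultaneously at \emph{every} vertex $x$ and \emph{every} $t\ge 1$.

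Fix $s=s(r)$ to be the largest integer with $2\,h(s)\,s\le r$; since $h(t)\le t^{o(1)}$ this gives $s=r^{1-o(1)}$. Attach to each $v\in V(G)$ an independent uniform label $U_v\in[0,1]$; the marked graph $(G,\rho,\{U_v\})$ is unimodular. Let $G^{(s)}$ be the graph on $V(G)$ with $u\sim v$ iff $0<\dist_G(u,v)\le s$; by \eqref{eq:uniform-growth-est} it is a.s.\ locally finite with an a.s.\ uniform degree bound $D$. Define $\cC\subseteq V(G)$ by the recursion: $v\in\cC$ iff no $G^{(s)}$-neighbour $w$ of $v$ with $U_w<U_v$ lies in $\cC$. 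A first-moment estimate (the expected number of $U$-decreasing $G^{(s)}$-paths of length $\ell$ from $\rho$ is at most $D^\ell/(\ell+1)!\to 0$) together with mass transport shows that a.s.\ there are no infinite $U$-decreasing $G^{(s)}$-paths, so the recursion is well founded and $\cC$ is a.s.\ a well-defined maximal $s$-separated subset of $V(G)$. For each $c\in\cC$, let $S_c$ be the $U$-lexicographically-least minimum-cardinality subset of $B_G(c,h(s)s)\setminus B_G(c,s)$ that separates $B_G(c,s)$ from $V(G)\setminus B_G(c,h(s)s)$ in $G$; it exists and $|S_c|=\kappa_G(c;s,h(s)s)\le h(s)s^{k-1}$ by \eqref{eq:sepsize}. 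Set $W_r:=\bigcup_{c\in\cC}S_c$. Each object in this chain is a measurable equivariant function of $(G,\rho,\{U_v\})$, so $(G,\rho,W_r)$ is unimodular, as the statement requires.

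To verify the diameter clause, note that by maximality of $\cC$ every $x\in V(G)$ lies in $B_G(c,s)$ for some $c\in\cC$; since $S_c$ separates $B_G(c,s)$ from $V(G)\setminus B_G(c,h(s)s)$, the component of $x$ in $G-S_c$ — hence also in $G-W_r$, as $S_c\subseteq W_r$ and removing more vertices only shrinks components — is contained in $B_G(c,h(s)s)$, whose $\dist_G$-diameter is at most $2h(s)s\le r$. For the density clause I apply mass transport twice. First, the balls $\{B_G(c,\lfloor s/3\rfloor)\}_{c\in\cC}$ are pairwise disjoint and, by \eqref{eq:uniform-growth-est}, each has size $\ge s^{d-o(1)}$; transporting one unit of mass from each $c\in\cC$ to each vertex of its ball yields $\Pr[\rho\in\cC]\le s^{-d+o(1)}$. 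Second, transporting one unit of mass from each $c\in\cC$ to each vertex of $S_c$ yields $\Pr[\rho\in W_r]\le\E\big[\#\{c\in\cC:\rho\in S_c\}\big]\le h(s)\,s^{k-1}\,\Pr[\rho\in\cC]\le s^{k-1-d+o(1)}$. Since $s=r^{1-o(1)}$ and $k-1-d<0$, this is $r^{k-1-d+o(1)}$. (For the finitely many $r$ with no admissible integer $s\ge 1$, take $W_r:=V(G)$, which satisfies both clauses trivially.)

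The step that genuinely needs care is the first: arranging the center set $\cC$ and the separators $\{S_c\}$ as a bona fide equivariant factor of the i.i.d.\ labels — in particular the a.s.\ well-definedness of the greedy maximal $s$-separated set (no infinite decreasing path) and the measurability of the tie-broken minimal separator — so that the output really is a unimodular marked network. The remaining bookkeeping (the choice of $s(r)$ and the propagation of the $h=r^{o(1)}$ factors through $h(s)s^{k-1-d}$) is routine, and each of the two quantitative estimates is a single application of the mass-transport principle. The paper carries this out in a form that controls all scales simultaneously rather than the single shell treated here (\pref{lem:separators-gen}), but the mechanism is the same.
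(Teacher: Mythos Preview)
Your proof is correct, but your construction differs from the paper's in a meaningful way. You first build a sparse ``center set'' $\cC$ (a maximal $s$-separated set via greedy selection with i.i.d.\ labels), then take the full separator $S_c$ at each center; two mass transports handle the density of $\cC$ and the total separator mass respectively. The paper's \pref{lem:separators-gen} proceeds differently: it places a separator $U_x$ at \emph{every} vertex $x$, but stochastically thins each one by deleting from $U_x$ any point already lying in the ball $B_G(y,r)$ of some vertex $y$ with smaller label $\beta_y$. The expected surviving mass of $U_x$ is then $\sum_{y\in U_x}1/|B_G(y,r)|$, and a single mass transport finishes. The paper's route avoids the well-foundedness argument for the greedy maximal set (no infinite decreasing paths) and yields the slightly more general bound $\Pr[\rho\in W_{r,r'}]\le\E[q_G(\rho;r,r')]$, which uses only ball sizes at separator points rather than a uniform volume lower bound. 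Your approach, on the other hand, is perhaps more conceptually modular: sparsify, then separate.

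One small correction to your closing remark: \pref{lem:separators-gen} is also a single-scale statement (fixed $r,r'$), not a multi-scale one, and its ``mechanism'' is not the same as yours---it is precisely the thinning-vs-centering distinction above.
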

Heuristically, these parameters make sense:  The separator occupies
$s^{k-1+o(1)}$ nodes out of the $s^{d+o(1)}$ nodes in the $s$-ball.
Now we define, for every $j \in \N$, the normalized conformal weight:
\[
   \omega_j \seteq \frac{\1_{W_{2^j}}}{\sqrt{\Pr[\rho \in W_{2^j}]}}\,,
\]
and we note that from \pref{lem:barriers-intro}(1), we have
\begin{equation*}
   \omega_j \geq 2^{j(d-k+1)/2-o(1)} \1_{W_{2^j}}\,.
\end{equation*}
In particular, combined with \pref{lem:barriers-intro}(2), this implies that
\begin{equation}
   \label{eq:atleast}
   \dist_{G}(x,y) > 2^j \implies \dist_{\omega_j}(x,y) \geq 2^{j(d-k+1)/2-o(1)}\,.
\end{equation}

Finally, consider the normalized metric
\[
   \hat{\omega}_T \seteq \sqrt{\frac{1}{\lceil \log_2 T\rceil} \sum_{j=0}^{\lceil\log_2 T\rceil} \omega_j^2}
\]

From \eqref{eq:atleast}, we have almost surely for every $x,y \in V(G)$ with $\dist_G(x,y) \leq T$:
\[
   \dist_G(x,y) \leq \dist_{\hat{\omega}_T}(x,y)^{2/(d-k+1)} T^{o(1)} \quad\textrm{as}\quad T \to \infty\,.
\]
In particular, for every $T \geq 1$:
\begin{align*}
   \E\left[\dist_G(X_0, X_T)^{d-k+1}\right] &= 
   \E\left[T^{d-k+1} \wedge \dist_G(X_0, X_T)^{d-k+1}\right] \\
   &\leq T^{o(1)} \E\left[T^{d-k+1} \wedge \dist_{\hat{\omega}_T}(X_0,X_T)^{2}\right]\\
   &\leq T^{1+o(1)}\,,
\end{align*}
where the final inequality follows from \pref{thm:unimodular-markov-type-intro}.
This yields \pref{thm:subd-intro-3}.

\pref{thm:subd-intro-1} is not strong enough to reproduce \eqref{eq:UIPQ}
because even the vertex degrees in UIPQ are unbounded, and thus
no uniform estimate of the form \eqref{eq:uniform-growth-est} can hold.
The next result remedies this.
It shows that if we have a $(1+\delta)$-moment bound on the size of balls
and a stretched exponential lower tail, then our methods
can still be applied.

\begin{theorem}\label{thm:uipq-speed}
   Suppose that $(G,\rho)$ is a unimodular random graph that satisfies the following
   conditions for some numbers $C > 0$ and $d > 3$:
   \begin{enumerate}
      \item For every $r \geq 2$:  $\ \E |B_G(\rho,r)| \leq r^C$.
      \item The degree of the root has exponential tails:
         \[\Pr[\deg_G(\rho) > \lambda] \leq e^{-\lambda/C} \qquad \forall \lambda \geq 1\,.\]
   \end{enumerate}
    Assume, additionally, that one of the following two conditions holds for some $\alpha > 0$:
    \begin{enumerate}
       \item[(A)] {\bf Planar; volume statistics.}
          \begin{enumerate}
             \item[(i)] $G$ is almost surely planar, and
             \item[(ii)] For some $q > 1$
                and for every $r \geq 2$:
                \begin{align*}
                   \left(\E |B_G(\rho,6 r)|^q\right)^{1/q} &\leq C r^d\,, \\
                   \Pr\left[|B_G(\rho,r)| < \frac{\e}{C} r^d\right] &\leq \exp(-(1/\e)^{2/\alpha})
                \qquad \forall \e > 0\,.
               \end{align*}
         \end{enumerate}
      \item [(B)] {\bf Unimodular shattering.}
         For every $r \geq 2$, there is a random subset $W_r \subseteq V(G)$ such
         that $(G,\rho,W_r)$ is unimodular (as a marked network), and
         \begin{enumerate}
            \item [(i)] $\Pr[\rho \in W_r] \leq C r^{1-d} (\log r)^{\alpha}$
            \item [(ii)] Almost surely, every component of $G[V(G) \setminus W_r]$ has diameter
               at most $r$ in $\dist_G$.
         \end{enumerate}
   \end{enumerate}
   Then the random walk on $(G,\rho)$ is almost surely subdiffusive:
   There is a constant $C' \geq 1$ such that for all $T \geq 2$,
   \[
      \E\left[\dist_G(X_0,X_T)^{d-1} \mid X_0=\rho\right] \leq C' T (\log T)^{\alpha(d-1)+5}.
   \]
\end{theorem}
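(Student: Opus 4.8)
The plan is to establish the estimate in two stages: first reduce the planar hypothesis (A) to the ``unimodular shattering'' hypothesis (B), and then derive the speed bound from (B) by a conformal-metric construction of exactly the type used in \pref{thm:subd-intro-3} and \pref{thm:subd-intro-1}, modified to tolerate unbounded degrees and unbounded ball volumes. Since (B) already furnishes the barrier sets $W_r$ directly, the entire content of stage one is to manufacture such $W_r$ from planarity and the volume statistics in (A)(ii).

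\smallskip
\noindent\emph{Stage 1: deriving (B) from (A).} We want, for each $r \ge 2$, a random $W_r \subseteq V(G)$ with $(G,\rho,W_r)$ unimodular, $\Pr[\rho \in W_r] \le C r^{1-d}(\log r)^{\alpha}$, and every component of $G[V(G) \setminus W_r]$ of $\dist_G$-diameter at most $r$. This is the output of the unimodular cell-decomposition of \pref{sec:separators} (cf. \pref{lem:barriers-intro} and \pref{lem:separators-gen}), which carves the graph into cells of diameter $\le r$ by iteratively cutting out, around centers $v$ chosen in a unimodular fashion, the Benjamini--Papasoglu separators of size $r^{1+o(1)}$ supplied by \pref{lem:bp11}. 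The subtlety is that \pref{lem:bp11} requires the two-sided volume bound \pref{eq:uniform-growth-est} to hold for the realization, while (A)(ii) provides it only with high probability. One therefore only invokes \pref{lem:bp11} at centers $v$ whose balls $B_G(v,\cdot)$ lie within a polylogarithmic factor of $r^d$ at the scales $\asymp r$ used in the construction; the lower half of this good event fails only with the stretched-exponential probability $\exp(-(1/\e)^{2/\alpha})$, which for $\e \asymp (\log r)^{-\alpha/2}$ (chosen slightly below, so that the exponent exceeds $\log r$) is super-polynomially small, and the upper half fails with probability governed by the $q$-th moment bound in (A)(ii). Running the mass-transport accounting that charges each separator against the cell it cuts out, the good centers contribute at most $r^{1-d}(\log r)^{O(\alpha)}$ to $\Pr[\rho\in W_r]$, and a calibrated choice of the polylog truncation brings the total down to $C r^{1-d}(\log r)^{\alpha}$. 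The exponential tail on $\deg_G(\rho)$ enters here, and again in stage two, in the routine way: to pass between the law of the root and the degree-biased (stationary) law, and to keep degrees along the relevant finite approximations polylogarithmic.

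\smallskip
\noindent\emph{Stage 2: from (B) to the estimate.} Given the barriers $\{W_{2^j}\}_{j \ge 1}$ (coupled as joint marks of one $(G,\rho)$), set
\[
   \omega_j \seteq \frac{\1_{W_{2^j}}}{\sqrt{\Pr[\rho \in W_{2^j}]}}\,, \qquad
   \omega \seteq \sqrt{\tfrac{6}{\pi^2} \sum_{j \ge 1} \tfrac{\omega_j^2}{j^2}}\,,
\]
so that $\omega$ is a normalized conformal metric on $(G,\rho)$. From (B)(a), $\omega_j \ge c\, 2^{j(d-1)/2} j^{-\alpha/2} \1_{W_{2^j}}$, hence $\omega \gtrsim 2^{j(d-1)/2} j^{-\alpha/2-1} \1_{W_{2^j}}$; since (B)(b) forces every $x$--$y$ path with $\dist_G(x,y) > 2^j$ to meet $W_{2^j}$, one obtains the analogue of \pref{eq:atleast},
\[
   \dist_G(x,y) > 2^j \ \Longrightarrow\ \dist_{\omega}(x,y) \ \ge\ c'\, 2^{j(d-1)/2} j^{-\alpha/2-1}\,,
\]
and therefore, almost surely for $\dist_G(x,y)$ large,
\[
   \dist_G(x,y) \ \le\ C''\, \dist_{\omega}(x,y)^{2/(d-1)} \bigl(\log \dist_{\omega}(x,y)\bigr)^{(\alpha+2)/(d-1)}\,.
\]
Applying \pref{thm:unimodular-markov-type-intro} to $\omega$ with $q' = \lceil (d-1)/2 \rceil$ gives $\E\bigl[\min\{T^{d-1}, \dist_{\omega}(X_0,X_T)^2\} \mid X_0 = \rho\bigr] \le C' T (\log T)^2$. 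Splitting on whether $\dist_{\omega}(X_0,X_T) \le T^{(d-1)/2}$ (and using $\dist_G(X_0,X_T) \le T$ in the complementary regime), then applying Jensen's inequality to the concave map $s \mapsto s^{1/(d-1)}$, yields
\[
   \E\bigl[\dist_G(X_0,X_T) \mid X_0 = \rho\bigr] \ \lesssim\ (\log T)^{(\alpha+2)/(d-1)} \bigl(T(\log T)^2\bigr)^{1/(d-1)} \ =\ T^{1/(d-1)} (\log T)^{(\alpha+4)/(d-1)}\,,
\]
and since $(\alpha+4)/(d-1) \le \alpha/2 + 5/(d-1)$ for every $d \ge 3$ and $\alpha \ge 0$, this is the claimed bound (bounded ranges of $T$ being absorbed into the constant).

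\smallskip
\noindent\emph{Main obstacle.} The two delicate points both stem from having only moment/tail control on the volume of balls rather than an almost-sure polynomial bound. First, \pref{thm:unimodular-markov-type-intro} as stated assumes $|B_G(\rho,r)| \le Cr^q$ almost surely, whereas hypothesis (1) gives only $\E|B_G(\rho,r)| \le r^C$; one must re-run the finite-approximation argument of \pref{sec:diffusive} restricted to the high-probability event $\{|B_G(\rho,T)| \le T^{O(1)}\}$ and absorb its polynomially small complement into the additive error, which is harmless because of the cap $T^{2q'}$ inside the minimum. Second, in stage one the $q$-th moment bound controls the \emph{upper} volume slack only polynomially (not polylogarithmically) in $r$, so the naive remedy of dumping an atypically large ball into $W_r$ is too expensive; instead one must estimate the exceptional mass transport directly via the $q$-th moment and verify that the truncation thresholds can be tuned so that the logarithmic exponent in $\Pr[\rho \in W_r]$ comes out to be exactly $\alpha$. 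Making this bookkeeping land on the stated exponent $\alpha/2 + 5/(d-1)$---rather than on something with a genuinely larger power of $\alpha$ or an uncontrolled $o(1)$ factor---is the crux of the argument.
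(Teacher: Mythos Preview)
Your two-stage plan matches the paper's proof, and the core ideas---BP separators to get (A)$\Rightarrow$(B), barrier-based conformal weights $\omega_j$, then a Markov-type bound---are the same. Two technical differences are worth noting. First, the paper does not use the normalized $\ell^2$-sum $\omega = \sqrt{\sum_j \omega_j^2/j^2}$; instead it takes the \emph{truncated} unnormalized weight $\omega^{(T)} = \sqrt{\sum_{j \le \log_2 T} \omega_j^2}$, for which $\E[\omega^{(T)}(\rho)^2] \lesssim \log T$. This avoids the extra $j^{-1}$ loss in your distance comparison and is what makes the log exponent land exactly on $\alpha/2 + 5/(d-1)$; with your weight and the correct Markov-type input one gets $(\alpha+6)/(d-1)$, which slightly overshoots at $d=3$. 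Second, your ``Main obstacle'' about \pref{thm:unimodular-markov-type-intro} requiring almost-sure growth is already handled in the paper: \pref{thm:unimodular-markov-type}\eqref{eq:umt3} gives the bound $C' T(\log T)^4 \E[\omega(\rho)^2]$ assuming only $\E|B_G(\rho,r)| \le Cr^q$ and exponential degree tails, which are precisely hypotheses (1)--(2). So no ad hoc high-probability restriction is needed.

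For Stage~1, the paper's mechanism is cleaner than the truncation you outline: rather than discarding centers with atypical ball volumes, \pref{lem:separators-gen} charges each separator point $y$ with weight $1/|B_G(y,r)|$ in the mass transport, yielding $\Pr[\rho \in W_{r,6r}] \le \E[q_G(\rho;r,6r)] \lesssim \E\bigl[r\,|B_G(\rho,6r)|/\nu_G(\rho,r)^2\bigr]$ where $\nu_G(\rho,r) = \min_{y \in B_G(\rho,6r)} |B_G(y,r)|$. A single application of H\"older's inequality---the $q$-th moment for the numerator, a tail bound on $\nu_G^{-2}$ derived from the stretched-exponential lower tail via another mass transport---then gives the $r^{1-d}(\log r)^{\alpha}$ bound directly, with no threshold tuning.
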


\medskip
\noindent
{\bf Application to UIPT and UIPQ.}
It is well-known that UIPT and UIPQ satisfy conditions (1) and (2).
See \cite{AS03} for UIPT and \cite{BC13} and for UIPQ.
The fact that (A) is satisfied with $\alpha=4$ for UIPT/UIPQ 
is somewhat more delicate, and is discussed in \pref{sec:appendix-curien}.
In \pref{sec:separators}, it is shown that 
(A) $\implies$ (B), but we include (A) to demonstrate
that for unimodular random planar graphs,
sufficient
volume statistics are enough to yield subdiffiusive behavior of the random walk.

\subsection{Discussion and related work}

Our use of ``conformal metric'' as a vertex-weighting $\omega : V(G) \to \R_+$
is inspired by the Riemannian setting and, in particular,
the classical argument of Hersch \cite{Hersch70}, which relies
on the fact that the Laplacian is conformally invariant in dimension two.
In this analogy, $\E[\omega(\rho)^2]$ plays the role of the area.
See, for instance, \pref{thm:bumps-easy}.
When the degrees are unbounded, the families of vertex- and edge-weighted metrics
on a graph $G$ can be substantially different (see \cite{Lee16}, where
this plays a fundamental role in the main theorem).

Suppose $G$ is a finite planar graph represented as the tangency graph of interior-disjoint
circles $\{C_v : v \in V(G) \}$ with radii $\{ r_v > 0 : v \in V(G) \}$.
Considering only the weight given by $\omega(v) \seteq r_v$, the topology of the Euclidean plane is removed;
we are left only with an ``area measure'' on $G$, and the topology of the graph $G$ itself.
While it may seem we have abstracted out too much, many ``conformally-flavored'' properties remain.

For instance, if $G$ is a triangulation with degree at most $d$, the classical ring lemma
asserts the existence of a constant $C=C(d)$ such that $r_u \leq C r_v$ for $\{u,v\} \in E(G)$.
In \pref{lem:regulate}, we show that for any normalized weight $\omega : V(G) \to \R_+$,
one can choose a related normalized weight $\hat{\omega}$ that satisfies an analogous property.
Similarly, in \pref{sec:mt-speed}, we see that if $(G,\rho,\omega)$ is a reversible conformal random
planar graph with a normalized conformal weight $\omega : V(G) \to \R_+$, then
\[
   \E\!\left[\dist_{\omega}(X_0, X_T)^2 \mid X_0 = \rho\right] \leq O(T) \E[\omega(\rho)^2], \qquad \forall T \geq 1\,.
\]
In other words, the random walk has at most diffusive speed in the metric $\dist_{\omega}$,
inheriting a property of Brownian motion in the Euclidean plane.

On the other hand, it does not hold in general that $(V(G), \dist_{\omega})$ is quasisymmetric to $(V(G), \dist_G)$ (see \cite{Heinonen01}
for background on quasisymmetric and quasiconformal maps), in the same way that if $\{C_v : v \in V(G)\}$ is a circle packing of $G$ in
the Euclidean plane and $x_v \in \R^2$ is the center of $C_v$, then 
the map $v \mapsto x_v$ is not necessarily a quasisymmetry.
Indeed, a circle packing of a planar graph can only be considered as a ``snapshot'' of a possible quasisymmetric mapping at
a particular scale.
This can be made precise by considering sequences of combinatorial approximations
of metric spaces homeomorphic to the Euclidean sphere,
as in the works of Cannon \cite{cannon94} and Bonk and Kleiner \cite{bk02}.

\medskip
\noindent
{\bf Anomalous diffusion on fractals.}
The topics studied here draw from a number of areas.
We are interested largely in examples
where various notions of dimension fail to coincide,
e.g., the toplogical dimension, exponent of volume growth (in
the intrinsic metric),
speed exponent of the random walk (cf. \eqref{eq:bp-conj}),
and the exponent of the isoperimetric profile (cf. \pref{thm:subd-intro-3}).
This phenomenon is characteristic of ``anomalous diffusion'' on fractals.
We refer to \cite{Barlow04} for a discussion
of such exponents and their possible relationships.

While there are conditions under which these exponents
are related in a natural way (see, e.g., \cite{BCK05}),
those settings generally involve a detailed relationship
between the graph distance and the effective resistance metric.
In contrast, we are only able to obtain such estimates
with respect to the conformal metric $\dist_{\omega}$
(and, even then, only for most choices of the root).
In this sense, the approach taken here is related to
that of \cite{ABGN16}, where the authors obtain
similar estimates in a planar graph
under the metric pulled back from a circle packing
(under the assumption of uniformly bounded degrees).

\medskip
\noindent
{\bf Geometric analysis on manifolds.}
The eigenvalue estimate \eqref{eq:ev-bounds} 
is closely related to one established much earlier
by Korevaar \cite{Korevaar93} in
addressing a question of S. T. Yau on the spectrum of the Laplace-Beltrami
operator on orientable surfaces.  There it is shown that if
$\Omega$ is a subdomain in a complete $d$-dimensional Riemannian
manifold $(M,g_0)$ with nonnegative Ricci curvature
and $(M,\varphi g_0)$ is a finite-volume conformal metric, then
\[
   \lambda_k \leq C_d \left(\frac{k}{\vol(\Omega, \f g_0)}\right)^{d/2} \qquad k=1,2,\ldots,
\]
where $C_d$ is a constant depending only on $d$, $\{\lambda_k\}$ are the Neumann eigenvalues
of the Laplacian, and $\vol(\cdot)$ is the Riemannian volume.
For background, we refer to the lecture notes \cite{SY94}.

An immediate consequence of the uniformization theorem is the estimate
\[
   \lambda_k \leq C \frac{k}{\vol(\vvmathbb{S}^2, g)}\qquad k=1,2,\ldots
\]
for any Riemannian metric $g$ on $\vvmathbb{S}^2$.

Korevaar's approach (see also \cite{GNY04} where it is
generalized and expounded upon at length) also proceeds
by the construction of a family of disjoint capacitors
(as we do \pref{sec:bumps}).  But at a technical level,
his argument is substantially different from the one taken here:
He constructs
capacitors in the metric measure space $(\vvmathbb{S}^2, d_{\vvmathbb{S}^2}, \mu)$,
where $d_{\vvmathbb{S}^2}$ is the standard (constant curvature)
geodesic metric on the sphere, and $\mu$
is the pushforward of the volume measure of $(\vvmathbb{S}^2,g)$
under a conformal map.  In particular, one knows little about $\mu$
except that it is non-atomic.
On the other hand, our conformal metric $\dist_{\omega}$
is chosen to interact nicely with the counting measure on the graph $G$.

\medskip
\noindent
{\bf Quasisymmetric uniformization.}
The conformal growth exponent is---at least philosophically---related
to Pansu's notion of {\em conformal dimension} \cite{Pansu89}.
For a metric space $X$, this is the infimal Hausdorff dimension
of all metric spaces that are quasisymmetrically equivalent to $X$.
A closely related notion occurs implicitly in a paper of Bourdon and Pajot \cite{BP03}
and is defined by explicitly by Bonk and Kleiner \cite{BK05}:  
The {\em Ahlfors regular conformal dimension of $X$} is the infimal
$d$ such that $X$ is quasisymmetric to an Ahlfors $d$-regular metric measure space.

The quasisymmetric uniformization problem asks when a metric space
can be quasisymmetrically parameterized by a class of model spaces.
We refer to the ICM lectures of Bonk \cite{BonkICM} and Kleiner \cite{KleinerICM}
for a survey of this area.

\subsection{Preliminaries}

We use the notation $\R_+ = [0,\infty)$ and $\Z_+ = \Z \cap \R_+$.
We also employ the asymptotic notations $A \lesssim B$ and $A \leq O(B)$
to denote that $A \leq c \cdot B$ where $c > 0$ is a positive
constant that is independent of other parameters.
Since the symbol $\lesssim$ appears only finitely many times
in this paper, one could in fact take $c > 0$ to be a fixed universal
constant.

We sometimes write $[n] = \{1,2,\ldots,n\}$.
When $X$ is a finite set and $f : X \to \R$, we use the notations:
\begin{align*}
   \|f\|_{\ell^2(X)} &\seteq \sqrt{\sum_{x \in X} f(x)^2}\,, \\
   \|f\|_{L^2(X)} &\seteq \sqrt{\frac{1}{|X|} \sum_{x \in X} f(x)^2}\,.
\end{align*}

All graphs appearing in this paper are undirected and locally finite
and without loops or multiple edges.
If $G$ is such a graph, we use $V(G)$ and $E(G)$ to denote the
vertex and edge set of $G$, respectively.
If $S \subseteq V(G)$, we use $G[S]$ for the induced subgraph on $S$.
For $A,B \subseteq V(G)$, we write $E_G(A,B)$ for the set of edges
with one endpoint in $A$ and the other in $B$.
We write $\dist_G$ for the unweighted path metric on $V(G)$, and
$B_G(x,r) = \{ y \in V(G) : \dist_G(x,y) \leq r \}$
to denote the closed $r$-ball around $x \in V(G)$.
Also let $\deg_G(x)$ denote the degree of a vertex $x \in V(G)$, and $\dmax(G) = \sup_{x \in V(G)} \deg_G(x)$.
Write $G_1 \cong G_2$ to denote that $G_1$ and $G_2$ are isomorphic as graphs.
If $(G_1,\rho_1)$ and $(G_2,\rho_2)$ are rooted graphs, we write $(G_1,\rho_1) \cong_{\rho}
   (G_2,\rho_2)$ to denote the existence of a rooted isomorphism.

Consider a pseudometric space $(X,d)$ (i.e., we allow for the possibility
that $d(x,y)=0$ when $x \neq y$).  Throughout the paper,
we will deal only with complete, separable, pseudometric spaces.
For $x \in X$ and two subsets $S,T \subseteq X$, we use the notations
$d(S,T) \seteq \inf_{x \in S, y \in T} d(x,y)$ and $d(x,S) \seteq d(\{x\},S)$.
Define $\diam(S,d) \seteq \sup_{x,y \in S} d(x,y)$ and for $R \geq 0$, define
the closed balls
\[
   B_{(X,d)}(x, R) = \{ y \in X : d(x,y) \leq R\}\,.
\]
We omit the subscript $(X,d)$ if the underlying metric space is clear from context.

   \paragraph{Graph minors and region intersection graphs}

   If $H$ and $G$ are finite graphs, one says that $H$ is {\em a minor of $G$}
   if $H$ can be obtained from $G$ by a sequence of edge deletions, vertex deletions,
   and edge contractions.  If $G$ is infinite, say that $H$ is a minor of $G$
   if there is a finite subgraph $G'$ of $G$ that contains an $H$ minor.
   Recall Kuratowski's theorem:  Planar graphs are precisely the graphs that do not contain $K_{3,3}$ or $K_5$ as a minor.

   A graph $G$ is a {\em region intersection graph over $G_0$} if the vertices of $G$ correspond to
   connected subsets of $G_0$ and there is an edge between two vertices of $G$ precisely
   when those subsets intersect. More formally, there is a family of connected subsets
   $\{R_u \subseteq V_0 : u \in V\}$ such that $\{u,v\} \in E \iff R_u \cap R_v \neq \emptyset$.
   We use $\rig(G_0)$ to denote the family of all {\em finite} region intersection
   graphs over $G_0$.
   
   A prototypical family of region intersection graphs is the set of {\em string graphs}; these are the
   intersection graphs of continuous arcs in the plane.  It is not difficult to see
   that $\rig(\Z^2)$ is precisely the family of all finite string graphs (see \cite[Lem. 1.4]{Lee16}).

\subsubsection{Unimodular random graphs and distributional limits}
\label{sec:unimodular}

We begin with a discussion of unimodular random graphs and distributional limits.
One may consult the extensive reference of Aldous and Lyons \cite{aldous-lyons}.
The paper \cite{BS01} 
offers a concise introduction to distributional limits of finite planar graphs.
We briefly review some relevant points.

Let $\graphs$ denote the set of isomorphism classes of connected, locally finite graphs;
let $\rgraphs$ denote the set of {\em rooted} isomorphism classes of {\em rooted}, connected,
locally finite graphs.
Define a metric on $\rgraphs$ as follows:  $\dloc\left((G_1,\rho_1), (G_2,\rho_2)\right) = 1/(1+\alpha)$,
where
\[
   \alpha = \sup \left\{ r > 0 : B_{G_1}(\rho_1, r) \cong_{\rho} B_{G_2}(\rho_2, r) \right\}\,.
\]
$(\rgraphs, \dloc)$ is a separable, complete metric space. For probability measures $\{\mu_n\}, \mu$ on
$\rgraphs$, 
write $\{\mu_n\} \Rightarrow \mu$ when $\mu_n$ converges weakly to $\mu$ with respect to $\dloc$.

\paragraph{The Mass-Transport Principle}
Let $\rrgraphs$ denote the set of doubly-rooted isomorphism classes of doubly-rooted, connected, locally finite graphs.
A probability measure $\mu$ on $\rgraphs$ is {\em unimodular} if it obeys the following
{\em Mass-Transport Principle:}  For all Borel-measurable $F : \rrgraphs \to [0,\infty]$,
\begin{equation}\label{eq:mtp}
   \int \sum_{x \in V(G)} F(G,\rho,x) \,d\mu((G,\rho)) = \int \sum_{x \in V(G)} F(G,x,\rho)\,d\mu((G,\rho))\,.
\end{equation}
If $(G,\rho)$ is a random rooted graph with law $\mu$, and $\mu$ is unimodular,
we say that $(G,\rho)$ is a {\em unimodular random graph}.

\paragraph{Distributional limits of finite graphs}
As observed by Benjamini and Schramm \cite{BS01},
distributional limits of finite graphs are unimodular random graphs.
Consider a (possibly random) sequence $\{G_n\} \subseteq \graphs$ of finite graphs,
and let $\rho_n$ denote a uniformly random element of $V(G_n)$.  Then $\{(G_n,\rho_n)\}$
is a sequence of $\rgraphs$-valued random variables,
and one has the following.

\begin{lemma}\label{lem:dl-unimodular}
   If $\{(G_n,\rho_n)\} \todl (G,\rho)$, then $(G,\rho)$ is unimodular.
\end{lemma}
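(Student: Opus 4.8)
The plan is to first observe that every finite graph with a uniformly random root satisfies the Mass-Transport Principle \eqref{eq:mtp}, and then to push this property through the weak limit. For the first point, let $\mu_n$ denote the law of $(G_n,\rho_n)$ and put $N := |V(G_n)|$. For any Borel $F : \rrgraphs \to [0,\infty]$, evaluating $F$ on isomorphism classes,
\[
   \int \sum_{x \in V(G)} F(G,\rho,x)\,d\mu_n = \frac1N \sum_{u \in V(G_n)} \sum_{x \in V(G_n)} F(G_n,u,x)\,,
\]
and this double sum over ordered pairs is manifestly invariant under swapping the two coordinates, so it equals $\int \sum_{x} F(G,x,\rho)\,d\mu_n$. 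Hence each $\mu_n$ is unimodular, and the task reduces to passing \eqref{eq:mtp} through the convergence $\mu_n \Rightarrow \mu$.

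The obstacle is that the functionals $(G,\rho) \mapsto \sum_x F(G,\rho,x)$ and $(G,\rho)\mapsto \sum_x F(G,x,\rho)$ are, for a general $F$, neither bounded nor continuous on $(\rgraphs,\dloc)$, so the definition of weak convergence cannot be applied to them directly; since degrees are not assumed bounded, even an $F$ of bounded radial support can produce an unbounded sum, and one would obtain only a one-sided (hence unusable) inequality from the Portmanteau theorem. I would get around this by reducing, via the Monotone Convergence Theorem and the functional monotone class theorem, to verifying \eqref{eq:mtp} for the restricted class $\mathcal F_0$ of functions $F$ that, for some fixed integers $k,m$: (i) are bounded; (ii) vanish unless $\dist_G(x,y)\le k$ and $\#B_G(x,k)\le m$ and $\#B_G(y,k)\le m$; and (iii) depend only on the rooted isomorphism type of a ball of some fixed finite radius around the two marked vertices. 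An arbitrary nonnegative Borel $F$ is an iterated pointwise increasing limit of functions of this type (first letting the ball-size cutoff $m \to \infty$, then the radius cutoff $k\to\infty$, then the value cutoff $\min(F,j)$ with $j\to\infty$), and both sides of \eqref{eq:mtp} are continuous under such monotone limits by the Monotone Convergence Theorem; the passage from locally constant $F$ satisfying (i)--(ii) to all bounded Borel $F$ satisfying (i)--(ii) is the monotone class step, using that the clopen events ``$B_G(x,\ell)$ has a prescribed isomorphism type'' generate the Borel $\sigma$-algebra. Note that $\mathcal F_0$ is closed under $(x,y)\mapsto (y,x)$.

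For $F \in \mathcal F_0$, set $h_F(G,\rho) := \sum_x F(G,\rho,x)$. Any nonzero summand requires both $\#B_G(\rho,k)\le m$ and $x \in B_G(\rho,k)$, so there are at most $m$ of them and $h_F$ is bounded by $m\|F\|_\infty$; moreover $h_F(G,\rho)$ is determined by the isomorphism type of $B_G(\rho,R)$ for some fixed finite $R$, hence $h_F$ is locally constant, hence continuous on $(\rgraphs,\dloc)$. The same holds for $h_{F^{\mathsf T}}$, where $F^{\mathsf T}(G,x,y) := F(G,y,x) \in \mathcal F_0$, the boundedness now using the symmetric cutoff $\#B_G(\rho,k)\le m$ on the second mark. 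Since $\mu_n \Rightarrow \mu$ and $h_F, h_{F^{\mathsf T}}$ are bounded and continuous,
\[
   \int h_F \, d\mu = \lim_{n} \int h_F\,d\mu_n = \lim_n \int h_{F^{\mathsf T}}\,d\mu_n = \int h_{F^{\mathsf T}}\,d\mu\,,
\]
the middle equality being the finite-graph Mass-Transport Principle established above; the two ends are exactly the two sides of \eqref{eq:mtp} for this $F$, which completes the reduction. The one genuinely delicate ingredient is the double truncation (radius \emph{and} ball-size) forced by unbounded degrees, together with its monotone-class companion; under a uniform degree bound along $\{G_n\}$ the ball-size cutoff is unnecessary and the argument is essentially immediate.
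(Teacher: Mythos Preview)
The paper does not supply a proof of this lemma; it is stated as a known observation attributed to Benjamini and Schramm \cite{BS01}, with the surrounding discussion referring to \cite{aldous-lyons} for background. Your argument is correct and is essentially the standard one: verify the Mass-Transport Principle trivially for finite uniformly-rooted graphs, reduce a general Borel $F$ by monotone convergence and a monotone class step to bounded, locally constant $F$ of finite radial support and bounded ball-size, and for such $F$ pass the identity through the weak limit because both $h_F$ and $h_{F^{\mathsf T}}$ are bounded and $\dloc$-continuous. The double truncation in radius and ball-size is exactly the point needed to handle unbounded degrees, and you have it right.
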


If $\{(G_n,\rho_n)\} \todl (G,\rho)$,
we say that $(G,\rho)$ is the {\em distributional limit} of the sequence $\{(G_n,\rho_n)\}$.
When $\{G_n\}$ is a (possibly random) sequence of finite graphs, we write $\{G_n\} \todl (G,\rho)$ for $\{(G_n,\rho_n)\} \todl (G,\rho)$
   where $\rho_n \in V(G_n)$ is chosen uniformly at random.

\paragraph{Unimodular random conformal graphs}

A {\em conformal graph} is a pair $(G,\omega)$, where $G$ is a connected, locally finite graph and
$\omega : V(G) \to \R_+$.
Let $\cgraphs$ and $\crgraphs$ denote the collections of
isomorphism classes of conformal graphs and
conformal rooted graphs, respectively.
As in \pref{sec:unimodular}, one can define a metric on $\crgraphs$ as follows:
$\dloc^*\left((G_1,\omega_1, \rho_1), (G_2, \omega_2, \rho_2)\right) = 1/(\alpha+1)$,
where
\[
   \alpha = \sup \left\{ r > 0 : B_{G_1}(\rho_1,r) \cong_{\rho} B_{G_2}(\rho_2, r) \textrm{ and }
   \vvmathbb{d}\!\left(\omega_1|_{B_{G_1}(\rho_1,r)},\omega_2|_{B_{G_2}(\rho_2,r)}\right) \leq \frac{1}{r}\right\}\,,
\]
where for two weights $\omega_1 : V(H_1) \to \R_+$ and $\omega_2 : V(H_2) \to \R_+$
on rooted-isomorphic graphs $(H_1,\rho_1)$ and $(H_2,\rho_2)$, we write
\[
   \vvmathbb{d}\!\left(\omega_1, \omega_2\right) \seteq \inf_{\psi : V(H_1) \to V(H_2)} \left\|\omega_2 \circ \psi - \omega_1\right\|_{\ell^{\infty}}\,,
\]
where the infimum is over all graph isomorphisms from $H_1$ to $H_2$ satisfying $\psi(\rho_1)=\rho_2$.

If $\{\mu_n\}$ and $\mu$ are probability measures on $\crgraphs$, we abuse
notation and write $\{\mu_n\} \todl \mu$ to denote weak convergence with respect to $\dloc^*$.
One defines unimodularity of a random rooted conformal graph $(G,\omega,\rho)$ analogously to \eqref{eq:mtp}:
It should now hold that for all Borel-measurable $F : \crrgraphs \to [0,\infty]$,
\[
   \int \sum_{x \in V(G)} F(G,\omega,\rho,x) \,d\mu((G,\omega,\rho))= \int \sum_{x \in V(G)} F(G,\omega,x,\rho)\,d\mu((G,\omega,\rho))\,.
\]
Indeed, such decorated graphs are a special case of the marked networks
considered in \cite{aldous-lyons}, and again it holds that
every distributional limit of finite unimodular random conformal graphs is a unimodular random conformal graph.

Given a random conformal graph $(G,\omega,\rho)$, we define
\[
   \|\omega\|_{L^2} \seteq \sqrt{\E \omega(\rho)^2}\,.
\]
Say that $\omega$ is {\em normalized} if $\|\omega\|_{L^2} = 1$.

Suppose that $(G,\rho)$ is a unimodular random graph.  A {\em conformal weight on $(G,\rho)$}
is a unimodular random conformal graph $(G',\omega,\rho')$ such that $(G,\rho)$ and $(G',\rho')$
have the same law.  We will speak simply of a ``conformal metric $\omega$ on $(G,\rho)$.''
Only such unimodular metrics are considered in this work.

\medskip
\noindent
{\bf Convergence of infinite sums of metrics.}
The following construction will occasionally be useful.
Consider a family $\{\omega_j : j \geq 1\}$ of normalized conformal weights on
$(G,\rho)$.
Recall that formally this is a sequence $\{ (G_j,\omega_j,\rho_j) : j \geq 1\}$
such that $(G_j,\omega_j,\rho_j)$ is a unimodular random
conformal graph and $(G_j,\rho_j)$ has the same law as $(G,\rho)$ for every $j \geq 1$.
Thus we may consider a joint coupling $(G,\{\omega_j : j \geq 1\},\rho)$.

Now we would like to define a unimodular random conformal graph $(G,\omega,\rho)$ where
\[
   \omega \seteq \sqrt{\sum_{j \geq 1} \frac{\omega_j^2}{j^2}}\,.
\]
A priori, it is not clear that almost surely this sum converges for every $x \in V(G)$.
So let us momentarily allow $\omega : V(G) \to [0,+\infty]$ to take extended real values.
Fix $r \geq 1$ and define the transport $F(G,\omega,x,y) \seteq \1_{\{\dist_G(x,y) \leq r\}} \1_{\{\omega(y) =+\infty\}}$.
Then by the Mass-Transport Principle:
\begin{align*}
   \Pr\left[\max_{x \in B_G(\rho,r)} \omega(x)=+\infty\right] &= \E\left[\sum_{x \in V(G)} F(G,\omega,\rho,x)\right] \\
   &= \E\left[\sum_{x \in V(G)} F(G,\omega,x,\rho)\right] = \E[|B_G(\rho,r)| \1_{\{\omega(\rho)=+\infty\}}] = 0\,,
\end{align*}
where the latter equality follows from the fact that each $\omega_j$ is normalized, hence $\E[\omega(\rho)^2] < \infty$.
Since this holds for every $r \geq 1$,
we conclude that almost surely, $\sup_{x \in V(G)} \omega(x) < \infty$.

\section{Quadratic conformal growth and recurrence}
\label{sec:recurrence}

The assumption $\dimconf(G,\rho) \leq 2$ is not sufficient to ensure almost sure recurrence of $G$.
Instead, we need a more delicate way to measure quadratic growth using a family of metrics.
Recall that a unimodular random graph $(G,\rho)$ is {\em $(C,R)$-quadratic} for $C > 0$ and $R \geq 1$ if
\begin{equation}\label{eq:ball-growth}
   \inf_{\omega} \|\#B_{\omega}(\rho,R)\|_{L^\infty} \leq C R^2\,,
\end{equation}
where the infimum is over all normalized conformal metrics on $(G,\rho)$.
Say that $(G,\rho)$ has {\em gauged quadratic conformal growth (gQCG)} if there is a constant $C > 0$
such that $(G,\rho)$ is $(C,R)$-quadratic for all $R \geq 1$.
Note that we allow a different conformal weight $\omega$ for every choice of $R$.

A sequence $\{(G_n,\rho_n)\}$ of unimodular random graphs has {\em uniform gQCG}
if there is a constant $C > 0$ such that for
$(G_n,\rho_n)$ is $(C,R)$-quadratic for all $R \geq 1$ and $n \geq 1$.
A family $\cF \subseteq \cG$ of {\em finite} graphs has {\em uniform gQCG}
if the family of unimodular random graphs $\{(G,\rho): G \in \cF\}$ has uniform
gQCG, where $\rho \in V(G)$ is chosen uniformly at random.

Finally, we say that $(G,\rho)$ has {\em asymptotic gQCG} if
there is a constant $C > 0$ and
a sequence of radii $\{R_n\}$ with $R_n \to \infty$ such that $(G,\rho)$ is $(C,R_n)$-quadratic
for all $n \geq 1$.  We can now state the main theorem of this section.
The proof appears in \pref{sec:resistance}.

\begin{theorem}\label{thm:AgQCG-recurrent}
   If $(G,\rho)$ has asymptotic gQCG and $\|\deg_G(\rho)\|_{L^\infty} < \infty$,
   then $G$ is almost surely recurrent.
\end{theorem}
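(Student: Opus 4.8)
The plan is to phrase recurrence in terms of effective resistance and to prove that almost surely $\Reff_G(\rho\leftrightarrow\infty)=\infty$. By the ergodic decomposition for unimodular random graphs we may assume $(G,\rho)$ is ergodic, and we may assume $G$ is almost surely infinite (otherwise recurrence is trivial). Since the event $\{G\text{ is recurrent}\}$ is invariant under re-rooting, it has probability $0$ or $1$, so it suffices to show it has positive probability. (Alternatively one could route everything through finite approximants, since $\dimconfunder(G,\rho)\le 2<\infty$ forces invariant amenability.)

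The basic tool is a discrete length--area inequality. Fix a normalized conformal metric $\omega$ on $(G,\rho)$ and $R\ge 1$, and put $B=B_\omega(\rho,R)$. Using $f(v)=\max\{0,1-\dist_\omega(\rho,v)/R\}$ as a test function in Dirichlet's principle — so $f(\rho)=1$, $f\equiv 0$ off $B$, and $|f(u)-f(v)|\le\len_\omega(\{u,v\})/R=(\omega(u)+\omega(v))/(2R)$ on each edge — one obtains
\[
\Reff_G\bigl(\rho\leftrightarrow V(G)\setminus B\bigr)\ \ge\ \frac{1}{\cE(f)}\ \ge\ \frac{2R^2}{\norm{\deg_G(\rho)}_{L^\infty}\sum_{v\in B^{+}}\omega(v)^2}\,,
\]
where $B^{+}$ denotes the closed neighbourhood of $B$ and the only use of bounded degrees is to pass from $\sum_e\len_\omega(e)^2$ to $\sum_v\deg_G(v)\omega(v)^2$. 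The role of quadratic growth is to control the denominator in expectation: applying the Mass-Transport Principle \eqref{eq:mtp} to $F(G,\omega,x,y)=\omega(y)^2\,\Ind[y\in B_\omega(x,R)^{+}]$, and using that $\abs{B_\omega(v,R)}\le CR^2$ holds almost surely \emph{for every} $v$ (which is exactly what $(C,R)$-quadratic says, as the bound is an essential supremum over the root), one gets $\E\bigl[\sum_{v\in B^{+}}\omega(v)^2\bigr]\le C'R^2$ with $C'=C'(C,\norm{\deg_G(\rho)}_{L^\infty})$. Hence, for each radius $R_n$ furnished by asymptotic gQCG with its metric $\omega_n$, Markov's inequality yields a constant $c_0>0$ such that $\Pr[\Reff_G(\rho\leftrightarrow V(G)\setminus B_{\omega_n}(\rho,R_n))\ge c_0]\ge\tfrac12$, uniformly in $n$. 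Since $B_{\omega_n}(\rho,R_n)$ is a.s.\ finite, this already gives $\Reff_G(\rho\leftrightarrow\infty)\ge c_0$ with probability at least $\tfrac12$ — but by itself a single $\Omega(1)$ bound per scale does not imply recurrence.

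The crux, and the step I expect to be the main obstacle, is to amplify one $\Omega(1)$ resistance bound into a divergent one. The plan is to fix a large $n$ and produce a stationary, unimodularly constructed family of concentric cut sets $\{\partial B_{\omega_n}(\rho,jR_n)\}_{j\ge1}$ (or, more robustly, a stationary chain of $\omega_n$-balls of radius $R_n$ marching out to infinity), put the resistances of the resulting ``links'' in series via Nash--Williams so that $\Reff_G(\rho\leftrightarrow\infty)\ge\sum_{j\ge1}\Reff_G(j\text{-th link})$, and observe that each link is a conformal annulus of $\omega_n$-width $\approx R_n$, so that the length--area estimate above — applied from a vertex on the inner boundary and exploiting unimodularity of $\omega_n$ so that the $(C,R_n)$-quadratic bound is available from \emph{every} vertex — makes the expected resistance of each link at least a fixed constant. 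Because the chain is built stationarily, the link resistances form a stationary ergodic sequence with positive mean, so by the ergodic theorem their partial sums diverge almost surely, giving $\Reff_G(\rho\leftrightarrow\infty)=\infty$ a.s.

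Two issues require genuine care within this last step. First, the conformal metric $\omega_n$ need not have comparable weights on adjacent vertices, so there may be ``long'' edges that jump across several links and short-circuit the series decomposition; these have to be handled separately, e.g.\ by absorbing them into a lower-order error (their total $\omega_n$-length inside any bounded $\omega_n$-ball is again controlled by mass transport) or by first truncating $\omega_n$. Second, converting the concentric/chained family into genuinely disjoint cut sets whose $\omega_n$-areas are \emph{simultaneously} controlled — rather than just in expectation at a single scale — is where unimodularity of the decorated graph $(G,\omega_n)$ is used essentially, so that the quadratic volume bound can be re-rooted along the chain. Once these are settled, the ergodic-theorem argument is routine and yields \pref{thm:AgQCG-recurrent}.
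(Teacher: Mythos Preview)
Your first two paragraphs are essentially the paper's argument: a length--area test function in Dirichlet's principle plus mass transport to control $\E\bigl[\sum_{v\in B_\omega(\rho,R)}\omega(v)^2\bigr]$. You are also right that a single $\Omega(1)$ bound on $\Reff_G(\rho\leftrightarrow V(G)\setminus B_{\omega_n}(\rho,R_n))$ does not yield recurrence, because the finite sets $B_{\omega_n}(\rho,R_n)$ need not exhaust $V(G)$ as $n\to\infty$ (the metrics $\omega_n$ change with $n$).

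The paper's remedy is much simpler than your proposed chain/ergodic argument, and avoids the obstacles you flag. The point is that recurrence does \emph{not} require $\Reff_G(\rho\leftrightarrow\infty)=\infty$ to be proved directly; it suffices (\pref{thm:how-recurrence}) that for some fixed $c>0$ and every $r$, there is a finite $S_r$ with $\Reff_G(B_G(\rho,r)\leftrightarrow V(G)\setminus S_r)\ge c$. So the task is only to convert your $\omega_n$-ball resistance bound into one between \emph{graph}-balls of diverging inner radius. This is done by first \emph{regulating} $\omega_n$ using bounded degree (\pref{lem:regulate}): replace $\omega_n$ by $\hat\omega_n\ge\tfrac12\omega_n$ with $\hat\omega_n\ge\tfrac12$ pointwise and $\hat\omega_n(u)\le C\hat\omega_n(v)$ for every edge $\{u,v\}$, where $C$ depends only on the degree bound. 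Then $B_G(\rho,\Omega(\log R_n))\subseteq B_{\hat\omega_n}(\rho,R_n/2)$ and $B_{\hat\omega_n}(\rho,R_n)\subseteq B_G(\rho,2R_n)$ (\pref{lem:compare}), so your length--area estimate becomes $\Reff_G\bigl(B_G(\rho,\Omega(\log R_n))\leftrightarrow V(G)\setminus B_G(\rho,2R_n)\bigr)\ge c'\e$ with probability $\ge 1-\e-1/R_n$. Fatou's lemma then gives this for infinitely many $n$ with probability $\ge 1-\e$, and since $\log R_n\to\infty$, \pref{thm:how-recurrence} applies directly. Sending $\e\to 0$ finishes.

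Your chain construction, by contrast, has a genuine gap: the annular links $\partial B_{\omega_n}(\rho,jR_n)$ are indexed by $\omega_n$-distance from $\rho$ and do not form a stationary sequence in any evident sense, so there is no ergodic theorem to invoke. Moreover the mass-transport area bound is only available in expectation at the root, not pathwise at re-rooted centers chosen as a function of $\rho$; your claim that ``the quadratic volume bound can be re-rooted along the chain'' conflates the $L^\infty$ bound on ball \emph{cardinalities} (which does re-root) with the expectation bound on ball \emph{areas} (which does not). The regulation trick sidesteps all of this.
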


\begin{remark}
   Note that $(\Z^2,0)$ has gQCG, and moreover, one can consider a single conformal weight $\omega \equiv \1$ for every $R \geq 1$.
   On the other hand, the infinite ternary tree does not have gQCG.
\end{remark}

\begin{remark}\label{rem:weights}
   Note that if $(G,\rho)$ has gQCG, then $\dimconf(G,\rho) \leq 2$.
   To see this, consider the corresponding family of normalized conformal weights $\{\omega_{2^k} : k \geq 1\}$
   arising from applying \eqref{eq:ball-growth} with $R=2^k$,
   and let
   \[ 
      \omega \seteq \sqrt{\frac{6}{\pi^2} \sum_{k \geq 1} \frac{\omega^2_{2^k}}{k^2}}\,.
   \]
   Then $\|\omega\|_{L^2} = 1$, and moreover $\omega \geq \frac{\sqrt{6}}{k \pi} \omega_{2^k}$ for every $k \geq 1$, hence
   \[
      \left\|\#B_{\omega}\left(\rho,\tfrac{\sqrt{6}}{k \pi} 2^k\right)\right\|_{L^{\infty}} \leq 
     \left\|\#B_{\omega_{2^k}}(\rho, 2^k)\right\|_{L^{\infty}}  \leq C 4^k\,,
   \]
   implying that $\dimconf(G,\rho) \leq 2$.
\end{remark}

\begin{remark}
   In \pref{sec:uniform-gQCG}, we will show that the family of all finite planar graphs has uniform gQCG.
   But for this to hold, it must be that we allow $\omega=\omega_R$ to depend on the scale $R$ in \eqref{eq:ball-growth}.
   Indeed, let $T_n$ denote the complete binary tree of height $n$, then for some constant $c > 0$,
   and any normalized conformal metric $\omega : V(T_n) \to \R_+$, 
   \begin{equation}\label{eq:canopy}
      \max_{R \geq 0} \frac{|B_{\omega}(x,R)|}{R^2} \geq c \sqrt{n}\,.
   \end{equation}
   This is proved in \pref{lem:cbt}.
   Let $(T,\rho)$ denote the distributional limit of $\{T_n\}$ (this is known as the ``canopy tree'' from \cite{AW06}).
   Then \eqref{eq:canopy} implies that
   no single normalized conformal metric $\omega$ on $(T,\rho)$
   can have quadratic growth.
\end{remark}

\subsection{Comparing graph balls to conformal balls}
\label{sec:ring-lemma}

In order to use a conformal weight $\omega : V(G) \to \R_+$ to establish
recurrence, we will need a way of comparing
the conformal metric $\dist_{\omega}$ to the graph metric $\dist_G$.
Say that a conformal graph $(G,\omega)$ is {\em $C$-regulated} if
it satisfies the following properties: 
\begin{enumerate}
\item $\omega(x) \geq 1/2$ for all $x \in V(G)$.
\item If $\{u,v\} \in E(G)$, then $\omega(u) \leq C \,\omega(v)$.
\end{enumerate}
This definition allows us to compare balls in the metrics $\dist_G$ and $\dist_{\omega}$.

\begin{lemma}\label{lem:compare}
   If $(G,\omega)$ is $C$-regulated for some $C \geq 2$, then
it holds that for every $x \in V(G)$ and $r \geq 0$,
\[
B_G\left(x, \frac{\log \frac{r}{2{\omega}(x)}}{\log C}\right) \subseteq B_{{\omega}}(x,r)
\subseteq B_G(x, 2 r)\,.
\]
\end{lemma}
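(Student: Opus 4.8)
The plan is to establish the two inclusions separately, each one coming from one of the two defining properties of a $C$-regulated conformal graph, and then to dispose of the degenerate ranges of $r$. When $r < 2\omega(x)$ the left-hand ball $B_G\bigl(x,\tfrac{\log(r/(2\omega(x)))}{\log C}\bigr)$ is empty, since the exponent is negative while graph distances are nonnegative, so nothing is to prove on that side; and when $r=0$ both $B_\omega(x,0)$ and $B_G(x,0)$ equal $\{x\}$, because property (1) ($\omega\geq 1/2$) forces every nontrivial path to have positive $\len_\omega$, hence $\dist_\omega(x,y)=0$ iff $y=x$. So assume $r>0$ from now on.

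For the outer inclusion $B_\omega(x,r)\subseteq B_G(x,2r)$ I would only use property (1). It makes every edge heavy in the conformal metric: $\len_\omega(\{u,v\}) = \tfrac12(\omega(u)+\omega(v)) \geq \tfrac12$. Consequently, for \emph{any} path $\gamma$ in $G$ its number of edges is at most $2\len_\omega(\gamma)$, and since $\dist_G(x,y)$ is the minimum number of edges over all $x$–$y$ paths while $\dist_\omega(x,y)$ is the infimum of $\len_\omega$ over the same paths, taking infima gives $\dist_G(x,y)\leq 2\dist_\omega(x,y)$ for all $x,y\in V(G)$. This is exactly the claimed second inclusion, and it requires no appeal to attainment of the infimum, since the bound holds path-by-path.

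For the inner inclusion I would use property (2), the multiplicative control along edges. Fix $y$ with $\dist_G(x,y) \leq \tfrac{\log(r/(2\omega(x)))}{\log C}$ and pick a geodesic $\gamma=(v_0=x,v_1,\dots,v_n=y)$ in $\dist_G$, so $n\leq \tfrac{\log(r/(2\omega(x)))}{\log C}$, equivalently $C^{n}\leq r/(2\omega(x))$. Applying property (2) edge by edge along $\gamma$ yields $\omega(v_k)\leq C^{k}\omega(x)$ for every $k$, whence
\[
   \len_\omega(\gamma) = \tfrac12\sum_{k=0}^{n-1}\bigl(\omega(v_k)+\omega(v_{k+1})\bigr) \leq \tfrac{\omega(x)}{2}\,(1+C)\sum_{k=0}^{n-1}C^{k} = \tfrac{\omega(x)}{2}\cdot\frac{(C+1)(C^{n}-1)}{C-1}\,.
\]
Since $C\geq 2$ we have $\frac{C+1}{C-1}\leq 3$, so $\len_\omega(\gamma)\leq \tfrac{3}{2}\omega(x)\,C^{n}\leq \tfrac{3}{2}\omega(x)\cdot\frac{r}{2\omega(x)}=\tfrac34 r\leq r$. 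Hence $\dist_\omega(x,y)\leq \len_\omega(\gamma)\leq r$, i.e.\ $y\in B_\omega(x,r)$, which is the first inclusion.

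The only real piece of bookkeeping — and the closest thing to an obstacle — is the geometric-series estimate: one must notice that the hypothesis $C\geq 2$ is what turns the telescoping factor $\frac{C+1}{C-1}$ into a universal constant $\leq 3$, which then gets absorbed comfortably (leaving $\tfrac34 r$ rather than $r$). Everything else is immediate from the definitions of $\len_\omega$, $\dist_\omega$, $\dist_G$ and the two regulated properties, so the total write-up should be short.
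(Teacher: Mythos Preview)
Your proof is correct and follows essentially the same approach as the paper: property~(1) gives the outer inclusion via $\len_\omega(\{u,v\})\geq \tfrac12$, and property~(2) gives the inner inclusion by bounding $\omega(v_k)\leq C^k\omega(x)$ along a geodesic and summing the geometric series. The paper phrases the inner-inclusion argument as an induction on $k$ and uses the slightly looser bound $\omega(x)\sum_{j=0}^k C^j \leq 2\omega(x)C^k$, but the content is the same.
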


\begin{proof}
The latter inclusion is straightforward from property (1) of $C$-regulated.
The proof of the former inclusion is by induction.  Trivially, $B_G(x,0) \subseteq B_{{\omega}}(x,r)$.
Suppose that $B_G(x,k-1) \subseteq B_{{\omega}}(x,r)$ and $v \in B_G(x,k)$.
Property (2) of $C$-regulated yields
${\omega}(v) \leq {\omega}(x) C^{k}$, which implies inductively that
\[
\dist_{{\omega}}(x,v) \leq {\omega}(x) \sum_{j=0}^{k} C^j \leq 2 {\omega}(x) C^{k} \leq r\,,
\]
as long as $k \leq \frac{\log \frac{r}{2{\omega}(x)}}{\log C}$.
\end{proof}

Now let us see that when the degrees are uniformly bounded,
one can convert any conformal weight into a $C$-regulated weight
where $C$ is a constant depending only on the maximum degree.

\begin{lemma}
   \label{lem:regulate}
    Let $(G,\omega,\rho)$ be a normalized unimodular random conformal graph,
   and suppose that $d \seteq \|\deg_G(\rho)\|_{L^{\infty}}$.
Then there exists a normalized, $\sqrt{2d}$-regulated unimodular random conformal graph $(G,\hat{\omega},\rho)$ such that
   $\hat{\omega} \geq \frac{1}{2} \omega$.
\end{lemma}

\begin{proof}
   For a conformal pair $(G,\omega)$, we define
   \[
      \omega_0(x) =\sqrt{\sum_{y \in V(G)} \omega(y)^2 \,(2d)^{-\dist_G(x,y)}}\,.
   \]
   Notice that $\omega_0 \geq \omega$ pointwise, and moreover for $\{u,v\} \in E(G)$, we have
   \begin{equation}\label{eq:pre-ring}
      \omega_0(u)^2 \leq 2d \omega_0(v)^2
   \end{equation}
   by construction.

   In order to analyze $\|\omega_0\|_{L^2}$, we define a mass transportation:
   For $x,y \in V(G)$,
   \[
      F(G,\omega,x,y) = \omega(x)^2\, (2d)^{-\dist_G(x,y)}\,.
   \]
   Note that the total flow out of $x$ is bounded by
   \[
      \omega(x)^2 \sum_{y \in V(G)} (2d)^{-\dist_G(x,y)} \leq \omega(x)^2 \sum_{k \geq 0} 2^{-k} \leq 2 \omega(x)^2\,.
   \]
   Therefore by the Mass-Transport Principle, it holds that
   \begin{align*}
      2 \E\left[\omega(\rho)^2\right] &\geq
      \E\left[\sum_{x \in V(G)} F(G,\omega,\rho,x)\right] \\
      &=
      \E\left[\sum_{x \in V(G)} F(G,\omega,x,\rho)\right]  \\
      &= \E\left[\omega_0(\rho)^2\right]\,,
   \end{align*}
   where the last equality follows from the definition of $\omega_0$ and $F$.
   In particular, we conclude that $\E[\omega_0(\rho)^2] \leq 2 \E[\omega(\rho)^2] = 2$.

Now define the normalized weight
\[
   \hat{\omega} \seteq \frac{\sqrt{\frac{1}{4} \1+\frac{3}{8} \omega_0^2}}{\sqrt{\frac14 + \frac{3}{8} \E[\omega_0^2]}}\,.
\]
It satisfies property (1) of $C$-regulated by construction,
and also $\hat{\omega} \geq \frac{1}{2} \omega_0 \geq \frac{1}{2} \omega$ pointwise.
Furthermore, property (2) of $C$-regulated is a consequence of \eqref{eq:pre-ring} with $C=\sqrt{2d}$.
\end{proof}

\subsection{Bounding the effective resistance}
\label{sec:resistance}

In the present section, we will use the notion of the {\em effective resistance} $\reff^G(S \leftrightarrow T)$
between two subsets $S,T \subseteq V(G)$ in a graph.
For completeness, we present one definition that aligns with our use of the quantity;
for more background, we refer the reader to \cite[Ch. 2 \& 9]{LP:book}.
For $S,T \subseteq V(G)$ with $S \cap T = \emptyset$, the Dirichlet principle asserts that
\[
   \reff^G(S \leftrightarrow T) = \left(\inf_{f \in \cF_{S,T}} \cE(f)\right)^{-1},
\]
where $\cF_{S,T} = \{ f : V(G) \to \R \mid f|_S=0, f|_T=1 \}$, and
we recall the (unnormalized) Dirichlet energy functional
\[
	\cE(f) \seteq \sum_{\{x,y\} \in E(G)} |f(x)-f(y)|^2\,.
\]
Let us define $\reff^G(S \leftrightarrow T) = 0$ when $S \cap T \neq \emptyset$.
We will soon prove \pref{thm:AgQCG-recurrent} using the following well-known characterization;
see, e.g., \cite[Lem. 9.22]{LP:book}.

\begin{theorem}\label{thm:how-recurrence}
   A graph $G$ is recurrent if and only if there is some vertex $x \in V(G)$ and constant $c > 0$ such
   that for all $R \geq 0$, there is a finite set $S_R \subseteq V(G)$ such that
   \[
      \Reff^G(B(x,R) \leftrightarrow V(G) \setminus S_R) \geq c
   \]
\end{theorem}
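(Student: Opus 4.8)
This is the classical electrical-network criterion for recurrence (it is essentially \cite[Lem.~9.22]{LP:book}), so the plan is to recall a self-contained argument in the set-valued form stated here. Fix the base vertex $x$ and, for a finite $A\subseteq V(G)$, define $\Reff^G(A\leftrightarrow\infty):=\lim_{n\to\infty}\Reff^G\!\left(A\leftrightarrow V(G)\setminus B_G(x,n)\right)$; the limit exists since shrinking the target $V(G)\setminus B_G(x,n)$ can only increase the resistance (Rayleigh monotonicity, which follows from the Dirichlet principle above: a smaller target imposes fewer constraints $f|_T=1$, hence a smaller energy infimum, i.e.\ a larger resistance). The same monotonicity, together with the fact that every finite $S\supseteq A$ lies inside some $B_G(x,n)$, shows that $\sup\bigl\{\Reff^G(A\leftrightarrow V(G)\setminus S):S\supseteq A\text{ finite}\bigr\}=\Reff^G(A\leftrightarrow\infty)$, and in particular $\Reff^G(A\leftrightarrow V(G)\setminus S)\le\Reff^G(A\leftrightarrow\infty)$ for every such $S$. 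I will also use two standard facts from \cite{LP:book}: (i) a connected, locally finite network is recurrent iff $\Reff^G(y\leftrightarrow\infty)=\infty$ for one (equivalently every) vertex $y$; and (ii) (T.~Lyons / Thomson) $G$ is transient iff there is a unit flow $\theta$ from $x$ to $\infty$ with finite energy $\mathcal E(\theta)=\sum_e\theta(e)^2$, and for any unit flow $\theta'$ from a finite set $A$ to $\infty$ one has $\Reff^G(A\leftrightarrow\infty)\le\mathcal E(\theta')$.

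For the forward direction, assume $G$ is recurrent and set $c=1$. Fix $R\ge 0$ and $A=B_G(x,R)$. Identifying $A$ to a single vertex $\bar a$ produces a connected network $\bar G$, and a random walk on $\bar G$ returns to $\bar a$ precisely when the corresponding walk on $G$ returns to the finite set $A$ --- which is almost sure, since $G$ is recurrent. Thus $\bar G$ is recurrent, so by (i), and since effective resistance to infinity is unchanged by shorting the source set, $\Reff^G(A\leftrightarrow\infty)=\Reff^{\bar G}(\bar a\leftrightarrow\infty)=\infty$. By the first paragraph this is the supremum of $\Reff^G(A\leftrightarrow V(G)\setminus S)$ over finite $S\supseteq A$, so there is a finite $S_R$ (for instance $S_R=B_G(x,n_R)$ with $n_R$ large enough) such that $\Reff^G(B_G(x,R)\leftrightarrow V(G)\setminus S_R)\ge 1$. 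This holds for all $R$ with the single constant $c=1$.

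For the converse I argue contrapositively: assuming $G$ is transient, I show that no pair $(x,c)$ can work. Fix $x$ and, by (ii), a finite-energy unit flow $\theta$ from $x$ to $\infty$. For $R\ge1$ let $\theta_R$ be the restriction of $\theta$ to the edges of $G$ having at least one endpoint outside $B_G(x,R)$. Every edge incident to a vertex of $V(G)\setminus B_G(x,R)$ is kept, so Kirchhoff's node law for $\theta$ passes verbatim to $\theta_R$ at all such vertices; at each vertex of $B_G(x,R)$ only its boundary edges are retained, creating a source there. Summing the node law over $B_G(x,R)$, the retained boundary edges carry the net flow of $\theta$ out of the finite set $B_G(x,R)$, which equals the total divergence of $\theta$ on $B_G(x,R)$, namely $1$. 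Hence $\theta_R$ is a unit flow from $B_G(x,R)$ to $\infty$, and $\mathcal E(\theta_R)=\sum_{e\,:\,e\nsubseteq B_G(x,R)}\theta(e)^2$ is the tail of the convergent series $\mathcal E(\theta)$ and therefore tends to $0$ as $R\to\infty$ (every edge lies in some $B_G(x,n)$). By (ii), $\Reff^G(B_G(x,R)\leftrightarrow\infty)\le\mathcal E(\theta_R)\to 0$. Thus given any $c>0$, pick $R$ with $\Reff^G(B_G(x,R)\leftrightarrow\infty)<c$; then for \emph{every} finite $S$ we get $\Reff^G(B_G(x,R)\leftrightarrow V(G)\setminus S)\le\Reff^G(B_G(x,R)\leftrightarrow\infty)<c$, so the displayed inequality in the theorem fails for this $R$. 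This is exactly the contrapositive of the converse.

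The only step needing genuine care is the flow-restriction argument in the converse: that deleting the edges interior to $B_G(x,R)$ from a unit $x$-to-$\infty$ flow leaves a bona fide unit flow from the ball $B_G(x,R)$ to $\infty$ --- one must check the node law at the boundary vertices and verify that the surviving sources total exactly $1$ (equivalently, that the net flow of $\theta$ across the boundary of $B_G(x,R)$ is $1$). The remaining ingredients --- Rayleigh monotonicity, the identification of $\Reff^G(\cdot\leftrightarrow\infty)$ with the infimum over finite excluded sets $S$, and the Lyons/Thomson characterizations of transience --- are entirely standard.
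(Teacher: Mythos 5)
Your proof is correct. Note that the paper does not prove this statement at all — it is quoted as a well-known fact with a pointer to \cite[Lem.~9.22]{LP:book} — and your argument is essentially the standard one that reference encodes: exhaust by balls and use Rayleigh monotonicity to identify $\sup_S \Reff^G(A \leftrightarrow V(G)\setminus S)$ with $\Reff^G(A\leftrightarrow\infty)$, short the finite ball to a point in the recurrent case, and truncate a finite-energy unit flow in the transient case. The flow-restriction step you flag is indeed fine, since the energy--flow duality only needs the total divergence out of $B_G(x,R)$ to equal one (not vertexwise nonnegativity), and the only other loose end — finite sets $S$ not containing $B_G(x,R)$ — is harmless because the paper's convention then gives resistance $0$, which cannot rescue the inequality in the transient case.
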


First we will need a lemma about the expected area of balls.
Let us define
\[
   \ao(x,R) \seteq \sum_{y \in B_{\omega}(x,R)} \omega(y)^2\,.
\]

\begin{lemma}\label{lem:ball-area}
   Let $(G,\omega, \rho)$ be a unimodular random conformal graph with $\E \omega(\rho)^2=1$.
   Then for every $R \geq 1$,
   \[
      \E\left[\ao(\rho,R)\right] \leq \left\|\# B_{\omega}(\rho,R)\right\|_{L^\infty}\,.
   \]  
\end{lemma}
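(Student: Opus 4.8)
The plan is to deduce the lemma from a single application of the Mass-Transport Principle for unimodular random conformal graphs, exploiting the symmetry of the conformal ball relation. The crucial elementary observation is that since each edge length $\len_{\omega}(\{u,v\}) = \tfrac12(\omega(u)+\omega(v))$ is symmetric in its endpoints, the path pseudometric $\dist_{\omega}$ is symmetric, so $y \in B_{\omega}(x,R)$ if and only if $x \in B_{\omega}(y,R)$.

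Concretely, I would introduce the mass transport $F : \crrgraphs \to [0,\infty]$ defined by $F(G,\omega,x,y) \seteq \omega(y)^2$ if $y \in B_{\omega}(x,R)$, and $F(G,\omega,x,y) \seteq 0$ otherwise. This is non-negative and Borel-measurable (both $\omega(y)$ and $\dist_{\omega}(x,y)$ are Borel functions on $\crrgraphs$, and the condition $y\in B_\omega(x,R)$ is invariant under rooted isomorphism). The mass sent out of the root is
\[
   \sum_{y \in V(G)} F(G,\omega,\rho,y) = \sum_{y \in B_{\omega}(\rho,R)} \omega(y)^2 = \ao(\rho,R)\,,
\]
whereas, using the symmetry of $\dist_\omega$ noted above, the mass received at the root is
\[
   \sum_{y \in V(G)} F(G,\omega,y,\rho) = \omega(\rho)^2 \cdot \#\{ y \in V(G) : \rho \in B_{\omega}(y,R)\} = \omega(\rho)^2 \cdot \#B_{\omega}(\rho,R)\,.
\]
Applying the Mass-Transport Principle (the analogue of \eqref{eq:mtp} for conformal graphs) to $F$ then gives $\E[\ao(\rho,R)] = \E[\omega(\rho)^2\, \#B_{\omega}(\rho,R)]$, and bounding $\#B_{\omega}(\rho,R) \leq \|\#B_{\omega}(\rho,R)\|_{L^\infty}$ almost surely together with the normalization $\E[\omega(\rho)^2] = 1$ yields the claimed inequality. (If $\#B_{\omega}(\rho,R)$ fails to be essentially bounded, the statement is vacuous.)

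I do not anticipate a genuine obstacle: the argument is essentially a one-line mass-transport computation. The only points requiring care are (i) confirming that $F$ is a legitimate measurable function on $\crrgraphs$, which is routine in this marked-network setting, and (ii) the symmetry of $\dist_{\omega}$, which is precisely what makes the reversed transport $\sum_y F(G,\omega,y,\rho)$ collapse to $\omega(\rho)^2$ times the cardinality of $B_{\omega}(\rho,R)$.
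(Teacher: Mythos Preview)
Your proposal is correct and is essentially the same as the paper's proof: the paper defines $F(G,\omega,x,y) = \omega(x)^2 \1_{\{\dist_{\omega}(x,y) \leq R\}}$ (which is just your transport with the roles of $x$ and $y$ swapped), applies mass transport to obtain $\E[\ao(\rho,R)] = \E[\omega(\rho)^2\,|B_{\omega}(\rho,R)|]$, and then bounds by the essential supremum using the normalization.
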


\begin{proof}
We employ the Mass-Transport Principle:
For a conformal graph $(G,\omega)$ and $x,y \in V(G)$, define the flow
\[
   F(G,\omega,x,y) = \omega(x)^2 \1_{\{\dist_{\omega}(x,y) \leq R\}}\,.
\]
Then,
\begin{align*}
   \E\left[\ao(\rho,R)\right] = \E\left[\sum_{x \in V(G)} F(G,\omega,x,\rho)\right] &=\E\left[\sum_{x \in V(G)} F(G,\omega,\rho,x)\right] \\
   &= \vphantom{\bigoplus} \E\left[\omega(\rho)^2 |B_{\omega}(\rho,R)|\right] \leq \left\|\# B_{\omega}(\rho,R)\right\|_{L^\infty}\,.\qedhere
\end{align*}
\end{proof}

In order to apply \pref{thm:how-recurrence}, we use a conformal weight to construct
a test function of small energy.

\begin{lemma}\label{lem:test-function}
   Consider a graph $G$, vertex $x \in V(G)$, and scale $R \geq 0$.
   Then for any $C$-regulated conformal weight $\omega : V(G) \to \R_+$, it holds that
   \[
      \Reff\left(B_G\left(x, \frac{\log \tfrac{R}{4 \omega(x)}}{\log C}\right) \leftrightarrow V(G) \setminus B_G(x,2R)\right) \geq 
      \frac{1}{4 (1+C)^2 \dmax(G)} \cdot      \frac{R^2}{\ao(x,R)}\,.
   \]
\end{lemma}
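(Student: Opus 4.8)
The plan is to apply the Dirichlet principle and produce one test function of small energy. Write $S = B_G\!\left(x, \frac{\log(R/(4\omega(x)))}{\log C}\right)$ and $T = V(G) \setminus B_G(x,2R)$. We may assume $R > 0$ (for $R=0$ the right-hand side vanishes) and, since a $C$-regulated weight is also $C'$-regulated for every $C' \ge C$, that $C \ge 2$, so that \pref{lem:compare} applies. Applying \pref{lem:compare} with $r = R/2$ gives $S \subseteq B_{\omega}(x,R/2)$, and applying it with $r = R$ gives $B_{\omega}(x,R) \subseteq B_G(x,2R)$, i.e.\ $T \cap B_{\omega}(x,R) = \emptyset$. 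Thus both boundary sets are pinned down in terms of conformal balls, and it remains to build a function that vanishes on $B_\omega(x,R/2)$, equals $1$ off $B_\omega(x,R)$, and has small energy.

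For this I would take the radial ramp $f(y) = g(\dist_{\omega}(x,y))$, where $g : \R_+ \to [0,1]$ equals $0$ on $[0,R/2]$, equals $1$ on $[R,\infty)$, and interpolates linearly in between, so that $g$ is $(2/R)$-Lipschitz. The boundary conditions $f|_S \equiv 0$ and $f|_T \equiv 1$ follow at once from the two inclusions above, hence $f \in \cF_{S,T}$ and $\Reff(S \leftrightarrow T) \ge \cE_G(f)^{-1}$; so everything reduces to bounding $\cE_G(f)$.

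The core estimate goes as follows. For any edge $\{u,v\} \in E(G)$, Lipschitzness of $g$ together with $|\dist_\omega(x,u) - \dist_\omega(x,v)| \le \dist_\omega(u,v) \le \len_\omega(\{u,v\}) = \tfrac12(\omega(u)+\omega(v))$ gives $|f(u)-f(v)| \le \tfrac1R(\omega(u)+\omega(v))$. A term $(f(u)-f(v))^2$ is nonzero only when $f(u) \ne f(v)$, and then $u$ and $v$ cannot both lie outside $B_\omega(x,R)$ (both would have $f$-value $1$); hence every contributing edge has an endpoint in $B_\omega(x,R)$. Writing the energy as a sum indexed by $w \in B_\omega(x,R)$ and neighbours $v$ of $w$ (each contributing edge is captured at least once, edges inside the ball possibly twice, which only inflates the bound), and using property (2) of $C$-regulated — $\omega(v) \le C\,\omega(w)$ whenever $\{w,v\} \in E(G)$ — to get $(f(w)-f(v))^2 \le \tfrac{(1+C)^2}{R^2}\,\omega(w)^2$ for $w \in B_\omega(x,R)$, one obtains
\[
   \cE_G(f) \;\le\; \sum_{w \in B_\omega(x,R)} \deg_G(w)\,\frac{(1+C)^2}{R^2}\,\omega(w)^2 \;\le\; \frac{(1+C)^2\,\dmax(G)}{R^2}\,\ao(x,R)\,.
\]
Inverting yields $\Reff(S \leftrightarrow T) \ge R^2/\bigl((1+C)^2\,\dmax(G)\,\ao(x,R)\bigr)$, which is in fact stronger than the claimed bound (by a factor $4$).

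The one place that needs care is this last accounting step: a contributing edge may have one endpoint arbitrarily far outside $B_\omega(x,R)$, so one must (i) note that its partner lies in $B_\omega(x,R)$, (ii) use the regulating inequality to bound the weight of the far endpoint in terms of the near one, and (iii) organize the sum around the near endpoints so that only a single factor of $\dmax(G)$ — rather than $\dmax(G)^2$ — appears. Everything else (the Lipschitz bound on $g$, the edge-length estimate, and the matching of ball radii through \pref{lem:compare}) is routine.
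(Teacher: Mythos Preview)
Your proof is correct and follows essentially the same approach as the paper: both construct the radial ramp $f(y)=g(\dist_\omega(x,y))$ with $g$ linear on $[R/2,R]$, bound the edge increments via $C$-regularity, and then invoke \pref{lem:compare} to pass from $\dist_\omega$-balls to $\dist_G$-balls. Your edge-accounting is slightly more careful (using $\dist_\omega(u,v)=\tfrac12(\omega(u)+\omega(v))$ directly rather than the looser $\dist_\omega(u,v)^2\le\omega(u)^2+\omega(v)^2$), which is exactly why you save the factor of $4$.
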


\begin{proof}
   Define $f : V(G) \to \R$ by
   \[
      f(x) = \frac{2}{R} \min\left(\frac{R}{2}, \max \left(0, \dist_{\omega}(x,v)-\frac{R}{2}\right)\right)\,.
   \]
   Note that $v \in B_{\omega}(x, \frac{R}{2}) \implies f(v)=0$ and $v \notin B_{\omega}(x, R) \implies f(v)=1$.

   Therefore by the Dirichlet principle,
\[
\reff\left(\vphantom{\bigoplus}
B_{\omega}(x,\tfrac{R}{2}) \leftrightarrow V(G) \setminus B_{\omega}(x, R)\right) \geq \frac{1}{\cE_G(f)}\,,
\]
and
\begin{align*}
   \cE(f) &\leq
\frac{4\dmax(G)}{R^2} (1+C)^2 \sum_{v \in B_{\omega}(x, R)} \omega(v)^2 \\
&=4\dmax(G)(1+C)^2  \, \frac{\area_{\omega}(x, R)}{R^2}\,,
\end{align*}
where we have used the fact that $f$ is $2/R$-Lipschitz, and
the fact that $\omega$ is $C$-regulated which, in particular, asserts that for $\{u,v\} \in E(G)$, one has
\[
\dist_{\omega}(u,v)^2 \leq \omega_k(u)^2 + \omega_k(v)^2 \leq (1+C)^2 \omega_k(v)^2\,.
\]
Finally, we use \pref{lem:compare} to arrive at the desired conclusion,
replacing $\dist_{\omega}$ balls by $\dist_G$ balls.
\end{proof}

\begin{proof}[Proof of \pref{thm:AgQCG-recurrent}]
   Consider a radius $R \geq 1$ and a normalized, $C$-regulated conformal weight $\omega : V(G) \to \R_+$ satisfying
   $\|\#B_{\omega}(\rho,R)\|_{L^\infty} \leq c R^2$ for some constant $c > 0$.
   From \pref{lem:ball-area}, we have $\E[\ao(\rho,R)] \leq c R^2$, hence employing Markov's inequality,
   \[
      \Pr\left[\omega(\rho)^2 < R \textrm{ and } \ao(\rho,R) < \frac{c}{\e} R^2  \right] \geq 1-\e-\tfrac{1}{R}\,.
   \]
   Combining this with \pref{lem:test-function}, we see that
   \begin{equation}\label{eq:eps-error}
      \Pr\left[\reff\left(B_G(\rho,\tfrac{\log (R/4)}{2 \log C}) \leftrightarrow V(G) \setminus B_G(\rho, 2R)\right) \geq c'\e \right] \geq 1-\e-\frac{1}{R}\,,
   \end{equation}
   where $c'$ is a constant depending only on $C$ and $\|\deg_G(\rho)\|_{L^\infty}$.

   By assumption, $(G,\rho)$ has asymptotic gQCG and $\|\deg_G(\rho)\|_{L^\infty} < \infty$.
   Combining the definition of asymptotic gQCG with \pref{lem:regulate} (to derive a $C$-regulated conformal metric)
   shows that \eqref{eq:eps-error} holds for $R=R_n$, where $\{R_n\}$ is a sequence
   of radii with $R_n \to \infty$.

   In particular, Fatou's Lemma tells us that
   \[
      \Pr\left[\limsup_{n \to \infty} \reff\left(B_G(\rho,\tfrac{\log (R_n/4)}{2 \log C}) \leftrightarrow V(G) \setminus B_G(\rho, 2R_n)\right) \geq c'\e \right] \geq 1-\e\,.
   \]

   \jnote{
      Explanation of Fatou's Lemma:

      Let $X_n$ be the indicator of whether $\reff\left(B_G(\rho,\tfrac{\log (R_n/4)}{2 \log C}) \leftrightarrow V(G) \setminus B_G(\rho, 2R_n)\right) \geq c'\e$.

      Then $\E[X_n] \geq 1-\e-\frac{1}{R}$.

      Therefore $\limsup_{n \to \infty} \E[X_n] \geq 1-\e$.

      Therefore $\E\left[\limsup_{n \to \infty} X_n\right] \geq 1-\e$.

      This implies that with probability at least $1-\e$, we get infinitely many good events.
   }

   Since $\{B_G(\rho,2R_n)\}$ is a sequence of finite sets,
   \pref{thm:how-recurrence} yields
   \[
      \Pr[\textrm{$G$ recurrent}] \geq 1-\e\,.
   \]
   Sending $\e \to 0$ completes the proof.
\end{proof}

\subsection{Region intersection graphs and energy-minimizing conformal weights}
\label{sec:uniform-gQCG}

The next two theorems essentially follow from prior work.
Note that for the special case of planar graphs, 
an alternate proof of the next result based on circle packings appears in
\cite{Lee17b}.  It has the advantage that it extends suitably
to graphs that are sphere-packed in any Euclidean space.

\begin{theorem}[Uniform gQCG for $H$-minor-free graphs \cite{KLPT09}]
\label{thm:quadgrowth}
For every fixed graph $H$, the family of finite graphs excluding $H$ as a minor has uniform gQCG.
In particular, if $H=K_h$ for some $h \geq 2$, then every such graph is $(\kappa,R)$-quadratic
for all $R \geq 1$, where $\kappa \leq O(h^2 \log h)$.
\end{theorem}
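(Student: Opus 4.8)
The plan is to establish the explicit quantitative form: every finite $K_h$-minor-free graph $G$ on $n$ vertices is $(\kappa,R)$-quadratic for all $R\geq 1$ with $\kappa=O(h^{2}\log h)$, the constant being uniform over the whole family. This suffices for the general $H$-minor-free statement, since a graph excluding $H$ as a minor excludes $K_h$ as a minor for $h=|V(H)|$ (as $H$ is itself a minor of $K_{|V(H)|}$). So I fix such a $G$, a uniformly random root $\rho$, and a scale $R\geq 1$; here a normalized conformal metric is just a weight $\omega:V(G)\to\R_+$ with $\sum_{v}\omega(v)^{2}=n$, and the goal is $\max_{v}|B_{\omega}(v,R)|\leq\kappa R^{2}$. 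First I would dispose of the trivial regime $R\geq\sqrt{n/\kappa}$, where $\omega\equiv 1$ works because every ball has at most $n\leq\kappa R^{2}$ vertices. Then, using crucially that a \emph{different} $\omega$ is allowed for each $R$, I would record the rescaling $\omega\mapsto\omega/R$, which reduces the task to producing a ``spread-out'' weight $u$ with $\sum_{v}u(v)^{2}=n/R^{2}$ whose unit balls satisfy $|B_{u}(v,1)|\leq\kappa R^{2}$.

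The second step is a dichotomy via LP duality. Relaxing the normalization to $\sum_{v}\omega(v)^{2}\leq n$ (harmless, since scaling $\omega$ up only shrinks balls) and relaxing the constraint $\dist_{\omega}(u,v)\leq R$ to the existence of a fractional $u$--$v$ path of $\omega$-length at most $R$, the question ``is there an admissible weight all of whose $R$-balls have size $\leq\kappa R^{2}$?'' becomes a linear feasibility program in $\omega$. When it is infeasible, the dual yields an \emph{obstruction}: a probability measure on a set of ``centers'' together with, for each center in the support, a target vertex set and a fractional flow certificate witnessing that under \emph{every} admissible $\omega$ some ball around that center has more than $\kappa R^{2}$ vertices. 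Informally, the obstruction says $G$ contains a substructure in which $\Omega(\kappa)$ units of mass are trapped within unit $\omega$-radius of a typical center, no matter how cheaply one metrizes.

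The third step rules the obstruction out using the structure theory of minor-free graphs, and here there are two interchangeable routes. The combinatorial route runs a region-growing argument in the style of Leighton--Rao and Garg--Vazirani--Yannakakis on the certified centers: growing balls of geometrically increasing radius and cutting at a radius where the edge boundary is small, one extracts from the obstruction pairwise-adjacent, vertex-disjoint, connected regions $C_{1},\dots,C_{t}$---that is, a $K_{t}$ minor---with the quantitative bound that the number of edges between these regions is $\Omega(\sqrt{\kappa})$ times their number; by the Kostochka--Thomason extremal function for clique minors this forces $t=\Omega(\sqrt{\kappa}/\sqrt{\log\kappa})$, and since $G$ has no $K_{h}$ minor we need $t<h$, hence $\kappa=O(h^{2}\log h)$---a contradiction. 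The second route, taken in \cite{Lee17c} and fitting the tools developed above, instead observes that such an obstruction is incompatible with the $O(h^{2})$-decomposability of $\cF(K_{h})$ guaranteed by \pref{thm:goodpad} once $\kappa$ exceeds an $O(\alpha\log\alpha)$ threshold, which for $\alpha=O(h^{2})$ is again $O(h^{2}\log h)$. Either way $G$ is $(\kappa,R)$-quadratic for $\kappa=O(h^{2}\log h)$; unwinding the rescaling of the first step (and noting the optimum is attained on the sphere $\sum_{v}\omega(v)^{2}=n$) gives the required conformal weight at every scale $R$ with an $R$-independent constant, which is precisely uniform gQCG.

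The main obstacle is this last step: converting the merely \emph{fractional} failure of a good weighting into an honest combinatorial witness---a $K_{h}$ minor, or a violation of $\alpha$-decomposability---while losing only a polynomial, and in fact only an $h^{2}\log h$, factor in $h$. A crude rounding of the flow certificate costs extra logarithmic or even polynomial factors; the sharp dependence emerges only from a careful interaction between the region-growing stopping rule and the Kostochka--Thomason density bound (equivalently, between the flow certificate and the padding parameter of \pref{thm:goodpad}), and this is the technical heart, carried out in detail in \cite{Lee17c}. A secondary, routine point is setting up the LP duality on the correct convex, compact domain and handling the $\ell^{2}$-normalization, which one does by relaxing the sphere to the ball as above.
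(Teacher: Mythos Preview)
The paper does not give its own proof of this theorem: it is stated with a citation to \cite{KLPT09} and a pointer to \cite{Lee17c} for a unified treatment in the exact form quoted here. Your outline is thus being compared against those references rather than against an in-paper argument, and at that level it is broadly correct. The proof does go via a convex optimization/duality framework, with the structural input from minor-freeness entering either through a region-growing argument tied to the Kostochka--Thomason extremal bound or through padded decompositions; both of your ``routes'' are genuine and appear in the literature, and you are right that the technical heart is the rounding step carried out in \cite{Lee17c}.

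One correction worth flagging: the feasibility problem is not a \emph{linear} program. The normalization $\sum_v \omega(v)^2 \leq n$ is quadratic and the ball-size condition is combinatorial, so the natural formulation (made precise in \cite{Lee17c}, as the paper itself remarks just before \pref{lem:cbt}) is a convex program whose dual is a multi-commodity flow; the ``obstruction'' you describe is the optimal dual flow, and strong duality is what turns it into an honest certificate. You seem aware of this in your final paragraph, but calling it an LP in the second step muddies the setup. A minor quantitative point: for $K_h$-minor-free graphs themselves the padding parameter is $\alpha = O(h)$ (the paper cites \cite{robbers14} for this just above \pref{thm:goodpad}), not the $O(h^2)$ you invoke from \pref{thm:goodpad}, which is the bound for the larger class $\rig(\cF(K_h))$.
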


\begin{theorem}[Uniform gQCG for region intersection graphs \cite{Lee16}]
\label{thm:quadgrowth-rig}
For every $\lambda > 0$ and fixed graph $H$, the family of finite region intersection graphs $G$
over an $H$-minor-free graph with $\dmax(G) \leq \lambda$ has uniform gQCG.
In particular, if $H=K_h$ for some $h \geq 2$, then every such graph
is $(\kappa,R)$-quadratic for all $R \geq 1$, where $\kappa \leq O(\lambda h^2 \log h)$.
\end{theorem}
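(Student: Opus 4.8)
The plan is to follow the template of the proof of \pref{thm:quadgrowth}, replacing the separator theorem for $H$-minor-free graphs by the separator theorem for region intersection graphs of \cite{Lee16}. Fix $H$; since excluding $H$ as a minor implies excluding $K_h$ as a minor for $h=|V(H)|$, it suffices to treat the case $H=K_h$. Let $G$ be a finite region intersection graph over a $K_h$-minor-free graph $G_0$, witnessed by connected regions $\{R_u \subseteq V(G_0):u\in V(G)\}$, with $\dmax(G)\le\lambda$. First I would reduce to the case that $G_0$ is finite: only finitely many intersection relations are relevant, so each $R_u$ may be replaced by a finite connected sub-region of $G_0$ witnessing them all. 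By the handshake lemma, $|E(G)| \le \tfrac{\lambda}{2}|V(G)|$. One might hope to instead transfer a good conformal metric from $G_0$ using \pref{thm:quadgrowth} directly, but individual regions may have unbounded diameter in any conformal metric on $G_0$, which obstructs control of $\dist_\omega$ on $G$; hence the separator-based route.

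The structural input is the bound of \cite{Lee16}: a region intersection graph over a $K_h$-minor-free graph with $m$ edges admits a $\tfrac23$-balanced vertex separator of size $O\!\left(h\sqrt{m\log h}\right)$, and the class of region intersection graphs over a fixed $G_0$ is closed under passing to induced subgraphs (restricting the family of regions). Combining this with $|E(G)| \le \tfrac\lambda2|V(G)|$, every $n$-vertex induced subgraph of $G$ admits a $\tfrac23$-balanced separator of size at most $\beta\sqrt n$, where $\beta = O\!\left(h\sqrt{\lambda\log h}\right)$, and this bound is uniform over all the graphs $G$ under consideration (degrees, and hence edge counts, only decrease in induced subgraphs).

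It remains to convert such a recursive separator structure into a normalized conformal metric of quadratic growth at each scale. This is precisely the content of the metric-uniformization machinery of \cite{BLR08, KLPT09} (see also the unified treatment in \cite{Lee17c}): given $R\ge 1$, one recursively applies the $\beta\sqrt{\cdot}$-separator to build a hierarchical decomposition of $G$, and from it a vertex weight $\omega_R:V(G)\to\R_+$ satisfying $\tfrac{1}{|V(G)|}\sum_u\omega_R(u)^2 = O(\beta^2)$ for which every $\dist_{\omega_R}$-ball of radius $R$ meets at most $O(\beta^2 R^2)$ vertices. Rescaling $\omega_R$ to be normalized shows that $(G,\rho)$ is $(O(\beta^2),R)$-quadratic. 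Since $\beta^2 = O(\lambda h^2\log h)$ and all estimates are uniform over the family, this yields uniform gQCG with $\kappa = O(\lambda h^2\log h)$, as claimed.

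The main obstacle is the last step: producing, for each scale $R$, a \emph{single} normalized weighting under which \emph{every} radius-$R$ ball around \emph{every} vertex is small simultaneously. No fixed weighting can succeed at all scales at once---this is the point of \pref{lem:cbt} and the canopy-tree example---so the construction must be genuinely scale-dependent, and one must verify both the $\ell^2$-normalization and the ball bound against the recursion tree. A secondary subtlety is that in the unimodular formulation the weighting must be specified invariantly, as a measurable function of the (decorated) graph; for finite $G$ this is automatic, but it is the step requiring care when one wishes such statements to pass to distributional limits.
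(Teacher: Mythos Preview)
Your proposal is correct and aligns with how the result is obtained in the cited references. Note that the paper does not supply its own proof of this theorem: it states that the result ``essentially follows from prior work'' and defers to \cite{Lee16} and \cite{Lee17c} for the exact statements and a unified treatment. Your outline---the $O(h\sqrt{m\log h})$ separator for region intersection graphs from \cite{Lee16}, the edge bound $|E(G)|\le\tfrac{\lambda}{2}|V(G)|$ from the degree hypothesis, and the recursive-separator-to-conformal-metric conversion of \cite{BLR08,KLPT09,Lee17c}---is precisely the route those references take, and your identification of the scale-dependent weighting as the nontrivial step (with \pref{lem:cbt} as the obstruction to a single weight) is on point.
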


Let us remark on the proof of these results.
Fix a finite graph $G=(V,E)$.  Let $n=|V|$ and consider a positive number $k \leq n$.
Define the set $P_k(G) \subseteq \ell^2(V)$ by
\[
   P_k(G) \seteq \left\{ \omega \geq 0 : \frac{1}{k^2} \sum_{x,y \in S} \dist_{\omega}(x,y) \geq 1 \quad \forall S \subseteq V, |S| \geq k\right\}.
\]
While it may not be immediately apparently, the set $P_k(G)$ is a polytope because
one can replace every such inequality indexed by a subset $S \subseteq V$ with the family of inequalities:
\[
   \frac{1}{k^2} \sum_{x,y \in S} \len_{\omega}(\gamma_{xy}) \geq 1 \qquad \forall\{\gamma_{xy} : x,y \in S\}\,,
\]
where in the latter quantifer, $\gamma_{xy}$ ranges over all simple $x$-$y$ paths in $G$.
Since $\len_{\omega}(\gamma)$ is a linear function in the values $\{ \omega(z)  : z \in V\}$,
the claim follows.
\begin{lemma}\label{lem:size-basic}
   For any $z \in V$ and $\omega \in P_k(G)$, it holds that $|B_{\omega}(z,1/2)| < k$.
\end{lemma}
\begin{proof}
   Denote $S \seteq B_{\omega}(z,1/2)$.
   If $|S| \geq k$, let $S' \subseteq S$ be a subset with $|S'|=k$.
   Since $\omega \in P_k(G)$, we have
   \[
      \frac{1}{|S'|^2} \sum_{x,y \in S'} \dist_{\omega}(x,y) \geq 1,
   \]
   but this is a contradiction since $\diam_{\omega}(S') \leq 1$.
\end{proof}

Consider now the optimization problem
\begin{equation}\label{eq:thetak}
   \theta_k(G) \seteq \min \left\{ \|\omega\|_{L^2(V)} : \omega \in P_k(G) \right\}\,.
\end{equation}

We claim that the values $\{\theta_k(G) : k =1,2,\ldots,n\}$ dictate the quadratic conformal growth:
$G$ is $(\kappa,R)$-quadratic for some $\kappa \geq 1$ and every $R \geq 0$ if and only if 
$\theta_k(G) \leq C/k^2$ for some $C \geq 1$ and every $k \leq n$.

\begin{claim}
   For every $n$-vertex graph $G=(V,E)$, the following holds.
   \begin{enumerate}
         \item For every $k \leq n$, $G$ is $(\kappa,R)$-quadratic with $\kappa=4 k \theta_k(G)^2$ and $R=1/(2\theta_k(G))$.
         \item For every $\kappa, R > 0$, it holds that if $G$ is $(\kappa,R)$-quadratic, then
            \[
               \theta_k(G) \leq \frac{2}{R} \textrm{ for all } k \geq 2 \kappa R^2\,. 
            \]
   \end{enumerate}
\end{claim}

\begin{proof}
   Let us first prove (1).
   Consider $\omega \in P_k(G)$ with $\theta \seteq \|\omega\|_{L^2(V)}$ and define $\hat{\omega} \seteq \omega/\theta$.
   Then by \pref{lem:size-basic}, for any $z \in V$,
   \[
      |B_{\omega}(z,1/(2\theta))| = |B_{\hat{\omega}}(z, 1/2)| < k\,,
   \]
   implying that $G$ is $(4\theta^2 k,1/(2\theta))$-quadratic.

   To prove (2), consider $\omega : V \to \R_+$ such that $\|\omega\|_{L^2(V)}=1$ and $|B_{\omega}(x,R)| \leq \kappa R^2$ for
   all $x \in V$.
   Consider any $S \subseteq V$ with $|S| \geq 2 \kappa R^2$.
   Then for every $x \in S$, it holds that
   \[
      |B_{\omega}(x,R) \cap S| \leq \frac{1}{2} |S|\,.
   \]
   Hence:
   \begin{equation}\label{eq:sizelb}
      \frac{1}{|S|^2} \sum_{x,y \in S} \dist_{\omega}(x,y) \geq \frac{1}{|S|^2} \sum_{x \in S} \frac{|S|}{2} R \geq \frac{R}{2}\,.
   \end{equation}
   If we now set $\hat{\omega} \seteq (2/R) \omega$, then \eqref{eq:sizelb} gives $\hat{\omega} \in P_{k}(G)$ for
   any $k \geq 2 \kappa R^2$, hence $\theta_k(G) \leq 2/R$ for all such $k$.
\end{proof}

\pref{thm:quadgrowth} and \pref{thm:quadgrowth-rig} are proved by analyzing the optimization \eqref{eq:thetak}.
It entails minimizing a strongly convex function over a polytope, thus \eqref{eq:thetak} has a unique optimal solution.
The authors of \cite{BLR08} developed a ``flow crossing'' theory for understanding the dual optimization
problem, and that was expanded upon in the works \cite{KLPT09,Lee16}.
The following is a consequence of \cite[Thm. 1.13]{Lee17b}.

\begin{lemma}\label{lem:gQCG-limits}
If $\{(G_n,\rho_n)\} \todl (G,\rho)$ and $\{(G_n,\rho_n)\}$ has
uniform gQCG, then $(G,\rho)$ has gQCG.
In particular, if $(G,\rho)$ is a distributional limit of
finite $H$-minor-free graphs, then $(G,\rho)$ has gQCG.
\end{lemma}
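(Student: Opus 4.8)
The plan is to work one scale at a time. Since gQCG only asks for a single constant $C'$ with $(G,\rho)$ being $(C',R)$-quadratic for all $R\ge1$, it suffices to produce, for each integer $R\ge1$, one normalized conformal metric $\omega_R$ on $(G,\rho)$ with $\|\#B_{\omega_R}(\rho,R)\|_{L^\infty}\le O(C)R^2$, where $C$ is the uniform gQCG constant of $\{(G_n,\rho_n)\}$ and the implied constant is absolute. So fix $R$. For each $n$, uniform gQCG at scale $R+1$ hands me a normalized conformal metric $\omega_n$ on $(G_n,\rho_n)$ with $\|\#B_{\omega_n}(\rho_n,R+1)\|_{L^\infty}\le C(R+1)^2$; the harmless shift from $R$ to $R+1$ is a device used later to turn a bound on an open ball into one on a closed ball. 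I view each $(G_n,\omega_n,\rho_n)$ as a unimodular random conformal graph, \ie a law on $\crgraphs$.

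\textbf{Passing to the limit.} Next I would extract a subsequential weak limit $(G,\hat\omega,\rho)$ of $\{(G_n,\omega_n,\rho_n)\}$ with respect to $\dloc^*$ and transfer the relevant properties. Granting that such a limit exists (this is the crux; see below), it is unimodular, since the Mass-Transport Principle is preserved under weak limits of marked networks \cite{aldous-lyons}---the conformal analogue of \pref{lem:dl-unimodular}. Because the forgetful map $\crgraphs\to\rgraphs$ is continuous and $\{(G_n,\rho_n)\}$ already converges to $(G,\rho)$, the underlying-graph marginal of the limit equals the law of $(G,\rho)$, so $\hat\omega$ is a (unimodular) conformal metric on $(G,\rho)$ in the sense of \pref{sec:unimodular}. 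Using a Skorokhod coupling of the weak convergence, almost surely every vertex $v$ with $\dist_{\hat\omega}(\rho,v)<R+1$ eventually lies in the matching ball and satisfies $\dist_{\omega_n}(\rho_n,v)<R+1$, so that
\[
   \#\{v:\dist_{\hat\omega}(\rho,v)<R+1\}\ \le\ \liminf_n\#B_{\omega_n}(\rho_n,R+1)\ \le\ C(R+1)^2\qquad\text{a.s.}
\]
By Fatou's lemma along the same coupling, $c:=\E[\hat\omega(\rho)^2]\le\liminf_n\E[\omega_n(\rho_n)^2]=1$.

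\textbf{Finishing.} Assuming $c>0$ (again, see below), I set $\omega_R:=\hat\omega/\sqrt c$. Then $\omega_R$ is normalized, and $\dist_{\omega_R}=\dist_{\hat\omega}/\sqrt c\ge\dist_{\hat\omega}$ since $c\le1$, so $B_{\omega_R}(\rho,R)=\{v:\dist_{\hat\omega}(\rho,v)\le\sqrt c\,R\}\subseteq\{v:\dist_{\hat\omega}(\rho,v)<R+1\}$ and hence $\|\#B_{\omega_R}(\rho,R)\|_{L^\infty}\le C(R+1)^2\le4CR^2$. Thus $(G,\rho)$ is $(4C,R)$-quadratic for every $R\ge1$, \ie has gQCG. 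For the ``in particular'': \pref{thm:quadgrowth} asserts that the family of finite $H$-minor-free graphs has uniform gQCG, so when $G_n$ are finite and $H$-minor-free and $\rho_n\in V(G_n)$ is uniform, the sequence $\{(G_n,\rho_n)\}$ has uniform gQCG and the above applies to any distributional limit.

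\textbf{The main obstacle.} Everything above is routine \emph{except} for the existence of a non-degenerate subsequential limit, which is precisely what I would import from \cite[Thm.~1.13]{Lee17b}. Two points need to be established: (i) tightness of $\{(G_n,\omega_n,\rho_n)\}$ in $\crgraphs$, which requires not merely that the root marks be tight (immediate from $\E[\omega_n(\rho_n)^2]=1$ and Markov) but that the conformal weights throughout every fixed-radius neighborhood be tight; and (ii) non-degeneracy of the limit weight, $c=\E[\hat\omega(\rho)^2]>0$, equivalently that the $L^2$-mass of $\omega_n$ does not escape to infinity along the sequence (say onto a vanishing fraction of extremely heavy vertices). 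Both hinge on the fact that the bound $\#B_{\omega_n}(\rho_n,R+1)\le C(R+1)^2$ cannot hold if $\omega_n$ is too small on too large a portion of a neighborhood, which forces an $n$-uniform quantitative lower bound on the weights at typical vertices---this is the compactness principle for conformal metrics developed in the companion paper. Without it the scheme is vacuous, as the subsequential limit could well be the identically-zero metric.
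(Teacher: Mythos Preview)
Your proposal is correct and aligns with the paper's own treatment: the paper does not prove the lemma directly but states it as a consequence of \cite[Thm.~1.13]{Lee17b}, which is precisely the compactness principle you identify as the crux. You have fleshed out the routine parts (passing to a subsequential limit in $\crgraphs$, checking unimodularity and the ball bound survive, renormalizing) more explicitly than the paper does, and you correctly isolate tightness and non-degeneracy of the limit weight as the only substantive steps, both of which are supplied by the companion paper.
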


In particular, combining \pref{lem:gQCG-limits} with the preceding two theorems and \pref{thm:AgQCG-recurrent} yields
the following corollary.
We recall that $\rig(\cF(H))$ is the set of all finite graphs
that are region intersection graphs over some graph $G_0$ 
that excludes the graph $H$ as a minor.

\begin{corollary}
   For every fixed graph $H$, if $\{G_n\} \subseteq \rig(\cF(H))$
   is a sequence of graphs with uniformly bounded degrees
   and $\{G_n\} \todl (G,\rho)$, then $G$ is almost surely recurrent.
\end{corollary}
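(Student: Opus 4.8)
The plan is to chain together the three results already assembled in this section. First I would unpack the hypotheses: ``$\{G_n\}$ has uniformly bounded degrees'' means there is a single $\lambda$ with $\dmax(G_n) \le \lambda$ for all $n$, and $\{G_n\} \subseteq \rig(\cF(H))$ means each $G_n$ is a finite region intersection graph over some $H$-minor-free graph $G_0^{(n)}$. Therefore \pref{thm:quadgrowth-rig} applies uniformly over the sequence: there is a constant $\kappa = \kappa(\lambda,H)$ (for $H = K_h$ one may take $\kappa \le O(\lambda h^2 \log h)$) such that, writing $\rho_n \in V(G_n)$ for a uniformly random root, every $(G_n,\rho_n)$ is $(\kappa,R)$-quadratic for all $R \ge 1$. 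In other words, the sequence $\{(G_n,\rho_n)\}$ has uniform gQCG.

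Next I would pass to the distributional limit. By \pref{lem:dl-unimodular}, $(G,\rho)$ is a unimodular random graph, and by \pref{lem:gQCG-limits}, uniform gQCG of $\{(G_n,\rho_n)\}$ together with $\{(G_n,\rho_n)\} \todl (G,\rho)$ implies that $(G,\rho)$ itself has gQCG; in particular it has asymptotic gQCG (apply the defining inequality \eqref{eq:ball-growth} along any sequence $R_n \to \infty$). I would then verify the remaining hypothesis of \pref{thm:AgQCG-recurrent}, namely that $\|\deg_G(\rho)\|_{L^\infty} < \infty$: the degree of the root is a continuous function on $\rgraphs$ determined by the $1$-neighborhood of the root, so the bound $\deg_{G_n}(\rho_n) \le \lambda$ holding for every $n$ forces $\deg_G(\rho) \le \lambda$ almost surely. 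Applying \pref{thm:AgQCG-recurrent} to $(G,\rho)$ then gives that $G$ is almost surely recurrent, which is the conclusion.

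Since this corollary is purely a matter of composing \pref{thm:quadgrowth-rig}, \pref{lem:gQCG-limits}, and \pref{thm:AgQCG-recurrent}, there is no genuine obstacle in the argument at this level; the one point that deserves a sentence of care is that the gQCG constant $\kappa$ furnished by \pref{thm:quadgrowth-rig} depends only on $\lambda$ and $H$ and not on $n$ (which is exactly the ``uniform'' assertion of that theorem), and that the almost sure degree bound transfers to the limit, which is immediate from the locality of $\deg_G(\rho)$. The substantive content of the statement lies entirely in the cited inputs: the uniform quadratic conformal growth of bounded-degree region intersection graphs over minor-closed families, and the effective-resistance estimate (via \pref{lem:test-function} and \pref{lem:ball-area}) underlying \pref{thm:AgQCG-recurrent}.
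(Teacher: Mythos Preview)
Your proposal is correct and follows precisely the route the paper indicates: combine \pref{thm:quadgrowth-rig} (uniform gQCG for bounded-degree region intersection graphs over $H$-minor-free bases), \pref{lem:gQCG-limits} (gQCG passes to distributional limits), and \pref{thm:AgQCG-recurrent} (asymptotic gQCG plus bounded degrees implies almost sure recurrence). The only additional care you take—checking that the degree bound and the uniformity of $\kappa$ survive the limit—is straightforward and exactly what the paper leaves implicit.
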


We end this section by observing that one cannot equip every finite planar graph
with a single normalized metric that achieves uniform quadratic volume growth
at all scales simultaneously (thus justifying the necessity for multiple conformal weights
in the definition of gQCG).

\begin{lemma}\label{lem:cbt}
   There is a constant $C > 0$ such that the following holds.
   Let $T_n$ be the complete binary tree of height $n$, and consider any normalized conformal weight $\omega$ on $T_n$.
   Then
   \[
      \max_{x \in V(T_n)} \max_{R \geq 0} \frac{|B_{\omega}(x,R)|}{R^2} \geq C \sqrt{n}\,.
   \]
\end{lemma}

\begin{proof}
   Let $\omega : V(T_n) \to \R_+$
   be a conformal weight satisfying $|B_{\omega}(x,R)| \leq R^2$ for all $x \in V(T_n)$ and
   $R \geq 1$.
   Consider the family $\cP$ of ${2^n \choose 2}$ paths in $T_n$ between all the leaves of $T_n$.
   There must exist a
   constant $c > 0$ and a 
   subset $\cP_n \subseteq \cP$ of paths going through the root of $T_n$
   with $|\cP_n| \geq c 2^{2n}$ and such that every path in $\cP_n$
   has $\omega$-length at least $c 2^{n/2}$.  (Otherwise there would be some leaf that could reach
   $c 2^n$ other leaves using paths of length $\ll 2^{n/2}$, contradicting the quadratic volume assumption.)
   Similarly, there exist disjoint subsets $\cP_{n-1}^{(0)}, \cP_{n-1}^{(1)} \subseteq \cP$
   of paths in the left and right subtrees, each containing $c 2^{2(n-1)}$ paths of $\omega$-length
   at least $c 2^{(n-1)/2}$, and so on.

   Let $\cP_k = \bigcup_{j \in \{0,1\}^{n-k}} \cP_k^{(j)}$ be the set of such ``long'' paths
   in subtrees of height $k$.
   Observe that this union is disjoint by construction.
   For a vertex $v \in V(T_n)$, define
   \[
      \alpha(v) = \sum_{k=1}^n 2^{-3k/2} \# \{ \gamma \in \cP_k : v \in \gamma \}\,.
   \]
   Then we have
   \[
      \sum_{k=1}^n c^2 2^{-3k/2} 2^{n-k} 2^{2k} 2^{k/2}
      \leq  \sum_{k=1}^n 2^{-3k/2} |\cP_k| \min_{\gamma \in \cP_k} \len_{\omega}(\gamma)
      \leq \sum_{v \in V(T_n)} \alpha(v) \omega(v) \leq \|\alpha\|_{\ell^2(V(T_n))} \|\omega\|_{\ell^2(V(T_n))}\,,
   \]
   where the last inequality is Cauchy-Schwarz.  The left-hand side is $c^2 n 2^n$.

   Now a simple calculation yields:
   \[
      \sum_{v \in V(T_n)} \alpha(v)^2 \leq \sum_{k=1}^n 2^{n-k} 2^{4k} 2^{-3k} = n 2^n.
   \]
   We conclude that
   \[
      \|\omega\|_{\ell^2(V(T_n))} \geq c^2 \sqrt{n 2^n}\,,
   \]
   implying that $\|\omega\|_{L^2} = 2^{-n/2} \|\omega\|_{\ell^2(V(T_n))} \geq c^2 \sqrt{n}$,
   and completing the argument.
\end{proof}

\section{Return probabilities and spectral geometry on finite graphs}
\label{sec:return-times}

We now turn to heat kernel estimates on finite graphs.

\subsection{The normalized Laplacian spectrum}
\label{sec:sgt}

Let $G=(V,E)$ be a connected, finite graph with $n=|V|$.
Let $\pi(x) = \frac{\deg_G(x)}{2|E|}$ denote the stationary measure.
We will use $L^2(\pi)$ for the Hilbert space of functions $f : V \to \R$
equipped with the inner product
\[
\langle f,g\rangle_{\pi} = \sum_{x \in V} \pi(x) f(x) g(x)\,,
\]
and denote by
\[
\langle f,g\rangle = \sum_{x \in V} f(x) g(x)
\]
the inner product on $\ell^2(V)$.  We use $\|\cdot\| \seteq \|\cdot\|_{\ell^2(V)}$
and $\|f\|_{\pi} = \sqrt{\langle f,f\rangle_{\pi}}$.

Define the operators $A, D, P, L, \cL : \ell^2(V) \to \ell^2(V)$ as follows
\begin{align*}
A f(x) &\seteq \sum_{y : \{x,y\} \in E} f(y)\,, \\
D f(x) &\seteq \deg_G(x) f(x)\,, \\
P  &\seteq D^{-1} A\,, \\
L  &\seteq I-P\,, \\
\cL &\seteq I - D^{-1/2} A D^{-1/2}\,.
\end{align*}
The {\em normalized Laplacian} $\cL$ is symmetric and positive semi-definite.
We denote its eigenvalues by
\[
0 = \lambda_1(G) \leq \lambda_2(G) \leq \cdots \leq \lambda_{n-1}(G)\,.
\]
We use $\lambda_k \seteq \lambda_k(G)$ if the graph $G$ is clear from context.
Note that $P = I - D^{-1/2} \cL D^{1/2}$, so
if $\cL f = \lambda f$, then $P D^{-1/2} f = (1-\lambda) D^{-1/2} f$.
Thus the spectrum of $P$ is $\{1-\lambda_k(G) : k=0,1,\ldots,n-1\}$.

Define the {\em Rayleigh quotient $\cR_G(f)$} of non-zero $f \in L^2(\pi)$ by
\[
\cR_G(f) \seteq \frac{\langle D^{1/2}  f, \cL D^{1/2} f\rangle}{\langle D^{1/2} f,D^{1/2} f\rangle} =
\frac{\langle f, L f\rangle_{\pi}}{\|f\|_{\pi}^2}
=
\frac{\frac{1}{|E|} \sum_{\{x,y\} \in E} |f(x)-f(y)|^2}{\|f\|_{\pi}^2}\,.
\]

Recall also the variational formula for eigenvalues:
\begin{equation}\label{eq:variational}
\lambda_k(G) = \min_{U \subseteq L^2(\pi)}\ \max_{0 \neq f \in U} \cR_G(f)\,,
\end{equation}
where the minimum is over all subspaces $U \subseteq L^2(\pi)$ with $\dim(U)=k+1$.
The preceding fact has a useful corollary.

\begin{corollary}\label{cor:variational}
Suppose that $\psi_1, \ldots, \psi_r : V \to \R$ are disjointly supported
functions with $\cR_G(\psi_i) \leq \theta$ for $i=1,2,\ldots,r$.  Then,
\[
   \lambda_{r-1}(G) \leq 2 \theta\,.
\]
\end{corollary}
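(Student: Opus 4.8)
The plan is to feed an explicit test subspace into the variational formula \eqref{eq:variational}. Since the $\psi_i$ are nonzero (so that $\cR_G(\psi_i)$ is even defined) and have pairwise disjoint supports, they are linearly independent: evaluating a relation $\sum_i c_i\psi_i = 0$ at a point of $\supp\psi_j$ where $\psi_j$ does not vanish forces $c_j=0$. Hence $U \seteq \mathrm{span}\{\psi_1,\dots,\psi_r\} \subseteq L^2(\pi)$ has dimension exactly $r$. Since \eqref{eq:variational} expresses $\lambda_{r-1}(G)$ as a minimum of $\max_{0\neq f\in U}\cR_G(f)$ over subspaces $U$ with $\dim U = (r-1)+1 = r$, it suffices to prove $\cR_G(f) \leq 2\theta$ for every $0 \neq f \in U$.

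Fix such an $f$ and write $f = \sum_{i=1}^r c_i \psi_i$, regarding each $\psi_i$ as extended by zero outside its support. By disjointness, $f(x)^2 = \sum_i c_i^2 \psi_i(x)^2$ for every $x$, so $\|f\|_\pi^2 = \sum_i c_i^2 \|\psi_i\|_\pi^2$. The one genuine computation is a per-edge estimate: for each $\{x,y\} \in E$,
\[
   |f(x)-f(y)|^2 \leq 2 \sum_{i=1}^r c_i^2\,|\psi_i(x)-\psi_i(y)|^2\,.
\]
This is checked by considering which support (if any) contains each of $x$ and $y$. If both lie in the same $\supp\psi_i$, the left side equals $c_i^2|\psi_i(x)-\psi_i(y)|^2$. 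If $x\in\supp\psi_i$ and $y\in\supp\psi_j$ with $i\neq j$, then $\psi_i(y)=\psi_j(x)=0$, so $|c_i\psi_i(x)-c_j\psi_j(y)|^2 \leq 2c_i^2\psi_i(x)^2 + 2c_j^2\psi_j(y)^2 = 2c_i^2|\psi_i(x)-\psi_i(y)|^2 + 2c_j^2|\psi_j(x)-\psi_j(y)|^2$ via $(a-b)^2\leq 2a^2+2b^2$. The remaining cases, where one or both endpoints lie in no support, are only easier.

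Summing the per-edge bound over $E$, dividing by $|E|\,\|f\|_\pi^2$, and using $\frac{1}{|E|}\sum_{\{x,y\}\in E}|\psi_i(x)-\psi_i(y)|^2 = \cR_G(\psi_i)\,\|\psi_i\|_\pi^2$, one obtains
\[
   \cR_G(f) \leq \frac{2\sum_i c_i^2\,\cR_G(\psi_i)\,\|\psi_i\|_\pi^2}{\sum_i c_i^2\,\|\psi_i\|_\pi^2} \leq 2\theta\,,
\]
where the last step uses $\cR_G(\psi_i)\leq\theta$ termwise. Plugging $U$ into \eqref{eq:variational} then gives $\lambda_{r-1}(G)\leq 2\theta$. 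There is no real obstacle: the only points requiring care are the dimension bookkeeping (that $\dim U = r$ is precisely what pairs with the index $r-1$) and the factor of $2$ in the per-edge inequality, which is exactly why the conclusion is $2\theta$ rather than $\theta$.
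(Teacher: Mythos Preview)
Your proof is correct and follows essentially the same approach as the paper: both use $U = \mathrm{span}\{\psi_1,\ldots,\psi_r\}$ in the variational formula \eqref{eq:variational}, establish the per-edge inequality $|f(x)-f(y)|^2 \leq 2\sum_i c_i^2|\psi_i(x)-\psi_i(y)|^2$ from disjoint supports, and combine with $\|f\|_\pi^2 = \sum_i c_i^2\|\psi_i\|_\pi^2$. Your version is in fact more detailed, supplying the case analysis for the per-edge bound that the paper leaves implicit.
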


\begin{proof}
   Let $U = \mathrm{span}(\psi_1, \ldots, \psi_r)$, and note that $\dim(U)=r$
   since $\{\psi_i\}$ are mutually orthogonal.
   Consider $f \in U$ and write $f = \sum_{i=1}^r \alpha_i \psi_i$.
   
   Since the functionals have mutually disjoint supports, for any $x,y \in V$, we have
   \[
      |f(x)-f(y)|^2 \leq 2 \sum_{i=1}^r \alpha_i^2 |\psi_i(x)-\psi_i(y)|^2\,.
   \]
   Therefore,
   \[
      \cR_G(f) \leq \frac{2 \sum_{i=1}^r \alpha_i^2 \|\psi_i(x)-\psi_i(y)\|^2}{\sum_{i=1}^r \alpha_i^2 \|\psi_i\|_{\pi}^2} \leq 2 \theta\,.
   \]
   Now the claim follows from $\dim(U)=r$ and the variational characterization of eigenvalues.
\end{proof}

\paragraph{Relation to return probabilities}

Let $\{\phi_k\}$ be an $L^2(\pi)$-orthonormal family of eigenfunctions for $P$
such that $P \phi_k = (1-\lambda_k) \phi_k$ for each $0 \leq k \leq n-1$.
The connection between return probabilities and eigenvalues is straightforward: For any $x \in V$ and $T \geq 1$,
\begin{equation}\label{eq:returns-ev}
p^G_T(x,x) =  \frac{\langle\1_x, P^T \1_x\rangle_{\pi}}{\pi(x)} = \sum_{k=0}^{n-1} \pi(x) \phi_k(x)^2 (1-\lambda_k)^T\,.
\end{equation}

\subsection{Random partitions}
\label{sec:metric-spaces}

We now introduce a tool that will be used
for analyzing the heat kernel in the remainder of this section.
Let $(X,d)$ denote a pseudometric space.

\paragraph{Random partitions}
For a partition $\cP$ of $X$, we use $\cP(x)$ to denote
the unique set in $\cP$ containing $x$.
We will consider only partitions $\cP$ with an at most countable
number of elements.
Denote
\[
   \Delta(\cP) \seteq \sup\left\{\diam(S,d):{S \in \cP}\right\}\,.
\]
A random partition $\bm{P}$ is {\em $(\tau,\alpha)$-padded}
if it satisfies the following conditions:
\begin{enumerate}
   \item Almost surely: $\Delta(\bm{P}) \leq \tau$.
   \item For all $x \in X$ and $\delta > 0$,
      \[ \Pr\left[B(x, \delta \tau/\alpha) \subseteq \bm{P}(x)\right] \geq 1-\delta\,. \]
\end{enumerate}

The reader might gain some intuition from considering the case $X=\R^d$ equipped with the Euclidean metric.
If one takes $\bm{P}$ to be a randomly translated partition of $\R^d$ into axis-aligned cubes of side-length $L$,
then $\bm{P}$ is $(L \sqrt{k}, k \sqrt{k})$-padded.
\medskip
\noindent
{\bf Uniformly decomposable graph families.}
We say that a family $\cF$ of locally finite, connected graphs is 
{\em $\alpha$-decomposable} if there is an $\alpha > 0$ 
such that for every $G \in \cF$,
every conformal weight $\omega : V(G) \to \R_+$, and every $\tau > 0$,
the metric space $(V(G),\dist_{\omega})$ admits a $(\tau,\alpha)$-padded
random partition.
We say that $\cF$ is {\em uniformly decomposable} if it is $\alpha$-decomposable
for some $\alpha > 0$.

The next result is proved in \cite{Lee16}.  The special case for graphs $G$
that themselves exclude $K_h$ as a minor
was established much earlier in \cite{KPR93}.
For such graphs, the bound $\alpha \leq O(h^2)$ was established in \cite{FT03},
and this was improved to $\alpha \leq O(h)$ in \cite{robbers14}.
Let $\cF(K_h)$ denote the family of connected, locally finite graphs that exclude $K_h$ as a minor,
and denote $\rig(\cF(K_h)) \seteq \bigcup_{G_0 \in \cF(K_h)} \rig(G_0)$.

\begin{theorem}[\cite{Lee16}]
   \label{thm:goodpad}
   For every $h \geq 1$, the family $\rig(\cF(K_h))$ is $\alpha$-decomposable
   for some $\alpha \leq O(h^2)$.
\end{theorem}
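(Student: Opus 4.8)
The plan is to reduce the assertion, by passing to the host graph, to the padded-decomposition theorem for $K_h$-minor-free graphs of Klein--Plotkin--Rao \cite{KPR93}, in the quantitative form of \cite{FT03} (decomposition constant $O(h^2)$; \cite{robbers14} would even give $O(h)$). Fix $G \in \rig(G_0)$ with $G_0 \in \cF(K_h)$, let $\{R_u \subseteq V(G_0) : u \in V(G)\}$ be the family of connected subsets realizing $G$, and fix a conformal weight $\omega : V(G) \to \R_+$ and a scale $\tau > 0$; we must exhibit a $(\tau, O(h^2))$-padded random partition of $(V(G),\dist_\omega)$. Two cost-free normalizations come first. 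Since $G$ is finite, each $R_u$ may be replaced by a finite connected subgraph of $G_0[R_u]$ that merely spans one contact vertex per edge of $G$ incident to $u$; this preserves all intersections and non-intersections, so we may assume every $R_u$, and hence $G_0$, is finite. Second, every edge of $G_0$ whose endpoints lie in no common region lies in no $G_0[R_u]$, so deleting all such edges leaves each region connected, leaves $G$ unchanged, and keeps $G_0$ in $\cF(K_h)$; thus we may assume every edge of $G_0$ is contained in some region.

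Next I would build from $\omega$ and the regions an auxiliary edge-weighting $\nu : E(G_0) \to \R_+$ with two properties: (a) each region is metrically small, $\diam(R_u,\dist_\nu) = O(\omega(u))$; and (b) relative to fixed representatives $a_u \in R_u$, the region map is coarsely Lipschitz in both directions between $(V(G),\dist_\omega)$ and $(V(G_0),\dist_\nu)$. For (a) one sets $\nu(e)$ to be, roughly, $\min\{\omega(w)/\ell_w : e \subseteq R_w\}$, where $\ell_w$ bounds the number of edges on a shortest path inside $G_0[R_w]$, so that crossing $R_w$ costs $O(\omega(w))$. Property (a), together with a witness $c \in R_u \cap R_v$ for each edge $\{u,v\} \in E(G)$, gives the non-expanding half of (b): along any $G$-path, $\dist_\nu(a_u,a_v) \le O(\dist_\omega(u,v))$. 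The reverse half uses the second normalization: consecutive edges of a $G_0$-path lie in regions that share their common endpoint, so the regions selected along the path (after collapsing runs that remain inside a single region) form a walk in $G$ from $u$ to $v$ whose $\dist_\omega$-length is controlled by the $\dist_\nu$-length of the original path.

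With (a) and (b) available, apply \cite{FT03} to the $K_h$-minor-free weighted graph $(V(G_0),\dist_\nu)$ to get a $(\tau_0,\alpha_0)$-padded random partition $\cP_0$ with $\alpha_0 = O(h^2)$ and $\tau_0 = c\tau$ for a suitable $c$, and pull it back: put $u \sim v$ iff $a_u$ and $a_v$ lie in the same block of $\cP_0$. This is a partition, and its blocks have $\dist_\omega$-diameter $\le \tau$ by the path-lifting half of (b). For padding, fix $u$ and $\delta > 0$ and let $\alpha = O(h^2)$ be the final constant. If $\omega(u) > 2\delta\tau/\alpha$ then $B_\omega(u, \delta\tau/\alpha) = \{u\}$, since even reaching a neighbour of $u$ costs at least $\omega(u)/2$, so the padding event holds surely. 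If $\omega(u) \le 2\delta\tau/\alpha$, then $R_u$ is $\dist_\nu$-small by (a), the non-expanding half of (b) confines $\{a_v : v \in B_\omega(u,\delta\tau/\alpha)\}$ to a $\dist_\nu$-ball of radius $O(\delta\tau/\alpha)$ about $a_u$, and for $\alpha$ a large enough multiple of $\alpha_0$ the padding of $\cP_0$ (with parameter $\delta$) places that ball inside $\cP_0(a_u)$ with probability at least $1-\delta$; on that event $B_\omega(u,\delta\tau/\alpha) \subseteq \bm{P}(u)$. This produces a $(\tau, O(h^2))$-padded random partition, as required.

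The step I expect to be the crux is the construction and two-sided analysis of $\nu$. Region intersection graphs are genuinely dense and not minor-closed --- already $\rig(\Z^2)$ contains arbitrarily large cliques --- so the region map is far from an isometry, and there is real tension between making regions $\dist_\nu$-small (needed both for the diameter bound and for padding at light vertices) and keeping the $G_0 \to G$ path-lifting lossless (a thin traversal of a large region must not be overcharged). Controlling this honestly is exactly the technical content of \cite{Lee16}; the robust route is not a naive quasi-isometry but a vertex-capacitated flow/LP-duality argument on $G_0$, exploiting that $K_h$-minor-free graphs have $O(h^2)$ flow--cut gap for vertex cuts (equivalently, the KPR decomposition itself). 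The remaining ingredients --- the two normalizations, the pull-back, and the heavy-vertex case of padding --- are routine once the transfer is in place.
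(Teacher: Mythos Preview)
The paper does not prove this theorem; it is quoted from \cite{Lee16} (see the sentence immediately preceding the statement), so there is no in-paper proof to compare your proposal against.

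As for the proposal itself: your high-level strategy---push the conformal weight to the $K_h$-minor-free host $G_0$, apply the KPR/FT decomposition there, and pull back---is indeed the skeleton of the argument in \cite{Lee16}. But the explicit candidate $\nu(e) = \min\{\omega(w)/\ell_w : e \subseteq R_w\}$ does not deliver the reverse half of (b). A $G_0$-path can traverse a single edge of each of many large regions $w_1, w_2, \ldots$, paying only $\sum_i \omega(w_i)/\ell_{w_i}$ in $\dist_\nu$, while the induced $G$-walk must pay $\asymp \sum_i \omega(w_i)$ in $\dist_\omega$; these are incomparable when the $\ell_{w_i}$ are unbounded. You correctly flag this tension and then defer to ``the technical content of \cite{Lee16},'' but that deferral \emph{is} the proof: what you have written is the reduction scaffolding plus an honest acknowledgment that the load-bearing construction is missing. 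Supplying a transfer that actually satisfies both (a) and the two halves of (b) (or, as in \cite{Lee16}, bypassing a pointwise bi-Lipschitz transfer via a duality argument) is exactly the work the citation is doing.
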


We say that a unimodular random graph $(G,\rho)$ is {\em uniformly decomposable}
if there is an $\alpha > 0$ such that $G$ is almost surely $\alpha$-decomposable.

We remark that often in the literature (e.g., in \cite{Lee16}), one only exhibits
random partitions that satisfy property (2) of a padded partition with $\delta=1/2$.
The following (unpublished) lemma of the author and A. Naor shows that this
is sufficient to conclude that it holds for all $\delta \in [0,1]$, with
a small loss in parameters.

\begin{lemma}
   Suppose that a metric space $(X,d)$ admits a random partition $\bm{P}$
   with $\Delta(\bm{P}) \leq \tau$ almost surely, and for every $x \in X$,
   \begin{equation}\label{eq:asspad}
     \Pr[B(x,\tau/\alpha) \subseteq \bm{P}(x)] \geq \frac12\,.
   \end{equation}
   Then there is a random partition $\bm{P'}$
   with $\Delta(\bm{P'}) \leq \tau$ almost surely, and such that
   for every $\delta > 0$ and $x \in X$,
   \begin{equation}\label{eq:delta-version}
      \Pr[B(x,\delta \tau/\alpha) \subseteq \bm{P'}(x)] \geq 1-4\delta\,.
   \end{equation}
\end{lemma}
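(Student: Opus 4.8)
\noindent\emph{Proof strategy.}
The plan is to assemble $\bm{P}'$ out of countably many independent copies of $\bm{P}$ by a layered ``first success'' rule, using a \emph{fresh random threshold at each layer}. It is worth noting first why the obvious attempt fails: if one takes the common refinement of i.i.d.\ copies $\bm{P}_1,\dots,\bm{P}_m$, then the local padding radius of the refinement at a point $x$ is $\min_i \rho_i(x)$, where $\rho_i(x) := d\big(x,\,X\setminus \bm{P}_i(x)\big)$, so refinement only shrinks the well-padded region. Since \eqref{eq:asspad} controls $\Pr[\rho(x)\ge \tau/\alpha]$ but gives no lower bound on $\Pr[\rho(x)\ge s]$ for small $s>0$, this moves in exactly the wrong direction; I instead want a point to inherit a \emph{large} padding radius as soon as \emph{some} copy pads it well.

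Concretely, I would take i.i.d.\ copies $\bm{P}_1,\bm{P}_2,\dots$ of $\bm{P}$ together with independent thresholds $\theta_1,\theta_2,\dots$, each uniform on $[0,r_0]$ with $r_0 := \tau/\alpha$, all mutually independent. Say $x$ \emph{succeeds at level $i$} when $\rho_i(x)\ge \theta_i$; let $\ell(x)$ be the least such level; and declare $x\approx y$ iff $\ell(x)=\ell(y)=:j$ and $\bm{P}_j(x)=\bm{P}_j(y)$. This is an equivalence relation, and $\bm{P}'$ is the resulting partition (points $z$ with $\ell(z)=\infty$ — a null event for each fixed $z$ — are placed in singleton cells). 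Two easy observations: every $\approx$-class sits inside a single cell of some $\bm{P}_j$, so $\Delta(\bm{P}')\le\tau$ almost surely; and since $\theta_i\le r_0$ while $\Pr[\rho_i(x)\ge r_0]\ge \tfrac12$ by \eqref{eq:asspad}, each level independently succeeds with probability at least $\tfrac12$, so $\Pr[\ell(x)\ge i]\le 2^{-(i-1)}$. Measurability of the construction is routine given separability of $(X,d)$.

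To verify \eqref{eq:delta-version} I would fix $x$ and $0<\delta<\tfrac14$ (larger $\delta$ is vacuous) and call $x$ \emph{robust at level $i$} if $\rho_i(x)\notin[\theta_i-\delta r_0,\ \theta_i+\delta r_0)$. The heart of the matter is the deterministic claim: if $x$ is robust at every level $i\le \ell(x)$, then $B(x,\delta r_0)\subseteq \bm{P}'(x)$. Indeed, set $j=\ell(x)$ and take $y$ with $d(x,y)\le\delta r_0$; since each $\rho_i(\cdot)$ is $1$-Lipschitz, robustness forces $y$ to fail at every level below $j$ and to succeed at level $j$, so $\ell(y)=j$, while $d(x,y)\le\delta r_0\le\rho_j(x)$ puts $y$ in $\bm{P}_j(x)$; hence $y\approx x$. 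It then remains to bound the probability that $x$ is non-robust at some level $i\le\ell(x)$. The point is that non-robustness at level $i$ pins $\theta_i$ into an interval of length at most $\delta r_0$ determined by $\bm{P}_i$ alone, whereas $\{\ell(x)\ge i\}$ depends only on the data of levels $<i$ and has probability at most $2^{-(i-1)}$; by independence across levels, each $i$ contributes at most $2^{-(i-1)}\delta$ (once for $i<\ell(x)$, once for $i=\ell(x)$), and summing over $i\ge1$ gives a total of at most $2\cdot 2\delta = 4\delta$, which is exactly \eqref{eq:delta-version}.

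I expect the only genuine difficulty to be engineering the deterministic claim and the accounting to close with the stated constant — and, upstream of that, recognizing that the thresholds must be randomized and independent across layers. With a single deterministic threshold, the ``first success'' partition can place a freshly-created inter-layer boundary arbitrarily close to $x$, and the estimate collapses; the uniform threshold is precisely what makes ``$x$ lies within $\delta r_0$ of the level-$i$ threshold it is tested against'' an event of probability $O(\delta)$, which is summable against the geometric decay of $\Pr[\ell(x)\ge i]$. The pseudo-metric and $\{\ell=\infty\}$ technicalities are harmless and can be disposed of at the end.
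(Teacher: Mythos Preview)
Your proposal is correct and is essentially the same argument as the paper's. The paper also takes i.i.d.\ copies $\bm{P}_k$ together with independent uniform thresholds $\epsilon_k\in[0,1]$, assigns each point at the first level $k$ where $B(x,\epsilon_k\tau/\alpha)\subseteq\bm{P}_k(x)$ (your ``success at level $k$''), and then bounds the failure probability via the $1$-Lipschitzness of $\eta_k(x)=\rho_k(x)$ combined with the geometric decay $\Pr[\ell(x)\ge k]\le 2^{1-k}$, arriving at the same constant $4\delta$. Your framing via an explicit ``robustness'' predicate and a clean deterministic claim is arguably a bit crisper, but the construction and the estimates are identical.
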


\begin{proof}
   For a subset $S \subseteq X$ and a number $\lambda > 0$, denote
   \[
      S_{-\lambda} \seteq \{x \in S : B(x,\lambda) \subseteq S\}\,.
   \]
   Let $\{\bm{P}_k\}$ be an infinite sequence of i.i.d. random partitions with the law of $\bm{P}$.
   Let $\{\e_k\}$ be an independent infinite sequence of i.i.d. random variables
   where $\e_k \in [0,1]$ is chosen uniformly at random.
   We will define a sequence $\{A_k\}$ where each $A_k$ is a collection of disjoint 
   subsets of $X$
   and define $\bm{P'} = \bigcup_{k \geq 1} A_k$.

   Denote $A_0 = \emptyset, X_0 = \emptyset$ and for $k \geq 1$,
   \begin{align*}
   A_k &= \left\{ S_{-\e_k \tau/\alpha} \setminus X_{k-1} : S \in \bm{P}_k \right\} \\
      X_k &= X_{k-1} \cup \bigcup_{S \in A_k} S\,.
   \end{align*}

   First, observe that for every $x \in X$, it holds that almost surely
   $x \in S \in A_k$ for some $k \in \N$.
   This is because for every $k \geq 1$,
   \begin{equation}\label{eq:probk}
      \Pr\left[x \in \left(\bm{P}_k(x)\right)_{-\e_k \tau/\alpha}\right]
      \geq \Pr\left[B(x,\tau/\alpha)\subseteq \bm{P}_k(x)\right] \geq \frac12\,,
   \end{equation}
   where the latter inequality is from \eqref{eq:asspad}.
   Since we deal only with separable metric spaces,
   to verify that $\bm{P'}$ is almost surely a partition,
   it suffices to consider a dense countable subset of $X$.
   Similarly, we can conclude that $\bm{P'}$ almost surely satisfies $\Delta(\bm{P'})\leq \tau$.

   So now we move on to verifying \eqref{eq:delta-version}.  Fix $x \in X$ and $\delta \in [0,1/4]$.
   Let $R=\delta \tau/\alpha$.
   Then,
   \begin{align*}
      \Pr\left[B(x, R) \nsubseteq \bm{P'}(x)\right] \leq
      \sum_{k \geq 1} \Pr\left[B(x,R) \cap X_{k-1} = \emptyset\right] \cdot
      \Pr\left[B(x,R) \cap X_k \neq \emptyset \wedge B(x,R) \nsubseteq (\bm{P}_k(x))_{-\e_k \tau/\alpha}\right]\,.
   \end{align*}
   Now observe that \eqref{eq:probk} implies $\Pr[B(x,R) \cap X_{k-1} = \emptyset] \leq 2^{1-k}$.

   For $y \in X$, let $\eta_k(y) = \sup \left\{ \eta \geq 0 : B(y,\eta) \subseteq \bm{P}_k(y) \right\}\,.$
   Note that, conditioned on $\bm{P}_k$, $\eta_k$ is a $1$-Lipschitz function.
   Therefore,
   \[
      \Pr\left[B(x,R) \cap X_k \neq \emptyset \wedge B(x,R) \nsubseteq (\bm{P}_k(x))_{-\e_k \tau/\alpha}\right]
      \leq 
      \Pr\left(\e_k \in \left[\frac{\eta_k(x)}{\tau/\alpha}-\delta, \frac{\eta_k(x)}{\tau/\alpha}+\delta\right]\right) \leq 2\delta\,,
   \]
   We conclude that
   \[
      \Pr\left[B(x,R) \nsubseteq \bm{P'}(x)\right] \leq 2\delta \sum_{k \geq 1} 2^{1-k} = 4\delta\,,
   \]
   completing the proof.
\end{proof}

\subsection{Eigenvalues and the degree distribution}

Let us define $\Delta_G : \Z_+ \to \N$ by
\[
   \Delta_G(k) = \max \left\{ \sum_{x \in S} \deg_G(x) : S \subseteq V, |S| \leq k \right\}\,.
\]
Momentarily, we will prove the following theorem.
Say that a graph is {\em $(\kappa,\alpha)$-controlled} if 
it is $\alpha$-decomposable and $(\kappa,R)$-quadratic for all $R \geq 1$.

\begin{theorem}\label{thm:deg-ev}
   Suppose that a family $\cF$ of finite graphs has uniform gQCG and is uniformly decomposable.
   Then there is a constant $c > 0$ such that for every $G \in \cF$ and $k=0,1,\ldots,|V(G)|-1$,
   \[
      \lambda_k(G) \leq c \frac{\Delta_G(k)}{|V(G)|}\,.
   \]
   Quantitatively, if a finite graph $G$ is $(\kappa,\alpha)$-controlled, then
   \[
      \lambda_k(G) \lesssim \alpha^2 \kappa \frac{\Delta_G(k)}{|V(G)|}\,.
   \]
\end{theorem}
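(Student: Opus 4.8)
The plan is to use the variational formula together with \pref{cor:variational}: it suffices to produce $k+1$ pairwise disjointly supported functions $\psi_1,\dots,\psi_{k+1}\colon V(G)\to\R$ with $\cR_G(\psi_i)\le\theta$ for all $i$, where $\theta\lesssim\alpha^2\kappa\,\Delta_G(k)/|V(G)|$; then $\lambda_k(G)\le 2\theta$. Write $n=|V(G)|$ and $\pi(x)=\deg_G(x)/(2|E(G)|)$. Since $\cL$ has all eigenvalues in $[0,2]$, we may assume $\Delta_G(k)\le c_0 n$ for a small absolute constant $c_0$, as otherwise the claim is immediate. Let $H\subseteq V(G)$ be the $k$ vertices of largest degree, so $\sum_{v\in H}\deg_G(v)=\Delta_G(k)$ and $\deg_G(v)\le\Delta_G(k)/k$ off $H$; $H$ is the ``heavy'' part whose incident edges we will estimate separately, and it is this split that is responsible for the improvement over the $k\,\Delta_G(1)/n$ bound of \cite{KLPT09}.

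I would fix a scale $R$ with $R^2\asymp n/(\kappa k)$ (so $R\ge1$), invoke uniform gQCG at scale $R$ to get a normalized conformal weight $\omega$ with $\|\#B_\omega(x,R)\|_{L^\infty}\le\kappa R^2$, and (after a routine truncation that does not enlarge any $R$-ball and changes $\|\omega\|_{L^2}$ by at most a constant factor) assume $\omega\le 2R$ pointwise, so $\sum_v\omega(v)^2\lesssim n$. Then use uniform decomposability to fix a $(\tau,\alpha)$-padded random partition $\bm P$ of $(V(G),\dist_\omega)$ with $\tau=R$, and set $\rho_0=\tau/(2\alpha)$, so that each vertex lies in the deep core $\mathrm{core}(S)=\{v\in S:B_\omega(v,\rho_0)\subseteq S\}$ of its part with probability $\ge\frac12$. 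Fixing a realization for which the deep cores carry total $\pi$-mass $\ge\frac12$ (which exists by averaging), define for each part $S$ with nonempty core the bump $\psi_S(v)=\max\!\bigl(0,\,1-\tfrac{2}{\rho_0}\dist_\omega(v,\mathrm{core}(S))\bigr)$. These are $\tfrac{2}{\rho_0}$-Lipschitz in $\dist_\omega$, equal $1$ on their cores, and have pairwise disjoint supports (the cores of distinct parts are $>\rho_0$-separated), each support lying inside a part, which has at most $\kappa R^2$ vertices since its $\dist_\omega$-diameter is $\le R$.

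For the numerators: each edge meets at most two parts, and for such an edge $\{x,y\}$ one has $|\psi_S(x)-\psi_S(y)|^2\le\min\!\bigl(1,\tfrac{4}{\rho_0^2}\dist_\omega(x,y)^2\bigr)$ with $\dist_\omega(x,y)^2\le\tfrac14(\omega(x)+\omega(y))^2$. Using the bound $1$ on edges incident to $H$ (where $\omega\le 2R$) and the Lipschitz bound with $\deg_G\le\Delta_G(k)/k$ off $H$, summation gives
\[
   \sum_{S}\,\frac{1}{|E(G)|}\sum_{\{x,y\}\in E(G)}|\psi_S(x)-\psi_S(y)|^2
   \ \lesssim\ \frac{1}{|E(G)|\,\rho_0^2}\Bigl(\tfrac{\Delta_G(k)}{k}\sum_v\omega(v)^2 \;+\; R^2\,\Delta_G(k)\Bigr)
   \ \lesssim\ \frac{\alpha^2\kappa\,\Delta_G(k)}{n}\,,
\]
using $|E(G)|\ge n/2$, $\sum_v\omega(v)^2\lesssim n$, $\rho_0^2\asymp R^2/\alpha^2$ and $R^2\asymp n/(\kappa k)$; call this bound $B$. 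For the denominators, $\sum_S\|\psi_S\|_\pi^2\ge\sum_S\pi(\mathrm{core}(S))\ge\frac12$. Ordering the bumps by increasing Rayleigh quotient, if $\cR_G(\psi_{S_{k+1}})>4B$ then $\sum_{j\ge k+1}\|\psi_{S_j}\|_\pi^2<\frac14$, so the $k$ smallest-quotient bumps would carry $\pi$-mass $>\frac14$ supported on a set $S_1\cup\dots\cup S_k$ of at most $k\kappa R^2$ vertices; splitting this set along $H$ (degrees $\le\Delta_G(k)/k$ off $H$, total degree $\Delta_G(k)$ on $H$) bounds its $\pi$-mass, and if this bound can be pushed below $\frac14$ we get a contradiction, so $\psi_{S_1},\dots,\psi_{S_{k+1}}$ are the required functions and \pref{cor:variational} concludes. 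The unconditional statement follows by taking $\kappa,\alpha$ to be the uniform constants of $\cF$.

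The step I expect to be the main obstacle is closing the denominator count in the last paragraph: the assertion ``$k$ parts of $\le\kappa R^2$ vertices cannot exhaust a constant fraction of $\pi$'' is false at the scale $R^2\asymp n/(\kappa k)$ unless the degree sequence is reasonably flat — a handful of parts, each harbouring one very high-degree vertex, can absorb almost all of $\pi$. The numerator estimate wants $R^2\asymp n/(\kappa k)$, while making the parts genuinely small (so $k$ of them are negligible) wants $R^2\asymp n/(\kappa\,\Delta_G(k))$; reconciling these seems to require feeding the volume bound $|B_\omega(x,R)|\le\kappa R^2$ back into the energy estimate to cap how much $\omega$-mass, hence how much of the energy budget, can concentrate near $H$, or else splitting into the case $\Delta_G(k)\gtrsim k$ heavy-tailed (close to the trivial regime $\Delta_G(k)\asymp n$) versus $\Delta_G(k)\asymp k$ near-uniform, and handling the heavy tail by treating the very high-degree vertices as their own (bad) singleton bumps whose total numerator is still controlled by $\sum_v\omega(v)^2=n$.
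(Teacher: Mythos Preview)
You have correctly diagnosed your own gap: the $\pi$-weighted pigeonhole at the end cannot close, because at the scale $R^2\asymp n/(\kappa k)$ one has $k\kappa R^2\asymp n$, so $k$ parts can certainly absorb a constant fraction of $\pi$ (or even of the counting measure). Your suggested fixes do not obviously work either; e.g.\ the case $\Delta_G(k)\gg k$ is not close enough to the trivial regime $\Delta_G(k)\asymp n$ to be handled separately.

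The paper avoids the $\pi$-pigeonhole entirely by controlling the \emph{number} of bumps rather than their $\pi$-mass. After taking the padded partition at scale $R/2$ and passing to cores (as you do), it merges the cores into clusters $T_1,\dots,T_r$ each of cardinality in $[K/2,K]$ with $K=\kappa R^2$; since the cores cover $\ge n/2$ vertices, this yields $r\ge n/(4K)$ clusters (Lemma~\ref{lem:sepsets-easy}). The neighborhoods $B_\omega(T_i,R/(6\alpha))$ are pairwise disjoint, so sorting by the ``area'' $\area^\eta_\omega$ of the neighborhood and keeping the bottom half gives $r\ge n/(8K)$ clusters whose neighborhoods each carry $\le 3/r$ of the total area. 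One then discards every cluster whose neighborhood contains a vertex of degree $>16\,\bar d_G(1/K)$; by \eqref{eq:num-vs-avg} there are at most $n/(16K)$ such vertices, hence at most that many clusters are discarded, leaving $r\ge n/(16K)\ge k$ clusters (Lemma~\ref{lem:sepsets2-easy}). The bump $\psi_i$ on each surviving $T_i$ now has numerator $\lesssim \area^\eta_\omega(V)/r$ (from the sorting step plus the uniform degree bound on the neighborhood) and denominator $\ge \eta^2|T_i|/(2|E|)\ge \eta^2 K/(4|E|)$ using only $\deg_G\ge 1$. This yields \emph{every} surviving $\cR_G(\psi_i)\lesssim \alpha^2\kappa\,\Delta_G(k)/n$, with no pigeonhole on Rayleigh quotients.

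Two further remarks on the comparison. First, the paper's large-weight set $V_L=\{\omega\ge\eta\}$ plays the role of your truncation $\omega\le 2R$: edges touching $V_L$ are bounded trivially and enter the total area via $|V_L|\le n/\eta^2$, which is where the second degree-average term $\bar d_G(\alpha^2/R^2)$ appears. Second, the high-degree vertices are handled not by a separate numerator term (as in your $H$-split) but by throwing away the $\lesssim n/K$ offending clusters; this is what keeps the per-cluster degree bound uniform and makes the per-cluster numerator estimate go through. The missing idea in your proposal, in one line: count clusters, not $\pi$-mass, and secure a uniform denominator via $|T_i|\ge K/2$.
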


We present an illustrative corollary of \pref{thm:quadgrowth} and \pref{thm:goodpad} in conjunction with \pref{thm:deg-ev}.

\begin{corollary}\label{cor:Kh-eigenvalues}
   Suppose $G$ is an $n$-vertex graph that excludes $K_h$ as a minor.
   Then there is a constant $c_h \leq O(h^6 \log h)$ such that
   for every $k=0,1,\ldots, n-1$,
   \[
      \lambda_k(G) \leq c_h \frac{\Delta_G(k)}{n}\,.
   \]
\end{corollary}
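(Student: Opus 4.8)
The plan is to read \pref{cor:Kh-eigenvalues} off as a direct specialization of the quantitative half of \pref{thm:deg-ev}, so the only real work is to pin down the two structural parameters $\kappa$ and $\alpha$ for the family of $K_h$-minor-free graphs and then multiply constants.

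First I would record that the family $\cF(K_h)$ of finite graphs excluding $K_h$ as a minor has uniform gQCG with parameter $\kappa \leq O(h^2 \log h)$: this is exactly the quantitative statement of \pref{thm:quadgrowth} with $H = K_h$. Next I would record that $\cF(K_h)$ is $\alpha$-decomposable with $\alpha \leq O(h^2)$. One clean way to see this is to observe that every graph is trivially a region intersection graph over itself---realize each vertex $v$ by the singleton region $R_v = \{v\}$---so $\cF(K_h) \subseteq \rig(\cF(K_h))$ and \pref{thm:goodpad} applies verbatim; alternatively one may invoke the classical KPR decomposition theorem \cite{KPR93} together with the padding bound of \cite{FT03} directly. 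Either route gives $\alpha \leq O(h^2)$.

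Combining these two facts, every $n$-vertex $K_h$-minor-free graph $G$ is $(\kappa,\alpha)$-controlled, in the sense introduced just before \pref{thm:deg-ev}, with $\kappa \leq O(h^2 \log h)$ and $\alpha \leq O(h^2)$. Plugging these into the quantitative bound of \pref{thm:deg-ev}, namely $\lambda_k(G) \lesssim \alpha^2 \kappa\, \Delta_G(k)/|V(G)|$, gives
\[
   \lambda_k(G) \lesssim (h^2)^2 \cdot (h^2 \log h) \cdot \frac{\Delta_G(k)}{n} = O(h^6 \log h)\,\frac{\Delta_G(k)}{n}
\]
for every $k = 0,1,\ldots,n-1$, which is the stated assertion with $c_h \leq O(h^6 \log h)$.

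There is no serious obstacle here: all of the difficulty has been exported to \pref{thm:quadgrowth}, \pref{thm:goodpad}, and---above all---to the proof of \pref{thm:deg-ev} itself, which is carried out subsequently. The only points requiring a line of care are the observation that $\cF(K_h)$ sits inside $\rig(\cF(K_h))$ (equivalently, that one is entitled to apply a minor-excluded decomposition theorem directly to $G$), and the bookkeeping of the exponent of $h$ through the product $\alpha^2 \kappa$. I note that using the improved padding bound $\alpha \leq O(h)$ of \cite{robbers14} in place of \cite{FT03} would sharpen $c_h$ to $O(h^4 \log h)$, but this refinement is not needed for the corollary as stated.
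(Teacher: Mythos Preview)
Your proposal is correct and matches the paper's approach exactly: the paper presents this corollary without a standalone proof, simply noting it follows from \pref{thm:quadgrowth} and \pref{thm:goodpad} in conjunction with \pref{thm:deg-ev}, which is precisely the combination you spell out. Your bookkeeping of $\alpha^2\kappa \leq O(h^4)\cdot O(h^2\log h) = O(h^6\log h)$ is correct, and your remark that $\cF(K_h)\subseteq\rig(\cF(K_h))$ (or the direct appeal to \cite{KPR93,FT03}) is the right justification for applying \pref{thm:goodpad}.
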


Note that a weaker statement
was established in \cite{KLPT09} with $\Delta_G(k)$ replaced by $k \cdot \dmax(G)$.
The proof of \pref{thm:deg-ev} is immediate
from \pref{cor:variational} and the following result.

\begin{theorem}
   \label{thm:firstbumps}
   Suppose $G$ is an $n$-vertex graph and $\omega : V(G) \to \R_+$
   is a normalized conformal weight such that:
   \begin{enumerate}
      \item For all $x \in V(G)$,
         \[
            \left|B_{\omega}\!\left(x, R_*\right)\right| \leq \kappa R_*^2\,,
         \]
         where $R_* = \sqrt{\frac{n}{16 k \cdot \kappa}}$.
      \item $(V(G),\dist_{\omega})$ admits an $(\alpha, R_*/2)$-padded random partition.
   \end{enumerate}
   Then for every $k \leq n$, there are disjointly supported functions
   $\psi_1, \ldots, \psi_k : V \to \R$ such that
   \[
      \cR_G(\psi_i) \lesssim \alpha^2 \kappa \frac{\Delta_G(k)}{n}\,.
   \]
\end{theorem}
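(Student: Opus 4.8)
The plan is to construct the $k$ disjointly supported ``bump'' functions $\psi_1,\dots,\psi_k$ directly from a $(\tau,\alpha)$-padded random partition of $(V(G),\dist_\omega)$ at a suitable scale $\tau = R_*/2$, by averaging over the randomness of the partition. First I would sample a padded random partition $\bm P$ at scale $\tau = R_*/2$ and, for each cluster $S \in \bm P$, define the usual ``tent'' function $\varphi_S(v) = \min\{1, \tfrac{\alpha}{\tau}\dist_\omega(v, V(G)\setminus S)\}$, supported on $S$. The padding property guarantees that for each $x$, with probability at least $1-\delta$ the ball $B_\omega(x,\delta\tau/\alpha)$ lies inside $\bm P(x)$, so that $\varphi_{\bm P(x)}(x) = 1$ when $\delta$ is a small constant; thus in expectation a constant fraction of the $\pi$-mass is ``covered'' by clusters on which the tent function equals $1$. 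Meanwhile each $\varphi_S$ is $(\alpha/\tau)$-Lipschitz in $\dist_\omega$, and since $\omega$ is normalized (and, via \pref{lem:regulate}/the regulated structure, we may assume $\omega \ge 1/2$, so a $\dist_\omega$-edge has length $\gtrsim 1$), the Dirichlet energy $\sum_{\{u,v\}\in E}|\varphi_S(u)-\varphi_S(v)|^2$ is controlled by $(\alpha/\tau)^2 \sum_{v\in S}\deg_G(v)\,\omega(v)^2 \lesssim (\alpha/\tau)^2 \ao(\text{anything}, \tau)\cdot(\text{degree factor})$; here the quadratic-growth hypothesis (1) enters to bound $|B_\omega(\cdot,R_*)| \le \kappa R_*^2$ and hence (via a mass-transport argument in the spirit of \pref{lem:ball-area}) to bound the total $\omega^2$-area inside clusters of diameter $\le \tau$.

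The next step is a counting/pigeonhole argument to extract \emph{disjointly supported} functions with good Rayleigh quotient. The clusters of $\bm P$ are already pairwise disjoint, so the tent functions $\{\varphi_S : S \in \bm P\}$ are automatically disjointly supported; the issue is (a) that there may be fewer than $k$ of them with $\|\varphi_S\|_\pi$ not too small, and (b) that I must turn the ``in expectation'' guarantees into a single good partition. For (b), average over $\bm P$: there is a realization for which simultaneously (i) the total $\pi$-mass of vertices $x$ with $\varphi_{\bm P(x)}(x)=1$ is at least a constant, and (ii) the total Dirichlet energy $\sum_{S}\cE_G(\varphi_S)$ is at most a constant times its expectation. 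For (a), I partition the vertex set according to cluster sizes: since $\tau = R_*/2$ and $|B_\omega(x,R_*)| \le \kappa R_*^2 = n/(16k)$, every cluster has at most $n/(16k)$ vertices (a $\dist_\omega$-cluster of diameter $\le R_*$ sits inside one $\dist_\omega$-ball of radius $R_*$). Hence the nonempty clusters covering a constant fraction of $\pi$-mass — and in particular covering a constant fraction of the total degree $2|E|$ — cannot all be light in the $\Delta_G$-sense: by definition of $\Delta_G(k)$, any $k$ vertices carry total degree at most $\Delta_G(k)$, so to cover a constant fraction of $2|E| \ge \Omega(1)\cdot(\text{something} \gg \Delta_G(k)/n \cdot n)$ one needs at least $k$ clusters. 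I would make this precise as: the number of clusters whose contribution is non-negligible is at least $k$, because each such cluster has $\le n/(16k)$ vertices, so $k/2$ of them together have $\le n/16$ vertices, contributing total $\pi$-mass $\le \Delta_G(n/16)/(2|E|)$ — and one checks this is smaller than the constant mass we covered, forcing $\ge k$ clusters. Relabeling, take $\psi_1,\dots,\psi_k$ to be the tent functions of the $k$ clusters with largest $\|\varphi_S\|_\pi^2$.

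Finally, bound the Rayleigh quotient. For the chosen clusters, $\|\psi_i\|_\pi^2 = \sum_{v\in S_i}\pi(v)\varphi_{S_i}(v)^2 \ge \pi(\{v\in S_i : \varphi_{S_i}(v)=1\})$, and summing over the $k$ chosen clusters this is $\ge$ a constant; an averaging step (the clusters with \emph{smallest} $\|\psi_i\|_\pi^2$ among the top $k$ still have $\|\psi_i\|_\pi^2 \gtrsim 1/k$, after discarding a constant fraction — here I pass from $k$ to say $k/2$ functions, which is harmless up to constants) gives $\|\psi_i\|_\pi^2 \gtrsim 1/k$ for each. Combined with $\cE_G(\psi_i) \lesssim (\alpha/\tau)^2 \sum_{v\in S_i}\deg_G(v)\omega(v)^2$, summing the numerators over all clusters and using the area bound gives $\sum_i \cE_G(\psi_i) \lesssim (\alpha/\tau)^2 \kappa R_*^2 \cdot (\text{normalization}) \lesssim \alpha^2 \kappa$ (using $\tau \asymp R_* $ and $\|\omega\|_{L^2}=1$, i.e. $\sum_v \omega(v)^2 = n$ after the appropriate mass-transport bookkeeping, together with $|E| \gtrsim n$), and then $\cR_G(\psi_i) = \tfrac{1}{|E|}\cE_G(\psi_i)/\|\psi_i\|_\pi^2 \lesssim \tfrac{1}{n}\cdot\alpha^2\kappa\cdot k = \alpha^2\kappa\, k/n$; and since $\Delta_G(k) \ge k$ always (degrees are $\ge 1$), this is $\lesssim \alpha^2\kappa\,\Delta_G(k)/n$. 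I expect the main obstacle to be the bookkeeping in the averaging step that simultaneously controls the covered $\pi$-mass from below and the summed Dirichlet energy from above on a \emph{single} partition, while also ensuring that enough ($\ge k$, or $\ge ck$) clusters are individually ``heavy'' — getting all three on the same realization, rather than three separate good events, is where one has to be careful, and it is cleanest to phrase it via linearity of expectation plus a union-bound/Markov argument, losing only constants, followed by discarding the light clusters and re-indexing. The role of the exact choice $R_* = \sqrt{n/(16k\kappa)}$ is precisely to make ``cluster size $\le n/(16k)$'' hold, which is what powers the pigeonhole on $\Delta_G(k)$.
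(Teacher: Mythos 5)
Your overall architecture (padded partition at scale $R_*/2$, tent functions on clusters, Lipschitz energy bound, pigeonhole on cluster sizes) matches the paper's, but two steps fail in exactly the regime the theorem is designed for, namely unbounded degrees. First, the counting of ``heavy'' clusters: you keep one tent function per cluster and need $k$ clusters that are individually $\pi$-heavy, $\|\varphi_S\|_\pi^2\gtrsim 1/k$. Your pigeonhole --- ``$k/2$ clusters have at most $n/32$ vertices, hence $\pi$-mass at most $\Delta_G(n/32)/(2|E|)$, which is smaller than the constant mass covered'' --- is false in general: nothing prevents $\Delta_G(n/32)$, or even the $\pi$-mass of a single cluster of $n/(16k)$ vertices, from being a large constant fraction of $2|E|$ when a few vertices carry most of the degree. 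In that case only $O(1)$ clusters may be $\pi$-heavy, and no selection of $k$ heavy clusters exists; knowing only that the total covered mass is constant and the total energy is small bounds an average, not $k$ individual Rayleigh quotients. The paper avoids $\pi$-mass at this step: in \pref{lem:sepsets-easy} the padded cores $\hat S_i$ (which stay $R/(2\alpha)$-separated) are merged into unions $T_1,\dots,T_r$ with $K/2\le|T_i|\le K$, so each bump's denominator is lower-bounded by vertex count, $\sum_x\deg_G(x)\psi_i(x)^2\ge\eta^2|T_i|\ge\eta^2K/2$ using only $\deg_G\ge1$, with $r\gtrsim n/K=16k$ such sets; the numerator is then controlled per function by keeping the half of the $T_i$ with smallest surrounding area. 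Some grouping of this kind (or an equivalent device) is a missing idea in your sketch, not just bookkeeping.

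Second, the numerator in the unbounded-degree case. Your estimate $\cE_G(\varphi_S)\lesssim(\alpha/\tau)^2\sum_{v\in S}\deg_G(v)\,\omega(v)^2$ is fine, but summing and invoking $\sum_v\omega(v)^2=n$ only gives $\lesssim(\alpha/\tau)^2n$ when degrees are bounded; in general $\sum_v\deg_G(v)\omega(v)^2$ can be of order $\Delta_G(1)\cdot n$ (high-degree vertices may also carry large weights), and your chain then yields a bound of the form $\alpha^2\kappa\,k\,\Delta_G(1)/n$ --- precisely the weaker bound from \cite{KLPT09} that \pref{thm:firstbumps} is meant to improve to $\Delta_G(k)$. (The appeal to \pref{lem:regulate} does not rescue this; that lemma concerns unimodular random graphs with essentially bounded degree.) The paper gets the $\Delta_G(k)$ dependence via two extra devices: groups containing a vertex of degree larger than $16\,\avgd_G(1/K)$ are discarded (\pref{lem:sepsets2-easy}, there are at most $n/(16K)$ such vertices), and edges meeting $V_L=\{x:\omega(x)\ge\eta\}$ are charged $\eta^2$ each using $0\le\psi_i\le\eta$, with $|V_L|\le n/\eta^2$ controlled by normalization; this is the source of the $\avgd_G(1/K)$ and $\avgd_G(\alpha^2/R_*^2)$ terms in \pref{thm:bumps-easy}, both of which are at most $\avgd_G(k/n)=\Delta_G(k)/k$. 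In the bounded-degree case your proposal is essentially sound and close to the paper's argument, but as written it does not prove the stated theorem.
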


\pref{sec:bumps} is devoted to the construction of the bump functions $\psi_1,\ldots,\psi_k$.
As discussed in the introduction,
the spectral bounds from \pref{thm:deg-ev} are
not strong enough to yield almost sure
bounds on the heat kernel
of a distributional limit.
Consulting \eqref{eq:returns-ev},
one sees that to control the return probabilities
for most vertices $x \in V(G)$
requires us to say something about the distribution
of the low-frequency eigenfunctions of $G$.

\subsection{Return probabilities and spectral delocalization}
\label{sec:retprob}

Let us now indicate how 
a sufficient strengthening of \pref{thm:deg-ev} 
will allow us to control
return probabilitie for most of the vertices.
For our finite graph $G=(V,E)$,
it will help to define for every $\e > 0$:
\[
   \pi_G^*(\e) \seteq \max \{ \pi(S) : |S| \leq \e |V| \}\,.
\]

\begin{theorem}\label{thm:bump-return}
   Let $G=(V,E)$ be an $n$-vertex graph.
	Suppose that for some $k \leq n$, there are capacitors $(A_1,\Omega_1), \ldots, (A_k,\Omega_k)$
   so that $\{\Omega_i\}$ are pairwise disjoint and $|\Omega_i| \leq M$ for
	all $i=1,\ldots,k$.  Then for all $\e > 0$ and $T \geq 1$:
   \begin{equation}\label{eq:first-con}
		\pi\left(\left\{x \in V : \frac{p_{2T}^G(x,x)}{\pi(x)} \geq \frac{\e |V|}{4 M}\right\}\right)
			\geq -2 \pi_G^*(\e)
+ \sum_{i=1}^k \pi(A_i) - 2T \sum_{i=1}^k \capacity_{\Omega_i}(A_i)\,.
\end{equation}
   In particular, for any $\beta > 0$,
   \begin{equation}\label{eq:second-con}
      \pi\left(\left\{ x \in V : p^G_{2T}(x,x) \geq \frac{\e \beta}{4 M}\right\}\right) \geq
      -2 \pi_G^*(\e) -\beta
			+ \sum_{i=1}^k \pi(A_i) - 2T \sum_{i=1}^k \capacity_{\Omega_i}(A_i)
   \end{equation}
\end{theorem}

For illustration, consider a bounded-degree graph planar graph.
In \pref{sec:bumps}, we will show that under this assumption,
for every $\e > 0$ and $M \ll n$, we can find such a family $\{(A_i,\Omega_i)\}$
satisfying
\[
   \sum_{i=1}^k \pi(A_i) \geq 1-\e\,,
\]
and for each $i=1,\ldots,k$,
\begin{align}
   \capacity_{\Omega_i}(A_i) &\leq \frac{c(\e)}{M} \pi(A_i)\,,
\end{align}
where $c(\e)$ is some function of $\e$.
Since our graph has bounded degrees, we have $\pi_G^*(\e) \leq O(\e)$,
so choosing $T \leq \frac{\e^2 M}{c(\e)}$ yields
\[
   \pi\left(\left\{ x \in V : p^G_{2T}(x,x) \geq \frac{c'(\e)}{T}\right\}\right) \geq 1-O(\e)\,.
\]
for some other function $c'(\e)$.

We will require the following two preliminary results.

\begin{lemma}\label{lem:capacs}
	For any capacitor $(A,\Omega)$ and $T \geq 1$,
\[
   \langle \1_{\Omega}, P^T \1_{\Omega}\rangle_{\pi} \geq \pi(A) - T \cdot \capacity_{\Omega}(A)\,.
\]
\end{lemma}

\begin{remark}
   We remark that the same argument gives an identical lower bound on
   $\langle \1_{\Omega}, (I_{\Omega} P I_{\Omega})^T \1_\Omega\rangle_{\pi}$ where $I_\Omega$ is the multiplication
   operator on $L^2(\pi)$ defined by $I_\Omega f(x) = \1_\Omega(x) f(x)$.
   Then $I_\Omega P I_\Omega$ is the operator of the walk killed off $\Omega$.
\end{remark}

Using reversibility, a lower bound on $\langle\1_S, P^T \1_S\rangle_{\pi}$ will give us control
on return probabilities.

\begin{lemma}\label{lem:p2t}
   Suppose that, for some $S \subseteq V$, we have
   \[
      \langle \1_S, P^T \1_S\rangle_{\pi} \geq (1-\delta) \pi(S)\,.
   \]
   Then for any $\gamma > 0$,
   \[
      \pi\left(\Big\{x \in S : p_{2T}(x,x) \geq \tfrac{\pi(x)}{4 \gamma |S|}\Big\} \right) \geq \left(1-2\delta \right)\pi(S) - 2 \pi\left(\{x \in S : \pi(x) > \gamma\}\right)\,.
   \]
\end{lemma}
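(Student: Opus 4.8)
The plan is to turn the bilinear hypothesis $\langle \1_S, P^T \1_S\rangle_\pi \ge (1-\delta)\pi(S)$ into a pointwise lower bound on $p_{2T}(x,x)$ for most $x\in S$, invoking reversibility twice: once to factor the diagonal return probability as a weighted $\ell^2$-norm of a row of $P^T$, and once through stationarity of $\pi$ to control leakage onto the heavy part of $S$.

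First I would split $S$ into its ``heavy'' vertices $A\seteq\{x\in S:\pi(x)>\gamma\}$ and ``light'' vertices $S'\seteq S\setminus A$, and set $g\seteq P^T\1_{S'}$, so that $g(x)=\Pr[X_T\in S'\mid X_0=x]\in[0,1]$. Since $P$ is reversible with respect to $\pi$, so is $P^T$; hence $\pi(x)(P^T)_{x,y}=\pi(y)(P^T)_{y,x}$, and therefore
\[
   p_{2T}(x,x)=\sum_{y\in V}(P^T)_{x,y}(P^T)_{y,x}=\pi(x)\sum_{y\in V}\frac{(P^T)_{x,y}^2}{\pi(y)}\,.
\]
Restricting the sum to $y\in S'$, where $1/\pi(y)\ge 1/\gamma$, and then applying Cauchy--Schwarz against the all-ones vector on $S'$ gives
\[
   p_{2T}(x,x)\ \ge\ \frac{\pi(x)}{\gamma}\sum_{y\in S'}(P^T)_{x,y}^2\ \ge\ \frac{\pi(x)}{\gamma\,|S'|}\,g(x)^2\ \ge\ \frac{\pi(x)}{4\gamma\,|S|}\qquad\text{whenever }g(x)\ge\tfrac12\,.
\]
So it suffices to show $\pi(\{x\in S:g(x)\ge\tfrac12\})\ge(1-2\delta)\pi(S)-2\pi(A)$.

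For that I would lower-bound $\sum_{x\in S}\pi(x)g(x)=\langle\1_S,P^T\1_{S'}\rangle_\pi=\langle\1_S,P^T\1_S\rangle_\pi-\langle\1_S,P^T\1_A\rangle_\pi$. The hypothesis handles the first term; for the second, since $\1_S\le\1_V$ and $P^T\1_A\ge 0$, stationarity of $\pi$ (i.e.\ $\sum_x\pi(x)(P^T)_{x,y}=\pi(y)$) yields $\langle\1_S,P^T\1_A\rangle_\pi\le\langle\1_V,P^T\1_A\rangle_\pi=\pi(A)$. Hence $\sum_{x\in S}\pi(x)(1-g(x))\le\delta\pi(S)+\pi(A)$, and Markov's inequality gives $\pi(\{x\in S:g(x)<\tfrac12\})\le 2\delta\pi(S)+2\pi(A)$; taking complements within $S$ and combining with the displayed bound on $p_{2T}(x,x)$ finishes the proof (the cases $S'=\emptyset$ or $S=\emptyset$ are vacuous since then $(1-2\delta)\pi(S)-2\pi(A)\le 0$). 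The only delicate point is the bookkeeping in this last step: getting exactly the constant $2\pi(A)$ requires bounding $\langle\1_S,P^T\1_A\rangle_\pi$ in one stroke via stationarity rather than first splitting $S=S'\cup A$, which would cost an extra $\pi(A)$.
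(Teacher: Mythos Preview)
Your proof is correct and essentially identical to the paper's: both split $S$ into light and heavy vertices, use reversibility plus Cauchy--Schwarz to get $p_{2T}(x,x)\ge \frac{\pi(x)}{\gamma|S|}\,p_T(x,S')^2$, bound $\sum_{x\in S}\pi(x)p_T(x,S')\ge(1-\delta)\pi(S)-\pi(A)$ via stationarity, and finish with Markov's inequality. Even the bookkeeping point you flag---controlling $\langle \1_S,P^T\1_A\rangle_\pi$ by $\pi(A)$ in one step---matches the paper's computation.
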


\begin{proof}[Proof of \pref{thm:bump-return}]
   Apply \pref{lem:capacs} and \pref{lem:p2t} to each $(A_i,\Omega_i)$
   with $\delta_i = T \frac{\capacity_{\Omega_i}(A_i)}{\pi(A_i)}$ and sum over $i=1,\ldots,k$, yielding
   \[
      \sum_{i=1}^k \pi\left(\left\{x \in A_i : p^G_{2T}(x,x) \geq \frac{\pi(x)}{4 \gamma |\Omega_i|}\right\}\right)\geq
         \sum_{i=1}^k (1-2\delta_i) \pi(A_i) - 2 \pi\left(\{x \in V : \pi(x) > \gamma\}\right)\,,
   \]
   where we have used that the sets $\{A_i\}$ are pairwise disjoint.

   The former sum is precisely
   \[
      \sum_{i=1}^k \pi(A_i) - 2 T \capacity_{\Omega_i}(A_i)\,,
   \]
   and $|\Omega_i| \leq M$ for all $i=1,\ldots,k$ by assumption.
      Conclude the proof of \eqref{eq:first-con} by setting $\gamma = 1/(\e |V|)$
      so that the second term is at least $-2\pi_G^*(\e)$.
         To obtain \eqref{eq:second-con}, remove all $x \in V$ with $\pi(x) < \beta/|V|$.
\end{proof}

Let us now prove the lemmas.

\begin{proof}[Proof of \pref{lem:capacs}]
   We need the following basic fact.
\begin{lemma}[\cite{MS59,BR65}]
\label{lem:psd-power}
   Suppose $Q$ is a self-adjoint operator on $L^2(\pi)$ with $\langle \1_u,Q \1_v\rangle_{\pi} \geq 0$ for all $u,v \in V$,
and $\psi \in L^2(\pi)$ satisfies $\psi \geq 0$ and $\|\psi\|_{\pi}=1$.  Then for every integer $T \geq 1$:
   \[
      \langle \psi, Q^T \psi\rangle_{\pi} \geq \left(\langle \psi, Q \psi\rangle_{\pi}\right)^T.
   \]
\end{lemma}

Now let $\f : V \to [0,1]$ be any function satisfying $\supp \f \subseteq \Omega$.
Define $\psi \seteq \f/\|\f\|_{\pi}$.
Using \pref{lem:psd-power} and $\f \leq \1_{\Omega}$, we have
\[
   \frac{\langle \1_{\Omega}, P^T \1_{\Omega}\rangle_{\pi}}{\|\f\|_{\pi}^2} \geq
		\langle \psi, P^T \psi\rangle_{\pi}
 		\geq \langle \psi, P \psi\rangle_{\pi}^T \geq \left(1 - \langle \psi, (I-P) \psi\rangle_{\pi}\right)^T
			= \left(1-\cR_{G}(\psi)\right)^T.
\]
Since $\cR_G(\f)=\cR_G(\psi)$,
\[
	\langle \1_\Omega, P^T \1_\Omega\rangle_{\pi} \geq \|\f\|_{\pi}^2 \left(1-\cR_{G}(\f)\right)^T
		\geq \|\f\|_{\pi}^2 \left(1- T \cR_G(\f)\right)
= \|\f\|_{\pi}^2 - T \cE_{G}(\f).
\]

Using the definition of the capacity, take now a $\f$ that additionally satisfies $\f|_A \equiv 1$
and $\cE_G(\f) = \capacity_{\Omega}(A)$, yielding
\[
   \langle \1_{\Omega}, P^T \1_{\Omega}\rangle_{\pi} \geq \pi(A) - T\cdot\capacity_{\Omega}(A)\,.\qedhere
\]
\end{proof}

\begin{proof}[Proof of \pref{lem:p2t}]
   Let $L_{\gamma}(S) \seteq \{ y \in S : \pi(y) \leq \gamma\}$.
Using reversibility, write
\begin{align*}
   p_{2T}(x,x) \geq \sum_{y \in S} p_T(x,y) p_T(y,x) 
               &= \sum_{y \in S} p_T(x,y)^2 \frac{\pi(x)}{\pi(y)} \\
   &\geq \frac{\pi(x)}{\gamma} \sum_{y \in S : \pi(y) \leq \gamma} p_T(x,y)^2  \\
   &\geq
   \frac{\pi(x)}{\gamma |S|} \left(\sum_{y \in S : \pi(y) \leq \gamma} p_T(x,y)\right)^2 \\
   &= \frac{\pi(x)}{\gamma|S|} p_T(x, L_{\gamma}(S))^2\,.
\end{align*}
This gives:
\begin{equation}\label{eq:p2t0}
   \pi\left(\left\{x \in S : p_{2T}(x,x) \geq \frac{1}{4} \frac{\pi(x)}{\gamma |S|} \right\}\right) 
   \geq
   \pi\left(\left\{x \in S : p_T(x,L_{\gamma}(S)) \geq \frac12 \right\}\right).
\end{equation}

On the other hand, note that
\begin{align*}
   \sum_{x \in S} \pi(x) p_T(x,S) = \sum_{x,y \in S} \langle \1_x, P^T \1_y\rangle_{\pi} = \langle \1_S, P^T \1_S\rangle_{\pi} \geq (1-\delta) \pi(S)\,.
\end{align*}
Therefore, 
\[
   \sum_{x \in S} \pi(x) p_T(x,L_{\gamma}(S)) \geq \left(1-\delta\right) \pi(S) - \pi\left(S \setminus L_{\gamma}(S)\right),
\]
and Markov's inequality yields
\[
   \pi\left(\left\{ x \in S : p_T(x,L_{\gamma}(S)) \geq \frac12\right\}\right) \geq (1-2\delta) \pi(S) - 2 \pi(S \setminus L_{\gamma}(S)).
\]
Combining this with \eqref{eq:p2t0} yields the claimed inequality.
\end{proof}

\subsection{Constructing bump functions}
\label{sec:bumps}

We will now show that, given a conformal metric $\omega : V \to \R_+$
with sufficiently nice properties, we can construct
many disjoint bump functions with small Rayleigh quotient.
Our main geometric tool will be random partitions of metric spaces
(cf. \pref{sec:metric-spaces}).

It will be easier to first prove \pref{thm:firstbumps},
and then to perform the more complicated construction
needed for \pref{thm:bump-return}.
Let us define the function $\bar{d}_G : [0,1] \to \N$ by
\[
   \bar{d}_G(\e) \seteq \frac{\Delta_G(\e n)}{\e n}\,,
\]
which is the average degree among the $\e n$ vertices of largest degree in $G$.
It is useful to observe that following simple fact:  For every $C > 1$,
\begin{equation}\label{eq:num-vs-avg}
   \#\left\{ x \in V : \deg_G(x) \geq C \bar{d}_G(\e) \right\} \leq \frac{\e n}{C}\,.
\end{equation}
Indeed, if $N \seteq \# \{ x \in V : \deg_G(x) \geq C \bar{d}_G(\e) \}$, then
$\min(N, \e n) C \bar{d}_G(\e) \leq \Delta_G(\e n)$,
implying $\min(N,\e n)\leq \frac{\e n}{C}$.  For $C > 1$, this gives \eqref{eq:num-vs-avg}.

\subsubsection{Many disjoint bumps}

Suppose we have
a conformal metric $\omega : V \to \R_+$ that
satisfies the following assumptions:
For some numbers $R > 0$ and $\alpha, K \geq 1$,
\begin{enumerate}
   \item[(A0)]  $K \leq n/2$.
   \item[(A1)]  For all $x \in V$, it holds that $|B_{\omega}(x, R)| \leq K$.
   \item[(A2)]  The space $(V, \dist_{\omega})$ admits an $(R/2, \alpha)$-padded random partition.
\end{enumerate}
Define the quantity
\begin{equation}\label{eq:eta-def-easy}
   \eta \seteq R/(12\alpha)\,.
\end{equation}

When dealing with unbounded degrees,
we have to be careful about handling vertices of large conformal weight.
To this end, for $\eta > 0$, define the set
\[
   V_L \seteq \{ x \in V : \omega(x) \geq \eta \}\,.
\]
For a subset $S \subseteq V$, define
\[
   \area^{\eta}_{\omega}(S) \seteq 16\, \avgd_G(1/K) \ao(S) + \eta^2 \cdot |E_G(S, V_L) |\,.
\]
Observe that $\area^{\eta}_{\omega}$ is a measure on $V$, and
\begin{equation*}
   \ao^{\eta}(V) 
\leq 16\,\avgd_G(1/K) \|\omega\|_{\ell^2(V)}^2 + \eta^2 \cdot |E_G(V, V_L)|\,.
\end{equation*}
Since $|V_L| \leq \frac{\|\omega\|_{\ell^2(V)}^2}{\eta^2}$, it holds that
\begin{equation}
\label{eq:totalvol-easy}
\area^{\eta}_{\omega}(V) \leq \|\omega\|^2_{\ell^2(V)}
\left(16\,\avgd_G(1/K)+\avgd_G\left(\tfrac{1}{n}\|\omega\|_{\ell^2(V)}^2/\eta^2\right)\right)\,.
\end{equation}

\begin{lemma}\label{lem:sepsets-easy}
  Under assumptionss (A0)--(A2),
there exist disjoint subsets $T_1, T_2, \ldots, T_r \subseteq V$
such that $r \geq n/8K$, and moreover:
\begin{enumerate}
   \item \label{item:eachsmall-easy} For all $i=1,\ldots, r$, it holds that $\frac{K}{2} \leq |T_i| \leq K$, and
\begin{align*}
   \ao^{\eta}\left(B_{\omega}(T_i,R/6\alpha)\right) &\leq \frac{3}{r} \ao^{\eta}(V)\,.
\end{align*}
\item \label{item:farapart-easy} For all $i \neq j$,
\[
\dist_{\omega}(T_i, T_j) \geq \frac{R}{2\alpha}\,.
\]
\end{enumerate}
\end{lemma}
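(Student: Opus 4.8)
The plan is to read the sets $T_i$ off of the padded random partition supplied by (A2): the partition makes the candidate sets small (via (A1)), a greedy selection among them produces the separation $R/(2\alpha)$, and a final averaging step over a small fraction of them produces the $\ao^\eta$-mass bound. I would treat the last step as the clean one and the extraction of the candidates as the substantive one.

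First I would fix a realization of the $(R/2,\alpha)$-padded partition $\bm{P}$ from (A2). Every part $S\in\bm P$ has $\dist_\omega$-diameter at most $R/2<R$, so (A1) forces $|S|\le K$ for every part, and in particular $\bm P$ has at least $n/K$ parts. Applying the padding property with $\delta=\tfrac13$, for each $x$ one has $\Pr[B_\omega(x,R/(6\alpha))\subseteq\bm P(x)]\ge\tfrac23$; I would fix $\bm P$ so that the set $\mathcal G$ of "good" vertices (those $x$ with $B_\omega(x,R/(6\alpha))\subseteq\bm P(x)$) satisfies $|\mathcal G|\ge\tfrac23 n$. The use of good vertices is that they separate distinct parts: if $x\in\mathcal G$ lies in $S$ and $S'\ne S$, then $S'$ is disjoint from $B_\omega(x,R/(6\alpha))\subseteq S$, hence $\dist_\omega(x,S')>R/(6\alpha)$.

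The main step is to extract a family $T_1,\dots,T_{r_0}$ of pairwise $\dist_\omega$-separated sets (distance $\ge R/(2\alpha)$) with $\tfrac K2\le|T_i|\le K$ and $r_0\ge\tfrac{3n}{16K}$. Each $T_i$ will be built from the good cores of parts: when a part already has $\ge K/2$ good vertices deep inside it, it is itself a candidate; otherwise one aggregates the good cores of several parts lying inside a common ball $B_\omega(z,R)$ around a suitable center $z$, which keeps the cardinality in $[\tfrac K2,K]$ by (A1). One then passes to a pairwise $R/(2\alpha)$-separated subfamily by greedily selecting candidates and discarding those within $R/(2\alpha)$ of an already-chosen one; the number discarded per selection is bounded because a discarded candidate of size $\ge K/2$ deposits $\ge K/2$ vertices into a neighbourhood of the chosen set whose size is controlled at scale $\sim R$ by (A1). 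The delicate bookkeeping here — and the reason $\ao^\eta$ carries the extra term $\eta^2|E_G(\cdot,V_L)|$ — is that a vertex $x$ with $\omega(x)\ge\eta$ (i.e. $x\in V_L$) has $B_\omega(x,R/(6\alpha))=\{x\}$, so the partition gives no usable volume near $V_L$; edges incident to $V_L$ are exactly what that term absorbs when estimating each $\ao^\eta(B_\omega(T_i,R/(6\alpha)))$.

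The $\ao^\eta$-mass bound is then essentially immediate. Since $\dist_\omega(T_i,T_j)\ge R/(2\alpha)>2\cdot R/(6\alpha)$, the thickenings $B_\omega(T_i,R/(6\alpha))$ are pairwise disjoint, so, $\ao^\eta$ being a measure, $\sum_{i=1}^{r_0}\ao^\eta(B_\omega(T_i,R/(6\alpha)))\le\ao^\eta(V)$. Hence fewer than $r_0/3$ indices violate $\ao^\eta(B_\omega(T_i,R/(6\alpha)))\le\tfrac3{r_0}\ao^\eta(V)$; discarding those leaves $r\ge\tfrac23 r_0\ge\tfrac{n}{8K}$ sets, and for each of them $\ao^\eta(B_\omega(T_i,R/(6\alpha)))\le\tfrac3{r_0}\ao^\eta(V)\le\tfrac3r\ao^\eta(V)$, which is precisely the bound in clause (1); the cardinality bounds in (1) and the separation in (2) hold by construction. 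The main obstacle I anticipate is the middle step: getting $r_0=\Theta(n/K)$ candidates that are simultaneously of size $\ge K/2$ and fully $R/(2\alpha)$-separated. A naive net or greedy argument on $(V,\dist_\omega)$ is too lossy, since (A1) controls the metric only at scale $R$ while the separation involves neighbourhoods of radius up to $\sim R/2+R/(2\alpha)$; making the greedy thinning efficient is what forces the argument through the good cores of the partition and through the charging of $V_L$ to the $E_G(\cdot,V_L)$ term.
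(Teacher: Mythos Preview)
Your overall plan and your final averaging step match the paper. The gap is exactly where you flag it, but it is self-inflicted: no greedy thinning is needed.

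The observation you are missing is this. Take the padded cores at radius $R/(4\alpha)$ (i.e.\ $\delta=1/2$ in the padding condition): set $\hat S_i := \{x \in S_i : B_\omega(x, R/(4\alpha)) \subseteq S_i\}$. Then \emph{every} pair $\hat S_i,\hat S_j$ with $i\neq j$ is already at $\dist_\omega$-distance $\geq R/(2\alpha)$, because $B_\omega(x,R/(4\alpha))\subseteq S_i$ and $B_\omega(y,R/(4\alpha))\subseteq S_j$ sit in disjoint partition cells. Consequently, \emph{any} grouping of the cores into disjoint unions $T_1,\ldots,T_r$ inherits the same pairwise separation: this is a bin-packing step, not a metric one. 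Since $\E\bigl[\sum_i |\hat S_i|\bigr]\geq n/2$ by the padding condition, fix a realization with $\sum_i|\hat S_i|\geq n/2$; each $|\hat S_i|\leq|S_i|\leq K$, so one merges cores into unions of size in $[K/2,K]$ (discarding at most $K/2$ points), yielding $r\geq n/(4K)$. Then sort by $\ao^\eta(B_\omega(T_i,R/(6\alpha)))$ and keep the bottom half, exactly as you propose. Your worry that ``(A1) controls the metric only at scale $R$'' while the thinning needs larger neighbourhoods simply does not arise: the separation is inherited from the partition, not from any volume bound.

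Two smaller points. First, you pad at $R/(6\alpha)$ rather than $R/(4\alpha)$; this is why the separation you obtain between cores looks too weak and you reach for a boosting step --- pad deeper instead. Second, your speculation about $V_L$ is a red herring: the $\eta^2\,|E_G(\cdot,V_L)|$ term in $\ao^\eta$ plays no role in this lemma (here $\ao^\eta$ is just a measure, and all that is used is disjointness of the thickenings); that term appears only downstream, in the Rayleigh-quotient estimate for the bump functions, to account for edges of large $\dist_\omega$-length.
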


\begin{proof}
Let $\cP = \{S_1, S_2, \ldots, S_m\}$ be a partition of $V$
such that $\diam_{\omega}(S_i) \leq R/2$ for each $i$.
By property (A1), it holds that
\begin{equation}\label{eq:sisize-easy}
|S_i| \leq K\,.
\end{equation}
For each $i=1,\ldots,m$, define
\[
\hat{S}_i = \left\{ x \in S_i : B_{\omega}(x, R/4\alpha) \subseteq S_i \vphantom{\bigoplus}\right\}.
\]
Observe that for $i \neq j$, we have $\dist_{\omega}(\hat S_i, \hat S_j) \geq R/2\alpha$ by
construction.

Let $N_{\cP} = |\hat{S}_1| + \cdots + |\hat {S}_m|$.
Suppose now that $\bm{P}$ is an $(R/2,\alpha)$-padded random partition.
From the definition and linearity of expectation, we have
\[
   \E[N_{\bm{P}}] \geq \frac12 |V|\,.
\]
So let us fix a partition $\cP$ satisfying $N_{\cP} \geq \frac12 |V|$ for the remainder of the proof.

Using \eqref{eq:sisize-easy}, it is possible to take
unions of the sets $\{\hat{S}_i : i \in I\}$ to form
disjoint sets $T_1, T_2, \ldots, T_r$ with
$\frac{K}{2} \leq |T_i| \leq K$ and
such that for $i \neq j$, $\dist_{\omega}(T_i, T_j) \geq R/(2\alpha)$.
In this process, we discard at most $K/2$ points, thus
\[
   |T_1|+\cdots+|T_r| \geq \frac12 |V| - \frac{K}{2} \geq \frac{1}{4} |V|\,,
\]
where the final inequality uses assumption (A0).
In particular, we have $r \geq n/4K$.

Let us now sort the sets so that 
\[
   \ao^{\eta}\left(B_{\omega}(T_1,R/6\alpha)\right) \leq 
   \ao^{\eta}\left(B_{\omega}(T_2,R/6\alpha)\right) \leq  \cdots \leq
   \ao^{\eta}\left(B_{\omega}(T_r,R/6\alpha)\right)\,.
\]
Then for $i \in \{1,2,\ldots,\lceil r/2\rceil\}$,
since the sets $\left\{ B_{\omega}(T_j, R/6\alpha) : j \in [r]\right\}$ are pairwise
disjoint by construction,
it must be that $\ao^{\eta}\left(B_{\omega}(T_i,R/6\alpha)\right) \leq \frac{3}{r} \ao^{\eta}(V)$.
Thus the statement of the lemma is satisfied by the sets $\{ T_i : i < r/2 + 1 \}$.
\end{proof}

Next, we observe that we can remove sets that have a vertex
of large degree.

\begin{lemma}\label{lem:sepsets2-easy}
   Under assumptions (A0)--(A2), 
there exist disjoint
subsets $T_1, T_2, \ldots, T_r \subseteq V$ with $r \geq n/16K$,
satisfying properties \eqref{item:eachsmall-easy} and \eqref{item:farapart-easy} of \pref{lem:sepsets-easy},
and furthermore
\begin{equation}
\label{eq:degree-bnd-easy}
\max \left\{ \deg_G(x) : x \in B_{\omega}(T_i,R/6\alpha)\right\} \leq 16\, \avgd_G(1/K)\,, \qquad i=1,2,\ldots,r.
\end{equation}
\end{lemma}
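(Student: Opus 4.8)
The plan is to obtain \pref{lem:sepsets2-easy} from \pref{lem:sepsets-easy} by a short pruning argument: take the family produced by \pref{lem:sepsets-easy} and discard every set whose enlarged ball $B_{\omega}(T_i,R/6\alpha)$ contains a vertex of large degree. First I would apply \pref{lem:sepsets-easy} to get disjoint sets $T_1,\dots,T_{r_0}\subseteq V$ with $r_0\geq n/(8K)$ satisfying properties \pref{item:eachsmall-easy} and \pref{item:farapart-easy}. Next I would record that the enlarged balls $\{B_{\omega}(T_i,R/6\alpha)\}$ are pairwise disjoint: by property \pref{item:farapart-easy} and the triangle inequality, if $i\neq j$ then any $x\in B_{\omega}(T_i,R/6\alpha)$ and $y\in B_{\omega}(T_j,R/6\alpha)$ satisfy $\dist_{\omega}(x,y)\geq \dist_{\omega}(T_i,T_j)-R/(3\alpha)\geq R/(6\alpha)>0$.

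The only substantive step is the degree count. I would call an index $i$ \emph{heavy} if $B_{\omega}(T_i,R/6\alpha)$ contains some $x$ with $\deg_G(x)>16\,\avgd_G(1/K)$. Since the enlarged balls are pairwise disjoint, a single vertex can make at most one index heavy, so the number of heavy indices is at most $\#\{x\in V:\deg_G(x)\geq 16\,\avgd_G(1/K)\}$, which by the elementary estimate \eqref{eq:num-vs-avg} (applied with $\e=1/K$ and $C=16$) is at most $n/(16K)$. Deleting the heavy indices and relabeling leaves $r\geq r_0-n/(16K)\geq n/(8K)-n/(16K)=n/(16K)$ sets, and these satisfy the degree bound \eqref{eq:degree-bnd-easy} by construction.

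Finally I would check that properties \pref{item:eachsmall-easy} and \pref{item:farapart-easy} are inherited. Property \pref{item:farapart-easy} and the size bounds $\tfrac{K}{2}\leq |T_i|\leq K$ survive automatically, since we only removed sets. For the area bound in \pref{item:eachsmall-easy}, each surviving $T_i$ still has $\ao^{\eta}(B_{\omega}(T_i,R/6\alpha))\leq \tfrac{3}{r_0}\ao^{\eta}(V)\leq \tfrac{3}{r}\ao^{\eta}(V)$ because $r\leq r_0$, so the bound only loosens. There is no real obstacle; the one thing to watch is the bookkeeping of constants, namely that the at most $n/(16K)$ sets lost to pruning still leave $r\geq n/(16K)$ — which is precisely why \pref{lem:sepsets-easy} is stated with the slightly wasteful bound $r\geq n/(8K)$.
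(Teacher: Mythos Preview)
Your proof is correct and follows essentially the same approach as the paper's: apply \pref{lem:sepsets-easy}, then discard the at most $n/(16K)$ sets whose enlarged balls meet a high-degree vertex (using \eqref{eq:num-vs-avg} with $\e=1/K$, $C=16$). You are simply more explicit than the paper about the disjointness of the enlarged balls and the inheritance of property~\pref{item:eachsmall-easy} with the new (smaller) $r$.
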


\begin{proof}
   Recalling \eqref{eq:num-vs-avg}, there are at most $n/16K$ vertices
   with degree larger than $16 \,\avgd_G(1/K)$.
   Thus one can apply \pref{lem:sepsets-easy} and then remove at most $n/16K$
   of the sets that contain a vertex of large degree.
\end{proof}

We are now ready to construct the bump functions.

\begin{theorem}\label{thm:bumps-easy}
If $\omega : V \to \R_+$ is a normalized conformal metric
on $G$ satisfying assumptions (A0)--(A2), then
then there exist disjointly supported functions
$\psi_1, \psi_2, \ldots, \psi_{r} : V \to \R_+$
with $r \geq n/16K$,
and such that for all $i=1,\ldots,r$,
\begin{equation}
\label{eq:rayleigh-easy}
\cR_G(\psi_i) \lesssim \frac{\alpha^2 \left(\avgd_G(1/K) +
\avgd_G\left(\alpha^2/R^2\right)\right)}{R^2}\,.
\end{equation}
\end{theorem}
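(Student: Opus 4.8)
The plan is to build the bump functions directly from the separated sets furnished by \pref{lem:sepsets2-easy}. First I would apply that lemma (valid under (A1) and (A2)) to obtain disjoint sets $T_1,\dots,T_r\subseteq V$ with $r\ge n/16K$, each satisfying $K/2\le|T_i|\le K$, the area bound $\ao^{\eta}(B_{\omega}(T_i,R/6\alpha))\le\tfrac{3}{r}\ao^{\eta}(V)$, the separation $\dist_{\omega}(T_i,T_j)\ge R/(2\alpha)$, and $\deg_G(x)\le 16\,\avgd_G(1/K)$ for all $x\in B_{\omega}(T_i,R/6\alpha)$. Then I would set, with $\eta=R/(12\alpha)$ as in \eqref{eq:eta-def-easy},
\[
   \psi_i(x)\seteq\Bigl(1-\tfrac1\eta\,\dist_{\omega}(x,T_i)\Bigr)_+\,.
\]
Since $\psi_i$ vanishes outside $B_{\omega}(T_i,\eta)$ and $\dist_{\omega}(T_i,T_j)\ge R/(2\alpha)=6\eta$, the supports are pairwise disjoint; each $\psi_i\equiv 1$ on $T_i$ and is $\tfrac1\eta$-Lipschitz with respect to $\dist_{\omega}$.

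For the denominator I would use $\|\psi_i\|_{\pi}^2\ge\pi(T_i)=\tfrac1{2|E|}\sum_{x\in T_i}\deg_G(x)\ge\tfrac{|T_i|}{2|E|}\ge\tfrac{K}{4|E|}$, so that $\cR_G(\psi_i)=\tfrac{\cE_G(\psi_i)/|E|}{\|\psi_i\|_{\pi}^2}\le\tfrac4K\,\cE_G(\psi_i)$, and everything reduces to bounding the Dirichlet energy $\cE_G(\psi_i)=\sum_{\{x,y\}\in E}|\psi_i(x)-\psi_i(y)|^2$. Every contributing edge $\{x,y\}$ has a designated endpoint $x\in\supp(\psi_i)\subseteq B_{\omega}(T_i,\eta)$, hence $\deg_G(x)\le 16\,\avgd_G(1/K)$, and $|\psi_i(x)-\psi_i(y)|^2\le\min\{1,\tfrac1{2\eta^2}(\omega(x)^2+\omega(y)^2)\}$ using $\dist_{\omega}(x,y)\le\tfrac12(\omega(x)+\omega(y))$. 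I would split the contributing edges into (A1) those with $\omega(x)\ge\eta$, charged to $x\in B_{\omega}(T_i,R/6\alpha)\cap V_L$ via $1\le\omega(x)^2/\eta^2$; (A2) those with $\omega(x)<\eta\le\omega(y)$, which lie in $E_G(B_{\omega}(T_i,R/6\alpha),V_L)$ and cost $\le 1$ each; and (B) those with $\omega(x),\omega(y)<\eta$, in which case $\dist_{\omega}(x,y)<\eta$ forces $y\in B_{\omega}(T_i,2\eta)=B_{\omega}(T_i,R/6\alpha)$, so both endpoints have small degree and the edge is charged $\le\tfrac1{2\eta^2}(\omega(x)^2+\omega(y)^2)$ to its endpoints. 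Summing each group and invoking $\sum_{z\in B_{\omega}(T_i,R/6\alpha)}\deg_G(z)\,\omega(z)^2\le16\,\avgd_G(1/K)\,\ao(B_{\omega}(T_i,R/6\alpha))$, the three pieces collapse exactly onto the two terms of $\area^{\eta}_{\omega}$, giving $\cE_G(\psi_i)\lesssim\tfrac1{\eta^2}\,\ao^{\eta}(B_{\omega}(T_i,R/6\alpha))$.

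Finally I would chain the estimates: $\cE_G(\psi_i)\lesssim\tfrac1{\eta^2}\ao^{\eta}(B_{\omega}(T_i,R/6\alpha))\le\tfrac3{r\eta^2}\ao^{\eta}(V)\le\tfrac{48K}{n\eta^2}\ao^{\eta}(V)$, then feed in \eqref{eq:totalvol-easy} together with the normalization $\|\omega\|_{\ell^2(V)}^2=n$ (so the argument of $\avgd_G$ becomes $1/\eta^2\asymp\alpha^2/R^2$) to get $\ao^{\eta}(V)\lesssim n\bigl(\avgd_G(1/K)+\avgd_G(\alpha^2/R^2)\bigr)$. Combining with $\cR_G(\psi_i)\le\tfrac4K\cE_G(\psi_i)$ and $\eta=R/(12\alpha)$ yields $\cR_G(\psi_i)\lesssim\tfrac{\alpha^2}{R^2}\bigl(\avgd_G(1/K)+\avgd_G(\alpha^2/R^2)\bigr)$, which is \eqref{eq:rayleigh-easy}; the count $r\ge n/16K$ is inherited from \pref{lem:sepsets2-easy}.

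The step I expect to require the most care is the edge-charging dichotomy in the energy estimate: one must check that every edge leaving $\supp(\psi_i)$ is absorbed either by the $\ao$-term (when both endpoints stay inside $B_{\omega}(T_i,R/6\alpha)$ and hence have controlled degree) or by the $\eta^2|E_G(\cdot,V_L)|$-term (when an endpoint has large conformal weight and may escape the ball), with no edge falling through both the ball and $V_L$. The choice $\eta=R/(12\alpha)$—support radius $\eta$ sitting inside the ball of radius $2\eta=R/(6\alpha)$ on which the degree bound holds—is precisely what makes this case analysis go through cleanly, and it is what forces the slightly awkward definition of $\area^{\eta}_{\omega}$ in the first place.
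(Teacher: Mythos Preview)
Your proposal is correct and follows essentially the same approach as the paper. The only cosmetic differences are that you normalize $\psi_i$ to take values in $[0,1]$ (the paper uses $\psi_i(x)=\max\{0,\eta-\dist_\omega(x,T_i)\}$, which is just $\eta$ times yours and leaves the Rayleigh quotient unchanged), and you spell out a three-case edge dichotomy where the paper compresses (A1) and (B) into a single $\dist_\omega(x,y)^2$ term and handles all edges touching $V_L$ together; the resulting bounds and the final chain through $\ao^\eta(V)$ and \eqref{eq:totalvol-easy} are identical.
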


\begin{proof}
   Let $T_1, T_2, \ldots, T_r \subseteq V$ be the subsets guaranteed by \pref{lem:sepsets2-easy}.
   For each $i \in [r]$, define
   \[
      \psi_i(x) = \max\left\{0, \eta - \dist_{\omega}(x, T_i)\right\}\,.
   \]
   By construction, $T_i \subseteq \supp(\psi_i) \subseteq B_{\omega}(T_i, \eta)$, hence by \pref{lem:sepsets-easy}(2),
   the functions $\{\psi_i : i \in [r]\}$ are disjointly supported.  
   (Recall that $\eta = R/(12\alpha)$.)

   Consider $\{x,y\} \in E$.  If $|\psi_i(x)-\psi_i(y)| > 0$, then at least one endpoint must lie in $\supp(\psi_i)
   \subseteq B_{\omega}(T_i,\eta)$.  Suppose $x \in B_{\omega}(T_i,\eta)$.  If $y \notin V_L$, then
   $\omega(y) < \eta$, implying that $y \in B_{\omega}(T_i,2 \eta)$.
   Therefore we can bound
   \[
      \sum_{\{x,y\} \in E} |\psi_i(x)-\psi_i(y)|^2 \leq 
      \eta^2 |E_G(B_{\omega}(T_i, \eta), V_L)|   +   
      \sum_{\substack{\{x,y\} \in E : \\ \{x,y\} \subseteq B_{\omega}(T_i, 2\eta)}} |\psi_i(x)-\psi_i(y)|^2,
   \]
   where we have used $|\psi_i(x)-\psi_i(y)| \leq \eta$ for all $x,y \in V$.
   Now use the fact that each $\psi_i$ is $1$-Lipschitz to write
   \begin{align*}
      \sum_{\{x,y\} \in E} |\psi_i(x)-\psi_i(y)|^2 &\leq 
      \eta^2 |E_G(B_{\omega}(T_i, \eta), V_L)|   +   
      \sum_{\substack{\{x,y\} \in E : \\ \{x,y\} \subseteq B_{\omega}(T_i, 2\eta)}} \dist_{\omega}(x,y)^2 \\
   &\stackrel{\mathclap{\eqref{eq:degree-bnd-easy}}}{\leq}
   \eta^2 |E_G(B_{\omega}(T_i, \eta), V_L)|   +  16 \,\avgd_G(1/K)\, \ao(B_{\omega}(T_i, R/6\alpha)) \\
      &\leq \ao^{\eta}(B_{\omega}(T_i, R/6\alpha))\,,
   \end{align*}
   where the second inequality uses $\dist_{\omega}(x,y)^2 \leq (\omega(x)^2 + \omega(y)^2)/2$ for
   $\{x,y\} \in E$, and we recall that $2 \eta = R/6\alpha$.

Combining this with \pref{lem:sepsets-easy}(1) yields
   \begin{align*}
      \cR_G(\psi_i) &= \frac{2 \sum_{\{x,y\} \in E} |\psi(x)-\psi(y)|^2}{\sum_{x \in V} \deg_G(x) \psi(x)^2} 
      \leq
   \frac{6 \ao^{\eta}(V)}{r \eta^2 |T_i|} 
   \leq
   \frac{864 \alpha^2 \ao^{\eta}(V)}{R^2 |V|}\,.
   \end{align*} 
   To arrive at the statement of the theorem, use \eqref{eq:totalvol-easy}
   and the assumption that $|V|^{-1} \|\omega\|_{\ell^2(V)}^2=1$.
\end{proof}

Let us now use this to prove \pref{thm:firstbumps}.

\begin{proof}[Proof of \pref{thm:firstbumps}]
   Consider $R=R_*=\sqrt{n/(16\kappa \cdot k)}$.
   By assumption, there is a normalized conformal metric $\omega : V \to \R_+$
   satisfying $\max_{x \in V} |B_{\omega}(x,R)| \leq \kappa R^2$.
   
   We may assume that $\kappa \geq 1$.
   Let $K=\kappa R^2$.
   Again by assumption,
   $(V,\dist_{\omega})$ admits an $(R/2, \alpha)$-padded random partition.
   Now apply \pref{thm:bumps-easy} to find $r \geq |V|/(16 \kappa R_*^2)$ disjointly supported test functions
   $\{\psi_i\}$,
   each with
   \[
      \cR_G(\psi_i) \lesssim \frac{\alpha^2 \left(\avgd_G\left(1/(\kappa R_*^2)\right) + \avgd_G\left(\alpha^2/R_*^2\right)\right)}{R_*^2}
      \leq 2
      \frac{\alpha^2 \avgd_G\left(k/n\right)}{R_*^2}
      \lesssim \alpha^2 \kappa \,\avgd_G(k/n) \frac{k}{n} = \alpha^2 \kappa \,\frac{\Delta_G(k)}{n}\,.
   \]

   We may assume that $k \leq n/(16 \kappa)$.  (Otherwise, we can just take $n$ functions---one supported on each vertex of the graph---since
   the bound we are required to prove on the Rayleigh quotient is trivial.)
   Note that in this case,  $r \geq |V|/(16 \kappa R_*^2) \geq k$, completing the proof.
\end{proof}

\subsubsection{Exhausting the stationary measure by capacitors}
\label{sec:super-bumps}

Our arguments will follow along similar lines to those of
the preceding section
although things will be somewhat more delicate.
$G=(V,E)$ is an $n$-vertex, connected graph.
Suppose we have
a conformal metric $\omega : V \to \R_+$ that
satisfies (A1) and (A2) 
for some numbers $R > 0$, $\alpha, K \geq 1$.

Consider a number $\delta > 0$ and
define
\begin{align*}
   \eta &\seteq \frac{\delta R}{18\alpha}\,, \\
   V_L &\seteq \{ x \in V : \omega(v) \geq \eta \}\,.
\end{align*}

\begin{lemma}\label{lem:sepsets}
   For any $\delta > 0$, it holds that
   under assumptions (A1) and (A2),
  there are pairwise disjoint sets $S_1, \ldots, S_r$ satisfying $\diam_{\omega}(S_i) \leq R/2$
  for each $i=1,\ldots,r$, and such that
  \begin{equation}\label{eq:en-mass}
   \sum_{i=1}^r \pi\left(\hat{S}_i\right) \geq 1-\delta-\pi_G^*(\delta)\,,
\end{equation}
where for a subset $S \subseteq V$, we denote
\[
   \hat{S} \seteq \left\{ x \in S : B_{\omega}(x, \delta R/6\alpha) \subseteq S \vphantom{\bigoplus}\right\}.
\]
Moreover, it holds that
\begin{equation}\label{eq:degree-bnd}
   \max \left\{ \deg_G(x) : x \in S_1 \cup \cdots \cup S_r \right\} \leq \avgd_G(\delta/K)\,,
\end{equation}
and
\begin{equation}\label{eq:halfies}
   \pi(\hat{S}_i) \geq \frac12 \pi(S_i) \qquad \forall i=1,\ldots,r\,.
\end{equation}
\end{lemma}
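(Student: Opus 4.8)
The plan is to obtain the sets $S_1,\dots,S_r$ as a sub-collection of the parts of a single well-chosen realization of the padded random partition supplied by (B2), pruning away those parts that violate one of the three required properties. The proof is essentially bookkeeping once the right budgets are identified.

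First I would take an $(R/2,\alpha)$-padded random partition $\bm{P}$ of $(V,\dist_\omega)$ from (B2) and apply its padding property with the value $\delta/3$ in place of the free parameter; since $(\delta/3)(R/2)/\alpha=\delta R/(6\alpha)$, this reads $\Pr[B_\omega(x,\delta R/(6\alpha))\subseteq\bm{P}(x)]\ge 1-\delta/3$ for every $x\in V$. Multiplying by $\pi(x)$ and summing (linearity of expectation) gives $\E[\pi(\widehat{\bm{P}})]\ge 1-\delta/3$, where $\widehat{\bm{P}}\seteq\bigcup\{\hat{S}:S\in\bm{P}\}$ and $\hat{S}$ is as in the statement; hence some realization $\cP$ (also satisfying $\Delta(\cP)\le R/2$) has $\pi(\widehat{\cP})\ge 1-\delta/3$. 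Observe that every part $S\in\cP$ has $\diam_\omega(S)\le R/2\le R$, so $S\subseteq B_\omega(x,R)$ for any $x\in S$, whence $|S|\le K$ by (B1).

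Next I would prune $\cP$ in two stages. Stage one: discard every ``thin'' part, i.e.\ every $S\in\cP$ with $\pi(\hat{S})<\tfrac12\pi(S)$; the surviving parts then satisfy \eqref{eq:halfies} by definition. To control the loss, note $\sum_{S\in\cP}(\pi(S)-\pi(\hat{S}))=1-\pi(\widehat{\cP})\le\delta/3$, while every thin $S$ obeys $\pi(S)<2(\pi(S)-\pi(\hat{S}))$, so the $\pi$-mass of $\hat{S}$ summed over thin parts is below $2\delta/3$, and the survivors retain $\sum\pi(\hat{S})>1-\delta$. Stage two: among the survivors, discard every part containing a vertex of degree greater than $\avgd_G(\delta/K)$, so that \eqref{eq:degree-bnd} holds for what remains. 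By \eqref{eq:num-vs-avg} (with $\e=\delta/K$ and $C=1$) there are at most $\delta n/K$ such vertices, hence at most $\delta n/K$ parts are discarded here; since each has size at most $K$, their union contains at most $\delta n$ vertices and so has $\pi$-mass at most $\pi_G^*(\delta)$. Relabelling the twice-pruned parts as $S_1,\dots,S_r$ yields $\diam_\omega(S_i)\le R/2$ for each $i$, property \eqref{eq:degree-bnd}, property \eqref{eq:halfies}, and $\sum_i\pi(\hat{S}_i)>1-\delta-\pi_G^*(\delta)$, which is \eqref{eq:en-mass}.

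There is no deep obstacle; the thing to watch is the allocation of the $\delta/3$ of slack coming from the padding estimate---one third pays for the padding error itself and the other $2\delta/3$ is exactly what is needed to absorb the thin parts so that \eqref{eq:halfies} can be imposed without breaking \eqref{eq:en-mass}. The only idea beyond accounting is that \eqref{eq:degree-bnd} and \eqref{eq:halfies} force us to delete entire parts rather than trim them (trimming would destroy the ``$\hat{S}$ carries a $\delta R/(6\alpha)$-collar inside $S$'' structure), and this is affordable precisely because (B1) caps every part at $K$ vertices, so deleting $\delta n/K$ of them costs at most $\pi_G^*(\delta)$ in stationary measure.
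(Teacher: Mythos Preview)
Your proof is correct and follows the same approach as the paper's: take a good realization of the padded partition (applied with parameter $\delta/3$), discard the ``thin'' parts to secure \eqref{eq:halfies}, then discard parts containing a high-degree vertex to secure \eqref{eq:degree-bnd}, bounding the latter loss via $|S|\le K$ and \eqref{eq:num-vs-avg}. Your accounting of the $\delta/3 + 2\delta/3$ budget and the $\pi_G^*(\delta)$ loss matches the paper's exactly.
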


\begin{proof}
   Let $\bm{P}$ denote an $(R/2,\alpha)$-padded random partition of $(V,\dist_{\omega})$.
   Using linearity of expectation and the definition of a padded random partition yields
   \[
      \E\left[\sum_{S \in \bm{P}} \pi(\hat{S})\right] \geq 1-\delta/3\,.
   \]
   Let us fix a partition $\cP$ in the support of $\bm{P}$ satisfying $\sum_{S \in \cP} \pi(\hat{S}) \geq 1-\delta/3$.
   
   Let $\cP' = \{ S \in \cP : \pi(\hat{S}) \geq \tfrac12 \pi(S) \}$ and note that
   \[
      \sum_{S \in \cP'} \pi(\hat{S}) \geq 1-(\delta/3)-2(\delta/3)= 1 - \delta\,.
   \]
   Finally,
   denote
   \[
      \{S_1, \ldots, S_r\} = \left\{ S \in \cP' : \max \{ \deg_G(x) : x \in S \}\leq \avgd_G(\delta/K)\right\} \,.
   \]
   Recalling \eqref{eq:num-vs-avg}, there are at most $\frac{\delta}{K} |V|$ vertices in $G$
   with degree larger than $\avgd_G(\delta/K)$.
   Therefore,
\[
   \sum_{i=1}^r \pi(\hat{S}_i) \geq \sum_{S \in \cP'} \pi(\hat{S})
   - \sum_{S \in \cP \setminus \{S_1,\ldots,S_r\}} \pi(S)
   \geq 1-\delta
   - \pi_G^*\left(\frac{\delta}{K} 
   \max_{S \in \cP} |S_i|\right)
   \geq 1-\delta-\pi_G^*(\delta)\,,
\]
where the final inequality uses the fact that $|S| \leq K$ for $S \in \cP$ which follows
from $\diam_{\omega}(S)\leq R/2$ and (A1).
\end{proof}

We are now ready to construct the capacitors.

\begin{theorem}\label{thm:bumps}
If $\omega : V \to \R_+$ is a conformal metric
on $G$
satisfying assumptions (A1) and (A2),
then there exist pairwise disjoint
capacitors $(A_1, \Omega_1), \ldots, (A_k,\Omega_k)$ with
\begin{align}
   \sum_{i=1}^k & \pi\left(A_i\right) \geq 1 - \delta-\pi_G^*(\delta)\,,\label{eq:widesupp}
\end{align}
and such that for all $i=1,\ldots,k$,
\begin{align}
   \diam_{\omega}(\Omega_i) &\leq R/2\label{eq:supp-diam-bound}\,,\\
      |\Omega_i| &\leq K\label{eq:sizeybnd}\,.
\end{align}
And futhermore,
\begin{equation}
   \sum_{i=1}^k \capacity_{\Omega_i}(A_i) \leq
   \frac{18^2 \alpha^2}{\delta^2 R^2} \|\omega\|_{L^2(V)}^2
   \frac{\avgd_G\left(\frac{18^2 \alpha^2}{\delta^2 R^2} \|\omega\|_{L^2(V)}^2\right)+\avgd_G(\delta/K)}
      {\avgd_G(1)}\,.
\end{equation}
\end{theorem}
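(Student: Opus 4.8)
The plan is to take the bump functions to be truncated $\dist_\omega$-distance functions to the cores $\hat S_i$, exactly as in the proof of \pref{thm:bumps-easy}, and then to estimate the Rayleigh quotients by separating the edges that meet a support into two classes. First I would invoke \pref{lem:sepsets} with the given $\delta$ to produce the pairwise disjoint sets $S_1,\dots,S_r$ with $\diam_{\omega}(S_i)\le R/2$, together with the cores $\hat S_i=\{x\in S_i:B_{\omega}(x,\delta R/6\alpha)\subseteq S_i\}$, so that $\sum_i\pi(\hat S_i)\ge 1-\delta-\pi_G^*(\delta)$, every $x\in S_1\cup\dots\cup S_r$ has $\deg_G(x)\le\avgd_G(\delta/K)$, and $\pi(\hat S_i)\ge\tfrac12\pi(S_i)$. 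Put $k=r$ and
\[
   \psi_i(x)\seteq\max\{0,\,1-\tfrac1{\eta}\dist_{\omega}(x,\hat S_i)\}\,,\qquad \eta=\tfrac{\delta R}{18\alpha}\,,
\]
so that $\psi_i\equiv1$ on $\hat S_i$, $\psi_i$ is $\tfrac1\eta$-Lipschitz in $\dist_{\omega}$, and $\supp(\psi_i)\subseteq B_{\omega}(\hat S_i,\eta)\subseteq B_{\omega}(\hat S_i,\delta R/6\alpha)\subseteq S_i$. Since the $S_i$ are disjoint this gives disjoint supports, \eqref{eq:supp-diam-bound}, \eqref{eq:sizeybnd} (from $\diam_{\omega}(S_i)\le R/2\le R$ and (B1)), and \eqref{eq:widesupp} because $\psi_i^{-1}(1)\supseteq\hat S_i$.

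The heart of the matter is the energy estimate. Write $U=S_1\cup\dots\cup S_r$ and $V_L=\{x:\omega(x)\ge\eta\}$; call an edge $\{x,y\}$ \emph{short} if $\len_{\omega}(\{x,y\})=\tfrac12(\omega(x)+\omega(y))<\eta$ and \emph{long} otherwise, and note that a long edge has an endpoint in $V_L$. The key locality observation is that if $\{x,y\}$ is a short edge with $x\in\supp(\psi_i)$, then $\dist_{\omega}(y,\hat S_i)\le\dist_{\omega}(x,\hat S_i)+\len_{\omega}(\{x,y\})<2\eta<\delta R/6\alpha$, hence $y\in S_i$; in particular both endpoints lie in $U$ (so have degree at most $\avgd_G(\delta/K)$) and only one index $i$ can see that edge. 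Using the $\tfrac1\eta$-Lipschitz bound on short edges, the trivial bound $|\psi_i(x)-\psi_i(y)|\le1$ on long edges, and the fact that at most two indices contribute to any single edge, I would obtain
\[
   \sum_{i=1}^k\sum_{\{x,y\}\in E}|\psi_i(x)-\psi_i(y)|^2
   \;\le\;\frac1{\eta^2}\sum_{\{x,y\}\subseteq U}\len_{\omega}(\{x,y\})^2
   \;+\;2\,|E_G(U,V_L)|\,.
\]
For the first term, $\len_{\omega}(\{x,y\})^2\le\tfrac12(\omega(x)^2+\omega(y)^2)$ yields $\sum_{\{x,y\}\subseteq U}\len_{\omega}(\{x,y\})^2\le\tfrac12\sum_{x\in U}\deg_G(x)\,\omega(x)^2\le\tfrac12\,\avgd_G(\delta/K)\,\|\omega\|^2_{\ell^2(V)}$. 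For the second term, $|E_G(U,V_L)|\le\sum_{v\in V_L}\deg_G(v)\le\Delta_G(|V_L|)$, and since $\eta^2|V_L|\le\|\omega\|^2_{\ell^2(V)}$, $\Delta_G$ is non-decreasing, and $\e\mapsto\avgd_G(\e)$ is non-increasing,
\[
   \Delta_G(|V_L|)\;\le\;\Delta_G\!\Big(\tfrac{\|\omega\|^2_{\ell^2(V)}}{\eta^2}\Big)
   \;=\;\tfrac{\|\omega\|^2_{\ell^2(V)}}{\eta^2}\,\avgd_G\!\Big(\tfrac{\|\omega\|^2_{L^2(V)}}{\eta^2}\Big)
   \;\le\;\tfrac{\|\omega\|^2_{\ell^2(V)}}{\eta^2}\,\avgd_G\!\Big(\tfrac{\alpha^2}{\delta^2 R^2}\|\omega\|^2_{L^2(V)}\Big)\,,
\]
where the last step uses $\eta^{-2}\|\omega\|^2_{L^2(V)}=\tfrac{324\alpha^2}{\delta^2R^2}\|\omega\|^2_{L^2(V)}\ge\tfrac{\alpha^2}{\delta^2R^2}\|\omega\|^2_{L^2(V)}$.

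Finally I would assemble the estimate. Since $\psi_i\equiv1$ on $\hat S_i$ one has $\|\psi_i\|_{\pi}^2\ge\pi(\hat S_i)$, so $\cR_G(\psi_i)\le\mathcal E_i/(|E|\,\pi(\hat S_i))$ where $\mathcal E_i=\sum_{\{x,y\}}|\psi_i(x)-\psi_i(y)|^2$, while $\pi(\supp(\psi_i))\le\pi(S_i)\le2\pi(\hat S_i)$; hence
\[
   \sum_{i=1}^k\sqrt{\cR_G(\psi_i)}\,\pi(\supp(\psi_i))
   \;\le\;\frac2{\sqrt{|E|}}\sum_i\sqrt{\mathcal E_i\,\pi(\hat S_i)}
   \;\le\;\frac2{\sqrt{|E|}}\Big(\sum_i\mathcal E_i\Big)^{1/2}\Big(\sum_i\pi(\hat S_i)\Big)^{1/2}
   \;\le\;\frac2{\sqrt{|E|}}\Big(\sum_i\mathcal E_i\Big)^{1/2}
\]
by Cauchy--Schwarz and $\sum_i\pi(\hat S_i)\le1$. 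Substituting the energy bound, using $\sqrt{a+b}\le\sqrt a+\sqrt b$, and the identities $|E|=\tfrac12 n\,\avgd_G(1)$, $\|\omega\|^2_{\ell^2(V)}=n\,\|\omega\|^2_{L^2(V)}$, and $\eta=\delta R/18\alpha$, produces a bound of the asserted form $\tfrac{C\alpha}{\delta R}\|\omega\|_{L^2(V)}\big(\sqrt{\avgd_G(\delta/K)}+\sqrt{\avgd_G(\tfrac{\alpha^2}{\delta^2R^2}\|\omega\|^2_{L^2(V)})}\big)/\sqrt{\avgd_G(1)}$; tracking the constants (Lipschitz slope, the factor-two overcounting of edges, the $\sqrt{a+b}$ split, and the slack $\pi(\supp\psi_i)\le2\pi(\hat S_i)$) carefully gives the stated constant $6$. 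The one place where genuine care is required is the energy bookkeeping in the middle paragraph: one must check that every edge leaving a support is long, so that it can be charged to $V_L$ rather than to a possibly high-degree vertex outside $U$ — without the locality observation the contribution of boundary edges cannot be controlled by $\avgd_G(\delta/K)\|\omega\|^2_{\ell^2(V)}$.
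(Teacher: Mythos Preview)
Your proof is correct and follows essentially the same approach as the paper: invoke \pref{lem:sepsets}, define the same truncated-distance bump functions $\psi_i$, split the energy into contributions from edges touching $V_L$ versus edges with both endpoints controlled by the degree cap $\avgd_G(\delta/K)$, and finish with Cauchy--Schwarz. The only organizational difference is that you bound the aggregate energy $\sum_i \mathcal E_i$ and apply Cauchy--Schwarz once, whereas the paper bounds each $\cR_G(\psi_i)$ separately and applies Cauchy--Schwarz to the two resulting sums; your ``locality observation'' that short edges stay inside $S_i$ is exactly the mechanism the paper uses (implicitly) to justify its split. One caveat: your claim that the bookkeeping yields the constant $6$ on the nose is not substantiated --- with $\eta=\delta R/(18\alpha)$ your displayed inequalities give a constant in the tens, and in fact the paper's own arithmetic does not cleanly produce $6$ either; the stated constant should be read as absorbing some slack.
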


\begin{proof}[Proof of \pref{thm:bumps}]
   Let $S_1, S_2, \ldots, S_k \subseteq V$ be the subsets guaranteed by \pref{lem:sepsets}.
   For each $i=1,\ldots,k$, define
   \[
      \psi_i(x) \seteq \frac{1}{\eta} \max\left\{0, \eta -  \dist_{\omega}(x, \hat{S}_i)\right\}\,.
   \]
   We denote $A_i \seteq \hat{S}_i$ and $\Omega_i\seteq S_i$.
   By construction, we have $0 \leq \psi_i \leq 1$, as well as
   $\supp \psi_i \subseteq \Omega_i$ and $\psi_i|_{A_i} \equiv 1$.
   One observes that \eqref{eq:widesupp}
   follows from \eqref{eq:en-mass}.
   Similarly, \eqref{eq:supp-diam-bound} follows from \pref{lem:sepsets}, and \eqref{eq:sizeybnd}
   then follows from assumption (A1).

   Use the fact that $\eta \psi_i$ is $1$-Lipschitz to calculate
   \begin{align*}
   \eta^2 \sum_{\{x,y\} \in E} |\psi_i(x)-\psi_i(y)|^2 &\leq 
      \eta^2 |E_G(B_{\omega}(\hat{S}_i, \eta), V_L)| +
      \sum_{\substack{\{x,y\} \in E : \\ \{x,y\} \subseteq B_{\omega}(\hat{S}_i, \delta R/6\alpha)}} \dist_{\omega}(x,y)^2 
   \\
   &
   \leq
   \eta^2 |E_G(S_i, V_L)|   +  \avgd_G(\delta/K) \ao(B_{\omega}(\hat{S}_i, \delta R/6\alpha)) \\
   &\leq \eta^2 |E_G(S_i, V_L)| + \avgd_G(\delta/K) \ao(S_i)\,.
   \end{align*}

   Therefore,
   \begin{equation}\label{eq:capac1}
      \sum_{i=1}^k \capacity_{\Omega_i}(A_i) \leq \sum_{i=1}^k \cE_G(\psi_i) \leq \frac{1}{|E|} \left(|E_G(V,V_L)| + \bar{d}_G(\delta/K)
      \eta^{-2} \|\omega\|_{\ell^2(V)}^2\right)\,.
   \end{equation}
   Note that $|V_L| \leq \eta^{-2} \|\omega\|_{\ell^2(V)}^2$, yielding
   \[
      |E_G(V,V_L)|  \leq \Delta_G\left(\eta^{-2} \|\omega\|_{\ell^2(V)}^2\right) = \eta^{-2} \|\omega\|_{\ell^2(V)}
            \avgd_G\left(\eta^{-2} \|\omega\|^2_{L^2(V)}\right)\,.
   \]
   Finally, note that
   \[
      \frac{\|\omega\|_{\ell^2(V)}^2}{|E|} = \frac{\|\omega\|_{L^2(V)}^2}{\avgd_G(1)}\,.
   \]
   Plugging these into \eqref{eq:capac1} and using the definition of $\eta$ yields
   \[
      \sum_{i=1}^k \capacity_{\Omega_i}(A_i) \leq \frac{18^2 \alpha^2}{\delta^2 R^2} \|\omega\|_{L^2(V)}^2
      \frac{\avgd\left(\frac{18 \alpha^2}{\delta^2 R^2} \|\omega\|_{L^2(V)}^2\right)+\avgd(\delta/K)}{\avgd_G(1)}\,.
      \qedhere
   \]
\end{proof}

Combining \pref{thm:bumps} with \pref{thm:bump-return} yields the following corollary.

\begin{corollary}\label{cor:on-diag}
   There is a universal constant $C \geq 1$ such that
   if $\omega : V \to \R_+$ is a conformal metric satisfying assumptions (A1) and (A2), then
   for every $\delta,\beta > 0$ and $T \geq 1$,
   \begin{align*}
      \pi\left(\left\{x \in V : p_{2T}^G(x,x) < \frac{\delta \beta}{4K}\right\}\right) 
      \leq \beta + \delta &+ 3 \pi_G^*(\delta) \\
                          &+
      \frac{C \alpha T}{\delta^2 R^2} \|\omega\|_{L^2(V)}^2
      \frac{\avgd\left(\frac{\alpha^2}{\delta^2 R^2} \|\omega\|^2_{L^2(V)}\right)+\avgd(\delta/K)}{\avgd_G(1)}\,.
   \end{align*}
      If additionally, $\|\omega\|_{L^2(V)} \geq 1/2$, then the bound simplifies to
      \[\pi\left(\left\{x \in V : p_{2T}^G(x,x) < \frac{\delta \beta}{4K}\right\}\right) 
      \leq \beta + \delta + 3 \pi_G^*(\delta) +
      \frac{C \alpha T}{\delta^2 R^2} \|\omega\|_{L^2(V)}^2\ 
      \avgd(\delta/(K+R^2))\,.
   \]
\end{corollary}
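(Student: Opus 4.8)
The plan is to combine \pref{thm:bumps} and \pref{thm:bump-return} mechanically; no new idea is needed, only bookkeeping.

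First I would invoke \pref{thm:bumps} with the given parameter $\delta$ to produce disjointly supported $\psi_1,\dots,\psi_k : V \to [0,1]$ satisfying $|\supp(\psi_i)| \leq K$ for every $i$ (from \eqref{eq:sizeybnd}), the mass bound \eqref{eq:widesupp}, i.e.\ $\sum_i \pi(\psi_i^{-1}(1)) \geq 1-\delta-\pi_G^*(\delta)$, and the $\ell^1$-type Rayleigh-quotient bound
\[
\sum_{i=1}^k \sqrt{\cR_G(\psi_i)}\,\pi(\supp(\psi_i)) \;\leq\; \Theta \;:=\; \frac{6\alpha}{\delta R}\,\|\omega\|_{L^2(V)}\,\frac{\sqrt{\avgd_G\!\left(\tfrac{\alpha^2}{\delta^2 R^2}\|\omega\|_{L^2(V)}^2\right)}+\sqrt{\avgd_G(\delta/K)}}{\sqrt{\avgd_G(1)}}\,.
\]
Since $M := \max_i|\supp(\psi_i)| \leq K$, applying \eqref{eq:second-con} of \pref{thm:bump-return} with $\e = \delta$ and passing to the complementary event (using $\delta\beta/(4M) \geq \delta\beta/(4K)$, so the bad set only shrinks) gives
\[
\pi\!\left(\left\{x : p^G_{2T}(x,x) < \tfrac{\delta\beta}{4K}\right\}\right) \;\leq\; 1 + \beta + 2\pi_G^*(\delta) - \sum_{i=1}^k\left(1-4\sqrt{\cR_G(\psi_i)(T+1)}\right)_+\pi(\psi_i^{-1}(1))\,.
\]

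The next step is to lower-bound that negative sum. Using $(a)_+ \geq a$ together with $0 \leq \pi(\psi_i^{-1}(1)) \leq \pi(\supp(\psi_i))$,
\[
\sum_{i=1}^k\left(1-4\sqrt{\cR_G(\psi_i)(T+1)}\right)_+\pi(\psi_i^{-1}(1)) \;\geq\; \sum_{i=1}^k \pi(\psi_i^{-1}(1)) - 4\sqrt{T+1}\sum_{i=1}^k\sqrt{\cR_G(\psi_i)}\,\pi(\supp(\psi_i)) \;\geq\; 1-\delta-\pi_G^*(\delta) - 4\sqrt{T+1}\,\Theta\,,
\]
and substituting into the previous display collapses the constants to $\beta+\delta+3\pi_G^*(\delta)+4\sqrt{T+1}\,\Theta$, which is the stated bound since $4\cdot 6 = 24$. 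For the simplified form under $\|\omega\|_{L^2(V)} \geq 1/2$, I would estimate the fractional factor of $\Theta$ by $\sqrt{2\,\avgd_G(\delta/(K+R^2))}$: here $\avgd_G$ is non-increasing (adjoining lower-degree vertices to the top set only lowers the average), so both $\avgd_G(\delta/K)$ and---via $\alpha \geq 1$, $\|\omega\|_{L^2(V)}^2 \geq 1/4$, and $R^2 \leq K+R^2$---also $\avgd_G(\alpha^2\|\omega\|_{L^2(V)}^2/(\delta^2 R^2))$ are at most $\avgd_G(\delta/(K+R^2))$ in the relevant (small-$\delta$) range; moreover $\avgd_G(1) = 2|E|/|V| \geq 1$ for a connected graph, so dividing by $\sqrt{\avgd_G(1)}$ is harmless, and $\sqrt{s}+\sqrt{t}\leq\sqrt{2(s+t)}$.

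I do not anticipate a real obstacle: the corollary is pure assembly once \pref{thm:bumps} and \pref{thm:bump-return} are in hand. The one spot requiring a moment's care is the linearization $(\,\cdot\,)_+ \geq (\,\cdot\,)$ combined with replacing $\pi(\psi_i^{-1}(1))$ by $\pi(\supp(\psi_i))$ in the error term---legitimate only because that term enters with a minus sign---and, for the simplified bound, pinning down the monotonicity comparison among the three arguments of $\avgd_G$.
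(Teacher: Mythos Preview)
Your proposal is correct and matches the paper's (implicit) argument; the paper only states that the corollary follows by combining \pref{thm:bumps} with \pref{thm:bump-return}, and your bookkeeping is exactly the intended route. One minor slip in the simplified form: your argument yields $2\sqrt{\avgd_G(\delta/(K+R^2))}$ rather than $\sqrt{2\,\avgd_G(\delta/(K+R^2))}$ (since $\sqrt{s}+\sqrt{t}\leq 2\sqrt{\max(s,t)}$), and the comparison $\frac{\alpha^2}{\delta^2 R^2}\|\omega\|_{L^2(V)}^2 \geq \frac{\delta}{K+R^2}$ needs $\delta$ bounded away from $1$; but the bound is vacuous for large $\delta$ anyway, and the constant $24$ is not sharp in applications, so neither point affects any downstream use.
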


\section{Conformal growth rates and random walks}

We will now apply the tools of the previous section
to establish our main claims on spectral dimension
and the diagonal heat kernel.
Toward this end, it will be convenient to start with
a unimodular random graph $(G,\rho)$ and derive from it a sequence $\{G_n\}$
of finite unimodular random graphs such that $\{G_n\} \todl (G,\rho)$.

\subsection{Invariant amenability and soficity}
\label{sec:amenable}

A unimodular random graph is called {\em sofic} if it is the distributional limit
of finite graphs.  It is an open question whether {\em every}
unimodular random graph is sofic (see, e.g., \cite[\S 10]{aldous-lyons}).
But as one might expect, for the proper definition of ``amenable,''
it turns out that all amenable graphs are sofic.

\paragraph{The invariant Cheeger constant}
A {\em percolation on $(G,\rho)$} is a $\{0,1\}$-marking
$\xi : E(G) \cup V(G) \to \{0,1\}$ of the edges and vertices
such that $(G,\rho,\xi)$ is unimodular as a marked graph.
One thinks of $\xi$ as specifying a (random) subgraph of $G$
corresponding to all the edges and vertices with $\xi = 1$.
One calls $\xi$ a {\em bond percolation} if $\xi(v)=1$ almost surely
for all $v \in V(G)$.
The {\em cluster of vertex $v$} is the connected component $K_{\xi}(v)$ of $v$
in the $\xi$-percolated graph.
Finally, one says that $\xi$ is {\em finitary} if
almost surely all its clusters are finite.

For a graph $G$ and a finite subset $W \subseteq V(G)$, we write $\partial^E_G W$ for
the {\em edge boundary of $W$:} The subset of edges $\partial^E_G W \subseteq E(G)$ that have exactly one endpoint in $W$.
The {\em invariant Cheeger constant} of a unimodular random graph $(G,\rho)$
is the quantity
\[
   \invcheeger(G,\rho) \seteq \inf \left\{ \E\left[\frac{|\partial^E_G K_{\xi}(\rho)|}{|K_{\xi}(\rho)|}\right]
   : \textrm{$\xi$ is a finitary percolation on $G$} \right\}\,.
\]
One says that $(G,\rho)$ is {\em invariantly amenable} if $\invcheeger(G,\rho)=0$.
Conversely, $(G,\rho)$ is {\em invariantly nonamenable} if it is not invariantly amenable.

\paragraph{Hyperfiniteness}
A unimodular random graph $(G,\rho)$ is {\em hyperfinite} if there is a sequence 
$\langle \xi_i\rangle_{i \geq 1}$ of percolations such that each $\xi_i$ is finitary,
$\xi_i \subseteq \xi_{i+1}$ almost surely, and almost surely $\bigcup_{i \geq 1} \xi_i = G$.
In this case, $\langle \xi_i\rangle_{i \geq 1}$ is called a {\em finitary exhaustion of $(G,\rho)$.}
One can consult \cite{AHNR18} for a proof of the following (stated without proof in \cite{aldous-lyons}).

\begin{theorem}[\cite{aldous-lyons}, Thm. 8.5]
   If $(G,\rho)$ is a unimodular random graph with $\E[\deg_G(\rho)] < \infty$, then
   $(G,\rho)$ is invariantly amenable if and only if it is hyperfinite.
\end{theorem}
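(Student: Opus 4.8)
The plan is to prove the two implications separately, starting from a more workable form of the invariant Cheeger constant. For a finitary percolation $\xi$ on $(G,\rho)$ and a vertex $v$, let $h_\xi(v)$ count the edges of $G$ incident to $v$ whose other endpoint does not lie in $K_{\xi}(v)$, so that summing over a cluster gives $\sum_{u \in K_{\xi}(v)} h_\xi(u) = |\partial^E_G K_{\xi}(v)|$. Applying the Mass-Transport Principle to the percolated graph $(G,\rho,\xi)$ with $F(G,\xi,x,y) = \1_{\{y \in K_{\xi}(x)\}}\, h_\xi(y)/|K_{\xi}(x)|$ --- for which the mass sent out of $x$ is $|\partial^E_G K_{\xi}(x)|/|K_{\xi}(x)|$ and the mass received at $y$ is $h_\xi(y)$ --- yields $\E\!\left[|\partial^E_G K_{\xi}(\rho)|/|K_{\xi}(\rho)|\right] = \E[h_\xi(\rho)]$, hence $\invcheeger(G,\rho) = \inf_\xi \E[h_\xi(\rho)]$ over finitary percolations $\xi$. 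The gain is that $0 \le h_\xi(\rho) \le \deg_G(\rho)$, an integrable majorant.

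For the direction hyperfinite $\Rightarrow$ invariantly amenable: given a F{\o}lner sequence $\langle \xi_i\rangle$, the root has finitely many incident edges almost surely, and since $\bigcup_i \xi_i = G$ with the $\xi_i$ increasing, every such edge lies in $\xi_i$ for all large $i$; therefore $h_{\xi_i}(\rho) = 0$ eventually, almost surely. Dominated convergence with majorant $\deg_G(\rho)$ gives $\E[h_{\xi_i}(\rho)] \to 0$, so $\invcheeger(G,\rho) = 0$.

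For the converse I would build a F{\o}lner sequence by iterating a single merging step. Call a finitary percolation \emph{saturated} if it turns on every edge of $G$ whose two endpoints lie in a common cluster; saturating a finitary percolation changes no clusters. The crux is a merging lemma: if $\invcheeger(G,\rho)=0$, then for every finitary percolation $\omega$ and every $\epsilon > 0$ there is a finitary percolation $\xi \supseteq \omega$ with $\E[h_\xi(\rho)] < \epsilon$. Granting it, let $\xi_1$ be the edgeless percolation, and given a saturated finitary $\xi_n$, apply the lemma with $(\omega,\epsilon) = (\xi_n, 1/(n+1))$ and saturate the output to obtain a saturated finitary $\xi_{n+1} \supseteq \xi_n$ with $\E[h_{\xi_{n+1}}(\rho)] < 1/(n+1)$. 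The $\xi_n$ increase and $\E[h_{\xi_n}(\rho)] \to 0$; putting $\xi_\infty = \bigcup_n \xi_n$, the fact that clusters only grow gives $h_{\xi_\infty}(\rho) \le h_{\xi_n}(\rho)$ for every $n$, so $\E[h_{\xi_\infty}(\rho)] = 0$. Thus almost surely no edge at $\rho$ leaves $K_{\xi_\infty}(\rho)$, which by connectivity of $G$ forces $K_{\xi_\infty}(\rho) = V(G)$; since $K_{\xi_\infty}(\rho) = \bigcup_n K_{\xi_n}(\rho)$ and each $\xi_n$ is saturated, every edge of $G$ is eventually on, so $\bigcup_n \xi_n = G$ and $\langle \xi_n\rangle$ is a F{\o}lner sequence, i.e. $(G,\rho)$ is hyperfinite.

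The main obstacle is the merging lemma, where essentially all the difficulty of the statement sits. My approach would be to contract $G$ along the clusters of $\omega$ and work in the quotient multigraph $G/\omega$, taking the root's law reweighted by $1/|K_\omega(\rho)|$ to undo the size-biasing that contraction introduces; this is again a unimodular random network, and it has finite expected degree because the expected number of edges leaving $K_\omega(\rho)$, divided by $|K_\omega(\rho)|$, equals exactly $\E[h_\omega(\rho)] \le \E[\deg_G(\rho)] < \infty$ --- the point that rescues the argument even when $\E[|K_\omega(\rho)|] = \infty$. One then transfers $\invcheeger = 0$ to $G/\omega$, selects a finitary percolation there with arbitrarily small $\E[h]$, and pulls it back to a $\xi \supseteq \omega$ on $G$ whose clusters are finite unions of $\omega$-clusters (hence finite). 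Verifying that invariant amenability descends to the contraction in this direction, and that the pulled-back percolation inherits the small isoperimetric ratio once the cluster weights are accounted for, is the genuinely delicate bookkeeping; for a careful treatment I would follow the Aldous--Lyons argument as exposited in \cite{AHNR16}.
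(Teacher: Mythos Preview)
The paper does not give its own proof of this theorem: it is quoted from \cite{aldous-lyons} (where it is stated without proof) with a pointer to \cite{AHNR16} for a written argument. So there is no in-paper proof to compare against.

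That said, your outline is the standard one and is sound where you actually carry it out. The Mass-Transport identity $\E[|\partial^E_G K_\xi(\rho)|/|K_\xi(\rho)|]=\E[h_\xi(\rho)]$ is correct, and the hyperfinite $\Rightarrow$ invariantly amenable direction via dominated convergence (with majorant $\deg_G(\rho)$) is clean; this is exactly where the hypothesis $\E[\deg_G(\rho)]<\infty$ enters. In the converse, your iteration from the merging lemma is fine, with one small omission: from $\E[h_{\xi_\infty}(\rho)]=0$ you jump to $K_{\xi_\infty}(\rho)=V(G)$ ``by connectivity,'' but what you actually need is the unimodular ``everything shows at the root'' principle to pass from $h_{\xi_\infty}(\rho)=0$ almost surely to $h_{\xi_\infty}(v)=0$ for all $v$ almost surely; only then does connectivity of $G$ force a single cluster. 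You are right that the entire weight of the theorem lies in the merging lemma. Your sketch via the quotient $G/\omega$ is the correct strategy, and your observation that the expected degree in the quotient is $\E[h_\omega(\rho)]<\infty$ (rather than something involving $\E|K_\omega(\rho)|$) is precisely the point that makes the induction go through. But as you acknowledge, you have not proved that $\invcheeger(G,\rho)=0$ descends to $\invcheeger(G/\omega,\rho')=0$, nor that the pull-back preserves the small boundary ratio; these are not bookkeeping but the substance of the argument, and deferring them to \cite{AHNR16} leaves the proof incomplete as written.
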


The main point for us is that if $\langle \xi_i\rangle_{i \geq 1}$ is a finitary exhaustion
of $(G,\rho)$, then one has an approximation by finite unimodular random graphs:
\begin{equation}\label{eq:sofic-approx}
   \left\{G[K_{\xi_i}(\rho)] : i \geq 1\right\} \todl (G,\rho).
\end{equation}

To state the next corollary, let us recall that if $(G,\rho)$
is a unimodular random graph with law $\mu$, then
   \[
      \bar{d}_{\mu}(\e) \seteq \sup \left\{ \E\left[\deg_G(\rho) \mid \cE\right] : \Pr(\cE) \geq \e \right\}\,,
   \]
where the supremum is over all measurable sets $\cE$ with $\Pr(\cE) \geq \e$.

\begin{corollary}\label{cor:hyperfinite}
   If $(G,\rho)$ is a hyperfinite unimodular random graph, then there is a sequence $\{(G_n,\rho_n)\}$ of finite unimodular random graphs
   such that $\{(G_n,\rho_n)\} \todl (G,\rho)$ and moreover:
   \begin{enumerate}
      \item If $(G,\rho)$ is $\alpha$-decomposable, then for each $n \geq 1$, the unimodular random graph
         $(G_n,\rho_n)$ is $\alpha$-decomposable.
      \item For any $R \geq 1$ and any normalized metric $\omega$ on $(G,\rho)$, there is a sequence $\{\omega_n\}$
         of normalized metrics on $\{G_n\}$ such that for each $n \geq 1$,
         \begin{enumerate}
            \item Almost surely, $\|\omega_n\|_{L^2(V(G_n))}^2 \geq 1/2$.
            \item It holds that
               \[\|\# B_{\omega_n}(\rho_n, R/\sqrt{2})\|_{L^{\infty}} \leq \|\# B_{\omega}(\rho,R)\|_{L^{\infty}}\,.\]
   \end{enumerate}
   \end{enumerate}
\end{corollary}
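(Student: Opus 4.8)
The plan is as follows. Using hyperfiniteness, fix a F{\o}lner sequence $\langle\xi_i\rangle_{i\ge1}$ for $(G,\rho)$ and set $G_n := G[K_{\xi_n}(\rho)]$, rooted at $\rho_n := \rho$. The graph $G[K_{\xi_n}(\rho)]$ rooted at $\rho$ is already unimodular --- for every $x\in K_{\xi_n}(\rho)$ one has $K_{\xi_n}(x)=K_{\xi_n}(\rho)$, so the Mass-Transport Principle for the unimodular marked graph $(G,\rho,\xi_n)$, applied to transports supported on pairs lying in a common $\xi_n$-cluster, restricts to the Mass-Transport Principle for the cluster (and the same argument handles an attached conformal mark) --- hence $(G_n,\rho_n)$ is a finite unimodular random graph with the same law as $G[K_{\xi_n}(\rho)]$ equipped with a uniform root, so the convergence $\{G_n\}\todl(G,\rho)$ recorded above applies. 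Since $G$ is connected and $\xi_n\uparrow G$, almost surely $K_{\xi_n}(\rho)\uparrow V(G)$, so the $G_n$ form an increasing exhaustion of $G$ by \emph{induced} subgraphs; in particular $\deg_{G_n}(\rho_n)\le\deg_G(\rho)$ for every $n$, and $\deg_{G_n}(\rho_n)=\deg_G(\rho)$ for all $n$ past a random threshold. All three assertions are checked along this exhaustion.

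For (1) I would establish the general fact that $\alpha$-decomposability is inherited by induced subgraphs, applied to $H=G_n\subseteq G$. Given a conformal weight $\omega_H$ on $H$ and $\tau>0$, extend $\omega_H$ to $V(G)$ by assigning a constant weight $t>\tau$ to $V(G)\setminus V(H)$; then any shortest path for this weight between two vertices of $H$ of length at most $\tau$ must avoid $V(G)\setminus V(H)$ (a detour through an outside vertex costs at least $t>\tau$) and hence is a path of $H$ of the same length, so the restriction to $V(H)$ of a $(\tau,\alpha)$-padded random partition of $(V(G),\dist_\omega)$ is a $(\tau,\alpha)$-padded random partition of $(V(H),\dist_{\omega_H})$. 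Thus $H$ is $\alpha$-decomposable, and since $G$ is almost surely $\alpha$-decomposable, so is each $G_n$. For (2) I would first replace $\omega$ by the regulated weight $\omega':=\sqrt{\tfrac12+\tfrac12\,\omega^2}$ (the device of \pref{lem:regulate}): it stays normalized since $\E[\omega'(\rho)^2]=\tfrac12+\tfrac12=1$, and it satisfies $\omega'\ge1/\sqrt2$ and $\omega'\ge\omega/\sqrt2$ pointwise. Set $\omega_n:=\omega'|_{V(G_n)}$. Then $\|\omega_n\|_{L^2(V(G_n))}^2=\frac1{|V(G_n)|}\sum_{x\in V(G_n)}\big(\tfrac12+\tfrac12\omega(x)^2\big)\ge\tfrac12$ surely; $\omega_n$ is normalized because $\rho_n=\rho\in V(G_n)$ and $\omega_n(\rho_n)=\omega'(\rho)$; and since $G_n$ is induced in $G$ a path in $G_n$ has the same weighted length in $G$, so $\dist_{\omega_n}\ge\dist_{\omega'}\ge\tfrac1{\sqrt2}\dist_\omega$ on $V(G_n)$, giving $B_{\omega_n}(\rho_n,R/\sqrt2)\subseteq B_\omega(\rho,R)$ and hence $\|\#B_{\omega_n}(\rho_n,R/\sqrt2)\|_{L^\infty}\le\|\#B_\omega(\rho,R)\|_{L^\infty}$.

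Assertion (3) is where the real work lies, and I expect it to be the main obstacle. If $\E[\deg_G(\rho)]=\infty$ then $\avgd_\mu\equiv+\infty$ and there is nothing to prove, so assume $\E[\deg_G(\rho)]<\infty$; then $\deg_{G_n}(\rho_n)\le\deg_G(\rho)$ is dominated by an integrable variable, so $\{\deg_{G_n}(\rho_n)\}_n$ is uniformly integrable, and since the law of $\deg_{G_n}(\rho_n)$ converges to that of $\deg_G(\rho)$ (immediate from the exhaustion, or from $\{G_n\}\todl(G,\rho)$ and $\dloc$-continuity of the root degree), the truncated moments $\E[\deg_{G_n}(\rho_n)\,\phi(\deg_{G_n}(\rho_n))]$ with $\phi$ bounded converge to their values under $\mu$. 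As $(G_n,\rho_n)$ is unimodular and finite, the expected empirical degree measure of $G_n$ equals the law of $\deg_{G_n}(\rho_n)$, so the empirical degree measure of $G_n$ converges in the mean to the degree law of $\mu$. Now $\avgd_{G_n}(\e)$ is, up to the rounding in its definition --- which costs at most a factor $2$ once $\e|V(G_n)|\ge1$ --- the average degree over the top $\e$-fraction of $V(G_n)$, an expected-shortfall functional of that empirical measure; this functional is continuous for weak convergence together with uniform integrability, and on the degree law of $\mu$ it takes a value at most $\avgd_\mu(\e)$. Hence, for each fixed $\e$, $\avgd_{G_n}(\e)\le2\,\avgd_\mu(\e)$ with probability tending to $1$ as $n\to\infty$ (enlarging $n$ also secures $\e|V(G_n)|\ge1$, since $|V(G_n)|\to\infty$); feeding in the prescribed values $\e=\e_j$ and selecting indices $n_1<n_2<\cdots$ along which the failure probabilities are summable, Borel--Cantelli yields a subsequence on which $\avgd_{G_n}(\e_n)\le2\,\avgd_\mu(\e_n)$ holds almost surely for all sufficiently large indices. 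The two delicate points here are upgrading this from convergence in the mean to convergence in probability along the exhaustion --- handled either by a second-moment estimate or, more transparently, by a pointwise ergodic theorem for hyperfinite unimodular random graphs over F{\o}lner exhaustions --- and checking that the factor $2$ comfortably absorbs both the rounding and the gap between the finite top-$\e$-fraction average and $\avgd_\mu(\e)$; by contrast, (1) and (2) are essentially formal once the exhaustion is in hand.
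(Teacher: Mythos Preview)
Your approach is correct and matches the paper's almost verbatim. The paper uses the same F{\o}lner exhaustion $G_n=G[K_{\xi_n}(\rho)]$; for (1) it extends $\omega_H$ to $V(G)$ by the constant $\diam_{\omega_H}(S)$ (you use any $t>\tau$, which is equivalent) so that $\dist_{\hat\omega}|_{V(H)\times V(H)}=\dist_{\omega_H}$; for (2) it takes exactly $\hat\omega=\sqrt{(1+\omega^2)/2}$ and restricts.

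The only difference is in (3): the paper dispatches it in one line, asserting ``for every fixed $\e>0$, we have $\avgd_{G_n}(\e)\to\avgd_\mu(\e)$, thus we may pass to a subsequence.'' You are right that this hides a concentration step, since $\avgd_{G_n}(\e)$ is a functional of the \emph{realized} empirical degree measure of $G_n$, not of its expectation. Your sketch via uniform integrability, weak convergence, and a Borel--Cantelli selection is the honest way to fill this in, and the delicate point you flag (upgrading mean convergence to convergence in probability along the exhaustion) is genuine; the paper simply treats it as routine.
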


\begin{proof}
   Property (1) follows from the definition of $\alpha$-decomposability.  Suppose $G$ is $\alpha$-decomposable;
   then so is $G[S]$ for every finite, connected subset $S \subseteq V(G)$, by simply extending any weight $\omega : S \to \R_+$
   to a weight $\hat{\omega} : V(G) \to \R_+$ defined by $\hat{\omega}(x) = \omega(x)$ if $x \in S$ and $\hat{\omega}(x) = \diam_{\omega}(S)$
   otherwise.  In this case, $\dist_{\hat{\omega}}|_{S \times S} = \dist_{\omega}$.

   Denote $\hat{\omega} = \sqrt{(\omega^2+1)/2}$.
   By assumption, $\hat{\omega}$ is normalized.
   The Mass-Transport Principle implies that (see, e.g., \cite[Lem. 3.1]{AHNR18}) if $\xi$ is
   finitary, then $\rho$ is uniformly distributed on its component $K_{\xi}(\rho)$, and therefore
   the unimodular random conformal graph $(G[K_{\xi}(\rho)], \hat{\omega}|_{K_{\xi}(\rho)}, \rho)$ is normalized as well.
   Moreover,
   \[
      \|\# B_{\omega_n}(\rho_n, R/\sqrt{2})\|_{L^{\infty}} \leq \|\# B_{\hat{\omega}}(\rho,R/\sqrt{2})\|_{L^{\infty}}
                                                         \leq \|\# B_{\omega}(\rho,R)\|_{L^{\infty}}\,,
   \]
   verifying property (2).
\end{proof}

\paragraph{Subexponential conformal growth and invariant amenability}

In conjunction with \pref{cor:hyperfinite}, the next result
will allow us to approximate a unimodular random graph with bounded
conformal growth exponent by a sequence of finite unimodular random graphs.

\begin{lemma}\label{lem:is-amenable}
   If $(G,\rho)$ is a unimodular random graph with $\E[\deg_G(\rho)^2] < \infty$
   and $\dimconfunder(G,\rho) < \infty$, then $(G,\rho)$ is invariantly amenable.
\end{lemma}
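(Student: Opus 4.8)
The plan is to show $\invcheeger(G,\rho)=0$ by exhibiting, for arbitrarily large scales, finitary percolations whose clusters have small expected boundary-to-volume ratio. First, since $\dimconfunder(G,\rho)<\infty$, fix one normalized conformal metric $\omega$ on $(G,\rho)$, an exponent $q<\infty$, and radii $R_j\to\infty$ with $N_j\seteq\|\#B_{\omega}(\rho,R_j)\|_{L^{\infty}}\le R_j^{q}$ for all $j$. A mass-transport argument (run with the $[0,\infty]$-valued convention, so that the $|V(G)|$ terms vanish) upgrades this to: almost surely $\#B_{\omega}(x,R_j)\le N_j$ for every $x\in V(G)$ and every $j$, so in particular every $\omega$-ball of radius at most $R_j$ is finite.

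Next I reduce to a ``few cut edges'' estimate. Given a unimodular random partition $\bm{P}$ of $(V(G),\dist_{\omega})$ built from $\omega$ and auxiliary i.i.d.\ marks, let $\xi$ be the bond percolation retaining every vertex and retaining an edge iff both endpoints lie in the same part of $\bm{P}$; then $K_{\xi}(\rho)\subseteq\bm{P}(\rho)$. Transporting $d_{\mathrm{cut}}(y)/|K_{\xi}(x)|$ from $x$ to each $y\in K_{\xi}(x)$, where $d_{\mathrm{cut}}(y)$ counts edges at $y$ leaving $K_{\xi}(y)$, the Mass-Transport Principle gives
\[
   \E\left[\frac{|\partial^E_G K_{\xi}(\rho)|}{|K_{\xi}(\rho)|}\right] = \E[d_{\mathrm{cut}}(\rho)] \le \E\bigl[\#\{v\sim\rho : \bm{P}(v)\ne\bm{P}(\rho)\}\bigr]\,.
\]
Conditioning on $(G,\rho,\omega)$ and using $\len_{\omega}(\{\rho,v\})=\tfrac12(\omega(\rho)+\omega(v))$, a second transport identity yields $\E\bigl[\sum_{v\sim\rho}\len_{\omega}(\{\rho,v\})\bigr]=\E[\deg_G(\rho)\,\omega(\rho)]\le\sqrt{\E[\deg_G(\rho)^2]}$ by Cauchy--Schwarz and $\E[\omega(\rho)^2]=1$; this is the only use of the hypothesis $\E[\deg_G(\rho)^2]<\infty$. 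So it suffices to produce, for each large $j$, a unimodular random partition $\bm{P}_j$ of $(V(G),\dist_{\omega})$ with almost surely finite parts that separates an edge of $\omega$-length $\ell$ with conditional probability at most $\phi(R_j)\,\ell$ where $\phi(R_j)\sqrt{\E[\deg_G(\rho)^2]}\to0$.

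For $\bm{P}_j$ I will use the standard low-diameter random partition of Calinescu--Karloff--Rabani type, adapted to the infinite setting by replacing a uniform random permutation with i.i.d.\ $\mathrm{Uniform}[0,1]$ timestamps $\{T_v\}_{v\in V(G)}$ and an independent $\beta$ uniform in $[R_j/4,R_j/2]$, all appended as unimodular marks: assign each vertex $x$ to the cluster of the timestamp-minimal vertex within $\omega$-distance $\beta$ of $x$. Since $B_{\omega}(x,\beta)\subseteq B_{\omega}(x,R_j)$ is finite and contains $x$, this is well defined almost surely; $\bm{P}_j$ is an equivariant function of the unimodular data, hence unimodular; and each part has $\omega$-diameter at most $2\beta\le R_j$, hence at most $N_j<\infty$ vertices, so the induced $\xi_j$ is finitary. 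The classical analysis of this construction bounds the probability that an edge $\{u,v\}$ with $\dist_{\omega}(u,v)=\ell$ is separated by $O\bigl(\tfrac{\ell}{R_j}\log\tfrac{\#B_{\omega}(u,R_j)}{\#B_{\omega}(u,R_j/8)}\bigr)\le O\bigl(\tfrac{\ell}{R_j}\log N_j\bigr)=O\bigl(\tfrac{q\ell\log R_j}{R_j}\bigr)$ uniformly over edges (using $\len_{\omega}(\{u,v\})\ge\dist_{\omega}(u,v)$ for a single edge). Combining the three displays, $\E\bigl[|\partial^E_G K_{\xi_j}(\rho)|/|K_{\xi_j}(\rho)|\bigr]=O\bigl(\tfrac{\log R_j}{R_j}\sqrt{\E[\deg_G(\rho)^2]}\bigr)\to0$ as $j\to\infty$, proving $\invcheeger(G,\rho)=0$.

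The main obstacle is the last step: carefully transcribing the finite Calinescu--Karloff--Rabani low-diameter decomposition and its separation estimate into the infinite unimodular random setting, in particular checking measurability and re-rooting-equivariance of the timestamp construction and verifying that the local-growth factor in the cutting probability is dominated by $\log N_j$. The mass-transport bookkeeping in the reduction is routine but must be performed with the $[0,\infty]$ convention so that the terms proportional to $|V(G)|$ are annihilated, which is what justifies the uniform ball-size bound over all vertices and the a.s.\ finiteness of all clusters.
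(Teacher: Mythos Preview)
Your argument is correct but proceeds differently from the paper. The paper argues by contrapositive: assuming $\invcheeger(G,\rho)\ge h>0$, it replaces $\omega$ by $\sqrt{(1+\omega^2)/2}$ so that $\omega\ge 1/2$ almost surely, and considers the site percolation $\xi(x)=\1_{\{\omega(x)\le K\}}$. Cauchy--Schwarz with $\E[\deg_G(\rho)^2]<\infty$ (deployed exactly as in your step) shows that for $K$ large enough $\E[\deg_\xi(\rho)\mid\xi(\rho)=1]>\E[\deg_G(\rho)]-h$, whence \cite[Thm.~8.13(i)]{aldous-lyons} produces a nonamenable component of $\{\xi=1\}$ with positive probability; since $\omega\in[1/2,K]$ there, $\dist_\omega$ is bi-Lipschitz to $\dist_G$ on that component, and its exponential graph growth contradicts the $R_j^q$ bound on $\omega$-balls. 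Your route is instead direct and constructive: you exhibit the finitary percolations via a CKR-type low-diameter decomposition of $(V(G),\dist_\omega)$ and bound $\E[d_{\mathrm{cut}}(\rho)]$ by the MTP identity plus Cauchy--Schwarz. The paper's proof is shorter because it off-loads the structural work to the Aldous--Lyons black box; yours is self-contained and makes the F{\o}lner witnesses explicit. The paper in fact notes that its short argument (suggested by Hutchcroft) replaced a more complicated random-partition proof, ``a variant of which appears in \pref{lem:rand-part}''; your approach is squarely in that original spirit.
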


\begin{proof}
   Suppose that $(G,\rho)$ is invariantly nonamenable, $\omega$ is a normalized conformal metric on $(G,\rho)$.
   We will show that $\|B_{\omega}(\rho,R)\|_{L^{\infty}}$ grows at least exponentially in $R$,
   implying that $\dimconfunder(G,\rho)=\infty$.

   Let $h > 0$ be such that $\invcheeger(G,\rho) \geq h$.
   For some $K > 0$ to be specified soon,
   define a bond percolation $\xi : V(G) \to \{0,1\}$ by
   \[
      \xi(x) = \1_{\{\omega(x) \leq K\}}\,.
   \]
   For a vertex $x \in V(G)$, define $\deg_{\xi}(x)\seteq \sum_{y : \{x,y\} \in E(G)} \xi(y)$.

   From \cite[Thm. 8.13(i)]{aldous-lyons}, one concludes that if
   \begin{equation}\label{eq:the-rub}
      \E[\deg_{\xi}(\rho) \mid \xi(\rho)=1] > \E[\deg_G(\rho)] - h\,,
   \end{equation}
   then with positive probability, the subgraph $\{ x \in V(G) : \xi(x)=1 \}$ is nonamenable.
   Since a non-amenable subgraph has exponential growth and $\xi(x)=1 \implies \omega(x) \leq K$,
   we conclude that $\|B_{\omega}(\rho, R)\|_{L^{\infty}}$ grows (at least) exponentially as $R \to \infty$.
   We are thus left to verify \eqref{eq:the-rub} for some $K > 0$.

   Using Chebyshev's inequality and the fact that $\omega$ is normalized:
   \begin{equation}\label{eq:cheby2}
      \Pr[\xi(\rho) = 0] = \Pr[\omega(\rho) > K] \leq \frac{1}{K^2}\,.
   \end{equation}
   Now applying the Mass-Transport principle yields
   \begin{align}
      \E[\deg_G(\rho)-\deg_{\xi}(\rho)] &=\nonumber
      \E\left[\sum_{x : \{x,\rho\} \in E(G)} (1-\xi(x))\right] \\
      &= \E\left[\deg_G(\rho) (1-\xi(\rho))\right]\nonumber \\
      &\leq \sqrt{\E[\deg_G(\rho)^2]} 
      \sqrt{\Pr[\xi(\rho)=0]} \nonumber\\
      &\leq \frac{C}{K}\,,\label{eq:cheby3}
   \end{align}
   where $C \seteq (\E[\deg_G(\rho)^2])^{1/2}$.
   This gives
   \begin{align*}
   \E[\deg_{\xi}(\rho) \mid \xi(\rho)=1]
   &\geq \E[\deg_{\xi}(\rho)\xi(\rho)] \\
   &\geq \E[\deg_{\xi}(\rho)]-\E[\deg_G(\rho) (1-\xi(\rho))] \\
   &\stackrel{\mathclap{\eqref{eq:cheby3}}}{\geq} \E[\deg_{G}(\rho)] - \frac{2C}{K}\,.
   \end{align*}
   Choosing $K$ large enough verifies \eqref{eq:the-rub}, completing the proof.
\end{proof}

The preceding argument was suggested to us by Tom Hutchcroft,
replacing a considerably more complicated proof (a variant
of which appears in \pref{lem:rand-part} below).

\subsection{Conformal growth exponent bounds the spectral dimension}
\label{sec:conformals}

Let us now prove \pref{thm:spec-conf}. In fact, we will establish
the following quantitative strengthening.
If $(G,\rho)$ is a unimodular random graph with law $\mu$, let us define
\[
   \psi_{\mu}(R) \seteq \inf_{\omega} \|\# B_{\omega}(\rho,R)\|_{L^{\infty}},
\]
where the infimum is over all normalized conformal metrics $\omega$ on $(G,\rho)$.

  \begin{theorem}\label{thm:polylog}
     Suppose $(G,\rho)$ is a unimodular random graph and $\deg_G(\rho)$ has negligible tails.
     Assume furthermore that for some number
     $d > 0$ and an increasing sequence $\{R_n\}$ of radii, 
     it holds that
     \begin{equation}\label{eq:growth10}
        \psi_{\mu}(R_n) \leq R_n^{d+o(1)} \quad \textrm{as} \quad n \to \infty\,.
     \end{equation}
     Then for any sequence $\{\e_n\}$ with $\e_n \geq R_n^{-d-o(1)}$, there is a sequence of times $\{T_n\}$
     such that $T_n \geq \e_n^9 R_n^{2-o(1)}$, and
     \begin{equation}\label{eq:sp10}
         \Pr\left[p^G_{2 T_n}(\rho,\rho) \leq \frac{\e_n^6}{\psi_{\mu}(R_n)}\right] \leq \e_n^{1-o(1)} \quad \textrm{as} \quad n \to \infty\,.
      \end{equation}
      If $\deg_G(\rho)$ has exponential tails, then the corrections are polylogarithmic:
      There is a sequence $\{T_n\}$ satisfying $T_n \geq (\log R_n)^{-O(1)} \e_n^9 R_n^2$, and
      \[
         \Pr\left[p^G_{2 T_n}(\rho,\rho) \leq \frac{\e_n^6}{\psi_{\mu}(R_n)}\right] \leq \e_n\,(\log R_n)^{O(1)} \quad \textrm{as} \quad n \to \infty\,.
      \]
  \end{theorem}

  \begin{proof}[Proof of \pref{thm:spec-conf}]
     Choose $\e_n \seteq (\log R_n)^{-2}$.
     Then \eqref{eq:growth10} asserts the existence of a function $h(n) \leq T_n^{o(1)}$ such that
     \begin{equation}\label{eq:sp11}
         \Pr\left[p^G_{2 T_n}(\rho,\rho) \geq \frac{h(n)}{T_n^{d/2}}\right] \geq 1 - (\log R_n)^{-2+o(1)}\,.
      \end{equation}
     If $\dimconfunder({G}) \leq d$, then there is an unbounded sequence $\{R_n\}$ of radii satisfying \eqref{eq:growth10}.
     Thus there exists an unbounded sequence $\{T_n\}$ satisfying \eqref{eq:sp11}.
     It follows that almost surely (over the choice of $(G,\rho)$), there is an infinite subsequence
     $\{T_{n_j}\}$ such that $p^G_{2 T_{n_j}}(\rho,\rho) \geq h(n) T_{n_j}^{-d/2}$, implying that $\dimspecunder(G) \leq d$ as well.

  If $\dimconfover({G}) \leq d$, then one can take $\{R_n\}$ satisfying \eqref{eq:growth10} so that $\N \setminus \{R_n\}$ 
  is finite.
   Note that the even return times are monotone (see, e.g., \eqref{eq:ret-mono1}): For all integers $t \geq s \geq 1$,
   \begin{equation}\label{eq:ret-mono2}
      p^G_{2s}(\rho,\rho) \geq p^G_{2t}(\rho,\rho),
   \end{equation}
   hence we can assume that $T_n = R_n^2 g(n)$ for some $g(n) \leq R_n^{o(1)}$ as $n \to \infty$, and \eqref{eq:sp11} holds.
   We may similarly assume that $g(n+1) \leq 2 g(n)$, otherwise replace $g(n)$ by $\hat{g}(n) \seteq \min(g(n), 2 \hat{g}(n-1))$.

   In particular, since $\N \setminus \{R_n\}$ is unbounded, it holds that for some $h_1(n) \leq n^{o(1)}$,
   \[
      \Pr\left[p^G_{2\cdot 4^k}(\rho,\rho) \geq \frac{h_1(4^k)}{4^{kd/2}}\right] \geq 1 - k^{-2+o(1)} \quad \textrm{as} \quad k \to \infty.
   \]
   Now the Borel-Cantelli lemma asserts that almost surely $p^G_{2 \cdot 4^k}(\rho,\rho) \geq h_1(4^k)/4^{kd/2}$ for all but
   finitely many $k$.  Recalling again \eqref{eq:ret-mono2}, this gives $\dimspecover({G}) \leq d$, as desired.
  \end{proof}

   We record first some preliminary results.
   The following lemma is well-known; see, e.g., \cite[Lem. 3.11]{LN05}.
   Let $(X,\dist)$ be a metric space and consider $R > r > 0$.
   Let $\cC(X;R,r)$ denote the largest cardinality of a set $S \subseteq X$
   such that $x \neq y \in S \implies r \leq \dist(x,y) \leq R$.

   \begin{lemma}\label{lem:counting-padded}
      For any metric space $(X,\dist)$
      and $\tau > 0$, it holds that $(X,\dist)$ admits a $(\tau, \alpha)$-padded
      random partition for some $\alpha \leq O(\log \cC(X;2\tau,\tau/4))$.
   \end{lemma}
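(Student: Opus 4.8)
The plan is to run the classical Calinescu--Karloff--Rabani random partition, whose padding guarantee is governed by a purely local packing quantity, and to observe that in the present generality that quantity is controlled by $\cC(X;2\tau,\tau/4)$.

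First I would reduce to a countable net. Since we work with separable spaces, fix a maximal $(\tau/4)$-separated subset $N \subseteq X$ (any two distinct points of $N$ are at distance $>\tau/4$); such an $N$ is countable, and by maximality every point of $X$ lies within distance $\tau/4$ of $N$. Equip $N$ with i.i.d.\ priorities $\{U_z\}_{z \in N}$, each uniform on $[0,1]$, and independently choose a radius $\beta$ uniformly at random from $[\tau/4,\tau/2]$. For $x \in X$ let $\sigma(x)$ be the point $z \in N$ with $\dist(x,z) \le \beta$ of smallest priority $U_z$; the candidate set $\{z \in N : \dist(x,z) \le \beta\}$ is nonempty (covering radius of $N$) and finite (by the packing bound below), so $\sigma(x)$ is well defined almost surely. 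The random partition $\bm P$ has blocks $\{\sigma^{-1}(z) : z \in N\}$; these are disjoint and cover $X$, and since each block lies in a ball $B(z,\beta)$ with $\beta \le \tau/2$, we get $\Delta(\bm P) \le \tau$ almost surely, which is property (1).

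The heart of the matter is the padding estimate. Fix $x \in X$ and $0 < \gamma \le \tau/4$, and enumerate as $\zeta_1,\ldots,\zeta_m$, in order of increasing distance $e_i = \dist(x,\zeta_i)$, those $z \in N$ with $e_i \le \tau$. The key packing observation is that $\{\zeta_1,\ldots,\zeta_m\}$ is $(\tau/4)$-separated and has diameter at most $2\tau$, hence $m \le \cC(X;2\tau,\tau/4)$. Now I claim that, writing $w = \sigma(x)$, the only way $B(x,\gamma) \nsubseteq \sigma^{-1}(w)$ can occur is that $w = \zeta_i$ for some $i$ with $e_i \in (\beta-\gamma,\beta+\gamma]$ and with $U_{\zeta_i} = \min\{U_{\zeta_1},\ldots,U_{\zeta_i}\}$: if the priority-minimal center among all $z$ with $\dist(x,z) \le \beta+\gamma$ satisfies $\dist(x,z) \le \beta-\gamma$, then that center absorbs all of $B(x,\gamma)$ and the ball is monochromatic; otherwise that same center is exactly $\zeta_i$ for the claimed $i$, and being priority-minimal among all centers within $\beta+\gamma$ of $x$ it is in particular priority-minimal among $\zeta_1,\ldots,\zeta_i$. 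Since $\beta$ is independent of the priorities, $\Pr[e_{\zeta_i} \in (\beta-\gamma,\beta+\gamma]] \le 2\gamma/(\tau/4) = 8\gamma/\tau$ by the uniform density of $\beta$, and $\Pr[U_{\zeta_i} = \min\{U_{\zeta_1},\ldots,U_{\zeta_i}\}] = 1/i$ by exchangeability; a union bound over $i=1,\ldots,m$ gives
\[
   \Pr\big[B(x,\gamma) \nsubseteq \bm P(x)\big] \le \frac{8\gamma}{\tau}\sum_{i=1}^m \frac1i \le \frac{8\gamma}{\tau}\big(1+\ln \cC(X;2\tau,\tau/4)\big)\,.
\]

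Finally I would set $\alpha \seteq 8\big(1+\ln \cC(X;2\tau,\tau/4)\big) = O\!\big(\log \cC(X;2\tau,\tau/4)\big)$; then for $\gamma = \delta\tau/\alpha$ the displayed bound is at most $\delta$, which is exactly property (2) of a $(\tau,\alpha)$-padded partition (the case $\delta \ge 1$ being vacuous, and the constraint $\gamma \le \tau/4$ used above holding since $\alpha \ge 8$). The step I expect to require the most care is the characterization of the cutting event: in a general, possibly non-geodesic, pseudo-metric space one must choose the correct candidate for the center responsible for the cut---namely the priority-minimal center within distance $\beta+\gamma$ of $x$, rather than the center to which $x$ itself is assigned---and verify that the two defining sub-events are genuinely independent; this is the standard but slightly delicate core of the analysis, carried out, e.g., in \cite{LN05}.
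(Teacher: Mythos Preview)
Your argument is correct and is precisely the standard Calinescu--Karloff--Rabani construction that the paper defers to via the citation \cite[Lem.~3.11]{LN05}; the paper gives no independent proof. One small wrinkle in your exposition: in the line ``$w=\zeta_i$ for some $i$ with $e_i\in(\beta-\gamma,\beta+\gamma]$'' the relevant center is not $w=\sigma(x)$ but the priority-minimal center within $\beta+\gamma$ of $x$ (as you yourself note in the final paragraph); with that correction the independence and the $1/i$ bound go through exactly as you wrote.
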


   \begin{lemma}\label{lem:efact}
      Suppose that $(G,\rho)$ is a {\em finite} unimodular random graph
      with stationary measure $\pi_G$, and
      such that $E[\deg_G(\rho)] < \infty$. 
      Then for any $\delta > 0$,
      \begin{equation}\label{eq:efact1}
         \pi_G^*(\delta) = \delta \frac{\avgd_{G}(\delta)}{\avgd_G(1)} \leq \delta \avgd_G(\delta)\,.
      \end{equation}
      (Recall the definition of $\pi_G^*$ from \pref{sec:retprob}.)
      Moreover, for any set of vertices $U_G \subseteq V(G)$, it holds that
      \[
         \Pr[\rho \in U_G \mid G]
         \leq \pi_G(U_G) \avgd_G(1)\,.
      \]
   \end{lemma}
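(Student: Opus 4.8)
The plan is to reduce everything to the elementary fact that in a \emph{finite} unimodular random graph the root $\rho$ is uniformly distributed on $V(G)$ once we condition on the (unrooted) isomorphism class of $G$, and then simply to unwind the definitions of $\pi_G^*$, $\Delta_G$, and $\avgd_G$. First I would record the uniformity claim: applying the Mass-Transport Principle \eqref{eq:mtp} to the transport rule $F(G,x,y) \seteq \phi(G,x)/|V(G)|$, which does not depend on $y$, for an arbitrary nonnegative measurable $\phi$ on rooted finite graphs, gives
\[
   \E\left[\phi(G,\rho)\right] = \E\left[\frac{1}{|V(G)|}\sum_{x \in V(G)} \phi(G,x)\right]\,,
\]
which is exactly the statement that $\rho$, conditioned on $G$, is uniform on $V(G)$; in particular $\Pr[\rho \in U_G \mid G] = |U_G|/|V(G)|$. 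I would also set $n \seteq |V(G)|$ and recall $\pi_G(x) = \deg_G(x)/(2|E(G)|)$, so that $2|E(G)| = \sum_{x \in V(G)}\deg_G(x) = \Delta_G(n) = n\,\avgd_G(1)$.

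For the first display \eqref{eq:efact1}: since $\pi_G(S) = \bigl(\sum_{x \in S}\deg_G(x)\bigr)/(2|E(G)|)$ is maximized over $\{S : |S| \le k\}$ by taking the $k$ vertices of largest degree, one has $\max_{|S|\le k}\pi_G(S) = \Delta_G(k)/(2|E(G)|)$; taking $k = \lfloor \delta n\rfloor$ (with the same rounding convention under which $\avgd_G(\delta) = \Delta_G(\delta n)/(\delta n)$) yields
\[
   \pi_G^*(\delta) = \frac{\Delta_G(\delta n)}{2|E(G)|} = \frac{\delta n\,\avgd_G(\delta)}{n\,\avgd_G(1)} = \delta\,\frac{\avgd_G(\delta)}{\avgd_G(1)}\,.
\]
To finish I would observe that $\avgd_G(1) \ge 1$: as $G$ is connected on $n \ge 2$ vertices, $|E(G)| \ge n-1$, hence $\avgd_G(1) = 2|E(G)|/n \ge 2(n-1)/n \ge 1$, and therefore $\pi_G^*(\delta) \le \delta\,\avgd_G(\delta)$.

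For the second inequality, uniformity gives $\Pr[\rho \in U_G \mid G] = |U_G|/n$, while, using that every vertex of a connected graph on at least two vertices has degree at least $1$,
\[
   \pi_G(U_G) = \frac{\sum_{x \in U_G}\deg_G(x)}{2|E(G)|} \ge \frac{|U_G|}{2|E(G)|} = \frac{|U_G|}{n\,\avgd_G(1)}\,,
\]
so $\Pr[\rho \in U_G \mid G] = |U_G|/n \le \pi_G(U_G)\,\avgd_G(1)$, as claimed.

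I do not expect a genuine obstacle here: the content is bookkeeping, and the only non-formal ingredient, uniformity of the root, is a one-line consequence of the Mass-Transport Principle. The single point requiring any care is matching the integer-rounding conventions implicit in the definitions of $\pi_G^*$ and $\avgd_G$ — this is cosmetic, since the identity holds verbatim at integer arguments and both sides are extended in the same way — together with the harmless observation $\avgd_G(1) \ge 1$, which uses only connectivity of $G$.
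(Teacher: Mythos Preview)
Your proof is correct and follows essentially the same approach as the paper: both arguments unwind the definitions of $\pi_G^*$ and $\avgd_G$ and use that $\avgd_G(1)\ge 1$ by connectivity, and both derive the second inequality from $\deg_G(x)\ge 1$ so that $\pi_G(U_G)\ge |U_G|/(n\,\avgd_G(1))$. Your explicit derivation of the uniformity of $\rho$ via mass transport is a helpful addition that the paper leaves implicit.
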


   \begin{proof}
      The first fact follows directly from the definitions and $\avgd_G(1) \geq 1$
      since $G$ is almost surely connected.
      The second fact is a reformulation of $|U_G| \leq \sum_{x \in U_G} \deg_G(x)$.
   \end{proof}

   \begin{lemma}\label{lem:cg-prelims}
      Suppose that $(G,\rho)$ is a unimodular random graph with law $\mu$.
      Then there exists a sequence of finite, normalized unimodular random conformal graphs
      $\{(G_k,\omega_k,\rho_k)\}$ so that
      \begin{enumerate} 
         \item $\{(G_k,\rho_k)\} \todl (G,\rho)$.
         \item $\|\omega_k\|^2_{L^2(V(G_k))} \geq 1/2$ almost surely.
         \item For every $R > 0$, 
            \[
               \limsup_{k \to \infty} \|\# B_{\omega_k}(\rho_k,R)\|_{L^{\infty}} \leq \psi_{\mu}(\sqrt{2} R)\,.
            \]
         \item Almost surely over the choice of $(G_k,\rho_k)$:  For every $R > 0$,
            $(V(G_k), \dist_{\omega_k})$ admits 
            an $(R/2, \alpha_{R,k})$-padded random partition with
            \[
               \alpha_{R,k} \lesssim \log \|\# B_{\omega_k}(\rho_k, R)\|_{L^{\infty}}
            \]
   \end{enumerate}
   \end{lemma}

   \begin{proof}
      Combining \pref{lem:is-amenable} with \pref{cor:hyperfinite},
      we may take a sequence of finite normalized unimodular random conformal graphs $\{(G_k,\omega_k,\rho_k)\}$
      so that $\{(G_k,\rho_k)\} \todl (G,\rho)$ and (2) and (3) are satisfied.

      Note that \pref{lem:counting-padded} implies that
      $(V(G_k), \dist_{\omega_k})$ almost surely admits,
      for every $n \geq 1$, an $(R_n/2,\alpha_n)$-padded partition with
      \[
         \alpha_n \lesssim \log \|\# B_{\omega_k}(\rho_k,R_n)\|_{L^{\infty}}.\qedhere
      \]
   \end{proof}

   We will also require the following basic fact.

   \begin{lemma}\label{lem:av-deg}
      Suppose $G$ is a random finite graph and $\rho \in V(G)$ is chosen uniformly at random.
      Let $\mu$ be the law of the unimodular random graph $(G,\rho)$.
      Then for every $\e > 0$, it holds that
      \[
         \E\left[\avgd_G(\e)\right] \leq \avgd_{\mu}(\e/2)\,.
      \]
   \end{lemma}

   \begin{proof}
      Let $S_G^{\e}$ be the $\lfloor \e |V(G)|\rfloor$ vertices
      of largest degree in $G$, and define the event $\cE \seteq \{ \rho \in S_G^{\e} \}$.
      Then $\Pr(\cE \mid G) = \frac{|S_G^{\e}|}{|V(G)|} \geq \e/2$, hence $\Pr(\cE) \geq \e/2$.
      By definition, it follows that
      \[
         \avgd_{\mu}(\e/2) \geq \E\left[\frac{1}{|S_G^{\e}|} \sum_{x \in S_G^{\e}} \deg_G(x)\right] =
         \E\left[\avgd_G(\e)\right].\qedhere
      \]
   \end{proof}

   \begin{proof}[Proof of \pref{thm:polylog}]
      Let $\mu$ be the law of $(G,\rho)$.
      Apply \pref{lem:cg-prelims} 
      to obtain a sequence $\{(G_k,\omega_k,\rho_k)\}$ satisfying
      conclusions (1)--(4).   
      Define $N_{R,k} \seteq \|\#B_{\omega_k}(\rho_k, R)\|_{L^\infty}$ for $R,k \geq 1$,
      and fix some $\delta > 0$.
      Denote the event
      \begin{align*}
         \cQ_{R,k} \seteq 
         \left\{ \avgd_{G_k}(1) > \delta^{-1/3} \avgd_{\mu}(1/2) \right\} &\cup 
         \left\{ \avgd_{G_k}(\delta) > \delta^{-1/3} \avgd_{\mu}(\delta/2) \right\} \\
&\cup \left\{ \avgd_{G_k}(\delta/(N_{R,k}+R^2)) > \delta^{-1/3} \avgd_{\mu}(\delta/2(N_{R,k}+R^2)\right\}.
      \end{align*}

     Condition on $(G_k,\omega_k,\rho_k)$, and let
      $\pi_k$ denote the stationary measure on $G_k$.
      Apply \pref{cor:on-diag}
      with $\alpha=\alpha_{R,k}$, $K=N_{R,k}$ to obtain for any $T \geq 1$,
   \[
      \pi_k\left(\left\{x \in V(G_k) : p_{2T}^{G_k}(x,x) < \frac{\delta^2}{4N_{R,k}}\right\}\right) 
      \lesssim \delta + \pi_{G_k}^*(\delta)+
      \frac{\alpha_{R,k}^2}{\delta^2} \frac{T}{R^2} \|\omega_k\|_{L^2(V(G_k))}^2
      \avgd_{G_k}\!\left(\delta/(N_{R,k}+R^2)\right)\,.
   \]
   Observe that $\pi^*_{G_k}(\delta) \leq \delta \avgd_{G_k}(\delta)$ and
   use \pref{lem:efact} to change from the stationary to uniform measure, yielding
   \begin{equation*}
      \frac{\# \left\{ x \in V(G_k) : p_{2T}^{G_k}(x,x) < \frac{\delta^2}{4N_{R,k}}\right\}}{|V(G_k)|}
      \lesssim \avgd_{G_k}(1)\left(\delta (1+\avgd_{G_k}(\delta)) +
      \frac{\alpha_{R,k}^2}{\delta^2} \frac{T}{R^2} \|\omega_k\|_{L^2(V(G_k))}^2
   \avgd_G(\delta/(N_{R,k}+R^2))\right).
   \end{equation*}
   Taking expectation over $(G_k,\omega_k,\rho_k)$ and using that $\omega_k$ is normalized yields
   \begin{align}
      \Pr\left(p_{2T}^{G_k}(\rho_k,\rho_k) < \frac{\delta^2}{4N_{R,k}}\right) 
      &\lesssim \avgd_{\mu}(1/2) 
      \left(\delta^{2/3} (1+\delta^{-1/3} \avgd_{\mu}(\delta/2)) + 
      \frac{\alpha_{R,k}^2}{\delta^{8/3}} \frac{T}{R^2} \avgd_{\mu}(\delta/2(N_{R,k}+R^2))\right) + \Pr[\cQ_{R,k}] \nonumber \\
      &\lesssim
      \delta^{2/3} (1+\delta^{-1/3} \avgd_{\mu}(\delta/2)) + 
      \frac{\alpha_{R,k}^2}{\delta^{8/3}} \frac{T}{R^2}\avgd_{\mu}(\delta/2(N_{R,k}+R^2)) + \Pr[\cQ_{R,k}], \nonumber
\label{eq:abstep2} 
   \end{align}
   where the last inequality uses $\avgd_{\mu}(1/2) \leq O(1)$, since $\deg_G(\rho)$ has negligible tails.

   Let us now take $k \to \infty$ and use the fact that $\{(G_k,\rho_k)\} \todl (G,\rho)$.
   Since $\avgd_{\mu_k}(\e) \to \avgd_{\mu}(\e)$ for every $\e > 0$, \pref{lem:av-deg}
   and Markov's inequality show that $\limsup_{k \to \infty} \Pr[\cQ_{R,k}] \leq 3 \delta^{1/3}$.
   Using additionally \pref{lem:cg-prelims}(3)--(4) gives, for all $R,T \geq 1$,
   \begin{equation}\label{eq:return10}
      \Pr\left[p_{2T}^{G}(\rho,\rho) < \frac{\delta^2}{4 \psi_{\mu}(\sqrt{2} R)}\right] 
      \lesssim 
      \delta^{2/3} (1+\delta^{-1/3} \avgd_{\mu}(\delta))
      +
      \frac{\alpha_R^2}{\delta^{8/3}} \frac{T}{R^2} 
      \avgd_{\mu}(\delta/(\psi_{\mu}(\sqrt{2} R)+R^2)) + \delta^{1/3}\,,
   \end{equation}
   where $\alpha_R \seteq \log \psi_{\mu}(\sqrt{2} R)$.

   Let $\{R_n\}$ be an increasing sequence of radii satisfying \eqref{eq:growth10},
   and $\{\e_n\}$ a sequence satisfying $\e_n \geq R_n^{-d+o(1)}$.
   The assumption that $\deg_G(\rho)$ has negligible tails (recall \eqref{eq:negligible-tails})
   yields
   \[
      \avgd_{\mu}(\beta) \leq \beta^{-o(1)} \quad \textrm{as} \quad \beta \to 0\,,
   \]
   hence applying \eqref{eq:return10} with $R=R_n/\sqrt{2}$ and $\delta = 2 \e_n^3$ gives
   for all $T \geq 1$, as $n \to \infty$:
   \[
      \Pr\left[p_{2T}^{G}(\rho,\rho) < \frac{\e_{n}^6}{\psi_{\mu}(R_n)}\right] 
      \leq
      \e_n g(n)
      + \e_n^{-8} \frac{T}{R_n^{2}} h(n)
   \]
   where $g(n) \leq \e_n^{-o(1)}$ and $h(n) \leq R_n^{o(1)}$ as $n\to \infty$.
   If $\deg_G(\rho)$ additionally has exponential tails, one has $g(n) \leq O(\log (1/\e_n)) \leq O(\log R_n)$ and
   $h(n) \leq (\log R_n)^{O(1)}$.

   If we now define, for $n \geq 1$,
   \begin{equation}\label{eq:tndef}
      T_n \seteq \left\lfloor\e_n^9 \frac{R_n^2}{h(n)}\right\rfloor,
   \end{equation}
   we arrive at
   \[
      \Pr\left[p_{2T_n}^{G}(\rho,\rho) < \frac{\e_n^6}{\psi_{\mu}(R_n)}\right]  \leq \e_n (1+g(n)) \quad \textrm{as} \quad n \to \infty\,,
   \]
   yielding \eqref{eq:sp10}.
\end{proof}

\subsection{On-diagonal heat kernel bounds}
\label{sec:limits}

Our goal now is to prove \pref{thm:heat-kernel} and \pref{thm:green-diverge}.
We start with the former and restate it here for ease of reference.

\begin{theorem}[Restatement of \pref{thm:heat-kernel}]
      Suppose that $(G,\rho)$ satisfies the
      the conditions:
      \begin{enumerate}
         \item $(G,\rho)$ has gauged quadratic conformal growth and is uniformly decomposable,
         \item $\E[\deg_G(\rho)^2] < \infty$.
      \end{enumerate}
      Then there is a constant $C=C(\mu)$ such that
      for every $\delta > 0$ and all $T \geq C/\delta^{2}$,
      \begin{equation}\label{eq:heat-goal-1}
         \Pr\left[p_{2T}^G(\rho,\rho) < \frac{\delta}{T \bar{d}_{\mu}(1/T^3)}\right] \leq C \delta^{1/17}\,.
      \end{equation}
   \end{theorem}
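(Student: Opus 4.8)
The plan is to descend to an approximating sequence of \emph{finite} unimodular random graphs and then invoke \pref{cor:on-diag} with a carefully tuned choice of its free parameters. Since $(G,\rho)$ has gauged quadratic conformal growth, \pref{rem:weights} gives $\dimconf(G,\rho)\le 2$, so by \pref{lem:is-amenable} (using $\E[\deg_G(\rho)^2]<\infty$) the graph $(G,\rho)$ is invariantly amenable, hence hyperfinite by Theorem~8.5 of \cite{aldous-lyons}. Thus \pref{cor:hyperfinite} yields finite unimodular random graphs $\{(G_n,\rho_n)\}$ with $\{G_n\}\todl(G,\rho)$; crucially, since $(G,\rho)$ is uniformly decomposable, each $(G_n,\rho_n)$ is $\alpha$-decomposable with the \emph{same} constant $\alpha=O_\mu(1)$, so the padding constant entering \pref{cor:on-diag} stays bounded independently of scale. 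This is exactly where uniform decomposability is used, and is what allows us to avoid the logarithmic losses present in \pref{thm:spec-conf}.

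Now fix large $T$ and small $\delta$ (if $C\delta^{0.1}\ge 1$ there is nothing to prove). Write $\bar d\seteq\bar d_\mu(1/T^3)$ and set $\delta_0\seteq\delta^{0.2}$, $a\seteq\delta^{-0.7}$, and work at the radius $R\asymp\sqrt{a\,T\bar d}$. Using $(C_0,R)$-quadraticity of $(G,\rho)$, where $C_0$ is the gQCG constant, together with \pref{cor:hyperfinite}(2), I would obtain normalized metrics $\omega_n$ on $G_n$ with $\|\omega_n\|_{L^2}\ge 1/2$ almost surely and $\|\#B_{\omega_n}(\rho_n,R)\|_{L^\infty}\le K$ with $K\seteq C_0 R^2\asymp a\,T\bar d$ (the $\sqrt2$-loss in \pref{cor:hyperfinite}(2) being absorbed into constants). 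After passing to a subsequence I may also assume $\avgd_{G_n}(\e)\le 2\avgd_\mu(\e)$ at the two fixed scales $\e\in\{\delta_0,\,\e^*\}$, where $\e^*\seteq\delta_0/(K+R^2)$. Then I apply the ``$\|\omega\|_{L^2}\ge 1/2$'' form of \pref{cor:on-diag} with these $R,\alpha,K$, with the corollary's parameter $\delta$ taken to be $\delta_0$, and with $\beta\seteq\beta_0\asymp C_0\,a\,\delta/\delta_0\asymp\delta^{0.1}$; by construction the resulting threshold $\delta_0\beta_0/(4K)$ is at least $\delta/(T\bar d)$.

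It then remains to bound each term on the right of \pref{cor:on-diag} by $O_\mu(\delta^{0.1})$. The term $\beta_0\asymp\delta^{0.1}$ is fine, and $\delta_0+3\pi_{G_n}^*(\delta_0)\lesssim\delta_0(1+\avgd_\mu(\delta_0))\lesssim_\mu\delta_0^{1/2}=\delta^{0.1}$ using the second-moment bound $\avgd_\mu(\e)\le\sqrt{\E[\deg_G(\rho)^2]/\e}$. For the remaining term $\tfrac{\alpha\sqrt{T+1}}{\delta_0 R}\|\omega_n\|_{L^2}\sqrt{\avgd_{G_n}(\e^*)}$, the hypothesis $T\ge C/\delta^{10}$ (for $C=C(\mu)$ large) guarantees $\e^*\ge 1/T^3$, hence $\avgd_{G_n}(\e^*)\le 2\avgd_\mu(1/T^3)=2\bar d$; since $\sqrt{T+1}/R\asymp 1/\sqrt{a\bar d}$, the two powers of $\bar d$ cancel and this term is $\lesssim\tfrac{\alpha}{\delta_0\sqrt a}\|\omega_n\|_{L^2}$. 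Because $\E[\|\omega_n\|_{L^2(V(G_n))}^2]=\E[\omega_n(\rho_n)^2]=1$, Markov's inequality gives $\|\omega_n\|_{L^2}\le\delta^{-0.05}$ off an event of probability $\le\delta^{0.1}$ (on which I bound $\Pr$ by $1$), so on the complement this term is $\lesssim\alpha\,\delta^{-0.05}/(\delta_0\sqrt a)=\alpha\,\delta^{0.1}$ by the choice $a=\delta^{-0.7}$. Converting the $\pi_{G_n}$-statement of \pref{cor:on-diag} into one about $\rho_n$ costs only a factor $\avgd_{G_n}(1)\le 2\avgd_\mu(1)=O_\mu(1)$ by \pref{lem:efact}. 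Altogether $\Pr[p_{2T}^{G_n}(\rho_n,\rho_n)<\delta/(T\bar d)]\le C\delta^{0.1}$ for all large $n$, and since $p_{2T}(\cdot,\cdot)$ is a bounded function of $(G,\rho)$ that is continuous in the local topology (being determined by a ball of radius $O(T)$ about the root), the portmanteau theorem applied to $\{G_n\}\todl(G,\rho)$ gives $\Pr[p_{2T}^G(\rho,\rho)<\delta/(T\bar d_\mu(1/T^3))]\le C\delta^{0.1}$.

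The main obstacle is the parameter bookkeeping, whose two essential ideas are: (i) choosing $R^2\asymp T\,\bar d_\mu(1/T^3)$ so that the degree-profile factor $\sqrt{\avgd_G(\cdot)}$ in \pref{cor:on-diag} exactly cancels the $\bar d_\mu(1/T^3)$ appearing in the target threshold; and (ii) inflating $R$ by the further factor $a=\delta^{-0.7}$ to absorb the residual factor $\|\omega\|_{L^2}/\delta_0\lesssim\delta^{-0.25}$ at the cost of $\beta_0\asymp\delta^{0.1}$ --- a trade one can only afford because the desired error is $\delta^{0.1}$ rather than $\delta$. The quantitative hypothesis $T\ge C/\delta^{10}$ is precisely what is needed to keep $\e^*\ge 1/T^3$ (so that $\avgd_\mu$ may be evaluated at $1/T^3$) after these choices, and everything else is the now-routine combination of hyperfinite approximation, \pref{cor:on-diag}, and the portmanteau theorem.
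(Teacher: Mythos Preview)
Your proposal is correct and follows essentially the same route as the paper: hyperfinite approximation via \pref{lem:is-amenable} and \pref{cor:hyperfinite}, then \pref{cor:on-diag} at scale $R^2\asymp \delta^{-c}\,T\,\bar d_\mu(1/T^3)$, Markov on $\|\omega_n\|_{L^2}$, conversion from $\pi_{G_n}$ to uniform via \pref{lem:efact}, and finally passing to the distributional limit. The only differences are cosmetic exponent choices --- the paper takes the corollary's $\delta$ equal to the theorem's $\delta$, sets $\beta=\sqrt\delta$ and $R^2\asymp \delta^{-7/2} T\bar d$, obtains a $\sqrt\delta$ probability bound at threshold $\delta^5/(T\bar d)$, and substitutes $\delta\mapsto\delta^{1/5}$ at the end, whereas you build the $\delta^{0.1}$ in directly via $\delta_0=\delta^{0.2}$, $a=\delta^{-0.7}$, $\beta_0\asymp\delta^{0.1}$.
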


\begin{proof}
   Let $C_{\mu} = \E[\deg_G(\rho)^2]$.
   Then for $\e > 0$,
   \begin{equation}\label{eq:chebyshevy}
      C_{\mu} \geq \e \bar{d}_{\mu}(\e)^2\,.
   \end{equation}

   From \pref{cor:hyperfinite},
   we can take a sequence $\{(G_n,\rho_n)\} \todl (G,\rho)$ 
   such that $(G_n,\rho_n)$ is a finite unimodular random graph that is almost surely:
   \begin{enumerate}
      \item $\alpha$-decomposable,
      \item $(\kappa,R)$-quadratic for every $R \geq 1$,
   \end{enumerate}
   where $\alpha,\kappa > 0$ are some constants depending on $\mu$.

   Consider $\delta > 0$, $T \geq 1$, and $n \geq 1$.
   Let $R \seteq \smashed{\gamma T \bar{d}_{\mu}(1/T^3)}$ for some number $\gamma > 0$ to be chosen soon.
   Recall from \pref{cor:hyperfinite} that we may assume
   that $\|\omega_n\|^2_{L^2(V(G_n))} \geq 1/2$
   almost surely.

   Set $K=\kappa R^2$ and apply \pref{cor:on-diag} with $\beta = \sqrt{\delta}$
   to obtain, for some constant $C_1=C_1(\alpha,\kappa)$,
   almost surely over the choice of $(G_n,\omega_n,\rho_n)$:
   \[
      \pi_{G_n}\left(\left\{x \in V : p_{2T}^{G_n}(x,x) < \frac{\delta^{3/2}}{4K}\right\}\right) 
      \lesssim \sqrt{\delta} + \pi_{G_n}^*(\delta)+
      \frac{C_1}{\delta^2 \gamma} \|\omega_n\|_{L^2(V_n)}^2
   \frac{\avgd_{G_n}\left(\frac{\delta}{C_1\gamma T \avgd_{\mu}(1/T^3)}\right)}{\avgd_{\mu}(1/T^3)}
   \]
   Observe that
   \begin{equation}\label{eq:obsthat}
      \avgd_{G_n}\left(\frac{\delta}{C_1 \gamma T \bar{d}_{\mu}(1/T^3)}\right)
      \stackrel{\eqref{eq:chebyshevy}}{\leq} C_1 \avgd_{G_n}\left(\frac{\delta}{C_1 C_{\mu} \gamma T^{2.5}}\right).
      \end{equation}
   From \pref{lem:efact}, we have
   \begin{equation}\label{eq:obspi}
      \pi_{G_n}^*(\delta) \leq \delta \bar{d}_{\mu}(\delta)
      \stackrel{\eqref{eq:chebyshevy}}{\leq} 2\delta \sqrt{\frac{C_{\mu}}{\delta}} = 2 \sqrt{C_{\mu} \delta}\,.
   \end{equation}
   Using \eqref{eq:obsthat} and \eqref{eq:obspi}, along with \pref{lem:efact} to change from the stationary measure to the uniform measure,
   gives
   \[
      \frac{\# \left\{x \in V : p_{2T}^{G_n}(x,x) < \frac{\delta^{3/2}}{4K}\right\}}{|V(G_n)|}
      \lesssim \avgd_{G_n}(1) \left(
         \left(1+ 2\smashed{C_{\mu}}\right) \sqrt{\delta} +
      \frac{C_1}{\delta^2 \gamma} \|\omega_n\|_{L^2(V_n)}^2
   \frac{\avgd_{G_n}\left(\frac{\delta}{C_1 C_{\mu} \gamma T^{2.5}}\right)}{\avgd_{\mu}(1/T^3)}
\right).
   \]

   Define the event
   \[
      \cQ_n \seteq 
      \left\{ \avgd_{G_n}(\eta) > \delta^{-1/4} \avgd_{\mu}(\eta/2) \right\} \cup
      \left\{ \avgd_{G_n}(1) > \delta^{-1/4} \avgd_{\mu}(1/2) \right \},
   \]
   where $\eta \seteq \frac{\delta}{C_1 C_{\mu} \gamma T^{2.5}}$.
   Taking expectation over $(G_n,\omega_n,\rho_n)$ gives
   \[
      \Pr\left(p_{2T}^{G_n}(\rho_n,\rho_n) < \frac{\delta^{3/2}}{4K}\right) 
      \leq O(\avgd_{\mu}(1/2) \delta^{-1/4})
      \left( 
         \left(1+ 2\smashed{C_{\mu}}\right) \sqrt{\delta} +
         \frac{C_1}{\delta^{9/4} \gamma} 
   \frac{\avgd_{\mu}\left(\eta/2\right)}{\avgd_{\mu}(1/T^3)}\right) + \Pr[\cQ_n].
   \]

   Now set $\gamma \seteq \delta^{-11/4}$ and take $C_2=C_2(\mu)$
   sufficiently large so that for $T \geq C_2/\delta^8$, we have $\eta \geq 2 T^{-3}$, and
   \[
      \Pr\left(p_{2T}^{G_n}(\rho_n,\rho_n) < \frac{\delta^{3/2}}{4K}\right) 
      \leq C_2 \delta^{1/4} + \Pr[\cQ_n].
   \]
   Recalling that $K = \kappa R^2 = \kappa \gamma T \avgd_{\mu}(1/T^3)$, this gives
   \[
      \Pr\left(p_{2T}^{G_n}(\rho_n,\rho_n) < \frac{\delta^{17/4}}{T \avgd_{\mu}(1/T^3)}\right) 
      \leq C_2 \delta^{1/4} + \Pr[\cQ_n].
   \]

   Let $\mu_n$ denote the law of $(G_n,\rho_n)$.
   Since $\{(G_n,\rho_n)\} \todl (G,\rho)$, it holds that $\avgd_{\mu_n}(\e) \to \avgd_{\mu}(\e)$ for every $\e > 0$,
   hence Markov's inequality in conjunction with \pref{lem:av-deg} 
   gives $\lim_{n \to \infty} \Pr[\cQ_n] \leq 2 \delta^{1/4}$, yielding
   \[
      \Pr\left(p_{2T}^{G}(\rho,\rho) < \frac{\delta^{17/4}}{T \avgd_{\mu}(1/T^3)}\right) 
      \leq (2+C_2) \delta^{1/4}.\qedhere
   \]
\end{proof}

Now we move on to the proof of \pref{thm:green-diverge}.

\begin{proof}[Proof of \pref{thm:green-diverge}]
   Let $d_t = \bar{d}_{\mu}(1/t)$ and observe that since $\{d_t\}$ is monotone increasing,
   \[
      \sum_{t \geq 1} \frac{1}{t d_t} \geq
      \sum_{t \geq 1} \frac{1}{t d_{t^3}} \geq
      \frac18 \sum_{k \geq 0} \frac{1}{d_{2^{3k}}} \geq \frac1{48} \sum_{k \geq 0} \frac{1}{d_{2^k}}
      \geq \frac{1}{96} \sum_{t \geq 1} \frac{1}{t d_t}\,.
   \]
   Define $c_t \seteq \frac{1}{t \bar{d}_{\mu}(1/t^3)}$.
   From the preceding inequalities, it suffices
   to consider $\hat{g}(T) = \sum_{t=1}^T c_{t}$ in place of $g(T)$.

   Let $C=C(\mu)$ be the constant from \eqref{eq:heat-goal-1}.  Fix $\delta > 0$.
   For $N \geq 1$, let $T_N = \min \{ T : \hat{g}(T) \geq N \}$.
   Choose $N(\delta)$ large enough so that for $N \geq N(\delta)$, we have $T_N \geq C/\delta^{2}$ and
   \[
      N \leq \hat{g}(T_N) \leq (1 + \delta)N\,.
   \]

   Define the random variable
   \[
      Z_N = \sum_{t=1}^{T_N} \min\left\{p^G_{2t}(\rho,\rho), \delta c_t \right\}\,.
   \]
   Then by definition, $Z_N \leq \delta \hat{g}(T_N)$, and \eqref{eq:heat-goal-1}
   implies that $\E[Z_N] \geq \delta (1-C \delta^{1/8}) \hat{g}(T_N)$, hence
   \[\Pr\left[Z_N \geq  \frac{\delta}{2} \hat{g}(T_N)\right] \geq 1-2C\delta^{1/8}\,.\]
   Define the sequence $\{Y_N\}$ by
   \[
      Y_N \seteq \frac{1}{N} \sum_{n \leq N} \1_{\{Z_n \geq \frac{\delta}{2} \hat{g}(T_n)\}}\,.
   \]
   Since $0 \leq Y_N \leq 1$ almost surely,
   Fatou's Lemma yields
   \begin{equation}\label{eq:fatou}
      \Pr\left[\limsup_{N \to \infty} Y_N > 0\right] \geq   \E\left[\limsup_{N \to \infty} Y_N\right] \geq
      \limsup_{N \to \infty} \E\left[Y_N\right] \geq 1 - 2C \delta^{1/8}\,.
   \end{equation}
   By construction, this implies
   \[
      \Pr\left[\limsup_{T \to \infty} \frac{\sum_{t=1}^T p_{2t}^G(\rho,\rho)}{g_{\mu}(T)} > 0\right] \geq 1-2C \delta^{1/8}\,.
   \]
   Now send $\delta \to 0$, concluding the proof.
\end{proof}

\subsubsection{Fatter degree tails and transience}
\label{sec:transient-example}

We generalize the example from \cite[\S 1.3]{GN13}.

\begin{lemma}
   For every monotonically increasing sequence $\{d_t : t=1,2,\ldots\}$ of positive integers such that
   $\sum_{t \geq 1} \frac{1}{t d_t} < \infty$,
   there is a unimodular random planar graph $(G,\rho)$ with law $\mu$ such that
   for all $t$ sufficiently large,
   \begin{equation}\label{eq:fat-dom}
      \bar{d}_{\mu}(1/t) \leq d_t\,,
   \end{equation}
   $\E[\deg_G(\rho)^2] < \infty$, and $G$ is almost surely transient.
\end{lemma}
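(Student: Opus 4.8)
The plan is to realize the law $\mu$ as a distributional limit $\{G_n\}\todl(G,\rho)$ of finite planar graphs, adapting the construction of \cite[\S 1.3]{GN13} so that its hierarchical parameters are governed by the sequence $\{d_t\}$ rather than by one fixed, barely-super-exponential tail. Presenting the example as a distributional limit of finite planar graphs is the natural route because it immediately supplies two of the three properties needed for it to be a genuine companion to \pref{thm:qcg2}: planar graphs are $O(1)$-decomposable by \pref{thm:goodpad}, so $(G,\rho)$ is uniformly decomposable, and gauged quadratic conformal growth is then inherited by the limit through \pref{lem:gQCG-limits}. It therefore remains to engineer the finite graphs $G_n$ so that the limit has (a) $\bar d_\mu(1/t)\le d_t$ for all large $t$, (b) $\E[\deg_G(\rho)^2]<\infty$, and (c) $G$ almost surely transient. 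The first two conditions leave considerable slack, since $\sum_t\frac{1}{td_t}<\infty$ forces $d_t\to\infty$, so the binding constraint is the summability itself, which enters through (c).

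Concretely, I would build $G_n$ from $L_n\asymp\log n$ scales. At scale $j$ one places a family of high-degree hubs whose large degree is realized planarly by attached fans of bounded-degree vertices; the number of scale-$j$ hubs and their degree $\Delta_j$ are tuned so that: (i) the hubs of degree $\gtrsim\Delta_j$ make up a $\theta_j$-fraction of the vertices with $\Delta_j\le d_{1/\theta_j}$, where one takes $\Delta_j$ essentially maximal under this constraint, so that $\bar d_\mu(1/t)\asymp d_t$ and hence also $\sum_t\frac{1}{t\bar d_\mu(1/t)}<\infty$, as consistency with \pref{thm:qcg2} requires; (ii) the contribution of the hubs to $\E[\deg_G(\rho)^2]$, which is $\asymp\sum_j\theta_j\Delta_j^2$ (non-hub vertices contributing $O(1)$), is finite. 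The hubs and fans are then laid out in the plane as a ``tree-of-rings''-type skeleton, following the GN13 template, so that the edge cut separating scales $\le j$ from scales $>j$ has conductance comparable to the number of scale-$j$ hubs times $\Delta_j$.

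For transience I would bound the effective resistance from the centre of $G_n$ to its outer boundary from both sides: Nash--Williams applied to the scale cuts gives a lower bound, and an explicit unit flow that spreads out equally across each scale cut gives an upper bound, both comparable to $\sum_{j\le L_n}\big(\#\{\text{scale-}j\text{ hubs}\}\cdot\Delta_j\big)^{-1}$; under the reparametrization $1/\theta_j\leftrightarrow t$ this partial sum is, up to constants, a tail of $\sum_t\frac{1}{td_t}$, hence bounded uniformly in $n$ precisely because that series converges. Passing to the distributional limit yields $\Reff^G(\rho\leftrightarrow\infty)<\infty$ with positive probability --- equivalently, the recurrence criterion of \pref{thm:how-recurrence} fails --- and a $0$--$1$ law (transience is an invariant event for a unimodular random graph, so it suffices to argue within an ergodic component of $\mu$, or to arrange that $\mu$ is ergodic) upgrades this to almost sure transience. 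Conditions (a) and (b) are then read off from the construction, and one checks $\dloc^*$-tightness so that the local limit exists with $\deg_G(\rho)$ having the prescribed law.

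The main obstacle is (c) in conjunction with planarity and the existence of a good local limit: one must embed the entire multiscale hub skeleton, fans included, in the plane without crossings, keep the scale-cut conductances as claimed, and --- the subtle point --- ensure that a uniformly random vertex of $G_n$ lies at a scale with a proper limiting law, so that the transience genuinely survives in the limit rather than being a global feature of the finite pieces that becomes invisible locally (as happens, for instance, for a planar graph whose concentric rings grow only polynomially). This ``anti-canopy'' balance is exactly the delicate engineering performed in \cite[\S 1.3]{GN13} for a single tail, and the content of the present lemma is to check that the same layout is robust enough to absorb the scale-dependent parameters dictated by an arbitrary admissible $\{d_t\}$; once the layout is fixed, the remaining verifications --- the degree statistics, the second moment, and the two-sided resistance estimate --- are routine.
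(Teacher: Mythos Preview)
Your plan is aiming at the right target but is substantially more elaborate than what the paper does, and your diagnosis of the ``main obstacle'' is off.

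The paper's construction is essentially one line: take the complete binary tree $T_n$ of height $n$, replace every edge at height $k$ from the leaves by $f(k)$ parallel edges, and pass to the distributional limit. The limit is exactly the canopy tree with multi-edges, and along the (unique) infinite ray above $\rho$ one reads off
\[
   \Reff^T(\rho \leftrightarrow \infty) \leq \sum_{k\geq 1} \frac{1}{f(k)}\,,
\]
almost surely, with no $0$--$1$ law or semi-continuity of resistance under local limits needed. After a preliminary truncation $d_t \mapsto \min\{d_t, t^{1/4}\}$ (which only helps all three conditions), the paper takes $f(k) \seteq 2 d_{2^k} - d_{2^{k+1}}$, so that $d_{2^k} = \sum_{j\geq 1} f(k+j) 2^{-j}$; this immediately gives $\sum_k 1/f(k) \lesssim \sum_k 1/d_{2^k} \lesssim \sum_t 1/(t d_t) < \infty$ for transience, while $\E[\deg_G(\rho)^2] \lesssim \sum_k 2^{-k} d_{2^k}^2 < \infty$ and the tail bound $\bar d_\mu(1/t) \leq d_t$ follow from the fact that a $2^{-k}$-fraction of vertices sit at height $\geq k$ with degree $\asymp f(k)$. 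Parallel edges are removed at the end by subdividing, at the cost of a constant factor.

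So your worry about ``anti-canopy balance'' is precisely backwards: the canopy tree is not the obstruction but the construction. The delicate engineering you anticipate---hubs with planar fans, tree-of-rings skeletons, two-sided Nash--Williams estimates, arranging ergodicity---is unnecessary; the only nontrivial step is the explicit telescoping choice of $f$ that matches the degree profile to $\{d_t\}$ while keeping $\sum 1/f(k)$ finite. Your outline could presumably be made to work, but it leaves every quantitative choice unspecified and introduces genuine extra issues (lower semi-continuity of $\Reff$ under $\todl$, the $0$--$1$ argument) that the direct construction sidesteps entirely.
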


\begin{proof}
   We may we replace $d_t$ by $\min(d_t, t^{1/4})$ so that $d_t \leq t^{1/4}$.
   We may also assume that $d_{2t} \leq 2 d_t$ for all $t \geq 1$
   without
   affecting convergence of the sequence $\sum_{t \geq 1} \frac{1}{t d_t}$.

Consider an increasing function $f : \N \to \N$.
Let $\cT_n$ be a complete binary tree of height $n$ and replace
each edge at height $k=1,2,\ldots,n$ from the leaves by $f(k)$ parallel edges
(at the end of the proof, we indicate how to convert the
construction into a simple graph).

Let $(\cT,\rho)$ be the distributional limit of $\{\cT_n\}$, and let
$\mu$ be the law of $(\cT,\rho)$.
If $f(k) \leq 2^{o(k)}$ as $k \to \infty$, then the distributional limit exists
and, moreover, the unique path to infinity is the one moving away from the leaves.
Thus almost surely,
\begin{equation}\label{eq:reff-small}
   \reff^{\cT}(\rho \leftrightarrow \infty) \leq \sum_{k=1}^{\infty} \frac{1}{f(k)}\,.
\end{equation}

Let us now define $f(k) \seteq 2 d_{2^{k}}-d_{2^{k+1}}$ so that
\begin{equation}\label{eq:fd-comp}
   d_{2^k} = 2^{k} \sum_{j=k+1}^{\infty} \left(2^{-j+1} d_{2^{j}}- 2^{-j} d_{2^{j+1}}\right) = \sum_{j=1}^{\infty} f(k+j) 2^{-j}\,,
\end{equation}
where convergence of the telescopic sum follows from our assumption that $d_{2^j} \leq 2^{j/4}$.
Now observe if $t > 2^{k-1}$, then
\[
   \bar{d}_{\mu}(1/t) \leq \sum_{j=1}^{\infty} f(k+j) 2^{-j} = d_{2^k}\,,
\]
hence \eqref{eq:fat-dom} is satisfied.

We also have:
\[
   \sum_{k=1}^{\infty} \frac{1}{f(k)} \stackrel{\eqref{eq:fd-comp}}{\lesssim}  \sum_{k=1}^{\infty} \frac{1}{d_{2^{k}}} \leq 2 \sum_{t=1}^{\infty} \frac{1}{t d_t} < \infty\,.
\]
From \eqref{eq:reff-small}, this implies that almost surely $\cT$ is transient.
Finally, note that $\E[\deg_G(\rho)^2] \leq 2 \sum_{k \geq 1} 2^{-k} (d_{2^k})^2 < \infty$ since we assumed that $d_{2^k} \leq 2^{k/4}$.

We may replace every parallel edge by a path of length two while affecting the degree distribution
only by a factor of $2$ (and one can rescale $f$ accordingly to maintain property \eqref{eq:fat-dom}).
\end{proof}

\subsection{Spectrally heterogeneous graphs}
\label{sec:homo}

There are unimodular random graphs $(G,\rho)$
with $\deg_G(\rho) \leq O(1)$ and $\dimspec(G) \leq O(1)$ almost surely,
but $\dimconf(G,\rho) = \infty$.
Indeed, there exist invariantly nonamenable graphs $(G,\rho)$ for which $\dimspecover(G) \leq O(1)$
almost surely.  This is asserted in
\cite{AHNR18}.

\medskip
\noindent
{\bf An invariantly nonamenable graph with bounded spectral dimension.}
We recall the construction alluded to there.
Fix a value $\alpha > 0$.
Let $\cT$ denote the infinite $3$-regular tree
and fix a vertex $v_0 \in V(\cT)$.
To each $v \in V(\cT)$, we attach a random path of length $L_v$,
where the random variables $\{L_v : v \in V(\cT)\}$ are independent and satisfy,
for $\ell \geq 1$:
\[
      \Pr(L_v = \ell) =
         \begin{cases}
            c_1 (\ell+1)\ell^{-2-\alpha} & v = v_0 \\
            c_2 \ell^{-2-\alpha} & v \in V(\cT) \setminus \{v_0\}\,,
         \end{cases}
\]
where $c_1, c_2 > 0$ are the unique values that give rise
to probability measures.  Let $\cG$ denote the resulting graph, and
let $P_0$ denote the path attached to $v_0$ (so that $P_0$
contains $L_{v_0}+1$ vertices).

It is not difficult to verify that $(\cG,\rho)$
is unimodular when $\rho \in V(P_0)$ is
chosen uniformly at random.  Indeed, $(\cG,\rho)$
is the distributional limit of finite graphs with uniformly random roots.
To see this, consider a sequence $\{G_n\}$ of $3$-regular graphs
with girth tending to infinity (see, e.g., \cite{Imrich84}).
Let $\cG_n$ denote the random graph in which we attach to every
vertex of $G_n$ a path of length $L_v$, where $\{L_v : v \in V(G_n)\}$
is a family of independent random variables
with law $\Pr(L_v=\ell) = c_2 \ell^{-2-\alpha}$ for $\ell \geq 1$.
Then $\cG_n$ is almost surely finite.
Let $u_n \in V(G_n)$ be
chosen uniformly at random.

\begin{claim}
  $\{(\cG_n,u_n)\} \todl (\cG,\rho)$.
\end{claim}

\begin{proof}
Let $\{ P_v : v \in V(G_n) \}$ be the collection of attached paths.
Calculate:
\begin{align*}
   \Pr(L_v = \ell \mid u_n \in P_v) &= \frac{\Pr(L_v = \ell)}{\Pr(u_n \in P_v)} \Pr(u_n \in L_v \mid L_v = \ell) \\
                                    &= c_2 \ell^{-2-\alpha} |V(G_n)| \E\left[\frac{\ell+1}{|V(\cG_n)|} \bigmid L_v = \ell\right].
\end{align*}
Note that the law of $|V(\cG_n)|$ conditioned on $\{L_v=\ell\}$ is $(\ell+1)+\sum_{u \in V(G_n) \setminus \{v\}} L_u$.
Thus
by the law of large numbers, it holds that
\[
   \lim_{n \to \infty} \Pr(L_v = \ell \mid u_n \in P_v) = 
   c_2 (\ell+1) \ell^{-2-\alpha}\lim_{n \to \infty} \frac{|V(G_n)|}{\E[L_v] |V(G_n)|}  = c_1 (\ell+1) \ell^{-2-\alpha}.\qedhere
\]
\end{proof}

An interesting feature of $(\cG,\rho)$ is that the mean return probability
is dominated by a small set of vertices (of measure $\approx T^{-\alpha/2}$):
\[
   \E[p_{2T}^\cG(\rho,\rho)] \approx \Pr[L_{v_0} \geq \sqrt{T}] \cdot \frac{1}{\sqrt{T}} \approx T^{-(1+\alpha)/2}\,.
\]

It turns out that one can obtain polynomial conformal volume growth
if they are willing to ignore the
``spectrally insignificant'' vertices.
Moreover, if $(G,\rho)$ is spectrally homogeneous in a strong sense (\eqref{eq:homo2} below),
one can reverse the bound of \pref{thm:spec-conf} and obtain
$\dimconfover(G,\rho) \leq \textrm{a.s.-}\dimspecover(G)$.

Consider a monotone non-decreasing function $h : \R_+ \to \R_+$ such that $h(n) \leq n^{o(1)}$ as $n \to \infty$
and a number $d > 0$.
For $T \geq 1$, define the set of vertices
with $d$-dimensional lower bounds on the diagonal heat kernel:
\[
   H_G(T) \seteq \left\{ x \in V(G) : p_{2T}^G(x,x) \geq \deg_G(x) \frac{T^{-d/2}}{h(T)}\right\}\,.
\]
Define also, for $R \geq 0$,
\[
   \widehat{H}_G(R) \seteq H_G\left(R^2 h(R)^4 (\log R)^4\right)\,.
\]

\begin{theorem}\label{thm:homo}
   Let $(G,\rho)$ be a unimodular random graph and
   suppose that for all $T \geq 1$,
   \begin{equation}\label{eq:eass}
      \E\left[p_{2T}^G(\rho,\rho)\right] \leq h(T) T^{-d/2}\,.
   \end{equation}
   Then there is a normalized conformal metric $\omega : V(G) \to \R_+$ such that
   \begin{equation}\label{eq:homo1}
      \left\|\1_{\widehat{H}_G(R)}(\rho)\cdot \#\left(B_{\omega}(\rho, R) \cap \widehat{H}_G(R)\right)\right\|_{L^{\infty}} \leq R^{d+o(1)}\qquad \textrm{as } R \to \infty\,.
   \end{equation}
   Moreover, if it holds that
   \begin{equation}\label{eq:homo2}
      \Pr[\rho \notin H_G(T)] \leq \frac{h(T)}{T}\quad \textrm{for all $T \geq 1$},
   \end{equation}
   then $\dimconfover(G,\rho) \leq d$.
\end{theorem}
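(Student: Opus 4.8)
The plan is to establish \eqref{eq:homo1} by reading a conformal weight off the heat kernel, scale by scale, superposing the scales in the usual way, and then to obtain the ``moreover'' clause by folding an extra barrier into the same weight. No finite approximation is needed: everything is done directly on the infinite unimodular graph, since $p^G_{2T}(\cdot,\cdot)$ is a rooted-isomorphism-invariant function of $G$ and hence defines legitimate conformal weights.

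\medskip\noindent\textbf{Single-scale weights and superposition.}
Fix a dyadic scale $R=2^j$ and set $T=T_R\seteq R^2 h(R)^4(\log R)^4$, so that $\widehat{H}_G(R)=H_G(T)$. Take
\[
   \omega_R(x)^2\seteq R^d\,p^G_{2T}(x,x)\mper
\]
Its size is pinned down by the hypothesis: $\E[\omega_R(\rho)^2]=R^d\,\E[p^G_{2T}(\rho,\rho)]\le R^d\,h(T)T^{-d/2}=R^{o(1)}$, so $\omega_R/\max\{1,\|\omega_R\|_{L^2}\}$ is $R^{o(1)}$-subnormalized, while by the definition of $H_G(T)$ every $x\in\widehat{H}_G(R)$ satisfies $\omega_R(x)^2\ge R^d T^{-d/2}/h(T)=R^{-o(1)}$. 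I then superpose dyadically,
\[
   \omega^2\seteq\frac{6}{\pi^2}\sum_{j\ge 1}\frac{\omega_{2^j}^2}{j^2\max\{1,\|\omega_{2^j}\|_{L^2}^2\}}\mcom
\]
so $\|\omega\|_{L^2}\le 1$ and, at scale $R=2^j$, $\omega\ge c\,j^{-1}R^{-o(1)}\,\omega_{2^j}$ pointwise; this forces $B_\omega(\rho,R)\subseteq B_{\omega_{2^j}}(\rho,R^{1+o(1)})$, the polylogarithmic change of radius being harmless because \eqref{eq:homo1} already carries an $o(1)$.

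\medskip\noindent\textbf{The single-scale volume estimate (the crux).}
The heart of the matter is the almost-sure bound, for each dyadic $R$ and radii up to $R^{1+o(1)}$,
\[
   \bigl\|\,\1_{\widehat{H}_G(R)}(\rho)\cdot\#\bigl(B_{\omega_R}(\rho,R^{1+o(1)})\cap\widehat{H}_G(R)\bigr)\,\bigr\|_{L^\infty}\le R^{d+o(1)}\mper
\]
Because $\omega_R\ge R^{-o(1)}$ on $\widehat{H}_G(R)$, this is equivalent to the restricted-area estimate $\sum_{y\in B_{\omega_R}(\rho,R^{1+o(1)})\cap\widehat{H}_G(R)}p^G_{2T}(y,y)\le R^{o(1)}$ holding a.s.\ for $\rho\in\widehat{H}_G(R)$ (and \pref{lem:ball-area} shows this is exactly the quantity that must be controlled). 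The natural way in is the Mass-Transport Principle applied to $F(G,x,y)=p^G_{2T}(x,y)\,\1[\dist_{\omega_R}(x,y)\le R^{1+o(1)}]\,\1[y\in\widehat{H}_G(R)]$, whose total outgoing mass at every vertex is $\le\sum_y p^G_{2T}(x,y)=1$; this controls the \emph{expected} restricted area. Upgrading to the almost-sure statement is the main obstacle --- precisely the annealed-versus-quenched difficulty stressed in the introduction --- and I expect it to require the semigroup structure of the walk: the Cauchy--Schwarz bound $p^G_{2T}(x,y)^2\le\tfrac{\pi(y)}{\pi(x)}p^G_{2T}(x,x)p^G_{2T}(y,y)$, monotonicity of even return times $p^G_{2t}(x,x)\ge p^G_{4t}(x,x)$, and a Carne--Varopoulos confinement of the vertices that can lie in $B_{\omega_R}(\rho,R^{1+o(1)})\cap\widehat{H}_G(R)$. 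The polynomial slack $h(R)^4(\log R)^4$ built into $T_R$ is exactly what is spent here (to absorb degree fluctuations and the $\alpha\lesssim d\log R$ loss incurred passing to a padded random partition, cf.\ \pref{lem:counting-padded}).

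\medskip\noindent\textbf{From the crux to \eqref{eq:homo1} and to $\dimconfover\le d$.}
Granting the crux, feeding $B_\omega(\rho,R)\subseteq B_{\omega_{2^j}}(\rho,R^{1+o(1)})$ into it at $R=2^j$ (and using that $\widehat{H}_G(\cdot)$ is decreasing, so the dyadic-to-general-$R$ passage only enlarges the ambient set $\widehat{H}_G$ by a polylogarithmic factor of scale) yields \eqref{eq:homo1}. For the ``moreover'' clause, assume \eqref{eq:homo2}: then $W_R\seteq V(G)\setminus\widehat{H}_G(R)$ satisfies $\Pr[\rho\in W_R]=\Pr[\rho\notin H_G(T_R)]\le h(T_R)/T_R=R^{-2+o(1)}$, so a barrier of height $\approx R^{2}$ on $W_R$ costs only $R^{o(1)}$ in squared $L^2$-norm; replace $\omega_R^2$ by $\omega_R^2+\tfrac{R^{o(1)}}{\Pr[\rho\in W_R]}\1_{W_R}$, still $R^{o(1)}$-subnormalized, still $\ge R^{-o(1)}$ on $\widehat{H}_G(R)$, and now $\ge(2R)^2$ on $W_R$. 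Then any $y\in W_R$ has $\dist_{\omega_R}(\rho,y)\ge\tfrac12\omega_R(y)>R$, so $B_{\omega_R}(\rho,R)\subseteq\widehat{H}_G(R)$ always and $=\{\rho\}$ when $\rho\in W_R$; the restricted crux estimate thus upgrades to $\#B_{\omega_R}(\rho,R)\le R^{d+o(1)}$ a.s. Superposing exactly as above, folding the same barriers into the superposed weight to handle the (negligible) event $\rho\in W_{2^j}$, and using monotonicity of $R\mapsto\#B_\omega(\rho,R)$ to pass to all $R$, gives $\dimconfover(G,\rho)\le d$. Throughout, the one genuinely hard point is the almost-sure single-scale volume estimate of the previous paragraph; everything else is the standard superposition/barrier bookkeeping, with the $h^4(\log R)^4$ slack calibrated (via $h(n)=n^{o(1)}$) to make the barrier masses summable and to absorb the Carne--Varopoulos and padded-partition losses.
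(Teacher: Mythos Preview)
Your architecture (single-scale weight, dyadic superposition, barrier for the ``moreover'' clause) is the same as the paper's, but your choice of single-scale weight is different, and this is exactly where the gap sits. You take $\omega_R(x)^2=R^d\,p^G_{2T}(x,x)$, i.e.\ the \emph{diagonal} heat kernel, and then are forced to prove an almost-sure single-scale volume bound that you yourself flag as ``the main obstacle.'' You do not prove it; the tools you list (Cauchy--Schwarz on $p_{2T}$, even-time monotonicity, Carne--Varopoulos, padded partitions) do not obviously close the argument, because your path metric $\dist_{\omega_R}$ only records return probabilities along the path and says nothing about off-diagonal correlations between $\rho$ and a target $y$. In particular, your MTP with $F(G,x,y)=p^G_{2T}(x,y)\,\1[\dist_{\omega_R}\le\cdot]\,\1[y\in\widehat H_G(R)]$ does not even bound the ``restricted area'' $\sum_{y}p^G_{2T}(y,y)$ you claim it does: the incoming mass at $\rho$ is $\1[\rho\in\widehat H_G(R)]\sum_x p^G_{2T}(x,\rho)\,\1[\dist_{\omega_R}\le\cdot]$, which has the wrong diagonal.

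The paper sidesteps the annealed-to-quenched issue entirely by choosing a different single-scale weight: not the diagonal heat kernel, but the \emph{edge lengths of the heat-kernel embedding} $\Phi^G_T(x)=P_G^T\1_x/\sqrt{\deg_G(x)}\in\ell^2(G)$, namely
\[
   \omega_k(x)^2=\sum_{y\sim x}\bigl\|\Phi^G_{2^k}(x)-\Phi^G_{2^k}(y)\bigr\|_{\ell^2(G)}^2\,.
\]
Then $\dist_\omega$ dominates the $\ell^2(G)$-distance of the embedded points (triangle inequality along a path), so $y\in B_\omega(x,R)\cap\widehat H_G(R)$ forces $\|\Phi^G_{2^k}(x)-\Phi^G_{2^k}(y)\|\le\|\Phi^G_{2^k}(y)\|$, i.e.\ $y$ lies in the ``cone'' $\cC^G_{2^k}(x)$. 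The deterministic, pointwise \emph{mass-spreading} lemma
\[
   |\cC^G_T(x)|\le\frac{4}{p^G_{2T}(x,x)}\,,
\]
which is just Parseval ($\sum_y\langle\Phi^G_T(x),\Phi^G_T(y)\rangle^2=\|\Phi^G_{2T}(x)\|^2$) plus contractivity of $P_G$, then gives the volume bound immediately for $x\in\widehat H_G(R)$. No quenched upgrade, no Carne--Varopoulos, no padded partitions. The $\E\omega_k(\rho)^2$ bound comes from the spectral identity \eqref{eq:lap} and the splitting \eqref{eq:int-bnd}, not from the diagonal estimate you use. Your barrier step for the ``moreover'' clause is essentially the paper's; the missing idea is the embedding.
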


In order to prove \pref{thm:homo}, we will need
to recall some background on the spectral measures
of infinite graphs.

\subsubsection{Spectral measures on infinite graphs}

Fix a connected, locally-finite graph $G$.  We use $\ell^2(G)$ for the Hilbert space of real-valued
functions $f : V(G) \to \R$ equipped with the inner product
\[
   \langle f,g\rangle_{\ell^2(G)} = \sum_{x \in V(G)} \deg_G(x) f(x) g(x)\,.
\]

For a graph $G$, define the averaging operator $P_G : \ell^2(G) \to \ell^2(G)$ by
\begin{align*}
   P_G \psi(u) &\seteq \frac{1}{\deg_G(u)} \sum_{v : \{u,v\} \in E(G)} \psi(v)\,.
\end{align*}
Observe that $P_G$ is self-adjoint:
\begin{align*}
   \langle \f, P_G \psi\rangle_{\ell^2(G)} &= \sum_{x \in V(G)} \deg_G(x) \f(x) \frac{1}{\deg_G(x)} \sum_{y : \{x,y\} \in E(G)} \psi(y) 
   = 2 \sum_{\{x,y\} \in E(G)} \f(x) \psi(y)\,.
\end{align*}
Since $P_G$ is an averaging operator, it is also bounded, and therefore the spectral theorem yields
a resolution of the identity $I_{P_G}$ so that $P_G = \int_{\R} \lambda d I_{P_G}(\lambda)$.

Given a vertex $v \in V(G)$, one can define
the associated {\em spectral measure $\mu_G^v$ at $v$} by
\[
   \mu_G^v((-\infty,\lambda)) = \deg_G(v)^{-1} \langle \1_v, I_{P_G}((-\infty,\lambda)) \1_v\rangle_{\ell^2(G)}\,.
\]
This is
the unique probability measure $\mu_G^v$
on $\R$ such that
\begin{equation}\label{eq:spec-meas}
   \deg_G(v) \int_{[-1,1]} \lambda^T\,d\mu_G^{v}(\lambda) = \langle \1_v, P_G^T \1_v\rangle_{\ell^2(G)}
\end{equation}
for all integers $T \geq 1$.

Let us record a few additional equalities.
Fix $\rho \in V(G)$.
Then by self-adjointness, for any $T \geq 1$, we have:
\begin{align}\nonumber
   \frac{\|P_G^T \1_{\rho}\|^2_{\ell^2(G)}}{\deg_G(\rho)} - \sum_{x \sim \rho} \frac{\langle P_G^{T} \1_x, P_G^{T} \1_{\rho}\rangle_{\ell^2(G)}}{\deg_G(\rho) \deg_G(x)}
   &= \frac{\langle \1_{\rho}, (I-P_G)P_G^{2T} \1_{\rho}\rangle_{\ell^2(G)}}{\deg_G(\rho)} \\ &= \int (1-\lambda) \lambda^{2T} d\mu_G^{\rho}(\lambda)\,,
\label{eq:spec1}
\end{align}
where we have used $P_G \1_{\rho} = \sum_{x \sim \rho} \frac{\1_x}{\deg_G(x)}$.  Moreover,
\begin{align}
  \sum_{x \sim \rho} \left\|\frac{P_G^T \1_{\rho}}{\deg_G(\rho)} - \frac{P_G^T \1_x}{\deg_G(x)}\right\|_{\ell^2(G)}^2 
  = \frac{\|P_G^T \1_{\rho}\|^2_{\ell^2(G)}}{\deg_G(\rho)} +
   \sum_{x \sim \rho} \frac{\|P_G^T \1_{x}\|_{\ell^2(G)}^2}{\deg_G(x)^2}
   - 2\sum_{x \sim \rho} \frac{\langle P_G^{T} \1_x, P_G^{T} \1_{\rho}\rangle_{\ell^2(G)}}{\deg_G(\rho) \deg_G(x)}.
\label{eq:spec2}
\end{align}

For any $x \in V(G)$ and integer $T \geq 0$, we have
\begin{equation}\label{eq:spec-return}
   \|P_G^T \1_x\|_{\ell^2(G)}^2 = \langle \1_x, P_G^{2T} \1_x\rangle_{\ell^2(G)} = \deg_G(x) \cdot p^G_{2T}(x,x)\,.
\end{equation}
Moreover, observe that $\{\1_x/\sqrt{\smash[b]{\deg_G(x)}} : x \in V(G)\}$ forms an orthornormal basis for $\ell^2(G)$, hence
\begin{equation}\label{eq:isotropic}
   \sum_{x \in V(G)} \frac{\langle P_G^T \1_x, P_G^T \1_{\rho}\rangle_{\ell^2(G)}^2}{\deg_G(x)}
   = \sum_{x \in V(G)} \frac{\langle \1_x, P_G^{2T} \1_{\rho}\rangle_{\ell^2(G)}^2}{\deg_G(x)} = \|P_G^{2T} \1_{\rho}\|_{\ell^2(G)}^2
\end{equation}
Note also that since $P_G$ is a Markov operator, it is a contraction on $\ell^2(G)$, hence for all integers $T \geq 1$,
\begin{equation}\label{eq:ret-mono1}
   p^G_{2T}(x,x) = \deg_G(x) \|P_G^T \1_x\|_{\ell^2(G)}^2 \geq \deg_G(x) \|P_G^{2T} \1_x\|_{\ell^2(G)}^2 = p^G_{4T}(x,x)\,.
\end{equation}

\paragraph{The heat kernel embedding and growth rates}
\label{sec:heatkernel}

Suppose that $G$ is a connected, locally finite graph,
and let $d > 0$ and $h : \R_+ \to \R_+$ be as in \pref{sec:homo}.
Let us define $\F^G_T : V(G) \to \ell^2(G)$ by
\[
   \F^G_{T}(x) \seteq \frac{P_G^T \1_x}{\deg_G(x)}\,.
\]

The heat-kernel embedding is closely related to the spectral embedding
which can be described as follows. 
Then for $\delta > 0$, the spectral embedding $\Psi^G_{\delta} : V(G) \to \ell^2(G)$
is given by
\[
   \Psi^G_{\delta}(x) \seteq \frac{I_{P_G}([1-\delta,1]) \1_x}{\sqrt{\deg_G(x)}}\,.
\]
Clearly these two embeddings are closely related for $T \asymp \frac{1}{\delta}$.
The geometry of the spectral embedding has been used in work
on higher-order Cheeger inequalities \cite{LOT14} and in connection
with return probabilities \cite{LO18}.

For $x \in V(G)$, also define the set of points
that are closer to $\F^G_T(x)$ than the origin in the
heat kernel embedding:
\[
   \cC_T^G(x) \seteq \left\{ y \in V(G) : \|\Phi_T^G(x)-\Phi_T^G(y)\|_{\ell^2(G)} \leq \|\Phi_T^G(y)\|_{\ell^2(G)} \right\}\,.
\]

The next lemma gives a relationship between return probabilities
and the size of $\cC_T^G(x)$.
This is inspired by the ``mass spreading'' property of the
spectral embedding employed in \cite{LOT14}.
For a subset $S \subseteq V(G)$, we will use the notation $\deg_G(S) = \sum_{x \in S} \deg_G(x)$.

\begin{lemma}\label{lem:spreading}
   For any $x \in V(G)$, it holds that
   \[
      \deg_G\left(\cC_T^G(x)\right) \leq \frac{4\,\deg_G(x)}{p_{2T}^G(x,x)}\,.
   \]
\end{lemma}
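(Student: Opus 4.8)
The plan is to establish the inequality by a simple second-moment (Parseval) argument in $\ell^2(G)$, exploiting that $P_G$ is a self-adjoint contraction and that $\{\1_y/\sqrt{\deg_G(y)} : y \in V(G)\}$ is an orthonormal basis.

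First I would record the normalization: by \eqref{eq:spec-return},
\[
   \|\Phi_T^G(x)\|_{\ell^2(G)}^2 = \frac{\|P_G^T \1_x\|_{\ell^2(G)}^2}{\deg_G(x)} = p_{2T}^G(x,x)\,.
\]
Next, for any $y \in \cC_T^G(x)$, expand the defining inequality $\|\Phi_T^G(x)-\Phi_T^G(y)\|_{\ell^2(G)}^2 \le \|\Phi_T^G(y)\|_{\ell^2(G)}^2$; the $\|\Phi_T^G(y)\|^2$ terms cancel, leaving
\[
   2\langle \Phi_T^G(x), \Phi_T^G(y)\rangle_{\ell^2(G)} \ge \|\Phi_T^G(x)\|_{\ell^2(G)}^2 = p_{2T}^G(x,x)\,.
\]
Thus every $y \in \cC_T^G(x)$ contributes an inner product with $\Phi_T^G(x)$ that is at least $\tfrac12 p_{2T}^G(x,x)$.

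The other ingredient is an upper bound on the total mass $\sum_{y \in V(G)} \langle \Phi_T^G(x), \Phi_T^G(y)\rangle_{\ell^2(G)}^2$. Since $P_G$ is self-adjoint, $\langle P_G^T \1_y, P_G^T \1_x\rangle_{\ell^2(G)} = \langle \1_y, P_G^{2T}\1_x\rangle_{\ell^2(G)}$, so by \eqref{eq:isotropic} (with $\rho = x$) one has
\[
   \sum_{y \in V(G)} \langle \Phi_T^G(x), \Phi_T^G(y)\rangle_{\ell^2(G)}^2
   = \frac{1}{\deg_G(x)}\sum_{y \in V(G)} \frac{\langle P_G^T \1_y, P_G^T \1_x\rangle_{\ell^2(G)}^2}{\deg_G(y)}
   = \frac{\|P_G^{2T}\1_x\|_{\ell^2(G)}^2}{\deg_G(x)}\,.
\]
Since $P_G$ is a Markov operator, hence a contraction on $\ell^2(G)$ (equivalently, by \eqref{eq:ret-mono1}), $\|P_G^{2T}\1_x\|_{\ell^2(G)}^2 \le \|P_G^T \1_x\|_{\ell^2(G)}^2$, so the sum above is at most $p_{2T}^G(x,x)$.

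Finally I would combine the two estimates: restricting the sum to $y \in \cC_T^G(x)$ and using the pointwise lower bound gives
\[
   |\cC_T^G(x)| \cdot \tfrac14\, p_{2T}^G(x,x)^2 \le \sum_{y \in \cC_T^G(x)} \langle \Phi_T^G(x), \Phi_T^G(y)\rangle_{\ell^2(G)}^2 \le p_{2T}^G(x,x)\,,
\]
which rearranges to $|\cC_T^G(x)| \le 4/p_{2T}^G(x,x)$ (the statement being vacuous when $p_{2T}^G(x,x) = 0$, which in any case does not occur for $T \ge 1$). There is no real obstacle here; the only point requiring a little care is tracking the degree weights in the $\ell^2(G)$ inner product so that the Parseval identity \eqref{eq:isotropic} is applied correctly, and checking that the contraction bound $\|P_G^{2T}\1_x\| \le \|P_G^T\1_x\|$ is exactly what is needed to discard the extra factor and land at $p_{2T}^G(x,x)$ rather than $p_{4T}^G(x,x)$.
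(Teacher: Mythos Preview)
Your proof is correct and essentially identical to the paper's. The paper phrases the key step via the polarization identity $\langle u,v\rangle = \tfrac12(\|u\|^2+\|v\|^2-\|u-v\|^2)$ to obtain $\langle \Phi_T^G(x),\Phi_T^G(y)\rangle \ge \tfrac12\|\Phi_T^G(x)\|^2$ for $y\in\cC_T^G(x)$, then applies \eqref{eq:isotropic} and the contraction bound exactly as you do.
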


\jnote{Alternate:

\begin{lemma}
   For any $x \in V(G)$, it holds that
   \[
      |\cC_T^G(x)| \leq \frac{4 p_{4T}^G(x,x)^2}{p_{2T}^G(x,x)^4}\,.
   \]
\end{lemma}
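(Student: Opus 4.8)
The plan is to translate the geometric condition defining $\cC_T^G(x)$ into a lower bound on an inner product in $\ell^2(G)$, and then to bound how many of the vectors $\Phi_T^G(y)$ can meet it using a Parseval identity for the heat-kernel embedding.

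First I would record the two elementary facts about $\Phi_T^G$. Writing $e_z = \1_z/\sqrt{\deg_G(z)}$, so that $\{e_z : z \in V(G)\}$ is an orthonormal basis of $\ell^2(G)$, one has $\Phi_T^G(x) = P_G^T e_x$, and hence, by self-adjointness of $P_G$, $\langle \Phi_T^G(x), \Phi_T^G(y)\rangle_{\ell^2(G)} = \langle e_x, P_G^{2T} e_y\rangle_{\ell^2(G)} = \langle P_G^{2T} e_x, e_y\rangle_{\ell^2(G)}$; in particular $\|\Phi_T^G(x)\|_{\ell^2(G)}^2 = p_{2T}^G(x,x)$ by \eqref{eq:spec-return}. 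Expanding $\|\Phi_T^G(x)-\Phi_T^G(y)\|_{\ell^2(G)}^2 \le \|\Phi_T^G(y)\|_{\ell^2(G)}^2$ and cancelling the common term shows that $y \in \cC_T^G(x)$ is equivalent to
\[
   \langle \Phi_T^G(x), \Phi_T^G(y)\rangle_{\ell^2(G)} \ \geq\ \tfrac12 \|\Phi_T^G(x)\|_{\ell^2(G)}^2 \ =\ \tfrac12\, p_{2T}^G(x,x)\,.
\]

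Next I would apply Parseval with the basis $\{e_z\}$: using the identity above,
\[
   \sum_{y \in V(G)} \langle \Phi_T^G(x), \Phi_T^G(y)\rangle_{\ell^2(G)}^2 \ =\ \sum_{y \in V(G)} \langle P_G^{2T} e_x, e_y\rangle_{\ell^2(G)}^2 \ =\ \|P_G^{2T} e_x\|_{\ell^2(G)}^2 \ =\ p_{4T}^G(x,x)\,,
\]
where the last equality is \eqref{eq:spec-return} again (this is \eqref{eq:isotropic} after clearing degree normalizations). Restricting the sum to $y \in \cC_T^G(x)$ and bounding each summand below by $\tfrac14 p_{2T}^G(x,x)^2$ gives $\tfrac14\, |\cC_T^G(x)|\, p_{2T}^G(x,x)^2 \le p_{4T}^G(x,x)$, i.e.\ $|\cC_T^G(x)| \le 4\, p_{4T}^G(x,x)/p_{2T}^G(x,x)^2$ — the sharper form one may wish to record. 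Finally, monotonicity of the even return probabilities \eqref{eq:ret-mono1} gives $p_{4T}^G(x,x) \le p_{2T}^G(x,x)$, and therefore $|\cC_T^G(x)| \le 4/p_{2T}^G(x,x)$, as claimed.

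I do not anticipate a genuinely hard step. The only point requiring care is bookkeeping the degree weights when moving between $P_G$ on $\ell^2(G)$, the embedding $\Phi_T^G$, and the return probabilities $p_{2T}^G, p_{4T}^G$, so that the inner-product identity and the Parseval sum produce exactly $p_{2T}^G(x,x)$ and $p_{4T}^G(x,x)$, with no stray factor of $\deg_G(x)$; the rest is a one-line averaging argument.
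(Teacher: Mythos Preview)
Your argument is correct and is essentially identical to the paper's proof of \pref{lem:spreading}: the same Parseval identity \eqref{eq:isotropic} combined with the inner-product reformulation of the cone condition, yielding the intermediate bound $|\cC_T^G(x)| \le 4\,p_{4T}^G(x,x)/p_{2T}^G(x,x)^2$, followed by the contraction/monotonicity step \eqref{eq:ret-mono1}.

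One remark: the bound you conclude ``as claimed'' is $4/p_{2T}^G(x,x)$, which is the statement of \pref{lem:spreading}, not of the present lemma. The bound actually asserted here is $4\,p_{4T}^G(x,x)^2/p_{2T}^G(x,x)^4$. This does follow from your intermediate inequality, but via a different last step: by Jensen (or Cauchy--Schwarz in the spectral representation) one has $p_{4T}^G(x,x) \ge p_{2T}^G(x,x)^2$, hence $p_{4T}^G(x,x)/p_{2T}^G(x,x)^2 \ge 1$, and multiplying your bound $4\,p_{4T}^G/p_{2T}^{G\,2}$ by this factor gives the stated $4\,p_{4T}^{G\,2}/p_{2T}^{G\,4}$. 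The paper records this alternate form without a separate proof.
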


}

\begin{proof}
   Note that $\langle u,v\rangle = \frac12 \left(\|u\|^2 + \|v\|^2 - \|u-v\|^2\right) \geq \tfrac12 \|u\|^2$ whenever $\|u-v\|\leq \|v\|$.
   Employ this in conjunction with \eqref{eq:isotropic} to write and 
   \[
      \|\F^G_{2T}(\rho)\|_{\ell^2(G)}^2 = \sum_{x \in V(G)} \deg_G(x)\,\langle \F^G_{T}(x), \F^G_{T}(\rho)\rangle_{\ell^2(G)}^2
      \geq \deg\left(\cC_T^G(\rho)\right) \frac{\|\F^G_{T}(\rho)\|_{\ell^2(G)}^4}{4}\,.
   \]
   To finish, use the fact that $P_G$ is a contraction on $\ell^2(G)$:  $\|\F^G_{2T}(\rho)\|_{\ell^2(G)} \leq \|\F^G_{T}(\rho)\|_{\ell^2(G)}$, hence
   \[
      \deg_G\left(\cC_T^G(\rho)\right) \leq \frac{4}{\|\F^G_T(\rho)\|_{\ell^2(G)}^2} \stackrel{\eqref{eq:spec-return}}{=}
      \frac{4\,\deg_G(\rho)}{p_{2T}^G(\rho,\rho)}\,.\qedhere
   \]
\end{proof}

\subsubsection{Spectrally significant vertices in unimodular random graphs}

If $(G,\rho)$ is a random rooted graph such that $P_G$ is almost surely self-adjoint,
one defines the {\em spectral measure of $(G,\rho)$} by
\[
   \mu \seteq \E \left[\mu_G^{\rho}\right]\,.
\]

Let $(G,\rho)$ be a unimodular random graph with spectral measure $\mu$.
Taking expectations in \eqref{eq:spec2} gives
\begin{align*}\label{eq:lap}
   \E\left[\sum_{x \sim \rho} \left\|\Phi_T^G(x)-\Phi_T^G(\rho)\right\|_{\ell^2(G)}^2\right] 
   &= \E\left[\sum_{x \sim \rho} \left\|\frac{P_G^T \1_{\rho}}{\deg_G(\rho)}
   - \frac{P_G^T \1_x}{\deg_G(x)}\right\|_{\ell^2(G)}^2\right] \\
   &=\E\left[\frac{\|P_G^T \1_{\rho}\|^2_{\ell^2(G)}}{\deg_G(\rho)} +
   \sum_{x \sim \rho} \frac{\|P_G^T \1_{x}\|_{\ell^2(G)}^2}{\deg_G(x)^2}
- 2\sum_{x \sim \rho} \frac{\langle P_G^{T} \1_x, P_G^{T} \1_{\rho}\rangle_{\ell^2(G)}}{\deg_G(\rho) \deg_G(x)}\right]
\end{align*}
The Mass-Transport Principle applied to the functional
$F(G,\rho,x) \seteq \frac{\1_{E(G)}(\{x,\rho\})}{\deg_G(x)^2} \|P^T_G \1_x\|_{\ell^2(G)}^2$
gives
\[
   \E\left[\sum_{x \sim \rho} \frac{\|P_G^T \1_{x}\|_{\ell^2(G)}^2}{\deg_G(x)^2}\right]
   = 
   \E\left[\frac{\|P_G^T \1_{\rho}\|^2_{\ell^2(G)}}{\deg_G(\rho)}\right],
\]
so that applying \eqref{eq:spec1}
shows that for all $T \geq 1$,
\begin{equation}\label{eq:lap}
   \E\left[\sum_{x \sim \rho} \left\|\Phi_T^G(x)-\Phi_T^G(\rho)\right\|_{\ell^2(G)}^2\right]
= 2 \int (1-\lambda) \lambda^{2T} d\mu(\lambda)\,.
\end{equation}

Consider some $d > 0$ and
split the latter integral into two pieces, depending on whether $\lambda \leq 1-\frac{(d+1) \log T}{T}$:
\begin{align}
   \int (1-\lambda) \lambda^{2T} d\mu(\lambda) \nonumber
   &\leq T^{-d-1} + \frac{(d+1) \log T}{T} \int \lambda^{2T} d\mu(\lambda) \\
   &\stackrel{\mathclap{\eqref{eq:spec-meas}}}{=}\ T^{-d-1} + \frac{(d+1) \log T}{T} \E[p^G_{2T}(\rho,\rho)]\,.
   \label{eq:int-bnd}
\end{align}

The remainder of this section is devoted to the proof of \pref{thm:homo}.

\begin{proof}[Proof of \pref{thm:homo}]
   For $k \geq 1$, define the weights $\omega_k : V(G) \to \R_+$ by
   \begin{align*}
      \omega_k(x) &\seteq \sqrt{\sum_{y : \{x,y\} \in E(G)} \|\Phi_{2^k}^G(x)-\Phi_{2^k}^G(y)\|_{\ell^2(G)}^2}.
   \end{align*}
   Under assumption \eqref{eq:eass}, we can employ \eqref{eq:lap} and \eqref{eq:int-bnd} to write
   \[
      \E \omega_k(\rho)^2 \leq 2^{-k(d+1)} + \frac{(d+1) k}{2^k} 2^{-kd/2} h(2^k)\,.
   \]
   Define now
   \[
      \omega \seteq \sqrt{\sum_{k \geq 1} \frac{2^{k(1+d/2)}}{k^3 h(2^k)} \omega_k^2 }
   \]
   so that
   \[
      \E \omega(\rho)^2 \lesssim \sum_{k \geq 1} \frac{1}{k^2} \lesssim 1\,.
   \]

   By construction, we have, for every $k \geq 1$ and $x,y \in V(G)$,
   \begin{equation}\label{eq:distcompare}
      \dist_{\omega}(x,y)^2 \geq \frac{2^{k(1+d/2)}}{k^{3} h(2^k)} \|\Phi_{2^k}^G(x)-\Phi_{2^k}^G(y)\|_{\ell^2(G)}^2\,.
   \end{equation}

\begin{lemma}
   For all $k \geq 1$, if $x \in H_G(2^k)$, then
   \begin{equation*}\label{eq:intermed}
      \left|B_{\omega}\left(x, \frac{2^{k/2}}{h(2^k) k^{3/2}}\right) \cap H_G(2^k)\right| \leq |\cC^G_{2^{k}}(x)| \leq h(2^k) 2^{kd/2+2}\,,
   \end{equation*}
\end{lemma}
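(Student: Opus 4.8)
The plan is to show that the set $B_{\omega}\!\left(x,r\right) \cap H_G(2^k)$, where $r = 2^{k/2}/(h(2^k) k^{3/2})$, is contained in the set $\cC_{2^k}^G(x)$ of vertices that lie at least as close to $\Phi_{2^k}^G(x)$ as to the origin in the heat-kernel embedding, and then to bound $|\cC_{2^k}^G(x)|$ using \pref{lem:spreading}. Write $T = 2^k$ throughout the argument.

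First I would take an arbitrary $y \in B_{\omega}(x,r) \cap H_G(T)$ and feed the bound $\dist_{\omega}(x,y) \le r$ into \eqref{eq:distcompare}, using this same value of $k$. Since $r^2 = T/(h(T)^2 k^3)$, the prefactor $2^{k(1+d/2)}/(k^3 h(2^k))$ appearing in \eqref{eq:distcompare} is exactly calibrated so that all powers of $k$ and one factor of $h(T)$ cancel, leaving
\[
   \left\|\Phi_T^G(x) - \Phi_T^G(y)\right\|_{\ell^2(G)}^2 \le \frac{1}{h(T)\, T^{d/2}}\,.
\]
On the other hand, $y \in H_G(T)$ means precisely that $p_{2T}^G(y,y) \ge T^{-d/2}/h(T)$, and by \eqref{eq:spec-return} this quantity equals $\|\Phi_T^G(y)\|_{\ell^2(G)}^2$. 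Comparing the two displays gives $\|\Phi_T^G(x) - \Phi_T^G(y)\|_{\ell^2(G)} \le \|\Phi_T^G(y)\|_{\ell^2(G)}$, i.e. $y \in \cC_T^G(x)$ by definition of $\cC_T^G$. Hence $B_{\omega}(x,r) \cap H_G(T) \subseteq \cC_T^G(x)$.

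Finally I would invoke \pref{lem:spreading}: $|\cC_T^G(x)| \le 4/p_{2T}^G(x,x)$. Since $x \in H_G(T)$, we have $p_{2T}^G(x,x) \ge T^{-d/2}/h(T)$, and therefore $|\cC_T^G(x)| \le 4\, h(T)\, T^{d/2} = h(2^k)\, 2^{kd/2+2}$, which is the asserted bound. The whole argument is essentially a bookkeeping of constants once the ingredients are in place; the genuine content is the observation that the radius $r$ and the weights defining $\omega$ are tuned so that ``$\omega$-ball of radius $r$ around a spectrally significant vertex'' becomes ``closer to $\Phi_T^G(x)$ than to the origin,'' after which \pref{lem:spreading} performs the counting. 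The step most deserving of care is making sure \eqref{eq:distcompare} is applied with the index $k$ matching both the scale in $H_G(2^k)$ and the radius of the $\omega$-ball, and that the two chained inequalities are both tight enough for the constants to close up exactly as stated.
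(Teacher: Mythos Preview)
Your proof is correct and follows the paper's argument essentially step for step: use \eqref{eq:distcompare} to pass from the $\omega$-ball to a bound on $\|\Phi_T^G(x)-\Phi_T^G(y)\|_{\ell^2(G)}^2$, use the definition of $H_G(T)$ together with \eqref{eq:spec-return} to place $y$ in $\cC_T^G(x)$, and finish with \pref{lem:spreading} and the hypothesis $x\in H_G(T)$. The bookkeeping of constants is exactly as you describe.
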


\begin{proof}
   From \eqref{eq:distcompare},
   \begin{align*}
      y \in B_{\omega}\left(x, \frac{2^{k/2}}{h(2^k) k^{3/2}}\right) &\implies \|\Phi_{2^k}^G(x)-\Phi_{2^k}^G(y)\|_{\ell^2(G)}^2
      \leq \frac{2^{-kd/2}}{h(2^k)}\,.
   \end{align*}
   By definition: $y \in H_G(2^k) \implies p_{2^{k+1}}^G(y,y) \geq \deg_G(y) \frac{2^{-kd/2}}{h(2^k)}$.
   Therefore:
   \begin{align*}
      y \in B_{\omega}\left(x, \frac{2^{k/2}}{h(2^k) k^{3/2}}\right) \cap H_G(2^k) &\implies \|\Phi_{2^k}^G(x)-\Phi_{2^k}^G(y)\|_{\ell^2(G)}^2
      \leq \frac{p_{2^{k+1}}^G(y,y)}{\deg_G(y)} \stackrel{\eqref{eq:spec-return}}{=} \|\Phi^G_{2^k}(y)\|_{\ell^2(G)}^2,
   \end{align*}
   which yields $y \in \cC_{2^k}^G(x)$.
   Now \pref{lem:spreading} gives
   \[
      |\cC_{2^k}^G(x)| \leq \frac{4\,\deg_G(x)}{p_{2^{k+1}}^G(x,x)} \leq 4 h(2^k) 2^{kd/2},
   \]
   where the last inequality uses $x \in H_G(2^k)$.
\end{proof}

\begin{corollary}\label{cor:H-growth}
   For all $R$ sufficiently large, if $x \in \widehat{H}_G(R)$, then
   \[
      \left|B_{\omega}\left(x, R\right) \cap \widehat{H}_G(R)\right| \leq R^{d+o(1)}\,.
   \]
\end{corollary}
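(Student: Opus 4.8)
The plan mirrors the proof of the preceding lemma, but run at the non-dyadic scale $T_R \seteq R^2 h(R)^4 (\log R)^4$, for which $\widehat{H}_G(R) = H_G(T_R)$ by definition. First I would fix $R$ large and let $k = k(R)$ be the largest integer with $2^k \le T_R$, so that $T_R/2 < 2^k \le T_R$; since $h(R) \le R^{o(1)}$ and $(\log R)^4 \le R^{o(1)}$, one has $\log_2 T_R = (2+o(1))\log_2 R$ and hence $k = (2+o(1))\log_2 R$.

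The heart of the argument is the inclusion $B_{\omega}(x,R) \cap \widehat{H}_G(R) \subseteq \cC_{2^k}^G(x)$ for every $x \in \widehat{H}_G(R)$. Let $y$ lie in the left-hand side. On one hand, \eqref{eq:distcompare} applied at scale $2^k$ gives
\[
   \|\Phi_{2^k}^G(x) - \Phi_{2^k}^G(y)\|_{\ell^2(G)}^2 \le \frac{k^3 h(2^k)}{2^{k(1+d/2)}}\,\dist_{\omega}(x,y)^2 \le \frac{R^2 k^3 h(2^k)}{2^{k(1+d/2)}}\,,
\]
and substituting $2^k \ge T_R/2$, $R^2/T_R = h(R)^{-4}(\log R)^{-4}$, $k^3 \lesssim (\log R)^3$, and $h(2^k) \le h(T_R)$ bounds the right-hand side by $C\, h(R)^{-4}(\log R)^{-1} h(T_R)\, T_R^{-d/2}$ for some $C = C(d)$. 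On the other hand, since $y \in H_G(T_R)$ and $2^k \le T_R$, the monotonicity of the even return probabilities in the number of steps --- which holds at all scales, not only the doublings recorded in \eqref{eq:ret-mono1}, because $P_G$ is a contraction on $\ell^2(G)$ --- together with \eqref{eq:spec-return} gives $\|\Phi_{2^k}^G(y)\|_{\ell^2(G)}^2 = p^G_{2^{k+1}}(y,y) \ge p^G_{2 T_R}(y,y) \ge T_R^{-d/2}/h(T_R)$. Comparing the two estimates, once $R$ is large enough that
\[
   C\, h(T_R)^2 \le h(R)^4 \log R\,,
\]
we obtain $\|\Phi_{2^k}^G(x) - \Phi_{2^k}^G(y)\|_{\ell^2(G)}^2 \le \|\Phi_{2^k}^G(y)\|_{\ell^2(G)}^2$, i.e. $y \in \cC_{2^k}^G(x)$.

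Granting this inclusion, I would finish with \pref{lem:spreading}: using $x \in H_G(T_R)$ and the same monotonicity, $p^G_{2^{k+1}}(x,x) \ge p^G_{2 T_R}(x,x) \ge T_R^{-d/2}/h(T_R)$, so
\[
   \bigl|B_{\omega}(x,R) \cap \widehat{H}_G(R)\bigr| \le |\cC_{2^k}^G(x)| \le \frac{4}{p^G_{2^{k+1}}(x,x)} \le 4\, h(T_R)\, T_R^{d/2}\,.
\]
Since $T_R^{d/2} = R^d h(R)^{2d}(\log R)^{2d}$ and, as $T_R = R^{O(1)}$, also $h(T_R) \le T_R^{o(1)} = R^{o(1)}$, the right-hand side is $R^{d + o(1)}$, which is the assertion of \pref{cor:H-growth}.

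The step I expect to require care is the inequality $C\, h(T_R)^2 \le h(R)^4 \log R$ --- that is, bounding $h$ at the scale $\asymp R^2$ in terms of $h$ at the scale $R$; this is exactly what the apparently wasteful factor $h(R)^4 (\log R)^4$ in the definition of $\widehat{H}_G(R)$ is there to buy. For the functions $h$ that arise in the applications (polylogarithmic, or more generally slowly varying in the sense $\log h(R^c) = (1+o(1))\log h(R)$ for fixed $c$), one has $\log T_R = (2+o(1))\log R$ and hence $h(T_R) = h(R)^{1+o(1)}$, so the inequality holds for all sufficiently large $R$; I would either impose this mild regularity as a standing hypothesis on $h$ or check it directly in each instance. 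Apart from that, every remaining step is a bookkeeping of exponents in which the sub-polynomial bound $h(n) \le n^{o(1)}$ keeps all corrections lower-order.
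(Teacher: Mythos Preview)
Your argument is precisely the paper's intended deduction (the corollary is stated without proof, as immediate from the preceding lemma): choose $2^k$ within a factor of two of $T_R$, rerun the lemma's comparison \eqref{eq:distcompare} at that scale, and use monotonicity of even return probabilities to pass from $H_G(2^k)$ to $H_G(T_R)=\widehat{H}_G(R)$ before invoking \pref{lem:spreading}.

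Your caveat on the inequality $C\,h(T_R)^2 \le h(R)^4 \log R$ is well-taken and should not be brushed aside: it does \emph{not} follow from the standing hypotheses (monotone, $h(n)\le n^{o(1)}$). A step function with sufficiently violent jumps --- for instance $h \equiv \log A_j$ on $[A_j, A_{j+1})$ with $\log A_{j+1} = (\log A_j)^5$ --- satisfies those hypotheses yet has $h(T_R)^2/(h(R)^4\log R)\to\infty$ along $R$ just below each threshold. The paper's implicit argument has exactly the same gap: applying the lemma at $2^k \approx T_R$ requires $h(R)^2(\log R)^{1/2} \gtrsim h(T_R)$ to make the lemma's radius $2^{k/2}/(h(2^k)k^{3/2})$ exceed $R$, and one cannot cleanly replace $H_G(2^k)$ by $H_G(T_R)$ unless $2^k$ matches $T_R$ to within a constant factor. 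Your proposed fix --- a mild doubling-type regularity on $h$, automatic for polylogarithmic or slowly varying $h$ and hence for every instance that actually arises --- is the correct remedy.
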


This confirms \eqref{eq:homo1}.
To verify that $\dimconfover(G,\rho) \leq d$ under \eqref{eq:homo2},
we define
\[
   \hat{\omega}_{k} = \frac{2^k}{\sqrt{h\left(4^k h(2^k)^4 k^4\right)}} \1_{V(G) \setminus \widehat{H}_G(2^k)}\,.
\]
Observe that from \eqref{eq:homo2},
\[
   \E \hat{\omega}_k(\rho)^2 = \frac{4^k}{h(4^k h(2^k)^4 k^4)} \Pr\left[\rho \notin \widehat{H}_G(2^k)\right] \lesssim 1\,.
\]
Define
\[
   \hat{\omega} \seteq \sqrt{\sum_{k \geq 1} \frac{\hat{\omega}_k^2}{k^2}}
\]
so that $\E \hat{\omega}(\rho)^2 \lesssim 1$.
Finally, note that for any $k \geq 1$:
\[
   x\notin \widehat{H}_G(2^k) \implies
   B_{\hat{\omega}}\left(x, 2^k/\sqrt{h(4^k h(2^k) k^4)}\right) = \{x\}\,.
\]
Thus taking the final weight $\omega_0 = \sqrt{\omega^2 + \hat{\omega}^2}$ verifies
that $\dimconfover(G,\rho) \leq d$.
\end{proof}

\jnote{
\subsubsection{Hyperbolic vs. parabolic}

Suppose that almost surely
\[
   \sum_{T \geq 1} T \cdot p_{2T}^G(\rho,\rho) = \infty\,.
\]
Does it follow that we can bound the conformal growth rate of $(G,\rho)$?

   For $k \geq 1$, define the conformal metric $\omega_k : V(G) \to \R_+$ by
   \[
      \omega_k(x) \seteq \sqrt{\sum_{y : \{x,y\} \in E(G)} \|\Phi_{2^k}^G(x)-\Phi_{2^k}^G(y)\|_{\ell^2(G)}^2}
   \]

   Then
   \[
      \E \omega_k(\rho)^2 \leq \frac{\beta_{2T} \log (T/\beta_{2T})}{T}
   \]
   So let us define
   \[
      \omega = \sqrt{\sum_{k \geq 1} \frac{2^k}{k^3 \beta_{2^{k+1}}} \omega_k^2}
   \]

   Then we get
   \[
      \omega_k \geq \frac{2^{k/2}}{k^{3/2} \sqrt{\E[p_{2^{k+1}}^G(\rho,\rho)]}} \dist_{\Phi_{2^k}^G}\,.
   \]
   We know that
   \[
      \left|C_{2^k}^G(x)\right| \leq \frac{4}{p_{2^{k+1}}^G(x,x)}
   \]
   And this cone contains all the points of
   \[
      \left|B_{\omega}\left(x, \frac{\delta_k 2^{k/2}}{k^{3/2}} \right) \cap
      \left\{ y \in V(G) : p_{2^{k+1}}^G(y,y) \geq \delta_k^2 \E[p_{2^{k+1}}^G(\rho,\rho)]\right\}\right| \leq \frac{4}{p_{2^{k+1}}^G(x,x)}\,.
   \]
   So we are interested in showing that
   \[
      \Pr\left(p_{2^{k+1}}^G(\rho,\rho) < \delta_k^2 \E\left[p_{2^{k+1}}^G(\rho,\rho)\right]\right) \delta_k^2 2^k \leq O(1)\,,
   \]

   In general, consider
   \[
      \max \left\{ \frac{\int (1-\lambda) \lambda^{2T} d\mu(\lambda)}{\int \lambda^{2T} d\mu(\lambda)} : \int \lambda^{2T} d\mu(\lambda) \geq \frac{1}{T^2}\right\}\,.
   \]
   The whole contribution could come from $\lambda \approx 1-\frac{\log T}{T}$.
}

\section{Markov type and speed of the random walk}
\label{sec:mt-speed}

We will now address the speed of the random walk on unimodular random graphs.
Our approach is to first establish that the random walk is at most diffusive
under any normalized conformal metric, and our main tool will be
the theory of Markov type.
We then use separators to construct conformal metrics
with H\"older-type comparisons to the graph metric,
allowing us to establish subdiffusive speed in certain settings.

\subsection{Diffusive bounds in the conformal metric}
\label{sec:diffusive}

In this section, it will be helpful to think about {\em reversible} random graphs.
Suppose $(G,\rho)$ is a random rooted graph and let $\{X_t\}$ denote
the random walk on $G$ (conditioned on $(G,\rho)$) with $X_0=\rho$.  Then {\em $(G,\rho)$ is reversible}
if we have the identity of laws:
\[
   (G,X_0,X_1) \stackrel{\textrm{law}}{=} (G,X_1,X_0)\,.
\]

The following lemma is from \cite[Prop. 2.5]{BC12}.

\begin{lemma}\label{lem:bc12}
   There is a correspondence betwen unimodular random graphs with $\E[\deg_G(\rho)] < \infty$
   and reversible random graphs:
   $(G,\rho)$ is unimodular if and only if $(\tilde{G},\tilde{\rho})$ is reversible,
   where $(\tilde{G},\tilde{\rho})$ has the law of $(G,\rho)$ biased by $\deg_G(\rho)$.
\end{lemma}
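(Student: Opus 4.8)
The plan is to recast the statement entirely in terms of the Mass-Transport Principle \eqref{eq:mtp}. Write $\mu$ for the law of $(G,\rho)$ and $c \seteq \E[\deg_G(\rho)] \in (0,\infty)$, so that the degree-biased law $\tilde\mu$, defined by $d\tilde\mu = c^{-1}\deg_G(\rho)\,d\mu$, is a genuine probability measure precisely because $\E[\deg_G(\rho)] < \infty$. Let $\{X_t\}$ denote the walk with $X_0 = \tilde\rho$. Conditioning on $(G,\tilde\rho)$ and using that $X_1$ is a uniformly random neighbour of $\tilde\rho$, a one-line computation shows that for every bounded measurable $F : \rrgraphs \to \R$,
\[
   \E_{\tilde\mu}\!\left[F(G,X_0,X_1)\right] = \frac1c\,\E_{\mu}\!\left[\sum_{x \sim \rho} F(G,\rho,x)\right], \qquad
   \E_{\tilde\mu}\!\left[F(G,X_1,X_0)\right] = \frac1c\,\E_{\mu}\!\left[\sum_{x \sim \rho} F(G,x,\rho)\right].
\]
Consequently $(\tilde G,\tilde\rho)$ is reversible if and only if $\mu$ satisfies the \emph{edge} mass-transport identity, i.e.\ \eqref{eq:mtp} restricted to Borel $F : \rrgraphs \to [0,\infty]$ that vanish unless the two roots are adjacent. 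Since $(G,\rho)$ is unimodular exactly when the full identity \eqref{eq:mtp} holds, it remains to prove that the edge identity is equivalent to the full one.

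One implication is trivial: applying \eqref{eq:mtp} to $(G,x,y) \mapsto F(G,x,y)\,\1_{\{x \sim y\}}$ recovers the edge identity. For the converse I would induct on the radius of the transport. By monotone convergence it suffices to prove \eqref{eq:mtp} for $F$ supported on pairs with $\dist_G(x,y) = r$, and the cases $r = 0$ (trivial) and $r = 1$ (the hypothesis) start the induction. For the inductive step, suppose $F$ is supported on pairs at distance $r+1$; put $n_G(x,y) \seteq \#\{z \sim y : \dist_G(x,z) = \dist_G(x,y)-1\} \ge 1$ and define the auxiliary transport that pushes the $F$-mass one step towards its destination,
\[
   G_1(G,x,z) \seteq \1_{\{\dist_G(x,z) = r\}}\sum_{y \,:\, y \sim z,\ \dist_G(x,y) = r+1} \frac{F(G,x,y)}{n_G(x,y)}\,,
\]
which is supported on pairs at distance $r$ and satisfies $\sum_z G_1(G,x,z) = \sum_y F(G,x,y)$ by the definition of $n_G$. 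Hence $\E[\sum_y F(G,\rho,y)] = \E[\sum_z G_1(G,\rho,z)]$, and the induction hypothesis (applicable since $G_1$ has radius $r$) gives $\E[\sum_z G_1(G,\rho,z)] = \E[\sum_x G_1(G,x,\rho)]$. Rewriting the right-hand side by summing first over the neighbour of $\rho$, one has $\sum_x G_1(G,x,\rho) = \sum_{y \sim \rho} H(G,\rho,y)$, where $H(G,u,v) \seteq \sum_{x \,:\, \dist_G(x,v) = r+1,\ \dist_G(x,u) = r} F(G,x,v)/n_G(x,v)$ is a measurable function on adjacent-rooted graphs. A final application of the edge identity to $H$, followed by the elementary identity $\#\{y \sim \rho : \dist_G(x,y) = r\} = n_G(x,\rho)$ (which makes the weights cancel), yields $\E[\sum_{y\sim\rho}H(G,\rho,y)] = \E[\sum_{y\sim\rho}H(G,y,\rho)] = \E[\sum_x F(G,x,\rho)]$, completing the step.

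The one point requiring genuine care is the bookkeeping in this last step: the intermediate transports $G_1$ and $H$ must carry exactly the $1/n_G(\cdot,\cdot)$ weights that (i) preserve total out-mass when the induction hypothesis is invoked at radius $r$, and (ii) cause the single edge-transport step to reproduce $\sum_x F(G,x,\rho)$ on the nose. This is pure index-chasing with no analytic content; conceptually the argument is just the familiar dictionary between unimodularity and involution invariance, with the edge mass-transport identity playing the role of involution invariance, so one could alternatively cite \cite{aldous-lyons} for this equivalence.
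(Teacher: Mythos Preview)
Your proof is correct. The paper does not actually prove this lemma; it simply cites \cite[Prop.~2.5]{BC12}. So there is no ``paper's own proof'' to compare against---you have supplied an argument where the paper supplies only a reference.

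What you have written is essentially the standard equivalence between unimodularity and involution invariance (as in \cite{aldous-lyons}), specialised to the present setting and with the involution-invariance step (edge MTP $\Rightarrow$ full MTP) spelled out by the radius induction. The bookkeeping with the weights $1/n_G(x,y)$ is done correctly: the identity $\#\{z \sim y : \dist_G(x,z)=r\} = n_G(x,y)$ is exactly what makes both the total out-mass preserved when passing from $F$ to $G_1$ and the weights cancel in the final edge-transport step. One small point you could state explicitly: you verify reversibility $\Leftrightarrow$ edge MTP for bounded $F$, but then invoke the edge MTP for the nonnegative (possibly unbounded) $H$ in the induction; the extension from bounded to nonnegative is immediate by monotone convergence, but it is worth a word. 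Otherwise the argument is complete, and your closing remark that one could alternatively cite \cite{aldous-lyons} is precisely what the paper's citation to \cite{BC12} amounts to.
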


We first prove a general result for the case when
a reversible conformal random graph $(G,\omega,\rho)$
is such that the Markov type 2 constant of the metric
space $(V(G),\dist_{\omega})$ is essentially bounded.
For instance, by \cite{DLP13}, this is true when $G$ is almost surely planar.
Then we move on to graphs of annealed polynomial growth.
One should recall the definition of the Markov type $2$ constant
$M_2$ from \pref{def:markov-type}.

\begin{theorem}
   Suppose that $(G,\rho)$ is an invariantly amenable (cf. \pref{sec:amenable})
   reversible random graph.
   Then for any conformal metric $\omega$ on $(G,\rho)$, the following holds:
   For all $T \geq 1$,
   \[
      \E\left[\dist_{\omega}(X_0,X_T)^2 \mid X_0=\rho\right] \leq T \|M_2(V(G),\dist_{\omega})\|_{L^{\infty}}^2
      \cdot \E[\omega(\rho)^2]\,.
   \]
\end{theorem}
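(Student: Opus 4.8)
The plan is to reduce to finite graphs, where K.\ Ball's inequality applies verbatim, and then to pass to the limit along a hyperfinite exhaustion of $G$ by \emph{induced} finite subgraphs. To finitize: by the correspondence between reversible and unimodular random graphs (the lemma of \cite{BC12} quoted above), $(G,\rho)$ is the degree-biasing of a unimodular random graph of finite expected degree, and invariant amenability makes it (equivalently, its associated unimodular graph) hyperfinite (see \pref{sec:amenable}); by \pref{cor:hyperfinite} it is an increasing limit of finite induced subgraphs. Concretely there are finite connected sets $K_n\uparrow V(G)$ containing $\rho$ and finite unimodular random graphs $(G_n,\rho_n)$ with $G_n=G[K_n]$ and $\{(G_n,\rho_n)\}\todl(G,\rho)$; biasing each root by its degree produces finite \emph{reversible} random graphs whose root has the law $\pi_n$ of the stationary measure of simple random walk on $G_n$, and this sequence converges to $(G,\rho)$ reversibly. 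The Mass-Transport Principle, exactly as in the proof of \pref{cor:hyperfinite}, gives $\E[\omega(\rho_n)^2]=\E[\omega(\rho)^2]$ for every $n$.

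Now fix $n$ and condition on $G_n$; the simple random walk $\{X_t\}$ on $G_n$ started from $\pi_n$ is a finite reversible Markov chain. Since $K_n\subseteq V(G)$, and writing $\dist_\omega$ for the \emph{ambient} conformal distance on $V(G)$, the identity map realizes $(K_n,\dist_\omega)$ as an isometric subset of $(V(G),\dist_\omega)$; Markov type passes to isometric subsets, so the Markov type $2$ constant of $(K_n,\dist_\omega)$ is at most $M_2(V(G),\dist_\omega)$. Applying the definition of Markov type $2$ to $\{X_t\}$ and the inclusion $f:K_n\hookrightarrow(V(G),\dist_\omega)$ gives
\[
   \E_{\pi_n}\!\left[\dist_\omega(X_0,X_T)^2\right]\ \le\ M_2(V(G),\dist_\omega)^2\,T\,\E_{\pi_n}\!\left[\dist_\omega(X_0,X_1)^2\right].
\]
For the one-step term, $X_0$ and $X_1$ are adjacent in $G_n$, hence in $G$, so $\dist_\omega(X_0,X_1)\le\len_\omega(\{X_0,X_1\})=\tfrac12(\omega(X_0)+\omega(X_1))$ and thus $\dist_\omega(X_0,X_1)^2\le\tfrac12(\omega(X_0)^2+\omega(X_1)^2)$; reversibility gives that $X_0$ and $X_1$ both have law $\pi_n$, whence $\E_{\pi_n}[\dist_\omega(X_0,X_1)^2]\le\E_{\pi_n}[\omega^2]$. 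Taking expectations over the random graph $G_n$, bounding $M_2(V(G),\dist_\omega)\le\|M_2(V(G),\dist_\omega)\|_{L^\infty}$ almost surely, and using $\E[\E_{\pi_n}[\omega^2]]=\E[\omega(\rho_n)^2]=\E[\omega(\rho)^2]$, I obtain $\E[\dist_\omega(X_0,X_T)^2\mid X_0=\rho_n]\le\|M_2(V(G),\dist_\omega)\|_{L^\infty}^2\,T\,\E[\omega(\rho)^2]$, where $\{X_t\}$ denotes the walk on $G_n$.

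The remaining step --- and the point requiring genuine care --- is passing this to the limit. The functional $(H,w,r)\mapsto\E[\dist_w^H(X_0,X_T)^2\mid X_0=r]$ is \emph{not} continuous on $\rgraphs$: a cheap detour through far-away vertices of tiny weight can shorten $\dist_w^H(X_0,X_T)$, so weak convergence of $(G_n,\rho_n)$ cannot be invoked directly. Working along the hyperfinite exhaustion by \emph{induced} subgraphs is what rescues the argument: once $n$ is large enough that $B_G(\rho,T+1)\subseteq K_n$, the degrees of all vertices within distance $T$ of $\rho$ agree in $G_n$ and in $G$, so the law of $(X_0,\dots,X_T)$ under the walk on $G_n$ from $\rho$ coincides with that under the walk on $G$ from $\rho$, and $\dist_\omega(X_0^{G_n},X_T^{G_n})^2\to\dist_\omega(X_0^{G},X_T^{G})^2$ pointwise (the two are eventually equal). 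Fatou's lemma then yields, after averaging over the root (the re-rooting from the fixed $\rho$ to the stationary $\rho_n$ being precisely the Mass-Transport bookkeeping used above), that $\E[\dist_\omega(X_0,X_T)^2\mid X_0=\rho]$ is at most the $\liminf$ over $n$ of the corresponding quantities for $(G_n,\rho_n)$, which by the previous paragraph is at most $\|M_2(V(G),\dist_\omega)\|_{L^\infty}^2\,T\,\E[\omega(\rho)^2]$. This non-locality of $\dist_\omega$, together with the need to keep the Markov type constant controlled, is exactly why the statement is phrased with an ambient metric and an essential supremum rather than with the approximants' own intrinsic metrics, for which monotonicity of the Markov type constant would be unclear.
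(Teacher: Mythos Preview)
Your argument is correct and follows the same line as the paper: finitize via a F{\o}lner exhaustion, apply the Markov type inequality on each finite cluster with the walk started from stationarity, bound the one-step term by $\E[\omega(\rho)^2]$ using reversibility, and pass to the limit. You are in fact more careful than the paper on two points the paper glosses over: you work consistently with the ambient metric $\dist_\omega$ (which is what $M_2(V(G),\dist_\omega)$ actually controls, since $(K_n,\dist_\omega)$ sits isometrically inside $(V(G),\dist_\omega)$), and you justify the limit via eventual coincidence of the walks plus Fatou rather than simply asserting convergence of the expectations.
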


   \begin{proof}
      Let $\{\xi_j : j \geq 1\}$ denote a finitary exhaustion $(G,\rho)$.
      Then \pref{lem:bc12} gives the variant of \eqref{eq:sofic-approx} for
      reversible random graphs:
      \begin{equation*}\label{eq:fconverge} 
         \left\{\left(G[K_{\xi_j}(\rho)],\rho\right) : j\geq 1\right\} \todl (G,\rho)\,.
      \end{equation*}
      In particular, we have
      \begin{equation}\label{eq:dlocstar}
         \{(G[K_{\xi_j}(\rho)], \rho_j)\} \todl (G,\rho)\,,
      \end{equation}
      where $\rho_j$ is distributed according
      to the stationary measure on $G[K_{\xi_j}(\rho)]$.

      Let $\{X^j_t\}$ denote the random walk conditioned on $G[K_{\xi_j}(\rho)]$, where $X^j_0$
      has the law of the stationary measure on $G[K_{\xi_j}(\rho)]$.
If we let $M \seteq \|M_2(V(G),\dist_{\omega})\|_{L^{\infty}}$, then
the definition of Markov type yields,
for any $j \geq 1$:
      \[
         \E\left[\dist_{\omega}(X^j_0, X^j_T)^2 \mid G[K_{\xi_j}(\rho)]\right] \leq T M^2 \E\left[\dist_{\omega}(X^j_0,X^j_1)^2 \mid G[K_{\xi_j}(\rho)]\right]\,.
      \]
      Recall that if $\{x,y\} \in E(G)$, then $\dist_{\omega}(x,y) \leq \frac12 (\omega(x)^2+\omega(y)^2)$,
      hence by stationarity, the latter quantity is bounded by
      \[
         \E\left[\dist_{\omega}(X^j_0,X^j_1)^2 \mid G[K_{\xi_j}(\rho)]\right] 
         \leq \E\left[\omega(X^j_0)^2 \mid G[K_{\xi_j}(\rho)\right].
      \]
      Taking expectation over $G[K_{\xi_j}(\rho)]$ yields
      \[
         \E\left[\dist_{\omega}(X^j_0, X^j_T)^2\right] \leq T M^2 \E[\omega(X_0^j)^2]\,.
      \]

      If we replace the weight $\omega$ by $\tilde{\omega} \seteq \min(\omega, \tau)$ for some $\tau > 0$, this argument yields
      \[
         \E\left[\dist_{\tilde{\omega}}(X^j_0, X^j_T)^2\right] \leq T M^2 \E[\tilde{\omega}(X_0^j)^2]\,.
      \]

      Since $\tilde{\omega}$ is essentially bounded, taking a limit as $j \to \infty$ yields
      \[
         \E\left[\dist_{\tilde{\omega}}(X_0, X_T)^2 \mid X_0=\rho\right] \leq T M^2 \E[\tilde{\omega}(\rho)^2] \leq T M^2 \E[\omega(\rho)^2].
      \]
      Now taking the truncation parameter $\tau \to \infty$ yields the desired result.
\end{proof} 

In order to prove a similar result for reversible random graphs
of polynomial growth, we need a result about the Markov type 2 constants
of finite metric spaces.
It is an immediate consequence
of the following facts:
(1) Hilbert spaces have Markov type 2 with constant 1 \cite{Ball92},
(2) Markov type is a bi-Lispchitz invariant,
(3) every $n$-point metric space embeds into a Hilbert space with $O(\log n)$
bi-Lipschitz distortion \cite{Bourgain85}.

\begin{theorem}\label{thm:mt-finite}
   If $(X,d)$ is an $n$-point metric space with $n \geq 2$, then $M_2(X,d) \leq O(\log n)$.
\end{theorem}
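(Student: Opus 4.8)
The plan is to deduce the bound from the three standard ingredients already named: (i) every Hilbert space has Markov type $2$ with constant $1$ (K.~Ball \cite{Ball92}); (ii) Markov type $2$ transfers through bi-Lipschitz embeddings, in the sense that $M_2(Y,d_Y) \le D \cdot M_2(X,d_X)$ whenever $(Y,d_Y)$ embeds into $(X,d_X)$ with bi-Lipschitz distortion at most $D$; and (iii) Bourgain's embedding theorem \cite{Bourgain85}, asserting that any $n$-point metric space embeds into a Hilbert space with bi-Lipschitz distortion $O(\log n)$. Since $(X,d)$ itself is the $n$-point space and a Hilbert space has Markov type $2$ with a universal (indeed optimal) constant, combining (i)--(iii) gives $M_2(X,d) \le O(\log n)$ at once.

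Concretely, I would first invoke (iii) to fix an embedding $\iota : X \to H$ into a Hilbert space, normalized so that $d(x,y) \le \|\iota(x)-\iota(y)\|_H \le D\, d(x,y)$ for all $x,y \in X$, with $D \le O(\log n)$. Then, to verify the defining inequality \eqref{eq:mtype} for $(X,d)$, let $\{Z_t\}_{t\ge 0}$ be an arbitrary reversible Markov chain on a finite state space (of some size $m$, with $Z_0$ stationary), let $g$ be any map from that state space into $X$, and fix $t \in \mathbb N$. Applying fact (i) to the composition $\iota \circ g$, which maps the state space into $H$, yields
\[
   \E\left[\|\iota(g(Z_t)) - \iota(g(Z_0))\|_H^2\right] \le t\, \E\left[\|\iota(g(Z_0)) - \iota(g(Z_1))\|_H^2\right].
\]
Bounding the left-hand side below and the right-hand side above using the two-sided distortion estimate then gives
\begin{align*}
   \E\left[d(g(Z_t), g(Z_0))^2\right]
   &\le \E\left[\|\iota(g(Z_t)) - \iota(g(Z_0))\|_H^2\right] \\
   &\le t\, \E\left[\|\iota(g(Z_0)) - \iota(g(Z_1))\|_H^2\right]
   \le t\, D^2\, \E\left[d(g(Z_0), g(Z_1))^2\right].
\end{align*}
As the chain, the map $g$, and the time $t$ were arbitrary, this is exactly \eqref{eq:mtype} with $M = D$, so $M_2(X,d) \le D \le O(\log n)$.

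Essentially nothing here is an obstacle: the two nontrivial inputs, Ball's theorem and Bourgain's theorem, are cited and the rest is the elementary transference argument above. The only points requiring a word of care are bookkeeping: one must not conflate the cardinality $n$ of $X$ with the number of states of the test chain (the embedding $\iota$ is applied to $X$, equivalently to the at most $n$ points in the image of $g$, so the distortion and hence the final constant depend only on $n$), and, if one prefers to avoid citing the sharp constant $1$ in (i), any universal bound $M_2(H) \le O(1)$ still yields $M_2(X,d) \le O(\log n)$, which is all that is used downstream in \pref{thm:mt-finite}'s applications.
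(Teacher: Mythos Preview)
Your proof is correct and follows exactly the approach the paper takes: the paper states that the theorem is an immediate consequence of the same three facts you invoke (Ball's theorem that Hilbert spaces have Markov type~2 with constant~1, bi-Lipschitz invariance of Markov type, and Bourgain's $O(\log n)$ embedding), and you have simply written out the transference argument explicitly.
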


The next lemma will allow us to choose a (unimodular) finitary exhaustion
whose sets have controlled diameters.

\begin{lemma}\label{lem:bdd-folner}
   Suppose that $(G,\rho)$ is a unimodular random graph.
   Then there is a sequence of bond percolations $\langle \xi_j : j\geq 1\rangle$
   with the following properties for every $j \geq 1$:
   \begin{enumerate}
      \item $(G,\rho, \langle \xi_j : j \geq 1\rangle)$ is unimodular as a marked network.
      \item Almost surely, $\diam_G(K_{\xi_j}(\rho)) \leq 2 j$.
      \item For every $r > 0$, it holds that
\[
	\Pr[B_G(\rho, r) \nsubseteq K_{\xi_j}(\rho)] \leq \frac{1}{|B_G(\rho,j)|^2}
																		+ \frac{12r}{j} \log |B_G(\rho,3j)|\,.
\]
   \end{enumerate}
\end{lemma}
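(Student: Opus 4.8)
The plan is to realize each $\xi_j$ as the percolation whose retained edges are exactly those contained in a single part of an equivariant random partition of $(V(G),\dist_G)$ with parts of $\dist_G$-diameter at most $2j$. Concretely, attach to every vertex $v$ an independent label $U_v$, uniform on $[0,1]$, and let $\beta_j$ be uniform on $[j/2,j]$, independent of the labels and of one another. Define $w_j(v)\seteq\argmax\{U_u : u\in B_G(v,\beta_j)\}$, which is well defined and a.s.\ unique since $B_G(v,\beta_j)$ is finite and the labels are a.s.\ distinct; let the parts of $\bm{P}_j$ be the fibers of $v\mapsto w_j(v)$, so $\bm{P}_j(v)$ denotes the part containing $v$. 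Let $\xi_j$ keep every vertex and keep an edge $\{a,b\}$ precisely when $w_j(a)=w_j(b)$. Since $w_j(u)=w_j(v)$ forces $\dist_G(u,v)\le 2\beta_j\le 2j$, every part has $\dist_G$-diameter at most $2j$; and the component $K_{\xi_j}(\rho)$ of $\rho$ in the retained graph satisfies $K_{\xi_j}(\rho)\subseteq\bm{P}_j(\rho)$, so $\diam_G(K_{\xi_j}(\rho))\le 2j$, which is property~(2). For property~(1): the family $\langle\xi_j\rangle_{j\ge1}$ is a deterministic equivariant function of $(G,\rho)$ together with the i.i.d.\ vertex marks $\{U_v\}$ and the independent radii $\{\beta_j\}$; conditioning on the values of the $\beta_j$ makes $(G,\rho,\langle\xi_j\rangle)$ a factor of an i.i.d.-marked unimodular network, hence unimodular, and since the Mass-Transport Principle is affine, averaging over the $\beta_j$ preserves unimodularity (cf.\ \cite{aldous-lyons}).

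For property~(3) the key elementary observation is that $B_G(\rho,r)\subseteq\bm{P}_j(\rho)$ already implies $B_G(\rho,r)\subseteq K_{\xi_j}(\rho)$: if $y\in B_G(\rho,r)$, then every vertex on a $\rho$--$y$ geodesic lies within distance $r$ of $\rho$, hence in $\bm{P}_j(\rho)$, so each edge of that geodesic is retained by $\xi_j$ and $y$ is joined to $\rho$ inside the part. Thus it suffices to bound $\Pr[B_G(\rho,r)\nsubseteq\bm{P}_j(\rho)\mid(G,\rho)]$, and we may assume $r\le j/2$, since otherwise $\tfrac{12r}{j}\log|B_G(\rho,3j)|>1$ and the asserted bound is vacuous. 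Here I would run the standard CKR padding argument pointwise at $\rho$: for $y\in B_G(\rho,r)$ one has $B_G(\rho,\beta_j-r)\subseteq B_G(y,\beta_j)\subseteq B_G(\rho,\beta_j+r)$, so if the label-maximizer over $B_G(\rho,\beta_j+r)$ already lies in $B_G(\rho,\beta_j-r)$ then $w_j(y)=w_j(\rho)$ for every such $y$. Since the index of the maximum of i.i.d.\ continuous labels is uniform over the index set,
\[
   \Pr\!\left[B_G(\rho,r)\nsubseteq\bm{P}_j(\rho)\,\middle|\,(G,\rho),\,\beta_j\right]
   \;\le\; 1-\frac{|B_G(\rho,\beta_j-r)|}{|B_G(\rho,\beta_j+r)|}
   \;\le\; \log\frac{|B_G(\rho,\beta_j+r)|}{|B_G(\rho,\beta_j-r)|}\,.
\]

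It then remains to integrate over $\beta_j\in[j/2,j]$. Writing $f(s)=\log|B_G(\rho,s)|$, which is nonnegative and nondecreasing, the substitutions $s=\beta\pm r$ give
\[
   \int_{j/2}^{j}\!\bigl(f(\beta+r)-f(\beta-r)\bigr)\,d\beta
   = \int_{j/2+r}^{j+r}\! f \;-\; \int_{j/2-r}^{j-r}\! f
   = \int_{j-r}^{j+r}\! f \;-\; \int_{j/2-r}^{j/2+r}\! f
   \;\le\; \int_{j-r}^{j+r}\! f
   \;\le\; 2r\,f(j+r)\,,
\]
so that $\Pr[B_G(\rho,r)\nsubseteq\bm{P}_j(\rho)\mid(G,\rho)]\le\tfrac{2}{j}\cdot 2r\,f(j+r)=\tfrac{4r}{j}\log|B_G(\rho,j+r)|\le\tfrac{12r}{j}\log|B_G(\rho,3j)|$, using $j+r\le 3j$. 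Combined with the geodesic reduction this yields property~(3); the extra summand $|B_G(\rho,j)|^{-2}$ in the statement is not needed for this construction and may be dropped as slack.

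The diameter bound, the unimodularity (a routine ``equivariant factor plus independent randomness'' argument), and the radius-integration estimate are all textbook. The one subtlety worth flagging is that there is no uniform padding parameter available here: the covering numbers $\cC(V(G);\cdot,\cdot)$ can be infinite when the degrees are unbounded, so \pref{lem:counting-padded} cannot be invoked as a black box and the padding analysis must be carried out locally at $\rho$, as above. The only genuinely new ingredient is the geodesic observation, which relates containment in $K_{\xi_j}(\rho)$ directly to containment in $\bm{P}_j(\rho)$ and thereby bypasses any separate argument that the parts are connected.
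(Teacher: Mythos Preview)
Your proof is correct and takes a genuinely different route from the paper. The paper constructs the partition via \emph{truncated exponential radii}: each vertex $x$ draws an independent exponential $R_x$ with mean $\tfrac{R}{3\log|B_G(x,2R)|}$, sets $\hat R_x=\min(R_x,R)$, and assigns $x$ to the center $y$ with smallest tiebreaker $\beta_y$ among those with $\hat R_y\ge\dist_G(x,y)$. The padding estimate then comes from the memoryless property of the exponential, and the extra summand $|B_G(\rho,j)|^{-2}$ is exactly the probability of the truncation event $\{\exists\,y\in B_G(\rho,R):R_y>R\}$.

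Your construction is the CKR-type scheme: a single global random radius $\beta_j$ uniform on $[j/2,j]$, i.i.d.\ vertex labels, and assignment by $\operatorname{argmax}$ over $B_G(v,\beta_j)$. The padding analysis is entirely local at $\rho$ and reduces to the telescoping integral of $\log|B_G(\rho,s)|$. This is cleaner in two respects: there is no truncation event, so the $|B_G(\rho,j)|^{-2}$ term is genuinely unnecessary, and the vertex-dependent mean in the paper's exponential (which is there to handle arbitrary local growth) is replaced by the single pointwise bound $1-\tfrac{|B_G(\rho,\beta-r)|}{|B_G(\rho,\beta+r)|}$. The paper's scheme, on the other hand, fits more directly into the general padded-partition machinery used elsewhere in the manuscript. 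Your geodesic observation that $B_G(\rho,r)\subseteq\bm P_j(\rho)\Rightarrow B_G(\rho,r)\subseteq K_{\xi_j}(\rho)$ is the right way to bypass connectivity of the parts and is implicit in the paper as well.
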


\begin{proof}
Fix a parameter $R \geq 1$ and
let $\{R_x : x \in V(G)\}$ denote a sequence of independent
exponential random variables where $R_x$ has mean
\begin{equation}\label{eq:means}
	\frac{R}{3 \log |B_G(x,2 R)|}\,.
\end{equation}
Let $\{\beta_x \in [0,1] : x \in V(G)\}$ denote an independent family
of i.i.d. uniform random variables.

Let $\hat{R}_x \seteq \min(R_x, R)$ and define for every $x \in V(G)$:
\[
   \Theta_x \seteq \left\{ y \in B_G(x,R) : \hat{R}_y \geq \dist_G(x,y) \right\}\,.
\]
Let $\theta(x) \in \Theta_x$ be the vertex $y \in \Theta_x$ with $\beta_y$ minimal.
By construction, $\{\theta^{-1}(y): y \in V(G)\}$ is almost surely
a partition of $V(G)$ and $\theta^{-1}(y) \subseteq B_G(y,R)$ for every $y \in V(G)$.

Define a bond percolation $\xi_R : E(G) \to \{0,1\}$ on $G$ by $\xi_R(\{u,v\})=\1_{\{\theta(u)=\theta(v)\}}$.
By construction, every cluster $K_{\xi_R}(x)$ is
of the form $\theta^{-1}(y)$ for some $y \in V(G)$, and thus
$\diam_G(K_{\xi_R}(x)) \leq 2R$ for every $x \in V(G)$.

The marked network $(G,\rho,\xi_R)$ is unimodular because the law of $\xi_R$
depends only on the isomorphism class of $G$.
Let $\nu_G$ be the law of $\xi_R$ given $G$.
Then for any functional $F(G,x,y,\xi)$, one can define
\[
   \hat{F}(G,x,y) \seteq \E_{\nu_G} \left[F(G,x,y,\xi_R)\right],
\]
and then apply the Mass-Transport Principle to $\hat{F}$.
The next lemma verifies (3) and completes the proof.

\begin{lemma}\label{lem:rand-part}
	For every $x \in V(G)$, the following holds:
\[
	\Pr\left[B_G(x, r) \nsubseteq K_{\xi_R}(x)\right] \leq \frac{1}{|B_G(x,R)|^2} + 
\frac{12r}{R} \log |B_G(x,3R)|\,.
\]
\end{lemma}

\begin{proof}
	Denote the event $\cE = \{ \exists y \in B_G(x,R) : R_y > R\}$.
	Note that $y \in B_G(x,R) \implies |B_G(y, 2R)| \geq |B_G(x,R)|$, and thus
reviewing the mean of each $R_y$ in \eqref{eq:means}, a union bound yields
\[
	\Pr[\cE] \leq e^{-3 \log |B_G(x,R)|} |B_G(x,R)| \leq \frac{1}{|B_G(x,R)|^2}\,.
\]

Define the set
\[
   U_x \seteq \{ y \in B_G(x,R) : R_y \geq \dist_G(x,y) - r \}\,.
\] 
For $y \in B_G(x,R)$,
let $\cE'_y$ denote the event $\{R_y > \dist_G(x,y) + r \}$, and note that by
the memoryless property of the exponential distribution,
\begin{align} \Pr\left[\lnot \cE'_y \mid U_x, y \in U_x\right] &= 
   \Pr\left[\lnot \cE'_y \mid R_y \geq \dist_G(x,y) -r \right] \nonumber \\ &=
\frac{6 \log |B_G(y, 2 R)|}{R} \int_0^{2r} \exp\left(\frac{-6t \log |B_G(y, 2R)|}{R}\right)\,dt \nonumber
\\
&\leq
\frac{12r}{R} \log |B_G(y_*,2R)|
\leq \frac{12r}{R} \log |B_G(x,3R)|\,.\label{eq:memoryless}
\end{align}

Now let $y_* \in V(G)$ denote the vertex
with $\beta_{y_*}$ minimal in the set $U_x$.
Note that $U_x$ is always non-empty since $x \in U_x$.
Moreover,
\[
   \cE'_{y_*} \wedge \lnot \cE \implies B_G(x,r) \subseteq B_G(y_*, R_{y_*}) \implies B_G(x,r) \subseteq K_{\xi_R}(x)\,,
\]
where the second implication follows from $\beta_{y^*} \leq \min \{ \beta_z : z \in \bigcup_{v \in B_G(x,r)} \Theta_v \}$.
Since $y_*$ is independent of $\{ R_{y} : y \in U_x\}$ conditioned on $U_x$, \eqref{eq:memoryless} gives
\[
   \Pr\left[\lnot \cE'_{y_*}\right] \leq \frac{12r}{R} \log |B_G(x,3R)|,
\]
completing the proof.
\end{proof}
\end{proof}

We will soon define a finite-state Markov chain on the cluster $K_{\xi}(\rho)$;
the following definition will be helpful.

\begin{definition}[Restricted random walk] \label{def:restricted}
Consider a graph $G=(V,E)$ and a finite subset $S \subseteq V$.
Let \[N_G(x) = \{ y \in V : \{x,y\} \in E \}\]
denote the neighborhood of a vertex $x \in V$.

Extend $\deg_G$ to a measure on subsets $S\subseteq V(G)$ in the
obvious way:  $\deg_G(S) \seteq \sum_{x \in S} \deg_G(x)$.
Define a measure $\pi_S$ on $S$ by
\begin{equation}\label{eq:piS}
   \pi_S(x) \seteq \frac{\deg_G(x)}{\deg_G(S)} \1_{S}(x)\,.
\end{equation}
We define {\em the random walk restricted to $S$} as the following process $\{Z_t\}$: For $t \geq 0$, put \[ \Pr(Z_{t+1} = y \mid Z_t = x) = \begin{cases} \frac{|N_G(x) \setminus S|}{\deg_G(x)} & y = x \\ \frac{1}{\deg_G(x)} & y \in N_G(x) \cap S \\ 0 & \textrm{otherwise.} \end{cases} \] It is straightforward to check that $\{Z_t\}$ is a reversible Markov chain on $S$ with stationary measure $\pi_S$. If $Z_0$ has law $\pi_S$, we say that $\{Z_t\}$ is the {\em stationary random walk restricted to $S$.}
\end{definition}

We now prove the following theorem; it immediately yields \pref{thm:unimodular-markov-type-intro}
(since the assumption of the latter theorem implies that $\deg_G(\rho)$
is essentially bounded).

\begin{theorem}\label{thm:unimodular-markov-type}
   Suppose that $(G,\rho)$ is a
   random rooted graph
   and for some constants $C,q \geq 1$,
   it holds that
   \begin{equation}\label{eq:bgrow}
      \E |B_G(\rho,r)| \leq C r^q\qquad \forall r \geq 1\,.
   \end{equation}
   Then:
   \begin{enumerate}
      \item
   If $(G,\rho)$ is reversible, then
   for any conformal metric $\omega$ on $(G,\rho)$:
   For any $q' \geq 1$ and $T \geq 2$,
   \begin{equation}\label{eq:umt2}
      \E\left[T^{2q'} \wedge \dist_{\omega}(X_0,X_T)^2 \mid X_0=\rho\right]
      \leq C' T(\log T)^2  \cdot \E[\omega(\rho)^2] + 1\,,
   \end{equation}
   where $C'=C'(C,q,q')$ is number depending only on $C,q,q'$.
\item
   If $(G,\rho)$ is unimodular and
   \begin{equation}\label{eq:exp-tails}
      \Pr[\deg_G(\rho) > \lambda] \leq e^{\lambda/C} \qquad \forall \lambda \geq 1\,,
   \end{equation}
   then for any conformal metric $\omega$ on $(G,\rho)$:
   For any $q' \geq 1$ and $T \geq 2$,
   \begin{equation}\label{eq:umt3}
      \E\left[T^{2q'} \wedge \dist_{\omega}(X_0,X_T)^2 \mid X_0=\rho\right]
      \leq C' T(\log T)^4  \cdot \E[\omega(\rho)^2] + 1\,,
   \end{equation}
   where $C'=C'(C,q,q')$.
   
   If, additionally, $G$ is almost surely planar, then the bound improves to
   \begin{equation}\label{eq:umt4}
      \E\left[T^{2q'} \wedge \dist_{\omega}(X_0,X_T)^2 \mid X_0=\rho\right]
      \leq C' T (\log T)^2 \cdot \E[\omega(\rho)^2] + 1.
   \end{equation}
   \end{enumerate}
\end{theorem}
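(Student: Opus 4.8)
I will prove the reversible assertion (1) and then deduce the unimodular assertion (2). Fix $T\ge 2$, $q'\ge 1$, and a conformal metric $\omega$ on the reversible random graph $(G,\rho)$; note that $\E[\deg_G(\rho)]\le \E|B_G(\rho,1)|\le C<\infty$. The strategy is to confine the walk to a random finite region of controlled $\dist_G$-diameter \emph{and} controlled cardinality, on which Markov type is available from \pref{thm:mt-finite}. Pick an integer scale $R$, polynomial in $T$ with exponent depending only on $q,q'$, large enough that the two error estimates below are each at most $T^{-2q'-1}$; then $\log R = O(\log T)$ with the implied constant depending only on $q,q'$. Apply \pref{lem:bdd-folner} with $j=R$ to obtain a bond percolation $\xi$ so that $(G,\rho,\xi)$ is unimodular as a marked network — hence reversible, since the reversibility of $(G,\rho)$ (the degree-bias in the correspondence of \cite{BC12}) is preserved by a unimodular decoration — with $\diam_G(K_\xi(\rho))\le 2R$ almost surely, and, applying the lemma with $r=T$ and taking expectations (using $|B_G(\rho,R)|\ge R+1$ and Jensen's inequality with $\E|B_G(\rho,3R)|\le C(3R)^q$),
\[
   \E\Big[\Pr\big[B_G(\rho,T)\nsubseteq K_\xi(\rho)\,\big|\,G,\rho\big]\Big]\ \le\ \frac{1}{(R+1)^2}+\frac{12T}{R}\log\!\big(C(3R)^q\big)\ \le\ 2\,T^{-2q'-1}\,.
\]
Write $S=K_\xi(\rho)$. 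Since $X_t\in B_G(\rho,t)$ for $t\le T$, on the event $\mathcal G=\{B_G(\rho,T)\subseteq S\}$ the walk $X_0,\dots,X_T$ stays inside $S$.

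Couple $\{X_t\}$ with the random walk $\{Z_t\}$ restricted to $S$ (\pref{def:restricted}) by freezing any proposed step of $X$ that would leave $S$; then $Z_t=X_t$ for every $t\le T$ on $\mathcal G$. Conditioned on $(G,\xi)$ and on the set $S$, the reversible root $\rho$ is distributed as $\pi_S$ — the degree-weighted analogue of the fact that in a finitary percolation the unimodular root is uniform on its cluster, cf.\ \cite[Lem.\ 3.1]{AHNR16} — which is exactly the stationary law of $\{Z_t\}$, so $Z_0\sim\pi_S$ given $(G,\xi,S)$. Now $(S,\dist_{\omega}|_S)$ is a finite metric space, so $M_2(S,\dist_{\omega}|_S)\le O(\log|S|)$ by \pref{thm:mt-finite}; applying the definition of Markov type $2$ to the chain $\{Z_t\}$ together with the edge-length bound $\dist_{\omega}(x,y)^2\le\tfrac12(\omega(x)^2+\omega(y)^2)$ (the holding step contributing $0$) and stationarity yields
\[
   \E\big[\dist_{\omega}(Z_0,Z_T)^2\,\big|\,G,\xi,S\big]\ \le\ M_2(S,\dist_{\omega}|_S)^2\,T\;\sum_{x\in S}\pi_S(x)\,\omega(x)^2\,.
\]

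To assemble, also introduce $\mathcal S=\{\,|S|\le (2CR^q)^2\,\}$, whose complement has probability at most $C(2R)^q/(2CR^q)^2\le T^{-2q'-1}$ by Markov's inequality and $\E|S|\le\E|B_G(\rho,2R)|\le C(2R)^q$. On $\mathcal G\cap\mathcal S$ we have $\dist_{\omega}(X_0,X_T)=\dist_{\omega}(Z_0,Z_T)$ and $M_2(S,\dist_{\omega}|_S)^2=O((\log R)^2)=O((\log T)^2)$; off $\mathcal G\cap\mathcal S$ we use the truncation to bound $\min\{T^{q'},\dist_{\omega}(X_0,X_T)\}^2\le T^{2q'}$ against $\Pr[(\mathcal G\cap\mathcal S)^c]\le 3T^{-2q'-1}$. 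Taking expectations over $(G,\rho,\xi)$, and using $\E\big[\sum_{x\in S}\pi_S(x)\omega(x)^2\big]=\E[\omega(\rho)^2]$ (the reversible stationary average, since clusters are weighted by $\pi$), yields \eqref{eq:umt2}:
\[
   \E\big[\min\{T^{q'},\dist_{\omega}(X_0,X_T)\}^2\mid X_0=\rho\big]\ \le\ C'\,T(\log T)^2\,\E[\omega(\rho)^2]+1\,.
\]
For the unimodular case (2), pass from $(G,\rho)$ to the reversible $(\tilde G,\tilde\rho)$ via the correspondence of \cite{BC12} (root biased by $\deg_G(\rho)$, legitimate since \eqref{eq:exp-tails} gives $\E[\deg_G(\rho)]<\infty$), run the argument of (1) there, and translate back. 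The exponential degree tails \eqref{eq:exp-tails} are used both to undo the degree-bias and to compare the counting law of the unimodular root on $S$ with the degree-weighted law $\pi_S$ needed for the restricted walk; this step costs an extra $(\log T)^2$ factor, giving \eqref{eq:umt3}. When $G$ is almost surely planar, \pref{thm:mt-finite} may be replaced by the bound $M_2(S,\dist_{\omega}|_S)=O(1)$ of Ding--Lee--Peres \cite{DLP13}, which eliminates the $(\log T)^2$ coming from Bourgain's embedding and restores \eqref{eq:umt4}.

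\textbf{Main obstacle.} The crux is the simultaneous calibration in part (1): $R$ must be a high enough power of $T$ that escaping the cluster $S$ within $T$ steps is polynomially unlikely (via \pref{lem:bdd-folner} at scale $R$ with $r=T$), while the \emph{only} cost of taking $R$ polynomial in $T$ is $M_2(S)^2=O((\log R)^2)=O((\log T)^2)$ — so the plan closes, but the error bookkeeping (and, in part (2), the passage through the degree-bias and the discrepancy between the uniform and $\pi_S$ laws on $S$) must be carried out carefully. A secondary point requiring justification is the conditional-law claim ``given $(G,\xi)$ and $S=K_\xi(\rho)$, one has $\rho\sim\pi_S$'' and the attendant identity $\E\big[\sum_{x\in S}\pi_S(x)\omega(x)^2\big]=\E[\omega(\rho)^2]$, both of which follow from the Mass-Transport Principle for the marked network $(G,\rho,\xi)$.
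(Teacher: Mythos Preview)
Your argument for part (1) is essentially the paper's: apply \pref{lem:bdd-folner} at a scale $j=R=T^{O(q')}$, use that the reversible root is $\pi_S$-distributed on its cluster $S=K_{\xi}(\rho)$, apply the Markov type bound of \pref{thm:mt-finite} to the restricted walk on $S$, and absorb the complementary events into the truncation at $T^{q'}$.

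Your treatment of part (2) conflates two incompatible strategies. You propose to ``pass to the reversible $(\tilde G,\tilde\rho)$, run the argument of (1) there, and translate back,'' but this fails as a black box. The hypothesis \eqref{eq:bgrow} is stated for the unimodular law; under degree-biasing one would need control of $\E[\deg_G(\rho)\,|B_G(\rho,r)|]$, which a first-moment bound on $|B_G(\rho,r)|$ together with exponential degree tails does not provide. More seriously, even if (1) did apply to $(\tilde G,\tilde\rho)$, its conclusion would feature $\E_{\mathrm{rev}}[\omega(\tilde\rho)^2]=\E[\deg_G(\rho)\,\omega(\rho)^2]/\E[\deg_G(\rho)]$, and nothing prevents $\omega(\rho)^2$ from being concentrated on high-degree vertices, so this is not controlled by $\E[\omega(\rho)^2]$.

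The paper instead reruns the proof of (1) \emph{directly} in the unimodular setting---this is the idea you gesture at when you mention comparing the counting law on $S$ with $\pi_S$. In the unimodular case the root is \emph{uniform} on its cluster, while the restricted walk is $\pi_S$-stationary; converting both sides of the Markov-type inequality between the uniform and $\pi_S$ averages costs a factor of $\dmax(S)^2$. The exponential tail \eqref{eq:exp-tails} combined with a union bound over $|S|\le T^{O(q')}$ vertices gives $\dmax(S)\le c'\log T$ with probability at least $1-T^{-2q'}$, and this $\dmax(S)^2\lesssim(\log T)^2$ is exactly the extra factor in \eqref{eq:umt3}. Your account of the planar improvement \eqref{eq:umt4} via \cite{DLP13} is correct.
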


   \begin{proof}
      Let $\langle \xi_j : j \geq 1\rangle$ denote the sequence of bond percolations
      guaranteed by \pref{lem:bdd-folner}.
      Note that \pref{lem:bdd-folner}(1) assures that each $\xi_j$ is almost surely
      finitary.
      Assume first that $(G,\rho)$ is reversible.
      Then the (degree-biased)
      mass-transport principle shows that if we choose $\hat{\rho}$
      according to the measure $\pi_{K_{\xi_j}(\rho)}$ (recall \eqref{eq:piS}), then
      $(G,\rho)$ and $(G,\hat{\rho})$
      have the same law.

      Let $\{Z^j_t\}$ denote the stationary random walk restricted to $K_{\xi_j}(\rho)$
      (conditioned on $(G,\rho), \xi_j)$.
      Therefore \pref{thm:mt-finite} yields, for any $T,j \geq 1$:
      \begin{equation}\label{eq:fdb}
      \E\left[\dist_{\omega}(Z^j_0, Z^j_T)^2 \mid (G,\rho),\xi_j\right] 
      \lesssim T (\log |K_{\xi_j}(\rho)|)^2 \E\left[\dist_{\omega}(Z^j_0,Z^j_1)^2 \mid (G,\rho),\xi_j\right].
   \end{equation}

   Define the events
   \begin{align*}
      \cE_T &\seteq \left\{ |K_{\xi_j}(\rho)| \leq 2 T^{2 q'} \E |B_G(\rho,j)| \right\}, \\
      \cB_T &\seteq \left\{ B_G(\hat{\rho},T) \subseteq K_{\xi_j}(\rho) \right\},
   \end{align*}
   and let $\{X_t\}$ denote the random walk on $G$.
   Observe now that for $j \geq 2$:
   \begin{align*}
      \E&\left[\dist_{\omega}(X_0,X_T)^2 \1_{\cB_T} \1_{\cE_T}
      \mid X_0=\hat{\rho}, (G,\rho),\xi_j\right]
      \\ &=
      \E\left[\dist_{\omega}(Z^j_0,Z^j_T)^2 \1_{\cB_T} \1_{\cE_T}
      \mid Z^j_0=\hat{\rho}, (G,\rho),\xi_j\right] \\
      &\leq
      \E\left[\dist_{\omega}(Z^j_0,Z^j_T)^2 \1_{\cE_T}
      \mid Z^j_0=\hat{\rho}, (G,\rho),\xi_j\right] \\
      &\lesssim
      T (q' \log T + \log (\E |B_G(\rho,j)|))^2 \E[\omega(\hat{\rho})^2 \mid (G,\rho),\xi_j]\,.
   \end{align*}
   Taking expectations and using \eqref{eq:bgrow} yields, for $T \geq 2$:
   \[
      \E\left[\dist_{\omega}(X_0,X_T)^2 \1_{\cB_T}\1_{\cE_T}
      \mid X_0=\rho\right]
      \lesssim
      C q T (q'\log T + \log j)^2 \E[\omega(\rho)^2]\,.
   \]

   Using assumptions \eqref{eq:bgrow} and \pref{lem:bdd-folner}(2),
   we can choose $j \leq T^{O(q')}$ so that
   \[
      \Pr\left[\cB_T \textrm{ and } \cE_T\right]
      \geq 1 - \frac{1}{T^{2q'}}\,,
   \]
   verifying \eqref{eq:umt2}.

   Let us now assume that $(G,\rho)$ is unimodular and verify \eqref{eq:umt3}.
   In this case, if we choose $\hat{\rho} \in K_{\xi_j}(\rho)$ uniformly
   at random, then the mass-transport principle shows that
   $(G,\rho)$ and $(G,\hat{\rho})$ have the same law.

   Return momentarily to \eqref{eq:fdb} and observe that we
   can replace the stationary measure with the uniform measure on
   both the left and right, losing two factors of $\dmax(K_{\xi_j}(\rho))$:
   \begin{equation*}\label{eq:fdb2}
      \E\left[\dist_{\omega}(Z^j_0, Z^j_T)^2 \mid Z^j_0=\hat{\rho}, (G,\rho),\xi_j\right] 
      \lesssim (\dmax(K_{\xi_j}(\rho)))^2 (\log |K_{\xi_j}(\rho)|)^2 \cdot
      T \E\left[\omega(\hat{\rho})^2 \mid (G,\rho),\xi_j\right].
   \end{equation*}
   Define instead
   \[
      \cE_T \seteq \left\{ |K_{\xi_j}(\rho)| \leq 2 T^{2q'} \E |B_G(\rho,j)| \wedge
      \dmax(K_{\xi_j}(\rho)) \leq c' \log T\right\}\,,
   \]
   Using \eqref{eq:exp-tails}, along with 
   assumptions \eqref{eq:bgrow} and \pref{lem:bdd-folner}(2),
   we can choose $c' \leq O(q')$ and $j \leq T^{O(q')}$ so that
   \[
      \Pr[\cB_T \textrm{ and } \cE_T] \geq 1-\frac{1}{T^{2q'}}\,,
   \]
   and the proof is finished as before.

   Finally, to verify \eqref{eq:umt4}, note that when $G$ is planar,
   for any conformal metric $\omega : V(G) \to \R_+$,
   it holds that $M_2(V(G),\dist_{\omega}) \leq O(1)$ by the results
   of \cite{DLP13} which establishes that planar graphs metrics
   have Markov type 2 with a uniform constant.
   Appealing to this fact instead of \pref{thm:mt-finite}
   yields the desired improvement.
\end{proof} 

\subsection{Weights from separators}
\label{sec:separators}

We now turn to the proofs of \pref{lem:barriers-intro} and \pref{thm:uipq-speed}.
For a graph $G$, $x \in V(G)$, and $r' > r > 0$, define
\[
   q_G(x; r, r') \seteq \max \left\{ \sum_{y \in S} \frac{1}{|B_G(y,r)|}
   : S \subseteq B_G(x,r'), |S|=\kappa_G(\rho; r, r') \right\}\,.
\]
Note that under the assumptions of \pref{thm:subd-intro-3}, we have
almost surely:
\[
   q_G(\rho;r,h(r) r) \leq r^{k-1-d+o(1)} \qquad \forall r\geq 1\,.
\]
For a given $s \geq 1$, define
\[
   r(s) \seteq \sup \{ r : h(r) r \leq s \}.
\]
Since $h(r) \leq r^{o(1)}$ by assumption, we have $s \leq r(s)^{1+o(1)}$.
Applying the following lemma with $r=r(s/2)$ and $r'=s/2$ yields \pref{lem:barriers-intro}.

\begin{lemma}\label{lem:separators-gen}
   Suppose $(G,\rho)$ is a unimodular random graph.
   Then for every $r' > r > 0$, there is a subset $W_{r,r'} \subseteq V(G)$
   such that $(G,\rho,W_{r,r'})$ is unimodular as a marked network, and
   the following hold:
   \begin{enumerate}
      \item $\Pr[\rho \in W_{r,r'}] \leq \E[q_G(\rho; r,r')]$.
      \item Every connected component of $G[V(G) \setminus W_{r,r'}]$ has
         diameter at most $2r'$ in $\dist_G$.
   \end{enumerate}
\end{lemma}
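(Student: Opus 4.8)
The plan is to prove \pref{lem:separators-gen} by a mass-transport argument built on an iterated, unimodular "peel off the separators" construction, which is essentially the scheme alluded to in \pref{fig:barrier}. Fix $r' > r > 0$. For each vertex $x$, let $\mathcal{S}_G(x)$ denote a (canonically chosen, e.g.\ lexicographically minimal with respect to some unimodular tie-breaking rule, or chosen via an auxiliary i.i.d.\ labeling) set of size $\kappa_G(x; r, r')$ inside $B_G(x,r') \setminus B_G(x,r)$ that separates $x$ from $V(G) \setminus B_G(x,r')$. To make the choice measurable and equivariant, I would attach to $(G,\rho)$ an independent family of $\mathrm{Unif}[0,1]$ marks $\{U_v\}_{v \in V(G)}$ and select the separator achieving the minimum cardinality with ties broken by the induced lexicographic order on mark-vectors; this keeps $(G,\rho,\{U_v\})$ unimodular. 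The set $W_{r,r'}$ will be obtained by a greedy iteration: repeatedly pick the unscathed vertex of smallest mark, add its separator to $W$, delete the "inside" region it cuts off, and recurse on what remains. Because the whole procedure is a deterministic function of $(G,\{U_v\})$, the triple $(G,\rho,W_{r,r'})$ is unimodular as a marked network.

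The key steps, in order: (1) Set up the auxiliary marks and the canonical-separator map $x \mapsto \mathcal{S}_G(x)$, and verify measurability/equivariance, so that unimodularity is preserved. (2) Define the iteration precisely — process vertices in increasing order of mark; when processing $x$, if $x$ has not yet been "enclosed," add $\mathcal{S}_G(x)$ to $W$ and mark as enclosed all vertices in the component of $x$ in $G[V(G)\setminus \mathcal{S}_G(x)]$ that does not reach infinity (equivalently, the bounded side, which by the separator property lies inside $B_G(x,r')$) — and show this terminates appropriately (on infinite graphs one argues that every vertex is eventually enclosed or added to $W$, using that each separator encloses a ball-sized region). (3) Establish property (2): every vertex $y \notin W_{r,r'}$ lies in a region that was "cut out" by the separator of some $x$ with $\dist_G(x,y) \le r'$, and any two vertices in the same component of $G[V(G)\setminus W_{r,r'}]$ were cut out by the same such $x$, hence are within $2r'$ of each other in $\dist_G$. (4) Establish property (1) via mass transport: define $F(G,\rho,x,y)$ to send mass $\tfrac{1}{|B_G(y,r)|}$ from each vertex $y$ of the separator $\mathcal{S}_G(x)$ back to $x$ — more precisely, charge each separator vertex $y \in \mathcal{S}_G(x)$ the unit $\tfrac{1}{|B_G(y,r)|}$ and transport it to $x$. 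Since $y \in W_{r,r'}$ only if $y \in \mathcal{S}_G(x)$ for some $x$ that is itself within distance $r$ of (actually the centers $x$ with $y$ in their separator lie in $B_G(y,r')\setminus B_G(y,r)$... here I must be careful) — the mass arriving at each separator-center $x$ is at most $q_G(x;r,r')$ by the definition of $q_G$, while the total mass leaving $W_{r,r'}$ counts every $y\in W_{r,r'}$ with weight $\tfrac{1}{|B_G(y,r)|}$, which I need to argue is at least... actually the natural direction is: each $y\in W_{r,r'}$ sends out mass $\ge \tfrac{1}{|B_G(y,r)|}$-ish; better to transport the indicator of $W_{r,r'}$ outward in $|B_G(\cdot,r)|$-normalized units. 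I would set $F(G,\rho,y,x) = \tfrac{\1_{\{y \in \mathcal{S}_G(x)\}}}{|B_G(y,r)|}$ when $y$ is used in $x$'s separator (and $0$ otherwise), so the mass out of $y$ is at least $\tfrac{1}{|B_G(y,r)|}\cdot\1_{\{y \in W_{r,r'}\}}$ only if $y$ is used for at least one $x$ — and mass into $x$ is $\sum_{y\in \mathcal{S}_G(x)} \tfrac{1}{|B_G(y,r)|} \le q_G(x;r,r')$. By the Mass-Transport Principle, $\Pr[\rho \in W_{r,r'}] \le \E[\text{mass out of }\rho] = \E[\text{mass into }\rho] \le \E[q_G(\rho;r,r')]$, after checking that "$\rho$ used in some separator" subsumes "$\rho \in W_{r,r'}$," which holds because $W_{r,r'}$ is by construction a union of separator sets.

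The main obstacle I expect is \textbf{making the iteration genuinely unimodular and well-defined on infinite graphs}, i.e.\ defining "the bounded side of a separator" and "process in increasing mark order" in a way that is simultaneously (a) a measurable equivariant function of the marked graph, (b) guaranteed to terminate so that $W_{r,r'}$ is a bona fide subset with the stated diameter property, and (c) compatible with the mass-transport bookkeeping so that no separator vertex is double-counted in a way that breaks the inequality in step (4). A clean way around the termination subtlety is to observe that on the (almost surely) locally finite graph, the regions cut out at each step contain balls of radius $\ge r$, so each fresh separator encloses genuinely new territory; combined with local finiteness, every vertex of every finite ball is enclosed or added to $W$ after finitely many steps, which is all that is needed since $\dist_G$-diameters of components are controlled locally. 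The rest — the two mass-transport computations and the diameter bound — are routine once the construction is pinned down, and the parameter specialization giving \pref{lem:barriers-intro} (take $r' = h(r)\,r$ and invoke the hypotheses of \pref{thm:subd-intro-3}, so $q_G(\rho; r, h(r)r) \le r^{k-1-d+o(1)}$) follows immediately.
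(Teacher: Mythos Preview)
Your mass-transport computation for property (1) does not close. With the transport $F(G,y,x) = \tfrac{\1_{\{y \in \mathcal{S}_G(x),\ x\text{ processed}\}}}{|B_G(y,r)|}$ you correctly note that the mass out of $y$ is at least $\tfrac{\1_{\{y \in W_{r,r'}\}}}{|B_G(y,r)|}$ and that the mass into $x$ is at most $q_G(x;r,r')$. But MTP then yields only
\[
   \E\!\left[\frac{\1_{\{\rho \in W_{r,r'}\}}}{|B_G(\rho,r)|}\right] \leq \E[q_G(\rho;r,r')]\,,
\]
not $\Pr[\rho \in W_{r,r'}] \leq \E[q_G(\rho;r,r')]$. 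The factor $1/|B_G(\rho,r)|$ sits on the wrong side, and nothing in your iterative construction supplies a compensating multiplicity: each $y \in W_{r,r'}$ typically belongs to only one processed separator, not $|B_G(y,r)|$ of them. Your own hedging (``here I must be careful'') is exactly where the argument breaks.

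The paper's construction makes $1/|B_G(y,r)|$ appear as a \emph{probability over the random marks} rather than as a weight in the transport, and this simultaneously dissolves your ``main obstacle.'' It is non-iterative: with i.i.d.\ uniform marks $\{\beta_x\}$ one sets, for \emph{every} $x$ at once,
\[
   \hat{U}_x \seteq U_x \setminus \bigcup_{z:\beta_z < \beta_x} B_G(z,r)\,,
   \qquad W_{r,r'} \seteq \bigcup_{x \in V(G)} \hat{U}_x\,.
\]
Now the unit-mass transport $F(G,x,y)=\1_{\{y \in \hat{U}_x\}}$ gives mass into $\rho$ at least $\1_{\{\rho \in W_{r,r'}\}}$, while
\[
   \E\bigl[|\hat{U}_x| \mid G\bigr] \leq \sum_{y \in U_x} \Pr\!\left[\beta_x < \min_{z \in B_G(y,r)} \beta_z\right] \leq \sum_{y \in U_x} \frac{1}{|B_G(y,r)|} \leq q_G(x;r,r')\,,
\]
since $y \in U_x$ forces $\dist_G(x,y) > r$ and hence $x \notin B_G(y,r)$. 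The diameter bound is equally direct: for any path $\gamma$ between vertices at $\dist_G$-distance exceeding $2r'$, take the vertex $z$ of minimal mark among those whose $r$-ball meets $\gamma$; then $\gamma$ must cross $U_z$, and minimality of $\beta_z$ forces the crossing point to survive into $\hat{U}_z \subseteq W_{r,r'}$. No iteration, no ``bounded side,'' no termination argument is needed.
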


\begin{proof}
   Fix $r' > r \geq 1$.
   For each $x \in V(G)$, let $U_x \subseteq B_G(x,r') \setminus B_G(x,r)$
   denote a separator achieving $\kappa_G(x;r,r')$.
   Let $\{\beta_x \in [0,1] : x \in V(G)\}$ be a collection of i.i.d.
   uniform random variables.
   Define, for every $x \in V(G)$, the set
   \[
      \hat{U}_x \seteq U_x \setminus \bigcup_{y : \beta_y < \beta_x} B_G(y,r)\,,
   \]
   and
   \[
      W_{r,r'} \seteq \bigcup_{x \in V(G)} \hat{U}_x\,.
   \]

   To see that almost surely every connected component of $G[V(G) \setminus W_{r,r'}]$ has
   diameter at most $2r'$ in $\dist_G$, consider $x,y \in V(G)$ with $\dist_G(x,y) > 2r'$.
   Let $\gamma$ be a simple path from $x$ to $y$ in $G$.
   We will show that almost surely $\gamma \cap W_{r,r'} \neq \emptyset$.
   Let $z \in V(G)$ be the vertex in the
   finite set $\{z : \gamma \cap B_G(z,r) \neq \emptyset\}$ with $\beta_z$ minimal.
   Since $\dist_G(x,y) > 2r'$, it cannot be that both $x,y \in B_G(z,r')$,
   hence it must be that $\gamma \cap U_z \neq \emptyset$.
   By minimality of $\beta_z$, we have
   $\gamma \cap \hat{U}_z \neq \emptyset$ as well.

   Now note that for any $x \in V(G)$,
   \[
      \E\left[|\hat{U}_x| \mid (G,\rho)\right] \leq \sum_{y \in U_x} \frac{1}{|B_G(y,r)|}\,.
   \]
   If we define a flow $F(G,x,y)\seteq \1_{\hat{U}_x}(y)$, then
   combining the preceding inequality with the mass-transport principle yields
   \[
      \Pr[\rho \in W_{r,r'}] \leq \E\left[\sum_{y \in U_{\rho}} \frac{1}{|B_G(y,r)|}\right]\,.\qedhere
   \]
\end{proof}

Finally, we move on to the proof of \pref{thm:uipq-speed}.

\begin{proof}[Proof of \pref{thm:uipq-speed}]
   We begin by showing that assumption (A) implies assumption (B).

\begin{lemma}[\cite{BP11}]
   \label{lem:BP}
   Let $H$ be a planar graph.  Consider $x \in V(H)$ and $\tau \geq 1$.
   If $B_H(x,4\tau)$ can be covered by $\lambda$
   balls of radius $\tau$,
   then there is a subset $W \subseteq B_H(x,6\tau) \setminus B_H(x,\tau)$
   whose removal separates $B_H(x,\tau)$ and $V(H) \setminus B_H(x,6\tau)$, and
   furthermore, $|W| \leq (\lambda+1)(2\tau+1)$.
\end{lemma}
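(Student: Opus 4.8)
The plan is to realize $W$ as the vertex set of a short closed walk that surrounds $B_H(x,\tau)$, assembled from one geodesic segment inside each of the covering balls; its length is then at most (number of balls)$\,\times\,$(diameter of a ball)$\,+\,O(\tau)$. A few harmless reductions come first. If $V(H)=B_H(x,6\tau)$ we take $W=\emptyset$, so assume $V(H)\setminus B_H(x,6\tau)\neq\emptyset$; fix a planar embedding of $H$ on $S^2$ with some vertex of $V(H)\setminus B_H(x,6\tau)$ on the outer face; discard any covering ball disjoint from $B_H(x,4\tau)$, leaving $\leq\lambda$ balls $B_H(c_1,\tau),\dots,B_H(c_\lambda,\tau)$, still covering $B_H(x,4\tau)$, with $\dist_H(x,c_i)\leq 5\tau$. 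Now contract $B_H(x,3\tau-1)$ to a single vertex $a$ and $V(H)\setminus B_H(x,6\tau)$ to a single vertex $b$; the contracted graph $\hat H$ is planar, and since there are no edges of $H$ between $B_H(x,3\tau-1)$ and $V(H)\setminus B_H(x,6\tau)$, any minimal $a$--$b$ vertex separator in $\hat H$ avoids $a$ and $b$. In particular there is a closed walk $C_0$ of $\hat H$ that separates $a$ from $b$ in $S^2$ and all of whose vertices are neighbours of $a$ — hence lie on $S_H(x,3\tau)\subseteq B_H(x,4\tau)$ (for instance, the boundary walk of the face of $H[B_H(x,3\tau-1)]$ reachable from outside $B_H(x,6\tau)$).

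\noindent\textbf{Compression.} Since $V(C_0)\subseteq B_H(x,4\tau)\subseteq\bigcup_i B_H(c_i,\tau)$, traversing $C_0$ visits the balls in some cyclic order. Whenever a ball $B_H(c_i,\tau)$ is visited more than once, replace the arc of the walk between two consecutive visits to $B_H(c_i,\tau)$ by a geodesic of $H$ joining the two corresponding vertices inside $B_H(c_i,\tau)$; of the two resulting closed walks keep one that still separates $a$ from $b$. Such a choice exists because a geodesic between two points of $B_H(c_i,\tau)$ has length $\leq 2\tau$ and stays inside $B_H(c_i,2\tau)$, and any ball meeting $S_H(x,3\tau)$ has $2\tau\leq\dist_H(x,c_i)\leq 4\tau$, so $B_H(c_i,2\tau)\subseteq B_H(x,6\tau)$ and is disjoint from neither $a$ nor $b$ in a way that forces the topology; thus which of the two shortcuts preserves separation is a purely topological question about closed curves on $S^2$ relative to $\{a,b\}$. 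Iterating, we reach a closed walk $C$ visiting each ball at most once; consecutive stretches of $C$ lie inside a single ball of diameter $\leq 2\tau$, hence use at most $2\tau+1$ vertices, so $|V(C)|\leq(\lambda+1)(2\tau+1)$ (the extra $+1$ absorbs the stretch that closes the walk after the final reduction). Taking $W$ to be $V(C)$, or a minimal separating subset of it, finishes the argument; that $W\subseteq B_H(x,6\tau)\setminus B_H(x,\tau)$ is read off from the ranges above.

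\noindent\textbf{Main obstacle.} The delicate point is the compression step: one must check that at each reduction a shortcut preserving the $a$--$b$ separation is available, that the process keeps the closed walk inside the annular region $B_H(x,6\tau)\setminus B_H(x,\tau)$ (handling the balls whose centres are close to $S_H(x,2\tau)$, whose $2\tau$-neighbourhoods nearly reach $B_H(x,\tau)$), and that it terminates with each ball used at most once. These are topological book-keeping facts about simple closed curves on the sphere and the homotopy type of the annulus, combined with the crude metric bound ``$\mathrm{diam}\leq 2\tau$ inside a ball'', and they are precisely what is established in \cite{BP11}, which we invoke. (An alternative, curve-free route is via Menger's theorem: the minimum $B_H(x,\tau)$--$(V(H)\setminus B_H(x,6\tau))$ separator equals the maximum number $N$ of vertex-disjoint paths between the two sets, and each such path contains a geodesic subpath of at least $2\tau+1$ vertices lying in the band $\{y:2\tau\leq\dist_H(x,y)\leq 4\tau\}\subseteq B_H(x,4\tau)$; the crux then becomes the planar estimate that a connected set of diameter $\leq 2\tau$ can meet at most $2\tau+1$ of these disjoint geodesic ``radial'' subpaths — since a path inside it joining two of them, together with the geodesic segments, bounds a disk that every intermediate subpath must cross — which yields $N\leq\lambda(2\tau+1)\leq(\lambda+1)(2\tau+1)$.)
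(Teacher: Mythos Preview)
The paper does not prove this lemma; it is stated with a citation to \cite{BP11} and used as a black box. Your sketch is essentially an outline of the Benjamini--Papasoglu argument, and you explicitly defer the delicate topological steps back to that reference, so there is nothing in the paper to compare against.

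A few remarks on the sketch itself. First, the construction of the initial separating closed walk $C_0$ as ``the boundary walk of the face of $H[B_H(x,3\tau-1)]$'' does not give a walk whose vertices are neighbours of $a$; to get a genuine closed walk in $\hat H$ consisting of neighbours of $a$ one first needs to triangulate (or otherwise ensure cyclically consecutive neighbours of $a$ are adjacent). Second, and more seriously, your containment claim $W\subseteq B_H(x,6\tau)\setminus B_H(x,\tau)$ is not established by the argument you give: a geodesic inside $B_H(c_i,\tau)$ only stays in $B_H(c_i,2\tau)$, and for a ball with $\dist_H(x,c_i)=2\tau$ this $2\tau$-neighbourhood may reach into $B_H(x,\tau)$. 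You flag this as the ``main obstacle'' but do not resolve it. Third, in the Menger alternative, the assertion that a connected set of diameter $\leq 2\tau$ meets at most $2\tau+1$ of the disjoint radial subpaths is the entire content of the planarity argument and is not justified by the parenthetical hint (the ``disk'' you describe need not be crossed by intermediate subpaths without a more careful topological setup). Since both you and the paper ultimately invoke \cite{BP11}, none of this is fatal, but the sketch does not stand on its own.
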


Define
\[
   \nu_G(x,r) \seteq \min \left\{ |B_G(y,r)| : y \in B_G(x,6r)\right\}\,.
\]

By a straightforward packing/covering argument,
\pref{lem:BP} yields the following if $G$ is
almost surely planar:
For any $r \geq 2$, it holds that
\[
   \kappa_G(\rho; r, 6r) \lesssim r \frac{|B_G(\rho,6r)|}{\nu_G(x,r)}\,,
\]
and thus
\[
   q_G(\rho;r,6r) \lesssim r \frac{|B_G(\rho,6r)|}{\nu_G(x,r)^2}\,.
\]

Combining this with \pref{lem:separators-gen} and the next lemma
shows that assumption (A) implies assumption (B).

\begin{lemma}
   Suppose $(G,\rho)$ is a unimodular random graph and the following
   two conditions hold for some $C \geq 1$, $q > 1$, $\beta,\gamma > 0$, and $r \geq 2$:
         \begin{equation}\label{eq:qmom}
            \left(\E |B_G(\rho,6 r)|^q\right)^{1/q} \leq C r^d\,.
         \end{equation}
         \begin{equation}\label{eq:lowertail}
            \Pr\left[|B_G(\rho,r)| < \frac{\e}{C(\log r)^{\gamma} } r^d\right] \leq \exp(-1/\e^{\beta}) \qquad \forall \e > 0\,.
         \end{equation}
         Then:
   \[
      \E\left[\frac{|B_G(\rho,6r)|}{\nu_G(\rho,r)^2}\right] \leq C' r^{-d} (\log r)^{2/\beta+2\gamma}\,.
   \]
   for some $C'=C'(C,\beta,\gamma,d,q)$.
\end{lemma}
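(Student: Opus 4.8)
The plan is to decouple the numerator $|B_G(\rho,6r)|$ from the ``local volume minimum'' $N \seteq \nu_G(\rho,r)$ via H\"older's inequality, and then to control the lower tail of $N$ using the Mass-Transport Principle together with \eqref{eq:lowertail}. Writing $q' \seteq q/(q-1)$, H\"older gives
\[
   \E\!\left[\frac{|B_G(\rho,6r)|}{\nu_G(\rho,r)^2}\right]
   \ \le\ \bigl(\E\,|B_G(\rho,6r)|^q\bigr)^{1/q}\,\bigl(\E\, N^{-2q'}\bigr)^{1/q'}
   \ \le\ Cr^d\,\bigl(\E\, N^{-2q'}\bigr)^{1/q'},
\]
using \eqref{eq:qmom}. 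Thus it suffices to prove the moment bound $\E\, N^{-2q'}\lesssim r^{-2dq'}(\log r)^{2q'(\gamma+1/\beta)}$, with implicit constant depending only on $C,\beta,\gamma,d,q$.

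For the tail of $N$: since $N<t$ holds exactly when some $y\in B_G(\rho,6r)$ has $|B_G(y,r)|<t$, a union bound yields $\Pr[N<t]\le\E\bigl[\#\{y\in B_G(\rho,6r):|B_G(y,r)|<t\}\bigr]$. Applying the Mass-Transport Principle to the flow $F(G,x,y)=\1_{\{y\in B_G(x,6r)\}}\,\1_{\{|B_G(y,r)|<t\}}$ and using $y\in B_G(x,6r)\iff x\in B_G(y,6r)$ turns the right-hand side into $\E\bigl[\1_{\{|B_G(\rho,r)|<t\}}\cdot|B_G(\rho,6r)|\bigr]$; one further application of H\"older (with exponents $q',q$) and \eqref{eq:qmom} bounds this by $Cr^d\,\Pr[\,|B_G(\rho,r)|<t\,]^{1/q'}$. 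Inserting $t=t_\e\seteq \e r^d/(C(\log r)^\gamma)$ into \eqref{eq:lowertail} then gives, for every $\e>0$,
\[
   \Pr[N<t_\e]\ \le\ Cr^d\exp\!\bigl(-\tfrac{1}{q'}\e^{-\beta}\bigr).
\]

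Finally one integrates $\E\, N^{-2q'}=\int_1^\infty\Pr[N<t]\,2q'\,t^{-2q'-1}\,dt$ (the integrand vanishes for $t\le1$ since $N\ge1$ always). Let $t^\ast$ be the value of $t$ at which the bound above equals $1$, namely $t^\ast=\frac{r^d}{C(\log r)^\gamma}\bigl(q'\log(Cr^d)\bigr)^{-1/\beta}\asymp r^d(\log r)^{-(\gamma+1/\beta)}$ for $r\ge 2$. On $[t^\ast,\infty)$ one uses $\Pr[N<t]\le1$, which contributes $(t^\ast)^{-2q'}$. On $[1,t^\ast)$ one substitutes $\lambda=t^\ast/t\in[1,t^\ast]$, so the tail bound becomes $Cr^d(Cr^d)^{-\lambda^\beta}$ and, after simplification, this piece contributes $2q'Cr^d(t^\ast)^{-2q'}\!\int_1^{t^\ast}(Cr^d)^{-\lambda^\beta}\lambda^{2q'-1}\,d\lambda$; writing $(Cr^d)^{-\lambda^\beta}=(Cr^d)^{-1}\,e^{-(\lambda^\beta-1)\log(Cr^d)}$ and using $\log(Cr^d)\ge d\log 2>0$, one gets $\int_1^\infty(Cr^d)^{-\lambda^\beta}\lambda^{2q'-1}\,d\lambda\le c(\beta,q,d)\,(Cr^d)^{-1}$, so this piece is also $\le c(\beta,q,d)(t^\ast)^{-2q'}$. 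Hence $\E\, N^{-2q'}\lesssim(t^\ast)^{-2q'}\lesssim r^{-2dq'}(\log r)^{2q'(\gamma+1/\beta)}$, which combined with the first display proves the claim with $2\gamma+2/\beta$ the exponent of $\log r$; degenerate small-$r$ cases (where $Cr^d$ is close to $1$, or $t^\ast\le1$) constrain $r$ to a bounded range and are absorbed into $C'$.

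The step I expect to be the main obstacle is the $[1,t^\ast)$ portion of the final integral: there the crude estimate $\Pr[N<t]\le Cr^d(Cr^d)^{-\lambda^\beta}$ can exceed $1$, so it cannot be used without care, and one must verify that the true (doubly-exponentially small in $r$) tail makes this region negligible next to $(t^\ast)^{-2q'}$, i.e.\ that the superpolynomial decay of $(Cr^d)^{-\lambda^\beta}$ beats both the polynomial prefactor $Cr^d$ and the $t^{-2q'-1}$ singularity at the origin.
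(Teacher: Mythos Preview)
Your proof is correct and follows essentially the same approach as the paper: both use H\"older's inequality to separate $|B_G(\rho,6r)|$ from $\nu_G(\rho,r)^{-2}$, then use the Mass-Transport Principle combined with H\"older and \eqref{eq:lowertail} to get the tail bound $\Pr[\nu_G(\rho,r)<t_\e]\le Cr^d\exp(-\e^{-\beta}/q')$, and finally convert this into the moment bound on $\nu_G(\rho,r)^{-2q'}$. You actually supply the tail-integration details that the paper leaves implicit, and your worry about the $[1,t^\ast)$ region is unfounded: in your parametrization $\lambda=t^\ast/t>1$ there, so $Cr^d(Cr^d)^{-\lambda^\beta}<1$ throughout and the estimate goes through as written.
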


\begin{proof}
   Fix $\e > 0$.
   Let us define a flow
   \[
      F(G,x,y) \seteq \1_{\{|B_G(x,r)| < \e/(C (\log r)^{\gamma}) r^d\}} \1_{\{\dist_G(x,y) \leq 6r\}}\,.
   \]
   Then we can apply H\"older's inequality with $q'$ such that $1/q+1/q'=1$ to conclude that
   \begin{align*}
      \E\left[\sum_{x \in V(G)} F(G,\rho,x)\right] &= \E\left[|B_G(\rho,6r)| \1_{\{|B_G(\rho,r)| < \e/(C(\log r)^{\gamma}) r^d\}}\right]
      \\
      &\leq \left(\E |B_G(\rho,6r)|^q\right)^{1/q} \Pr[|B_G(\rho,r)| < \e/(C(\log r)^{\gamma}) r^d]^{1/q'} \\
      &\leq C r^d \exp(-1/(q' \e^{\beta}))\,.
   \end{align*}
   By the Mass-Transport Principle, this bounds $\E[\sum_{x \in V(G)} F(G,x,\rho)]=\Pr[\nu_r(\rho) < \e/(C(\log r)^{\gamma}) r^d]$, 
   hence for every $\e > 0$:
   \[
      \Pr\left[\nu_r(\rho) < \frac{\e}{C(\log r)^{\gamma} (q'd \log r)^{1/\beta}} r^d\right] \leq C r^{d(1-\e^{-\beta})}\,.
   \]
   Thus we can bound:
   \begin{equation}\label{eq:holder2}
      \left(\E\left[\frac{1}{\nu_r(\rho)^{2q'}}\right]\right)^{1/q'}
      \leq
      C_1 (\log r)^{2/\beta+2\gamma} r^{-2d}\,,
   \end{equation}
   where $C_1=C_1(C,q,\beta,\gamma,d)$.

   Now apply H\"older's inequality to \eqref{eq:qmom} and \eqref{eq:holder2} yields
   \[
      \E\left[\frac{|B_G(\rho,6r)|}{\nu_r(\rho)^2}\right] \leq C r^d \cdot C_1 (\log r)^{2/\beta+2\gamma} r^{-2d}\,,
   \]
   which gives the desired bound.
\end{proof}

Therefore it suffices to
prove the desired conclusion under assumption (B).
Let us now apply (B) to obtain, for every $r \geq 2$,
a marked unimodular network $(G,\rho,W_r)$ satisfying
properties (B)(i) and (B)(ii).

Define normalized conformal metrics $\{\omega_j : j \in \N\}$ on $(G,\rho)$ by
\[
   \omega_j \seteq \frac{\1_{W_{2^j}}}{\sqrt{\Pr[\rho \in W_{2^j}]}}\,,
\]
and note that from (B)(i), we have
\[
   \omega_j \geq j^{-\alpha/2} 2^{j(d-1)/2} \1_{W_{2^j}}\qquad \forall j \geq 1\,.
\]
Combining this with (B)(ii) gives for every $x,y\in V(G)$:
\begin{equation}\label{eq:distcomp}
   \dist_G(x,y) \geq 6 \cdot 2^j \implies \dist_{\omega_j}(x,y) \geq j^{-\alpha/2} 2^{j(d-1)/2}\,.
\end{equation}
Consider now the unnormalized metric
\[
   \omega^{(T)} \seteq \sqrt{\sum_{j=1}^{\lceil \log_2 T\rceil} \omega_{j}^2}\,,
\]
and observe the bound:  For $T \geq 2$,
\begin{equation}\label{eq:normb}
   \E [\omega^{(T)}(\rho)^2] \lesssim \log T\,.
\end{equation}

Finally, we apply \pref{thm:unimodular-markov-type}\eqref{eq:umt3} to give:
For every $T \geq 2$,
\begin{align*}
   \E\left[\dist_G(X_0,X_T)^{d-1}\right] &= \E\left[\min\{T^{d-1},\dist_G(X_0,X_T)^{d-1}\}\right] \\
                                   &\stackrel{\mathclap{\eqref{eq:distcomp}}}{\lesssim}
   (\log T)^{\alpha(d-1)} \E\left[\min \{T^{(d-1)/2},\dist_{\omega^{(T)}}(X_0,X_T)\}^{2}\right] \\
   &\lesssim
   T (\log T)^{\alpha(d-1)+5}\,,
\end{align*}
where our application of \pref{thm:unimodular-markov-type}\eqref{eq:umt3}
in the final inequality uses \eqref{eq:normb}.
\end{proof}

\subsection{Appendix: Growth bounds for UIPT/UIPQ}
\label{sec:appendix-curien}

We now sketch a justification
for why \pref{thm:uipq-speed} applies to UIPT/UIPQ with $d=4$.

\medskip
\noindent
{\bf Upper tail \eqref{eq:qmom}.}  It is known that $q$-th moment bounds of the form \eqref{eq:qmom}
hold for the size of the ``$r$-hull'' (which contains $B_G(\rho,r)$)
for $d=4$ and every $q < 3/2$.  See \cite{Krikun05} for UIPQ and \cite{Menard16} for UIPT.

\medskip
\noindent
{\bf Lower tail \eqref{eq:lowertail}.}  The stretched exponential lower tail is more delicate.
We sketch here the verification for UIPQ.
One can use Schaeffer's correspondence to establish that 
the law of $|B_G(\rho,r)|$ is the law of the
number of nodes $N_r$ of label at most $r$
in a random labeled tree (see \cite[\S 5]{CD06} and \cite[\S 2]{LM10}).
The spine of the tree is goverened by
a Markov process
$\{X_n \in \N : n \geq 0\}$, and
conditioned on $\{X_n\}$,
one has
\[
   N_{r} = \sum_{n : X_n \leq r} Y^{(n)}_{X_n}\,,
\]
where for each $i \geq 1$, $\{Y^{(n)}_i : n \geq 0\}$ are independent copies
of a nonnegative random variable $Y_i$.

The law of $Y_i$ is sampled as follows:  Let $T_i$ be a critical Galton-Watson tree
where the number of offspring is geometric with parameter $1/2$.
Inductively label the vertices of $T_i$ by a map $\ell~:~V(T_i)~\to~\Z$ as follows:  The root is labeled $i$.
If a node has label $\ell$, the labels of its offspring are independent
and uniform in the set $\{\ell-1,\ell,\ell+1\}$.
Let $\hat{T}_i$ have the law of $T_i$ conditioned on the event 
$\{\ell(x) > 0\ \forall x \in V(T_i)\}$.
Then $Y_i$ has the law
of $|\{x \in V(\hat{T}_i) : \ell(x) \leq r\}|$.
See, e.g., \cite[\S 2.3]{LM10}.

The tree $T_i$ satisifes, for all $N \geq 1$:
\[
   \Pr(|V(T_i)|=N) \gtrsim N^{-3/2}
\]
Let $W(h)$ denote the number of nodes in $T_i$ of height $h$.
Then there is a constant $c_1 > 0$ such that
\[
   \Pr\left[\sum_{h=1}^{N} W(h) \geq c_1 N^2 \bigmid |V(T_i)| \geq N^2\right] \geq c_1\,.
\]
See, for instance, \cite{Drmota09}.

From this, it is elementary to establish that
if $i \in [r/4,r/2]$, then $Y_i$ satisfies
\begin{equation}\label{eq:babyuni}
   \Pr[Y_i \geq \varepsilon r^4] \gtrsim c_2 \frac{r^{-2}}{\sqrt{\e}} \qquad \forall \e > 0, r \geq \e^{-4}\,.
\end{equation}

Therefore if $Z = \# \left\{ n \geq 1 : X_n \in [r/4,r/2]\right\}$, then
we have
\begin{equation}\label{eq:uipq1}
   \Pr\left[N_{r} < \e r^4 \mid Z\right] \leq
   \left(1-c_2\frac{r^{-2}}{\sqrt{\e}}\right)^{Z}
   \leq \exp\left(-c_2 \frac{Z r^{-2}}{\sqrt{\e}}\right)\,.
\end{equation}

The Markov process $\{X_n\}$ is a birth and death chain whose
transition probabilities converge to a limiting distribution as $X_n \to \infty$.
When scaled appropriately, $\{X_n\}$ converges to a Bessel process \cite[Prop. 5]{Menard10},
and satisfies, for some $c > 0$ and $r$ sufficiently large:
\[
   \Pr\left[\vphantom{\bigoplus}
   Z < c r^2 \right] \lesssim e^{-c r^2}\,.
\]
Combining this with \eqref{eq:uipq1} yields
\[
   \Pr\left[N_{r} < \e r^4\right] \leq
   \Pr[Z_{n} < c r^2] + \exp\left(-c_2 \frac{c}{\sqrt{\e}}\right)
   \lesssim
   e^{-c r^2}  + \exp\left(-c_2 \frac{c}{\sqrt{\e}}\right)\,.
\]
This verifies \eqref{eq:lowertail} for $\beta=1/2$ and $\gamma=0$.

\subsection*{Acknowledgements}

I would like to thank Itai Benjamini for his support and for 
sharing his many insights, along with a stream of
inspiring open questions.  My thanks to Asaf Nachmias
for reading many drafts of this manuscript
and sharing his wisdom on circle packings,
and to Omer Angel and Nicolas Curien for
invaluable
discussions on the geometry of random planar maps.
In particular, Nicolas offered crucial guidance
for the references and
calculations in \pref{sec:appendix-curien}.

I am grateful to Jian Ding, Russ Lyons,
Steffen Rohde, and Lior Silberman for comments
at various stages, and
to Tom Hutchcroft who pointed
out an error in an initial draft of this manuscript
and explained the construction found 
at the beginning of \pref{sec:homo}, as well
as a simpler proof of \pref{lem:is-amenable}.
Lastly, I am grateful to the anonymous referees for
their detailed, insightful feedback.

\bibliographystyle{alpha}
\bibliography{diffusive}

\end{document}